\newtheorem{thm}{Theorem}[section]
\newtheorem{cor}[thm]{Corollary}
\newtheorem{prop}[thm]{Proposition}
\newtheorem{lem}[thm]{Lemma}
\newtheorem{defi}[thm]{Definition}
\theoremstyle{remark}
\newtheorem*{rmk}{Remark}
\newcommand{\R}{\mathds R}
\newcommand{\Q}{\mathds Q}
\newcommand{\N}{\mathds N}
\newcommand{\Zintff}[1]{\left\llbracket #1 \right\rrbracket}
\newcommand{\Zintfo}[1]{\left\llbracket #1 \right\llbracket}
\newcommand{\Zintof}[1]{\left\rrbracket #1 \right\rrbracket}
\newcommand{\id}{\textup{Id}}
\newcommand{\abs}[1]{\left\lvert#1\right\rvert}
\newcommand{\norm}[1]{\left\lVert#1\right\rVert}
\DeclareMathOperator{\sgn}{sign}
\newcommand{\st}{\text{ such that }}
\newcommand{\ie}[1][\,]{\textit{i.e.#1}}
\newcommand{\eg}[1][\, ]{\textit{e.g.#1}}
\newcommand{\tend}[2][+\infty]{\scriptscriptstyle #2\rightarrow #1 }
\newcommand{\cv}[2][+\infty]{\underset{\tend[#1]{#2}}{\longrightarrow}} 
\renewcommand{\d}{\text{\small d}}
\newcommand{\leb}{\textup{Leb}}
\newcommand{\supp}{\textup{supp}}
\newcommand{\evt}[1]{\!\left( #1 \right)}
\newcommand{\cro}[1]{\!\left[ #1 \right]}
\newcommand{\1}{\mathbbm 1}
\newcommand{\as}[1][-]{\text{#1a.s.}}
\renewcommand{\ae}[1][-]{\text{#1a.e.}}
\newcommand{\iid}{\text{i.i.d.\,}}
\newcommand{\prob}{(\mathbb P)}
\newcommand{\dstb}{(d)}
\renewcommand{\P}{\mathbb P}
\newcommand{\E}{\mathbb E}
\newcommand{\Unif}{\textsc{Unif}}
\newcommand{\Geom}{{\textsc{Geom}}_{\N}}
\newcommand{\BGW}[1][\mu]{\textup{BGW}_{\!#1}}
\newcommand{\Luka}[1][ ]{\textsc{\L{}ukasiewicz#1}}
\newcommand{\Ul}{\mathcal U}
\newcommand{\Tree}{\mathcal T}
\newcommand{\dGH}{d_{\textup{\tiny GH}}}
\newcommand{\dGHP}{d_{\textup{\tiny GHP}}}
\newcommand{\dis}{\textup{dis}}
\newcommand{\disc}{\textup{Disc}}
\newcommand{\PR}{\mathscr P}
\DeclareMathOperator{\rot}{Rot}
\DeclareMathOperator{\corot}{Tor}
\DeclareMathOperator{\Loop}{Loop}
\title{Rotating random trees with \textsc{Skorokhod}'s $M_1$ topology}
\date{}
\author{Antoine \textsc{Aurillard}\footnote{CMAP, CNRS, École polytechnique, Institut Polytechnique de Paris, 91120 Palaiseau, France. \newline Institut Camille Jordan, CNRS UMR 5208, Universite Claude Bernard Lyon 1, 43
Boulevard du 11 novembre 1918, 69622 Villeurbanne Cedex, France. \newline \textit{email:} antoine.aurillard@polytechnique.edu}}
\begin{document}

\maketitle
\begin{abstract}
    We extend the classical coding of measured $\R$-trees by continuous excursion-type functions to càdlàg excursion-type functions through the notion of parametric representations. The main feature of this extension is its continuity properties with respect to the \textsc{Gromov-Hausdorff-Prokhorov} topology for $\R$-trees and \textsc{Skorokhod}'s $M_1$ topology for càdlàg functions. As a first application, we study the $\R$-trees $\mathscr T_{\mathbbm x^{(\alpha)}}$ encoded by excursions of spectrally positive $\alpha$-stable \textsc{Lévy} processes for $\alpha \in (1,2]$. In a second time, we use this setting to study the large-scale effects of a well-known bijection between plane trees and binary trees, the so-called rotation. \textsc{Marckert} has proved that the rotation acts as a dilation on large uniform trees, and we show that this remains true when the rotation is applied to large critical \textsc{Bienaymé} trees with offspring distribution attracted to a Gaussian distribution. However, this does not hold anymore when the offspring distribution falls in the domain of attraction of an $\alpha$-stable law with $\alpha \in (1,2)$, and instead we prove that the scaling limit of the rotated trees is $\mathscr T_{\mathbbm x^{(\alpha)}}$.

\end{abstract}

\section{Introduction}\label{sct:Intro}

In studies on random trees, a classical approach is to encode trees by real-valued functions. It can be used to establish scaling limits of trees and to derive some of their geometric properties (see \eg \cite{Legall2005RandomTreesApplications,DuquesneLegall2005LevyTrees}) but also to define and understand related objects such as random snakes \cite{DuquesneLegall2002RandomTreeLevyProcessesBranching,Marzouk2020ScalingLimitStableSnake} and looptrees \cite{CurienKortchemski2014StableLooptrees,KhanfirPreprintConvergenceLooptrees}. In the present paper, we broaden this approach by defining measured $\R$-trees encoded by a \textit{càdlàg} (\ie right-continuous with left limits) excursion-type function in a way that is continuous with respect to \textsc{Skorokhod}'s $M_1$ topology on the space of càdlàg functions $D([0,1])$. This construction generalizes the encoding of $\R$-trees by a continuous \textit{contour function} in a way that preserves some tools already used to study the geometric properties of the encoded $\R$-trees. Moreover, since \textsc{Skorokhod}'s $M_1$ topology is weaker than the uniform topology and \textsc{Skorokhod}'s $J_1$ topology which are usually used in the studies on random trees, this extension also enables to get scaling limits of trees by means of a weak functional convergence of their encoding processes.

Based on this, we are able to study the large-scale effects of the so-called \textit{rotation correspondence} on random trees. This has already been investigated in \cite{Marckert2004Rotation} in the case of uniform trees, but here we consider the wider setting of large size-conditioned \textsc{Bienaymé-Galton-Watson} trees (which will simply be called \textsc{Bienaymé} trees in the following) with a critical offspring distribution that lies in the domain of attraction of a stable distribution with index $\alpha \in (1,2]$. It appears that radically different behaviours occur depending on $\alpha$. In the Gaussian case $\alpha=2$, the rotation asymptotically acts as a dilation, which means that up to a dilation of distances the rotation does not affect the scaling limits of these \textsc{Bienaymé} trees. This contrasts with the case $\alpha \in (1,2)$, where the scaling limits of rotated trees form a new family of random $\R$-trees which are encoded by excursions of spectrally positive $\alpha$-stable \textsc{Lévy} processes. We also investigate some geometric properties of these trees, such as their \textsc{Hausdorff} dimension, and establish a link with the stable looptrees introduced in \cite{CurienKortchemski2014StableLooptrees}.

Let us present our main results. We assume some familiarity with plane trees and their encoding processes (contour process, height process and \Luka walk), with scaling limits of trees and with critical \textsc{Bienaymé} trees whose offspring distribution lies in the domain of attraction of a stable distribution. These notions will be detailed in Sections~\ref{sct:planeTrees},~\ref{sct:ScalingLimitTrees} and ~\ref{sct:Encoding Processes}.

\paragraph{\textsc{Skorokhod}'s $M_1$ topology and (measured) $\R$-trees.}

Our idea underlying the encoding of $\R$-trees by a càdlàg function $x$ is heavily based on a core notion of \textsc{Skorokhod}'s $M_1$ topology, namely the notion of parametric representations of $x$. In short, a parametric representation consists of an increasing and continuous parametrization of the graph of $x$ where the discontinuities are filled in by segments. Informally, this notion provides continuous functions that are \textit{stretched} versions of $x$. Combining this with the usual encoding of $\R$-trees by functions in $C_0([0,1],\R_+)$, which is invariant by \textit{stretching}, gives a uniquely defined $\R$-tree $\mathscr T_x$ associated with $x \in D_0([0,1],\R_+)$ (see Definition~\ref{def:Discontinuous Contour and Real Tree}). Here $C_0$ (resp. $D_0$) refers to the space of continuous (resp. càdlàg) functions that vanish at $0$ and $1$. The tree $\mathscr T_x$ may also be endowed in a natural way with a measure $\mu_x$ encoded by $x$ (Definition~\ref{def:Discontinuous Contour and Measured Real Tree}). We will give some basic properties of this construction, but its main point is that it also extends the continuity properties of $f\in C_0([0,1],\R_+) \mapsto \mathscr T_f$ and $f\in C_0([0,1],\R_+) \mapsto (\mathscr T_f,\mu_f)$ to the setting of $D_0([0,1],\R_+)$ endowed with the $M_1$ distance. More precisely, in the context of the \textsc{Gromov-Hausdorff} topology where we deal with $\R$-trees without considering measures,  the \textsc{Lipschitz} property of the usual setting immediately extends as follows
\begin{prop}\label{prop:GHlipschitzM1}
    For all $x,y\in D_0([0,1],\R_+)$, 
    \begin{equation*}
        \dGH(\mathscr T_x,\mathscr T_y) \leq 2d_{M_1}(x,y).
    \end{equation*} 
 \end{prop}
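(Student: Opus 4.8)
The plan is to reduce everything to the classical \textsc{Lipschitz} estimate for continuous contour functions, the passage from $C_0$ to $D_0$ being absorbed entirely by the notion of parametric representation. Recall from Definition~\ref{def:Discontinuous Contour and Real Tree} that for $x\in D_0([0,1],\R_+)$ the tree $\mathscr T_x$ is set to be $\mathscr T_u$, where $(u,r)$ is \emph{any} parametric representation of $x$ (spatial coordinate $u$, non-decreasing time coordinate $r$), and that this is well defined up to isometry. The key point I would stress is that such a spatial coordinate is a genuine element of $C_0([0,1],\R_+)$: it is continuous by definition of a parametric representation, and since $x\in D_0$ forces the completed graph to start and end at height $0$, we have $u(0)=u(1)=0$. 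So $\mathscr T_x$ is literally a continuous-contour tree in disguise.

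Concretely, I would fix arbitrary parametric representations $(u,r)$ of $x$ and $(v,t)$ of $y$. By the above and the isometry-invariance of $\dGH$, we have $\dGH(\mathscr T_x,\mathscr T_y)=\dGH(\mathscr T_u,\mathscr T_v)$ with $u,v\in C_0([0,1],\R_+)$. I would then invoke the classical bound for the map $f\mapsto \mathscr T_f$ on $C_0([0,1],\R_+)$: using the canonical correspondence $R=\{(p_u(s),p_v(s)):s\in[0,1]\}$ between $\mathscr T_u$ and $\mathscr T_v$, whose distortion is $\sup_{s,s'}\bigl|d_u(s,s')-d_v(s,s')\bigr|\leq 4\norm{u-v}_\infty$ (each of the two endpoint terms and the doubled infimum term contributes $\norm{u-v}_\infty$), one gets $\dGH(\mathscr T_u,\mathscr T_v)\leq \tfrac12\dis(R)\leq 2\norm{u-v}_\infty$.

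It then only remains to match this with the $M_1$ distance. Since trivially $\norm{u-v}_\infty\leq \max\bigl(\norm{u-v}_\infty,\norm{r-t}_\infty\bigr)$, the previous step yields $\dGH(\mathscr T_x,\mathscr T_y)\leq 2\max\bigl(\norm{u-v}_\infty,\norm{r-t}_\infty\bigr)$ for \emph{every} choice of parametric representations; taking the infimum over all of them and recalling that $d_{M_1}(x,y)$ is by definition exactly this infimum gives the claim. The point deserving care is not an obstacle in the computation but the well-definedness invoked at the start: one needs that any two parametric representations of the same $x$ differ by a non-decreasing continuous time-change, under which $f\mapsto\mathscr T_f$ is invariant. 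This is precisely what Definition~\ref{def:Discontinuous Contour and Real Tree} encodes, so it can be taken for granted here, and the estimate then extends ``for free'' simply by discarding the time coordinate $r$ and keeping only the spatial discrepancy $\norm{u-v}_\infty$.
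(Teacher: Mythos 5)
Your proof is correct and follows essentially the same route as the paper: pass to the spatial components of parametric representations (which lie in $C_0([0,1],\R_+)$ by Definition~\ref{def:Discontinuous Contour and Real Tree}), apply the classical \textsc{Lipschitz} bound for continuous contours, and take the infimum over parametric representations to recover $d_{M_1}$. The only difference is cosmetic: you re-derive the bound $\dGH(\mathscr T_u,\mathscr T_v)\leq 2\norm{u-v}_\infty$ via the distortion of the canonical correspondence, whereas the paper simply cites Lemma~\ref{lem:GHlipschitz}.
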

 In the context of measured $\R$-trees, Proposition~\ref{prop:GHP M1} gives a slightly weaker form of continuity with respect to the \textsc{Gromov-Hausdorff-Prokhorov} topology.

\begin{prop}\label{prop:GHP M1}
    The map $x \in D_0([0,1],\R_+) \mapsto \bigl(\mathscr T_x,\mu_x\bigr)$ is continuous for $d_{M_1}$ and $\dGHP$.
\end{prop}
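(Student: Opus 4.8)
The plan is to leverage Proposition~\ref{prop:GHlipschitzM1} for the metric part and to add a measure-coupling argument for the Prokhorov part, working throughout with parametric representations. Since $\dGHP$ is a metric, it suffices to argue along sequences, and by the subsequence principle it is enough to show that every subsequence of a sequence $x_n\to x$ (for $d_{M_1}$) in $D_0([0,1],\R_+)$ contains a further subsequence along which $\dGHP\bigl((\mathscr T_{x_n},\mu_{x_n}),(\mathscr T_x,\mu_x)\bigr)\to 0$.

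Fix such a subsequence (still denoted $x_n$). First I would record the enabling consequence of $M_1$-convergence: along a further subsequence, there exist a parametric representation $(r,u)$ of $x$ and parametric representations $(r_n,u_n)$ of $x_n$ with $\|r_n-r\|_\infty\to 0$ and $\|u_n-u\|_\infty\to 0$, where $r,r_n$ are the nondecreasing continuous time components and $u,u_n\in C_0([0,1],\R_+)$ the space components. This is extracted from the definition of $d_{M_1}$ as an infimum over parametric representations, together with a compactness/diagonalization argument on the representations of the fixed function $x$. By construction $\mathscr T_{x_n}=\mathscr T_{u_n}$ and $\mathscr T_x=\mathscr T_u$, and writing $p_v:[0,1]\to\mathscr T_v$ for the canonical projection, the encoded measure is the pushforward of the \textsc{Stieltjes} measure of the time component, namely $\mu_{x_n}=(p_{u_n})_*\,\mathrm{d}r_n$ and $\mu_x=(p_u)_*\,\mathrm{d}r$ (for a continuous $x$ one may take $r=\id$ and recovers $\mu_x=(p_x)_*\leb$). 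This is the representation I will exploit.

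For the geometric part, the mechanism behind Proposition~\ref{prop:GHlipschitzM1} gives, for each $n$, an isometric embedding of $\mathscr T_{u_n}$ and $\mathscr T_u$ into a common space $Z_n$ realizing the correspondence $p_{u_n}(\sigma)\leftrightarrow p_u(\sigma)$, so that $d_{Z_n}\bigl(p_{u_n}(\sigma),p_u(\sigma)\bigr)\le 2\|u_n-u\|_\infty$ for all $\sigma\in[0,1]$ and the \textsc{Hausdorff} distance between the images tends to $0$. For the measures I would bound the Prokhorov distance $d_P^{Z_n}$ in $Z_n$ by a triangle inequality:
\[
d_P^{Z_n}(\mu_{x_n},\mu_x)\le d_P^{Z_n}\bigl((p_{u_n})_*\mathrm{d}r_n,(p_u)_*\mathrm{d}r_n\bigr)+d_P^{Z_n}\bigl((p_u)_*\mathrm{d}r_n,(p_u)_*\mathrm{d}r\bigr).
\]
The first term is at most $2\|u_n-u\|_\infty$, since both measures are pushforwards of the common measure $\mathrm{d}r_n$ under two maps that are uniformly $2\|u_n-u\|_\infty$-close. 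For the second term, uniform convergence $r_n\to r$ forces the weak convergence of \textsc{Stieltjes} measures $\mathrm{d}r_n\rightharpoonup\mathrm{d}r$ on $[0,1]$; since $p_u$ is continuous and $\mathscr T_u$ is a fixed compact space, $(p_u)_*\mathrm{d}r_n\rightharpoonup(p_u)_*\mathrm{d}r$, and as Prokhorov distance metrizes weak convergence this term also tends to $0$. Combining with the \textsc{Hausdorff} bound yields $\dGHP\to 0$ along the subsequence, which is what we want.

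The main obstacle is the enabling step: extracting a single fixed parametric representation of the limit together with uniformly converging representations of the $x_n$. The space of parametric representations of $x$ is not equicontinuous, and the reparametrizations relating different representations degenerate on the filled-in jump segments, so there is no quantitative control; this is exactly the reason the statement is only continuity rather than the \textsc{Lipschitz} bound of Proposition~\ref{prop:GHlipschitzM1}. Once a converging pair of representations is fixed, the remaining work—the two Prokhorov estimates above—is routine, the only genuinely analytic input being the weak convergence $\mathrm{d}r_n\rightharpoonup\mathrm{d}r$ of \textsc{Stieltjes} measures under uniform convergence of the time components.
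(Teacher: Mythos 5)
Your proof is correct, but it handles the measure part by a genuinely different mechanism than the paper. Both arguments rest on the same enabling step — when $d_{M_1}(x_n,x)\to 0$ one can choose parametric representations of the $x_n$ converging uniformly to a fixed parametric representation of $x$ — and this is not the obstacle you fear: it is exactly \cite[Theorem~12.5.1]{Whitt2002StochasticProcessLimits}, which the paper invokes by citation (it holds for the full sequence and for \emph{every} representation of the limit, so your subsequence extractions and diagonalization are unnecessary). From there the paper stays inside the correspondence-and-coupling formulation \eqref{eq:GHP via distortion and coupling} of $\dGHP$: it thickens the correspondence to $C_{\delta}=\{(\pi_{\chi_1}(u),\pi_{\chi_2}(v)):\abs{u-v}\le\delta\}$, paying a modulus-of-continuity term $\omega(\chi_1;\delta)$ in the distortion, couples the two measures through a single uniform variable via $U\mapsto\bigl(\pi_{\chi_1}(\tau_1^{-1}(U)),\pi_{\chi_2}(\tau_2^{-1}(U))\bigr)$, and bounds the coupling mass escaping $C_{\delta}$ by $\norm{\tau_1-\tau_2}_{\infty}/\delta$ through an area argument comparing $\tau_1^{-1}$ and $\tau_2^{-1}$; this yields the quantitative bound \eqref{eq:Almost lipschitz}, from which continuity follows by optimizing $\delta$. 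You instead observe that $\mu_x=(\pi_{\chi})_*\,\mathrm{d}\tau$ — a correct identity, since the pushforward of $\leb$ by the right-continuous inverse $\tau^{-1}$ is the \textsc{Stieltjes} measure $\mathrm{d}\tau$ — then embed the two trees in a common space along the exact, unthickened correspondence and split the \textsc{Prokhorov} distance through the hybrid measure $(\pi_{\chi})_*\,\mathrm{d}\tau_n$: one leg is the elementary bound for pushforwards of a common probability measure under uniformly close maps, the other is weak convergence of \textsc{Stieltjes} measures pushed forward by the fixed continuous projection, metrized on the fixed compact $\mathscr T_{\chi}$. Your route dispenses with the $\delta$-thickening, the explicit coupling and the area computation, which is a real simplification; what it gives up is any rate, whereas the paper's intermediate inequality \eqref{eq:Almost lipschitz} is an explicit modulus that can be reused elsewhere. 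Two details you should still write down: for measures carried by an isometrically embedded compact subset, the \textsc{Prokhorov} distance computed in the ambient space $Z_n$ is at most the intrinsic one on $\mathscr T_{\chi}$, so your weak-convergence argument really does control the ambient distance; and the rooted $\dGHP$ also requires the roots to be close, which your correspondence provides at the parameter value $0$. Finally, your closing diagnosis is slightly off: the loss of the \textsc{Lipschitz} property is not caused by the extraction of representations, but by the trade-off, visible in \eqref{eq:Almost lipschitz}, between the spatial error augmented by $\omega(\chi_1;\delta)$ and the temporal error divided by $\delta$.
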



\paragraph{Large-scale effects of the rotation on \textsc{Bienaymé} trees.}

\begin{figure}[h!]
    \centering
    \includegraphics[width=0.75\linewidth]{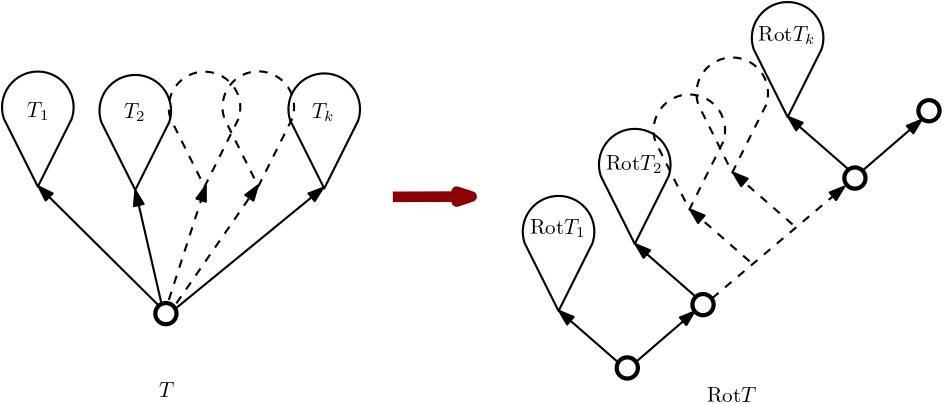}
        \caption{\centering Recursive definition of $\rot$}

    \label{fig:DefRecRotation}
\end{figure}

Our main application of the previous extension to càdlàg functions is the study of the rotation correspondence, denoted by $\rot$ in the following, applied to large critical \textsc{Bienaymé} trees. The rotation is a bijection between plane trees with $n$ vertices and binary plane trees with $n$ leaves which may be understood in a recursive way:
\begin{itemize}
    \item[] \textbf{Base case:} $\rot \{\varnothing\} =\{\varnothing\}$, where $\{\varnothing\}$ is the plane tree with a single vertex.
    \item[] \textbf{Inductive step:} Consider a plane tree $T$ with $n\geq 2$ vertices. Denote by $T_1,\ldots,T_k$ the $k\geq 1$ subtree(s) grafted on the root. Then $\rot T$ consists of a spine of $k+1$ vertices with $\rot T_1$ grafted on  the left of the $1$st one, \ldots, $\rot T_k$ grafted on  the left of the $k$th one (see Figure~\ref{fig:DefRecRotation}).
\end{itemize}

To understand how the rotation affects a critical \textsc{Bienaymé} tree conditioned to have $n$ vertices, say $\Tree_n$, for large $n$,  we determine the joint scaling limit (in distribution) of $\bigl( \Tree_n,\rot \Tree_n\bigr)$, in the setting of the \textsc{Gromov-Hausdorff} topology. This question has already been studied by \textsc{Marckert} \cite{Marckert2004Rotation} in the special case of $T_n$ a uniform plane tree with $n$ vertices, which corresponds to a \textsc{Bienaymé} tree with critical geometric offspring distribution. \textsc{Marckert}'s result, recast in the setting of the \textsc{Gromov-Hausdorff} topology, is

\begin{equation*}
    \Bigl(\frac{1}{\sqrt{n}}T_n,\frac{1}{\sqrt{n}}\rot T_n\Bigr) \xrightarrow[\tend{n}]{\dstb} \Bigl(\sqrt 2 \mathscr T_{\mathbbm e},2\sqrt 2\mathscr T_{\mathbbm e}\Bigr),
\end{equation*} where $\mathscr T_{\mathbbm e}$ is a Brownian Continuum Random Tree (in short, Brownian CRT), that is an $\R$-tree encoded by normalized Brownian excursion $\mathbbm e$. Note that it is known since the pioneering work of \textsc{Aldous} \cite{Aldous1991Continuum1,Aldous1993Continuum3} that the scaling limit of $\Tree_n$ alone, when its critical offspring distribution $\mu$ has a finite variance $\sigma^2$ and we rescale its graph distance by $\sqrt n$, is a Brownian CRT dilated by $2/\sigma$. Thus, the main part of \textsc{Marckert}'s result is the fact that $T_n$ and $\rot T_n$ jointly converge toward the \textit{same} Brownian CRT, up to a dilation by a factor $2$. Given this context, it is natural to ask whether a similar result holds for all \textsc{Bienaymé} trees whose critical offspring  distribution has a finite variance. Moreover, \textsc{Aldous}' Theorem has been extended by \textsc{Duquesne} \cite{Duquesne2003Contour} to a wider class of critical \textsc{Bienaymé} trees whose scaling limits form a family of $\R$-trees (the so-called stable trees) which contains the Brownian CRT as a special case, so the same question may be asked in this more general context. In order to answer this question, we first give more details on \textsc{Duquesne}'s Theorem: when $\mu$ is attracted to a stable distribution with index $\alpha \in (1,2]$, the (rescaled) encoding processes of $\Tree_n$ jointly converge towards a normalized excursion $\mathbbm x=\mathbbm x^{(\alpha)}$ of a spectrally positive $\alpha$-stable \textsc{Lévy} process together with the continuous-time height excursion $\mathbbm h$ associated with $\mathbbm x$, and the scaling limit of $\Tree_n$ itself is $\mathscr T_{\mathbbm h}$ \ie the  $\R$-tree encoded by this continuous excursion $\mathbbm h$. Based on these results and our extension exposed in the previous part, we prove that the distributional scaling limit of $\rot \Tree_n$ is $\mathscr T_{\mathbbm x}$, the $\R$-tree encoded by the càdlàg excursion $\mathbbm x$.


\begin{thm}\label{thm:MainResult}
Let $\Tree_n$ be distributed as $\BGW\evt{\,\cdot\,\vert\textup{\#vertices = $n$}}$ where the offspring distribution $\mu$ is critical. 

\begin{itemize}
    \item Assume that $\mu$ has variance $\sigma^2<+\infty$. We have the convergence in distribution
\begin{equation*}
    \Bigl(\frac{1}{\sqrt{n}}\Tree_n,\frac{1}{\sqrt{n}}\rot \Tree_n\Bigr) \xrightarrow[\tend{n}]{\dstb} \Bigl(\frac{2}{\sigma}\mathscr T_{\mathbbm e},\frac{2+\sigma^2}{\sigma}\mathscr T_{\mathbbm e}\Bigr),
\end{equation*}
with respect to the \textsc{Gromov-Hausdorff} topology. 
\item Assume that $\mu$ has variance $\sigma^2=+\infty$, and is attracted to a stable distribution of index $\alpha\in (1,2]$. There is an increasing slowly varying function\footnote{In this paper, one only needs to know that $\ell$ is such that for all $\varepsilon >0$, for $x$ large enough we have $x^{-\varepsilon} \leq \ell(x)\leq x^{\varepsilon}$. See \cite{BinghamGoldieTeugels1987RegularVariation} for a proper definition and a general account of slowly varying functions.} $\ell$ such that
\begin{equation*}
    \Bigl(\frac{\ell(n)}{n^{1-1/\alpha}}\Tree_n,\frac{1}{\ell(n)n^{1/\alpha}}\rot \Tree_n\Bigr) \xrightarrow[\tend{n}]{\dstb} \bigl(\mathscr T_{\mathbbm h},\mathscr T_{\mathbbm x}\bigr),
\end{equation*}
with respect to the \textsc{Gromov-Hausdorff} topology. 
\end{itemize}
\end{thm}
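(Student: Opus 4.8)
The plan is to encode everything through the coding functions and then invoke the continuity estimates of Propositions~\ref{prop:GHlipschitzM1} and~\ref{prop:GHP M1}, so that the proof reduces to a \emph{joint} functional convergence of the encoding processes of $\Tree_n$ and of $\rot\Tree_n$. First I would recall \textsc{Duquesne}'s Theorem, which gives, after the correct rescaling, the joint convergence in distribution of the \Luka{} walk, the height process and the contour process of $\Tree_n$ towards $(\mathbbm x,\mathbbm h,\mathbbm h)$, where $\mathbbm x=\mathbbm x^{(\alpha)}$ is the normalized spectrally positive $\alpha$-stable excursion and $\mathbbm h$ its associated height excursion (in the finite-variance case $\alpha=2$, $\mathbbm x=\sqrt2\,\mathbbm e$ is Brownian up to a constant and $\mathbbm h=\mathbbm e$). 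The key combinatorial input, which I would establish separately, is an \emph{exact} relation between an encoding process of $\rot\Tree_n$ and the \Luka{} walk of $\Tree_n$: because the rotation turns the out-degree sequence read in depth-first order into the spine structure of the binary tree, one expects the (rescaled) \Luka{} walk of $\Tree_n$ to be, up to the right normalization and up to a negligible error in the $M_1$ sense, an encoding function of $\rot\Tree_n$. This is exactly the point where the weaker $M_1$ topology is essential: the \Luka{} walk has large downward jumps that do \emph{not} disappear in the limit when $\alpha<2$, so no uniform or $J_1$ statement can hold, whereas $M_1$ tolerates monotone interpolation across these jumps.

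Granting that combinatorial identification, the argument runs as follows. I would rescale the \Luka{} walk of $\Tree_n$ by the appropriate factors so that it converges in distribution, for $d_{M_1}$, to $\mathbbm x$ (this is a consequence of \textsc{Duquesne}'s convergence, since $J_1$ convergence implies $M_1$ convergence). Applying Proposition~\ref{prop:GHlipschitzM1} together with the continuous mapping theorem then yields that the correspondingly rescaled $\rot\Tree_n$ converges to $\mathscr T_{\mathbbm x}$ for $\dGH$. The genuinely new content is the \emph{joint} convergence with $\Tree_n$: since the height/contour process of $\Tree_n$ and the \Luka{} walk of $\Tree_n$ converge \emph{jointly} (towards $(\mathbbm h,\mathbbm x)$, which are deterministic functionals of one another), and since $\mathscr T_{\Tree_n}$ and $\mathscr T_{\rot\Tree_n}$ are continuous images (via $f\mapsto\mathscr T_f$ and $x\mapsto\mathscr T_x$ respectively) of these two jointly converging processes, the continuous mapping theorem applied to the product map delivers the joint convergence $(\mathscr T_{\Tree_n},\mathscr T_{\rot\Tree_n})\to(\mathscr T_{\mathbbm h},\mathscr T_{\mathbbm x})$ after rescaling. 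In the finite-variance case the same scheme applies with $\mathbbm x=\sqrt2\,\mathbbm e$ and $\mathbbm h=\mathbbm e$: there the limiting \Luka{} excursion $\mathbbm x$ is continuous and encodes the \emph{same} Brownian CRT as $\mathbbm h$ up to a deterministic dilation, which is precisely how the dilation factor $(2+\sigma^2)/\sigma$ appears. I would compute that constant by tracking how the two rescalings (of $\Tree_n$ by $\sqrt n/\sigma$-type factors and of the \Luka{} walk) combine with the relation $\mathscr T_{\mathbbm e}\cong\tfrac1{\sqrt2}\mathscr T_{\sqrt2\,\mathbbm e}$, carefully matching normalizations.

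The main obstacle is the combinatorial step: producing a clean, \emph{pointwise} identification of a genuine encoding function of $\rot\Tree_n$ in terms of the \Luka{} walk (or another process) of $\Tree_n$, with an explicit control on the discrepancy that shows it is $o(1)$ in $M_1$ after rescaling. Concretely, one must check that the recursive spine-grafting description of $\rot$ translates, vertex by vertex in depth-first order, into the successive heights recorded by the \Luka{} path, so that the stretched parametric representation used to define $\mathscr T_{\mathbbm x}$ really matches the contour of $\rot\Tree_n$ up to a vanishing reparametrization. I expect this identification to hold only up to an error term coming from the spine vertices added at each grafting, and the delicate part is to bound this error in the $M_1$ distance uniformly enough to pass to the limit, using that the number and size of the downward jumps are controlled by the degree sequence whose fluctuations are governed by the same stable convergence. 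Once this is secured, the rest is a routine application of the continuity statements and the continuous mapping theorem.
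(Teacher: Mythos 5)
Your proposal follows the same overall architecture as the paper: reduce Theorem~\ref{thm:MainResult} to a joint $M_1$ convergence of encoding processes, feed in \textsc{Duquesne}'s theorem, and conclude via Propositions~\ref{prop:GHlipschitzM1}--\ref{prop:GHP M1} and the continuous mapping theorem. But the combinatorial lemma on which everything hinges is false as you state it: the contour (or height) function of $\rot\Tree_n$ is \emph{not} $M_1$-close, after rescaling, to the \Luka{} walk of $\Tree_n$ itself. The correct identity is \eqref{eq:decHauteurLargeur}, $|\widetilde u|=|u|-1+L(u)$, and the left spine-count $L(u)$ is not read off $S_{\Tree_n}$: it is the \Luka{} walk of the \emph{mirror} tree $\Tree_n^{\div}$ evaluated along the mirrored enumeration. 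The paper's route is to form $H^*_n=H_{\Tree_n^{\div}}+S_{\Tree_n^{\div}}-1$ (equation~\eqref{eq:ExactHeightPlusWidth}), match it to the \emph{time-reversed} contour of the internal tree via the rightmost enumeration (Lemma~\ref{lem:RightHeightAndRightContourM1}), then add the leaves (Lemma~\ref{lem:AddingTheLeaves}) and pass from height to contour (Lemma~\ref{lem:HeightIsRoughlyContour}). The mirror is not cosmetic. By Theorem~\ref{thm:DetailedResult}, the pair $\bigl(s_{\Tree_n},c_{\rot\!\Tree_n}\bigr)$, suitably rescaled, converges jointly to $\bigl(\mathbbm x,\widehat{F^{\div}(\mathbbm x)}\bigr)$, and for $\alpha<2$ one has $\widehat{F^{\div}(\mathbbm x)}\neq \mathbbm x$ almost surely: the jumps of $\widehat{F^{\div}(\mathbbm x)}$ sit at the times $e(t)=\inf\{s\geq t:\mathbbm x(s)<\mathbbm x(t-)\}$, which are almost surely not jump times of $\mathbbm x$. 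Hence the rescaled $M_1$ distance between $c_{\rot\!\Tree_n}$ and $s_{\Tree_n}$ does \emph{not} tend to $0$; the two processes agree only in law, which is not enough for the continuous-mapping step you use to obtain the \emph{joint} convergence of $(\Tree_n,\rot\Tree_n)$. To repair this one must argue through $\Tree_n^{\div}$, using $c_{\Tree_n}=\widehat c_{\Tree_n^{\div}}$ and \textsc{Duquesne}'s theorem applied to the mirror tree (or, for the full joint statement involving $s_{\Tree_n}$, Proposition~\ref{prop:DuquesneMiroir}, whose proof occupies Appendix~\ref{app:MirrorLuka}).

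The finite-variance constant exposes the same error from another angle. Your scheme would make the metric of $\rot\Tree_n$ governed by $s_{\Tree_n}/\sqrt n\to\sigma\mathbbm e$, yielding the limit $\sigma\,\mathscr T_{\mathbbm e}$, and no amount of bookkeeping with $\mathscr T_{\mathbbm e}$ versus $\mathscr T_{\sqrt2\,\mathbbm e}$ can turn $\sigma$ into $(2+\sigma^2)/\sigma$. The constant is additive in origin: when $\sigma^2<+\infty$ the term $|u|$ in \eqref{eq:decHauteurLargeur} is \emph{not} negligible --- both $|u|$ and $L(u)$ live at scale $\sqrt n$ --- so the contour of $\rot\Tree_n$ is asymptotically $h_{\Tree_n^{\div}}+s_{\Tree_n^{\div}}\approx\bigl(\tfrac2\sigma+\sigma\bigr)\sqrt n\,\mathbbm e$, and $\tfrac2\sigma+\sigma=\tfrac{2+\sigma^2}{\sigma}$. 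Only when $\sigma^2=+\infty$ is the height negligible and the \Luka{}-type term dominant. This mismatch is in fact a quick internal check showing your claimed relation cannot hold. Two smaller slips: the exact identity behind ``out-degrees become spines'' is $S_{\rot T}=C_T\oplus(-1)$, which expresses the \Luka{} walk of the \emph{rotated} tree through the contour of the original one --- useful for the component $s_{\rot\!\Tree_n}$, but of no use for the metric of $\rot\Tree_n$, since $\mathscr T_{s_T}$ is not close to $T$; and in the Gaussian case $\mathbbm h=\mathbbm x$ is distributed as $\sqrt2\,\mathbbm e$, not $\mathbbm e$.
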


In the Gaussian case $\alpha=2$, we actually have that $\mathbbm h= \mathbbm x$ with this process distributed as $\sqrt 2 \mathbbm e$, and additionally when $\sigma^2=+\infty$ then $\ell(n)\rightarrow+\infty$. To unify the two subcases $\sigma^2<+\infty$ and $\sigma^2=+\infty$, set $\ell(n) = \sigma/\sqrt 2$  when $\sigma^2<+\infty$, then we may rewrite the result in the Gaussian case as
\begin{equation*}
        \dGH\!\left(\frac{1}{\ell(n)\sqrt{n}}\rot \Tree_n,(1+\ell(n)^2)\frac{1}{\ell(n)\sqrt{n}}\Tree_n\right) \xrightarrow[\tend{n}]{\prob} 0.
    \end{equation*}
In particular, whenever $\sigma^2 < +\infty$ the rotation acts as a dilation by a constant factor $1+\sigma^2/2$.

In the case $\alpha \in (1,2)$, not only does the rotation change the scale of the tree, but it also affects its geometry since we will prove that the scaling limit of rotated trees, $\mathscr T_{\mathbbm x}$, has geometric properties different from those of the stable tree $\mathscr T_{\mathbbm h}$.

\paragraph{Main ideas of the proof.} Thanks to the continuity results (Propositions~\ref{prop:GHlipschitzM1} and~\ref{prop:GHP M1}), we prove Theorem~\ref{thm:MainResult} as a corollary of the joint convergence of the encoding processes of $\Tree_n$ and $\rot \Tree_n$, as \textsc{Marckert} did in the case of uniform trees. The complete statement for the joint scaling limits of encoding processes is a bit more technical, it is given in Theorem~\ref{thm:DetailedResult}, we only stress that it directly implies a refinement of Theorem~\ref{thm:MainResult} with $\dGHP$ instead of $\dGH$, and it relies on a comparison of the encoding processes of $\rot \Tree_n$ with those of $\Tree_n$. This comparison is also based on the handy identification  introduced by \textsc{Marckert} between the non-root vertices of a tree $T$ and the internal vertices of $\rot T$. However, we use quite different methods to prove that the relevant encoding processes are asymptotically close to each other. Indeed, \textsc{Marckert} heavily relies on the fact that in his case  $\rot \Tree_n$ is a uniform binary tree and thus may be seen as a size-conditioned \textsc{Bienaymé} tree whose structure is already well understood (see the comment ending Section~\ref{ssct:mainResults} for a more detailed discussion of \textsc{Marckert}'s approach). As this property is specific to the uniform case, we use a different, more general approach here:
\begin{enumerate}

    \item First we express the height of internal vertices of $\rot \Tree_n$ by means of the encoding processes of $\Tree_n$. This requires considering the mirrored version of $\Tree_n$ and its mirrored enumeration.
    \item Then we introduce an intermediate process, consisting of the height of those internal vertices of $\rot \Tree_n$ enumerated in a way corresponding to the mirrored enumeration of $\Tree_n$. The study of the underlying enumeration shows that there is a simple combinatorial relation between this process and the contour process of the internal vertices of $\rot \Tree_n$. As there are also combinatorial relations between this last process and the contour process of $\rot \Tree_n$, we get a link between this contour process of $\rot \Tree_n$ and the encoding processes of $\Tree_n$.
    \item Finally, we combine this with known results on encoding processes of $\Tree_n$ to get the desired joint scaling limits with respect to some adequate topology, here we need \textsc{Skorokhod}'s $M_1$ topology.
 
\end{enumerate}

In short, our approach is to express as many quantities of interest as possible by means of the encoding processes of $\Tree_n$ (rather than $\rot \Tree_n$), and then to establish combinatorial links between the processes easily expressed and the desired encoding processes in order to compare those processes with respect to the $M_1$ distance. Note that these combinatorial relations hold for all plane trees, we only make use of the fact that $\Tree_n$ is a \textsc{Bienaymé} tree when we use the joint convergence of its encoding processes.

\paragraph{Properties of $\mathscr T_{\mathbbm x}$ and links with transformations related to the rotation.}

 The tree $\mathscr T_{\mathbbm x}$ obtained as a scaling limit of rotated trees can also be studied in the setting of $\R$-trees encoded by càdlàg functions. In particular, even though the set $\disc(\mathbbm x)$ of discontinuities of $\mathbbm x$ is dense in $[0,1]$, standard arguments existing for the usual coding will be adapted and applied here in order to deduce, from properties of $\mathbbm x$, that $\mathscr T_{\mathbbm x}$ is  binary and that $(\mathscr T_{\mathbbm x},\mu_{\mathbbm x})$ is a continuum random tree as defined in \cite{Aldous1993Continuum3}.

\begin{prop}\label{prop:Continuum random tree}
    Assume that $\alpha \in (1,2)$. Almost surely, 
    \begin{itemize}
        \item $\mathscr T_{\mathbbm x}$ is binary \ie $\forall v\in \mathscr T_{\mathbbm x},\ \deg(v)\in\{1,2,3\}$;
        \item Let $\mathcal L(\mathscr T_{\mathbbm x})$ denotes the set of leaves $\{v\in \mathscr T_{\mathbbm x}: \deg(v)=1\}$, then $\overline{\mathcal L(\mathscr T_{\mathbbm x})}=\mathscr T_{\mathbbm x}$ ;
        \item $\mu_{\mathbbm x}$ is non-atomic and charges the leaves of $\mathscr T_{\mathbbm x}$, \ie $\mu_{\mathbbm x}\!\left(\mathcal L(\mathscr T_{\mathbbm x})\right)=1$.
    \end{itemize}
    \end{prop}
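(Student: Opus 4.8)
The plan is to establish each of the three properties of $\mathscr T_{\mathbbm x}$ by transporting the corresponding analytic property of the càdlàg excursion $\mathbbm x$ through the encoding $x \mapsto \mathscr T_x$, adapting to the $M_1$ setting the standard arguments used in the continuous case (see \cite{DuquesneLegall2005LevyTrees}). The first step is to recall the local structure of $\mathbbm x$. Since $\mathbbm x$ is the normalized excursion of a spectrally positive $\alpha$-stable \textsc{Lévy} process with $\alpha \in (1,2)$, its jumps are all \emph{positive}, it has no negative jumps, and almost surely it attains each of its local minima exactly once and has no two local minima at the same level; moreover the set of its local minimum values is dense. These are classical path properties of spectrally positive stable processes that I would state precisely (citing the relevant results on \textsc{Lévy} processes) and use as the analytic input throughout.

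For the branching structure, I would first translate the notion of degree of a point $v \in \mathscr T_{\mathbbm x}$ into a statement about $\mathbbm x$. In the continuous coding, the multiplicity of a point equals the number of distinct sub-excursions of the contour function above the corresponding minimum; here the parametric representation fills the jumps of $\mathbbm x$ with vertical segments, so a jump of $\mathbbm x$ does \emph{not} create an extra branch but rather a segment that gets collapsed by the tree pseudometric. The key observation is that a point of degree $\geq 3$ in $\mathscr T_{\mathbbm x}$ corresponds to a local minimum level of $\mathbbm x$ that is attained at three or more times, equivalently a value $m$ such that $\mathbbm x$ returns to $m$ at least three times with excursions above in between. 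By the spectral positivity of $\mathbbm x$ (all jumps upward), the process can only cross a level downward \emph{continuously}, so at any such branch point the incoming and the two outgoing pieces behave exactly as in the continuous case, and the absence of simultaneous coincidences of local minima forces the maximal multiplicity to be $3$. I would make this rigorous by showing that almost surely no level is a local minimum value of $\mathbbm x$ at four or more times, which is the analogue of the binary-branching argument for the Brownian and stable trees.

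For the second and third points, I would argue that leaves of $\mathscr T_{\mathbbm x}$ correspond to times $t$ that are points of \emph{increase} of $\mathbbm x$ from the left and of decrease to the right in the appropriate excursion sense, i.e. times that are not local minima of any sub-excursion; such times are dense because the local minimum values are only countable while the time parameter ranges over a continuum, giving $\overline{\mathcal L(\mathscr T_{\mathbbm x})} = \mathscr T_{\mathbbm x}$. For the measure $\mu_{\mathbbm x}$, which is by Definition~\ref{def:Discontinuous Contour and Measured Real Tree} the pushforward of \textsc{Lebesgue} measure on $[0,1]$ under the canonical projection, non-atomicity follows once I show that the projection is injective off a \textsc{Lebesgue}-null set, i.e. that for almost every $t$ the fiber of the projection over the image of $t$ is a single point; and $\mu_{\mathbbm x}(\mathcal L(\mathscr T_{\mathbbm x})) = 1$ follows from the fact that \textsc{Lebesgue}-almost every $t$ projects to a leaf, since the times projecting to branch points form a countable union of fibers each of zero \textsc{Lebesgue} measure.

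The main obstacle will be the first bullet, namely controlling how the jumps of $\mathbbm x$ interact with the tree structure under the parametric-representation encoding. Unlike the continuous case, I must check carefully that a positive jump, once stretched into a vertical segment by the parametric representation, is collapsed to a single edge (not a branch point) by the tree pseudometric, and conversely that no branch point of degree $>3$ can be manufactured at a jump location; this requires a delicate analysis of how the $M_1$ parametrization reorders time near a discontinuity and of the resulting fibers of the projection. Once this local picture at jumps is settled, the remaining arguments are essentially the classical leaf-density and measure computations adapted to the $\mathbbm x$ coding, and the properties of spectrally positive stable excursions supply all the needed path regularity.
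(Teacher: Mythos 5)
Your overall route is the paper's: transport almost-sure path properties of $\mathbbm x$ through the parametric-representation coding and adapt the classical continuous-excursion arguments. For the first and third bullets your plan essentially coincides with the paper's proof: there, the relevant properties of $\mathbbm x$ (interval infima attained at most once in the interior, and the fact that $\mathbbm x$ dips below both endpoints of each jump immediately before and after it) are transferred to the continuous spatial component $\chi$ of a one-to-one parametric representation, after which the classical degree/contour dictionary of \cite[p.30]{Legall2005RandomTreesApplications} yields $\deg(v)\leq 3$; non-atomicity is obtained in the paper by noting that an atom's time-fiber would have to be $\mathbbm x^{-1}(\{0\})$ or an interval of constancy, both excluded, and your a.e.-injectivity variant of this step is equally valid.

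The genuine gap is in the second bullet. You deduce $\overline{\mathcal L(\mathscr T_{\mathbbm x})}=\mathscr T_{\mathbbm x}$ from the density in $[0,1]$ of times that are not one-sided local minima, but density of leaf-\emph{times} does not transfer to density of leaves in the \emph{tree}: $\mathscr T_{\mathbbm x}$ also contains the arcs created by filling in the jumps, and for a general càdlàg excursion these arcs can lie at positive distance from the image of every time. Concretely, take an excursion that stays below a level $a$ on $[0,1/3)$, jumps up to $a+1$, stays above $a+1$ on $[1/3,2/3)$, jumps back down to $a$ and stays below $a$ afterwards, with Brownian-type continuous behaviour on the three pieces: leaf-times are dense in $[0,1]$, yet the two jump arcs get glued into a single edge of the tree whose interior point at height $m\in(a,a+1)$ lies at distance at least $\min(m-a,\,a+1-m)>0$ from every time-image, hence from every leaf. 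So the implication you invoke is false without an extra input, and that input is exactly spectral positivity: with no negative jumps, every point of a jump arc is identified with the first later time at which the process returns continuously to that level, \ie the quotient space $[0,1]/\!\!\sim_{\mathbbm x}$ is all of $\mathscr T_{\mathbbm x}$ (the paper's Lemma~\ref{lem:Criterion quotient space}). The paper then gets density for free: it first proves $\mu_{\mathbbm x}\!\left(\mathcal L(\mathscr T_{\mathbbm x})\right)=1$ and combines Lemma~\ref{lem:Criterion quotient space} with \eqref{eq:support} to obtain $\supp(\mu_{\mathbbm x})=\mathscr T_{\mathbbm x}$, so the full-measure set of leaves is automatically dense. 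You do record the decisive observation, namely that downward level-crossings of $\mathbbm x$ are continuous, but you deploy it only for the binary property, whereas it is equally indispensable here. A smaller repair: your countability arguments (countably many local-minimum values; branch-point fibers) should be replaced by the Lebesgue-a.e. absence of one-sided local minima, since the set of one-sided local-minimum times is null but in general uncountable, and it is the degree-two skeleton points, not only the branch points, whose preimages must be controlled to get $\mu_{\mathbbm x}\!\left(\mathcal L(\mathscr T_{\mathbbm x})\right)=1$.
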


Recalling that $\mathscr T_{\mathbbm h}$ has almost surely some vertices with infinite degree (see \cite{DuquesneLegall2005LevyTrees}), we immediately get that $\mathscr T_{\mathbbm x}$ cannot be distributed as $ \mathscr T_{\mathbbm h}$. Another striking difference between these trees is their \textsc{Hausdorff} dimensions, as we will prove that $\dim_H\left( \mathscr T_{\mathbbm x}\right) = \alpha$ while it is known that $\dim_H\left( \mathscr T_{\mathbbm h}\right) = \alpha/(\alpha-1)$ (see \cite{DuquesneLegall2005LevyTrees,HaasMiermont2004FragmentationTrees}). This also shows that $\mathscr T_{\mathbbm x}$ is not a Brownian CRT as soon as $\alpha < 2$.

Our study of $\dim_H\left( \mathscr T_{\mathbbm x}\right)$ relies on an interesting link between this tree and another metric space encoded by $\mathbbm x$, the stable looptree $\mathscr L_{\mathbbm x}$ introduced by \textsc{Curien \& Kortchemski} in \cite{CurienKortchemski2014StableLooptrees}. In short, $\mathscr L_{\mathbbm x}$ appears as the scaling limit of $\Loop(\Tree_n)$, where $\Loop(T)$ is the medial graph of the plane tree $T$ and is called the looptree associated with $T$. In the discrete setting, for every plane tree $T$ we will explain that $\rot T$ (without its leaves) can actually be seen as a specific spanning tree of $\Loop(T)$. We will then argue that in the setting of metric spaces, $\mathscr T_{\mathbbm x}$ may also be seen as a \textit{spanning $\R$-tree} of $\mathscr L_{\mathbbm x}$. Note that \textsc{Khanfir} \cite{KhanfirPreprintConvergenceLooptrees} also studied some relations between a tree encoded by a discontinuous contour function and the looptree encoded by the same function, but his notion of $\R$-tree encoded by a discontinuous excursion is different from ours. Here the relation between $\mathscr T_{\mathbbm x}$ and $\mathscr L_{\mathbbm x}$ is the following:

\begin{prop}\label{prop:Looptree and dimension}
 Assume that $\alpha \in (1,2)$. Consider the canonical projections $\pi_{\mathbbm x}^{\text{tree}}:[0,1] \mapsto \mathscr T_{\mathbbm x}$ and $\pi_{\mathbbm x}^{\text{loop}}:[0,1] \mapsto \mathscr L_{\mathbbm x}$. There exists a unique map $p:\mathscr T_{\mathbbm x} \mapsto \mathscr L_{\mathbbm x}$ which satisfies
 \begin{equation*}
 p\circ \pi_{\mathbbm x}^{\text{tree}}= \pi_{\mathbbm x}^{\text{loop}}.
 \end{equation*}
 Moreover, $p$ is $1$-\textsc{Lipschitz} and $p$ restricted to $\mathscr T_{\mathbbm x}^*=\mathscr T_{\mathbbm x}\!\setminus\!\{\pi_{\mathbbm x}^{\text{tree}}(s) \text{ for } s\in \disc(\mathbbm x)\}$ is one-to-one and onto.

 Finally, these metric spaces have the same \textsc{Hausdorff} dimension:
    \begin{equation*}
        \dim_H\left( \mathscr T_{\mathbbm x}\right) = \dim_H\left( \mathscr L_{\mathbbm x}\right) = \alpha.
    \end{equation*}
\end{prop}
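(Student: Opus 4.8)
The plan is to establish the three claims in order: first the existence and uniqueness of $p$, then its metric properties (1-Lipschitz, and bijectivity on $\mathscr T_{\mathbbm x}^*$), and finally the equality of Hausdorff dimensions. For the existence of $p$, I would argue that both $\pi_{\mathbbm x}^{\text{tree}}$ and $\pi_{\mathbbm x}^{\text{loop}}$ are quotient maps of $[0,1]$ by pseudodistances $d_{\mathbbm x}^{\text{tree}}$ and $d_{\mathbbm x}^{\text{loop}}$, so defining $p$ by $p\circ \pi_{\mathbbm x}^{\text{tree}}=\pi_{\mathbbm x}^{\text{loop}}$ is well-posed precisely when the tree-equivalence is finer than the loop-equivalence, i.e.\ $d_{\mathbbm x}^{\text{tree}}(s,t)=0 \implies d_{\mathbbm x}^{\text{loop}}(s,t)=0$. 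This is a pointwise verification comparing the two pseudodistances coming from the same encoding function $\mathbbm x$; uniqueness is automatic from surjectivity of $\pi_{\mathbbm x}^{\text{tree}}$. The 1-Lipschitz property then follows from the stronger inequality $d_{\mathbbm x}^{\text{loop}}(s,t)\leq d_{\mathbbm x}^{\text{tree}}(s,t)$ at the level of pseudodistances, which passes to the quotient.

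For the bijectivity of $p$ on $\mathscr T_{\mathbbm x}^*$, I would analyze the fibers of $p$. A fiber over a point of $\mathscr L_{\mathbbm x}$ can fail to be a singleton only when distinct tree-classes map to the same loop-class, and I expect to show that this identification happens exactly at the points coming from the discontinuity set $\disc(\mathbbm x)$: each jump of $\mathbbm x$ contributes a loop (a circle) in $\mathscr L_{\mathbbm x}$ whose preimage under $p$ is contracted in the tree (the segment filling the jump in the parametric representation is collapsed to the branch point $\pi_{\mathbbm x}^{\text{tree}}(s)$). Removing these countably many points $\{\pi_{\mathbbm x}^{\text{tree}}(s):s\in\disc(\mathbbm x)\}$ should leave $p$ injective, and onto the corresponding full-measure part of $\mathscr L_{\mathbbm x}$. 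This is where the precise dictionary between the two constructions encoded by the same càdlàg $\mathbbm x$ is needed, and I would lean on the parametric-representation description underlying $\mathscr T_{\mathbbm x}$ (Definition~\ref{def:Discontinuous Contour and Real Tree}) together with the looptree construction of \cite{CurienKortchemski2014StableLooptrees}.

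For the Hausdorff dimensions, the lower bound $\dim_H(\mathscr T_{\mathbbm x})\leq \dim_H(\mathscr L_{\mathbbm x})$ is immediate from the existence of the 1-Lipschitz surjection $p$, since Lipschitz maps do not increase Hausdorff dimension. For the reverse, I would use that $p$ is a bijection from $\mathscr T_{\mathbbm x}^*$ onto a co-countable (hence full-dimensional) subset of $\mathscr L_{\mathbbm x}$, and control the distortion of $p^{-1}$ on this set: it suffices that $p^{-1}$ be, say, locally Hölder or that the two metrics be comparable up to the contracted loops, so that $\dim_H(\mathscr T_{\mathbbm x}^*)\geq \dim_H(\mathscr L_{\mathbbm x})$. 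Since removing a countable set does not change Hausdorff dimension, this gives $\dim_H(\mathscr T_{\mathbbm x})=\dim_H(\mathscr L_{\mathbbm x})$, and I would then invoke the known value $\dim_H(\mathscr L_{\mathbbm x})=\alpha$ from \cite{CurienKortchemski2014StableLooptrees} to conclude.

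The main obstacle I anticipate is the bijectivity-and-distortion analysis: proving that the only identifications made by $p$ come from $\disc(\mathbbm x)$, and simultaneously obtaining enough quantitative control on $p^{-1}$ to transfer the dimension lower bound rather than merely the upper bound. The Lipschitz direction is soft, but the matching lower bound requires understanding how the tree metric $d_{\mathbbm x}^{\text{tree}}$ and the loop metric $d_{\mathbbm x}^{\text{loop}}$ differ away from the jumps, and ruling out that $p$ could collapse dimension on the leaf set $\mathcal L(\mathscr T_{\mathbbm x})$ (which by Proposition~\ref{prop:Continuum random tree} carries all the mass and is dense). I would therefore spend the bulk of the argument on a careful comparison of the two pseudodistances, using the fact established in Proposition~\ref{prop:Continuum random tree} that $\disc(\mathbbm x)$ is the only source of branch points of infinite local complexity.
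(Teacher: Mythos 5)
Your first step follows the paper's route: once $\mathscr T_{\mathbbm x}$ is identified with the quotient space $\bigl([0,1]/\!\!\sim_{\mathbbm x},d_{\mathbbm x}\bigr)$ (this uses Lemma~\ref{lem:Criterion quotient space}, i.e.\ positivity of the jumps, a point worth making explicit since Definition~\ref{def:Discontinuous Contour and Real Tree} goes through parametric representations), the inequality $d_{\mathbbm x}^{\text{loop}}\leq d_{\mathbbm x}$ from \cite[Lemma~2.1]{CurienKortchemski2014StableLooptrees} gives existence, uniqueness and the $1$-\textsc{Lipschitz} property of $p$, and injectivity on $\mathscr T_{\mathbbm x}^*$ amounts to showing that every identification made by $p$ involves a point of $\{\pi_{\mathbbm x}^{\text{tree}}(s):s\in\disc(\mathbbm x)\}$, which is also the paper's plan. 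However, your geometric picture of these identifications is wrong: $p$ does \emph{not} collapse the segment filling a jump, and $\pi_{\mathbbm x}^{\text{tree}}(s)$ is a \emph{leaf}, not a branch point --- the paper's proof exploits precisely this. Setting $e(s)=\inf\{t\geq s:\mathbbm x(t)<\mathbbm x(s-)\}$, what $p$ does is glue the leaf $\pi_{\mathbbm x}^{\text{tree}}(s)$ at the top of the jump segment to the non-leaf point $\pi_{\mathbbm x}^{\text{tree}}(e(s))$ at its bottom, closing that segment into the loop; no continuum is contracted. If a whole segment were collapsed to a point, the fiber over that point would be a continuum, and deleting countably many points could never restore injectivity --- your picture contradicts the very statement you are proving.

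The genuine gap is the Hausdorff-dimension argument, where the two inequalities are misassigned. A $1$-\textsc{Lipschitz} surjection $p:\mathscr T_{\mathbbm x}\to\mathscr L_{\mathbbm x}$ gives $\dim_H(\mathscr L_{\mathbbm x})\leq\dim_H(\mathscr T_{\mathbbm x})$, since the image of a Lipschitz map has dimension at most that of the domain; this is the opposite of what you call immediate. So the soft step yields the \emph{lower} bound $\dim_H(\mathscr T_{\mathbbm x})\geq\alpha$, and what actually requires work is $\dim_H(\mathscr T_{\mathbbm x})\leq\alpha$. Your route to the remaining inequality --- local H\"older control of $p^{-1}$, or comparability of the two metrics away from the jumps --- cannot work: on the segment created by a jump of size $\Delta$, the point at height $\Delta-\varepsilon$ above the bottom is at $d_{\mathbbm x}$-distance $\Delta-\varepsilon$ from the bottom of the corresponding loop but at $d_{\mathbbm x}^{\text{loop}}$-distance only $\varepsilon$, and both points can be taken in $\mathscr T_{\mathbbm x}^*$; hence $p^{-1}$ distorts distances by an unbounded factor near every jump (and jumps are dense), while a H\"older exponent $\beta<1$ would in any case only give the useless bound $\dim_H(\mathscr T_{\mathbbm x})\leq\alpha/\beta$. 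The paper obtains the upper bound by a direct covering argument instead: it reuses the partition of $[0,1]$ from the proof of $\dim_H(\mathscr L_{\mathbbm x})\leq\alpha$ in \cite[Section~3.3.1]{CurienKortchemski2014StableLooptrees}, projects it to $\mathscr T_{\mathbbm x}$, and checks that the diameter bound of \cite[Equation~3.26]{CurienKortchemski2014StableLooptrees} still holds for the tree pieces, which gives $\dim_H(\mathscr T_{\mathbbm x})\leq\alpha$ with no control on $p^{-1}$ at all.
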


\textit{Remarks.} \hfill
\begin{itemize}
    \item We believe that the first assertion also holds in a sense for a general càdlàg excursion $x$. To be more precise, we do not have in general that $\mathscr T_x$ (as defined in this paper) and $\mathscr L_x$ (as defined in \cite{KhanfirPreprintConvergenceLooptrees} in the case where $x$ has no negative jump) are related in a similar way, but $\mathscr T_x$ might be a spanning $\R$-tree of $\mathscr V_x$ the vernation tree encoded by $x$. This metric space is defined by \textsc{Khanfir} in \cite{KhanfirPreprintConvergenceLooptrees} to unify looptrees and their limits. However, we have not investigated this point further.
    \item  In the case $\alpha=2$, the scaling limit of $\frac{\ell(n)}{\sqrt n}\Loop(\Tree_n)$ is  $C(\mu)\mathscr T_{\sqrt 2\mathbbm e}$ where $C(\mu)$ is a constant depending on $\mu$  (see \cite{CurienHaasKortchemski2015CRTDissections,KortchemskiRichier2020BoundaryMapsLooptrees}). Note that this limit is not a looptree, but as an $\R$-tree it is a vernation tree, and one could clearly consider this tree a spanning $\R$-tree of itself. Thus in this particular case we do have that the scaling limit of $\rot \Tree_n$ is distributed as a spanning $\R$-tree of the scaling limit of $\Loop(\Tree_n)$, but surprisingly when $\sigma^2 < +\infty$ the constant $C(\mu)$ depends on $\mu$ in a more complicated fashion than the constant $1+\sigma^2/2$ arising for $\rot\Tree_n$.
    \item This simple relation between $\mathscr T_{\mathbbm x}$ and $\mathscr L_{\mathbbm x}$ contrasts with the fact that we do not know if $\mathscr T_{\mathbbm x}$ may be expressed as a simple measurable function of $\mathscr T_{\mathbbm h}$ and vice versa.
\end{itemize}

The setting of $\R$-trees encoded by càdlàg functions is also convenient to study the limits in distribution of $\alpha \mapsto \mathscr T_{\mathbbm x^{(\alpha)}}$ and its measured version.  Indeed, by our results of continuity, Propositions\ref{prop:GHlipschitzM1} and~\ref{prop:GHP M1}, convergences for $\alpha \mapsto \mathbbm x^{(\alpha)}$ directly implies the corresponding convergences for trees and measured trees. For instance, the continuity in distribution of $\alpha \in (1,2)  \mapsto \mathbbm x^{(\alpha)}$ (discussed, with the stronger $J_1$ topology, in \cite[Lemma~5.1]{KortchemskiMarzouk2024LevyLooptreesMaps}) is transferred to their measured trees. The most interesting fact obtained this way is that $\alpha \mapsto \mathscr T_{\mathbbm x^{(\alpha)}}$ interpolates between the line segment and the Brownian CRT (see Figure~\ref{fig:Simulation rotated stable trees}) and this also holds for their measured version.

\begin{prop}\label{prop:interpolation}
We have the following convergences in distribution with respect to the \textsc{Gromov-Hausdorff-Prokhorov} topology
    \begin{equation*}
    \bigl(\mathscr T_{\mathbbm x^{(\alpha)}},\mu_{\mathbbm x^{(\alpha)}}\bigr) \xrightarrow[{\tend[1]{\alpha}}]{\dstb} \bigl(([0,1],\abs{\cdot}),\leb\bigr), \hspace{1.5cm}  \bigl(\mathscr T_{\mathbbm x^{(\alpha)}},\mu_{\mathbbm x^{(\alpha)}}\bigr) \xrightarrow[{\tend[2]{\alpha}}]{\dstb} \bigl(\mathscr T_{\mathbbm x^{(2)}},\mu_{\mathbbm x^{(2)}}\bigr)=\sqrt 2\bigl(\mathscr T_{\mathbbm e},\mu_{\mathbbm e}\bigr).
\end{equation*}
\end{prop}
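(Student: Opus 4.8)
The plan is to leverage the continuity results already established, namely Propositions~\ref{prop:GHlipschitzM1} and~\ref{prop:GHP M1}, which reduce the problem to establishing the corresponding convergences at the level of the encoding processes $\mathbbm x^{(\alpha)}$ in $D_0([0,1],\R_+)$ with respect to the $M_1$ topology. Indeed, since $x \mapsto (\mathscr T_x,\mu_x)$ is continuous for $d_{M_1}$ and $\dGHP$ by Proposition~\ref{prop:GHP M1}, the continuous mapping theorem immediately transfers any convergence in distribution of $\mathbbm x^{(\alpha)}$ for $d_{M_1}$ to the desired $\dGHP$-convergence of the measured trees. Thus the crux of the matter is purely at the level of spectrally positive $\alpha$-stable \textsc{Lévy} excursions, and the tree statement becomes a corollary.

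For the limit $\alpha \to 2$, I would argue as follows. The normalized excursion $\mathbbm x^{(\alpha)}$ of the spectrally positive $\alpha$-stable \textsc{Lévy} process is known to converge in distribution, as $\alpha \to 2$, towards $\mathbbm x^{(2)}$, which is $\sqrt 2$ times the normalized Brownian excursion $\mathbbm e$; this is essentially the content of \cite[Lemma~5.1]{KortchemskiMarzouk2024LevyLooptreesMaps}, stated there for the stronger $J_1$ topology. Since the $M_1$ topology is weaker than $J_1$ (convergence in $J_1$ implies convergence in $M_1$), this $J_1$-convergence in distribution yields a fortiori convergence in distribution for $d_{M_1}$. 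Applying Proposition~\ref{prop:GHP M1} then gives $(\mathscr T_{\mathbbm x^{(\alpha)}},\mu_{\mathbbm x^{(\alpha)}}) \to (\mathscr T_{\mathbbm x^{(2)}},\mu_{\mathbbm x^{(2)}})$ in distribution for $\dGHP$, and the identification of the limit as $\sqrt 2(\mathscr T_{\mathbbm e},\mu_{\mathbbm e})$ follows from the scaling relation between $\mathscr T_{cx}$ and $c\,\mathscr T_x$ for the continuous encoding, together with the fact that $\mathbbm x^{(2)}=\sqrt 2\,\mathbbm e$ is already continuous so that $\mathscr T_{\mathbbm x^{(2)}}$ reduces to the usual Brownian CRT scaled by $\sqrt 2$.

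For the limit $\alpha \to 1$, the situation is more delicate and I expect this to be the main obstacle. The statement is that the measured tree degenerates to the unit interval $([0,1],|\cdot|)$ equipped with \textsc{Lebesgue} measure. The intuition is that as $\alpha \downarrow 1$ the spectrally positive stable process becomes dominated by its large positive jumps and, suitably normalized, the excursion $\mathbbm x^{(\alpha)}$ converges in distribution to a deterministic linear function on $[0,1]$ — heuristically the ramp $t\mapsto $ a single macroscopic jump filled in by the $M_1$ parametrization, or more precisely the degenerate excursion whose parametric representation is an affine interpolation. One would want to show that $\mathbbm x^{(\alpha)} \to \mathbbm x^{(1)}$ in distribution for $d_{M_1}$ where $\mathbbm x^{(1)}$ encodes the segment. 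The delicate point is that $M_1$ convergence does not see the vertical jump as creating any branching: a single jump in a nonnegative excursion produces, under our construction, a segment in $\mathscr T_x$, so a process concentrating all its mass in one jump encodes precisely a line. I would make this rigorous by first identifying the limit in law of $\mathbbm x^{(\alpha)}$ as $\alpha\to1$, likely via the convergence of the corresponding normalized excursion measures or by a direct computation of finite-dimensional distributions combined with tightness in $M_1$, exploiting that $M_1$-tightness is considerably easier to verify than $J_1$-tightness because $M_1$ tolerates the coalescence of many small jumps into one large one.

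The key steps, in order, are therefore: first, record that both convergences reduce, via Proposition~\ref{prop:GHP M1} and the continuous mapping theorem, to $d_{M_1}$-convergence in distribution of $\mathbbm x^{(\alpha)}$; second, for $\alpha\to2$ invoke the known $J_1$-convergence of \cite{KortchemskiMarzouk2024LevyLooptreesMaps} and the comparison $J_1 \Rightarrow M_1$, then identify the limit via the scaling $\mathbbm x^{(2)}=\sqrt2\,\mathbbm e$; third, for $\alpha\to1$ establish $M_1$-convergence of $\mathbbm x^{(\alpha)}$ towards the degenerate excursion encoding the segment, using $M_1$-tightness and convergence of the finite-dimensional laws of the stable excursions, and check that the encoded measured tree of this degenerate limit is indeed $([0,1],|\cdot|)$ with \textsc{Lebesgue} measure. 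The principal difficulty lies entirely in the $\alpha\to1$ regime, where one must control the excursion of an $\alpha$-stable process near the boundary of its existence domain and verify that the $M_1$ topology correctly captures the collapse to a single segment; the $\alpha\to2$ case is, by contrast, a soft consequence of the already-established continuity and the literature.
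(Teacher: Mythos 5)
Your overall architecture is exactly the paper's: reduce, via the continuity of $x \mapsto \bigl(\mathscr T_x,\mu_x\bigr)$ (Proposition~\ref{prop:GHP M1}) and the continuous mapping theorem, to convergence in distribution of the excursions $\mathbbm x^{(\alpha)}$ for $d_{M_1}$, and for $\alpha \to 2$ invoke a known convergence from the literature together with the implication $J_1 \Rightarrow M_1$. That half of your proof is sound; the only quibble is the citation: \cite[Lemma~5.1]{KortchemskiMarzouk2024LevyLooptreesMaps} concerns continuity at interior points $\alpha \in (1,2)$, whereas the endpoint limit $\alpha \to 2$ is \cite[Proposition~3.5]{CurienKortchemski2014StableLooptrees}, which is what the paper uses.

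The genuine gap is the $\alpha \to 1$ leg, which you correctly flag as the crux but do not close. The convergence you need does not have to be rebuilt from excursion measures or finite-dimensional distributions: it is \cite[Proposition~3.6]{CurienKortchemski2014StableLooptrees}, which states that the time-reversed excursions converge in distribution to the deterministic c\`adl\`ag ramp $t \mapsto t\1_{0 \leq t < 1}$ (equivalently, $\mathbbm x^{(\alpha)}$ converges to $t \mapsto (1-t)\1_{0 < t \leq 1}$); the paper's proof is this citation plus the observation, via Definition~\ref{def:Discontinuous Contour and Measured Real Tree}, that this ramp encodes $\bigl(([0,1],\abs{\cdot}),\leb\bigr)$. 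Your sketch never pins down the limit function, and your heuristic (``a process concentrating all its mass in one jump encodes precisely a line'') is imprecise in a way that matters: if the excursions collapsed instead to the plateau $\1_{(0,1)}$ --- one jump up, constant at height $1$, one jump down --- the encoded tree would still be a unit segment, but the measure would be the Dirac mass at the tip, since all of the time interval projects to the single point at height $1$. The Lebesgue measure in the statement comes precisely from the limit being a \emph{linear} ramp, whose constant slope spreads the time-Lebesgue measure evenly along the segment. So identifying the exact shape of the degenerate limit is not a cosmetic verification --- without it your argument cannot distinguish $\bigl(([0,1],\abs{\cdot}),\leb\bigr)$ from $\bigl(([0,1],\abs{\cdot}),\delta_1\bigr)$ --- and the fdd-plus-$M_1$-tightness programme you outline to produce it is left entirely unexecuted.
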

\begin{figure}[h]
    \begin{center}
    \includegraphics[width=0.3\linewidth]{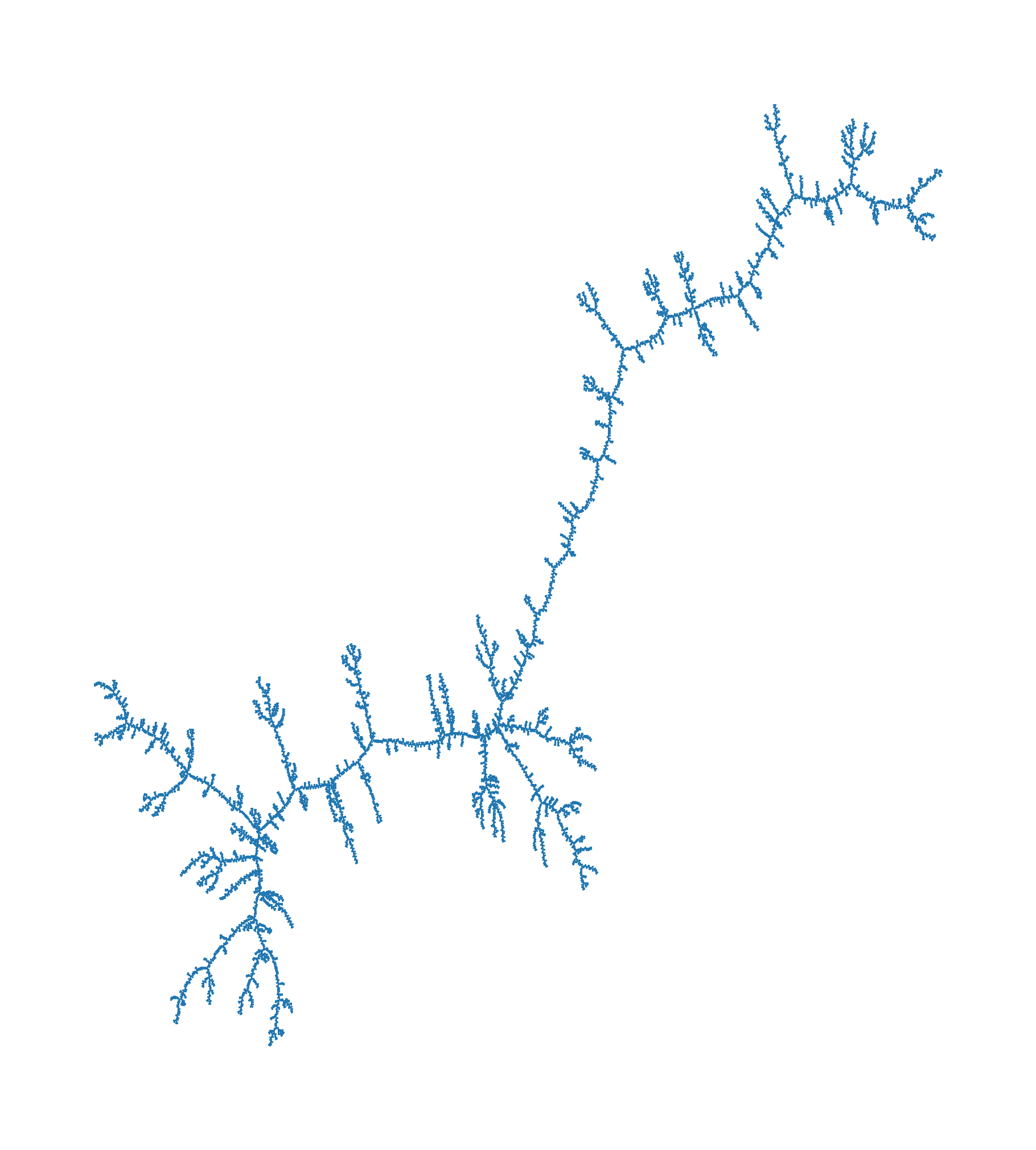}
    \includegraphics[width=0.3\linewidth]{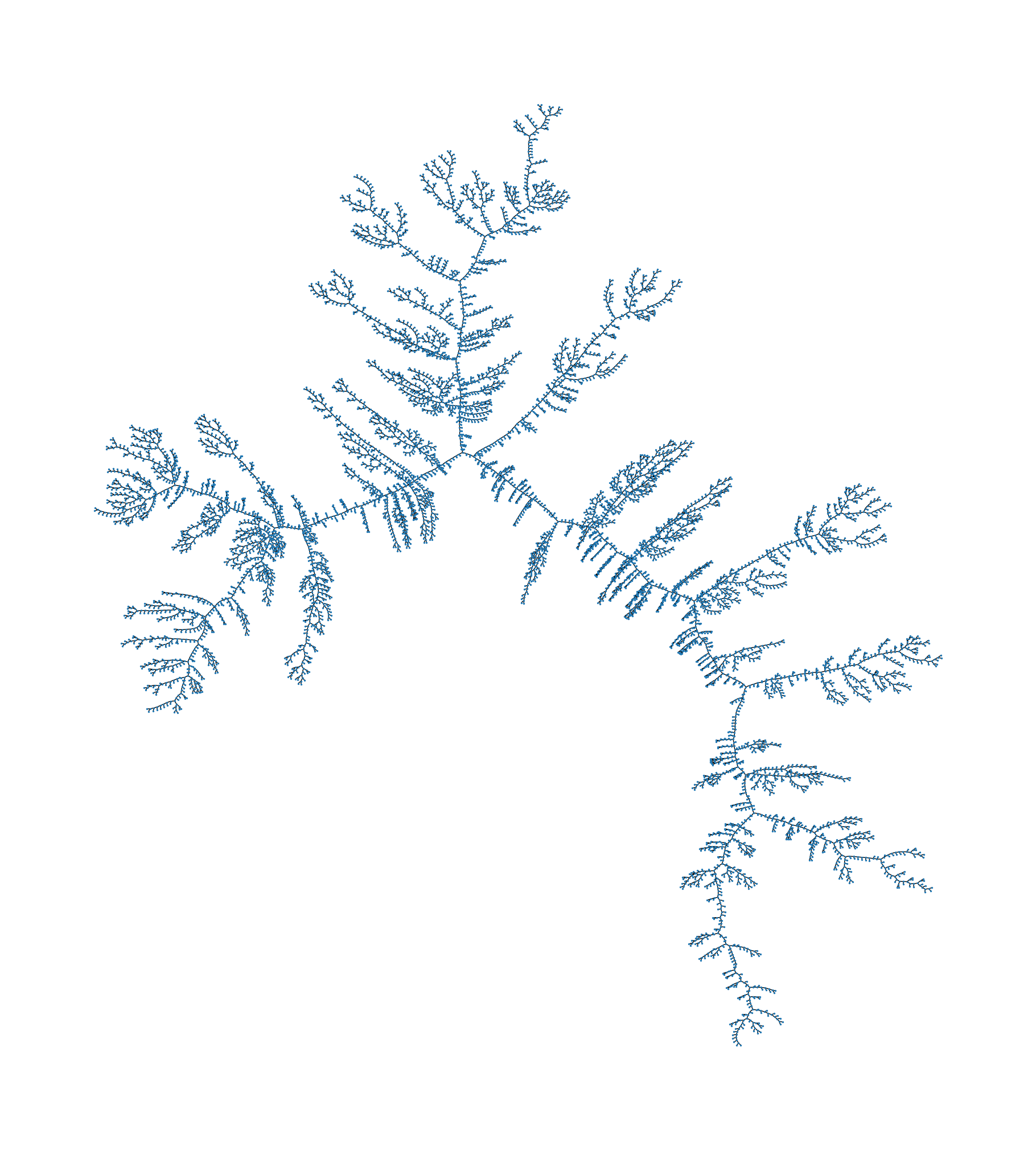}
    \includegraphics[width=0.3\linewidth]{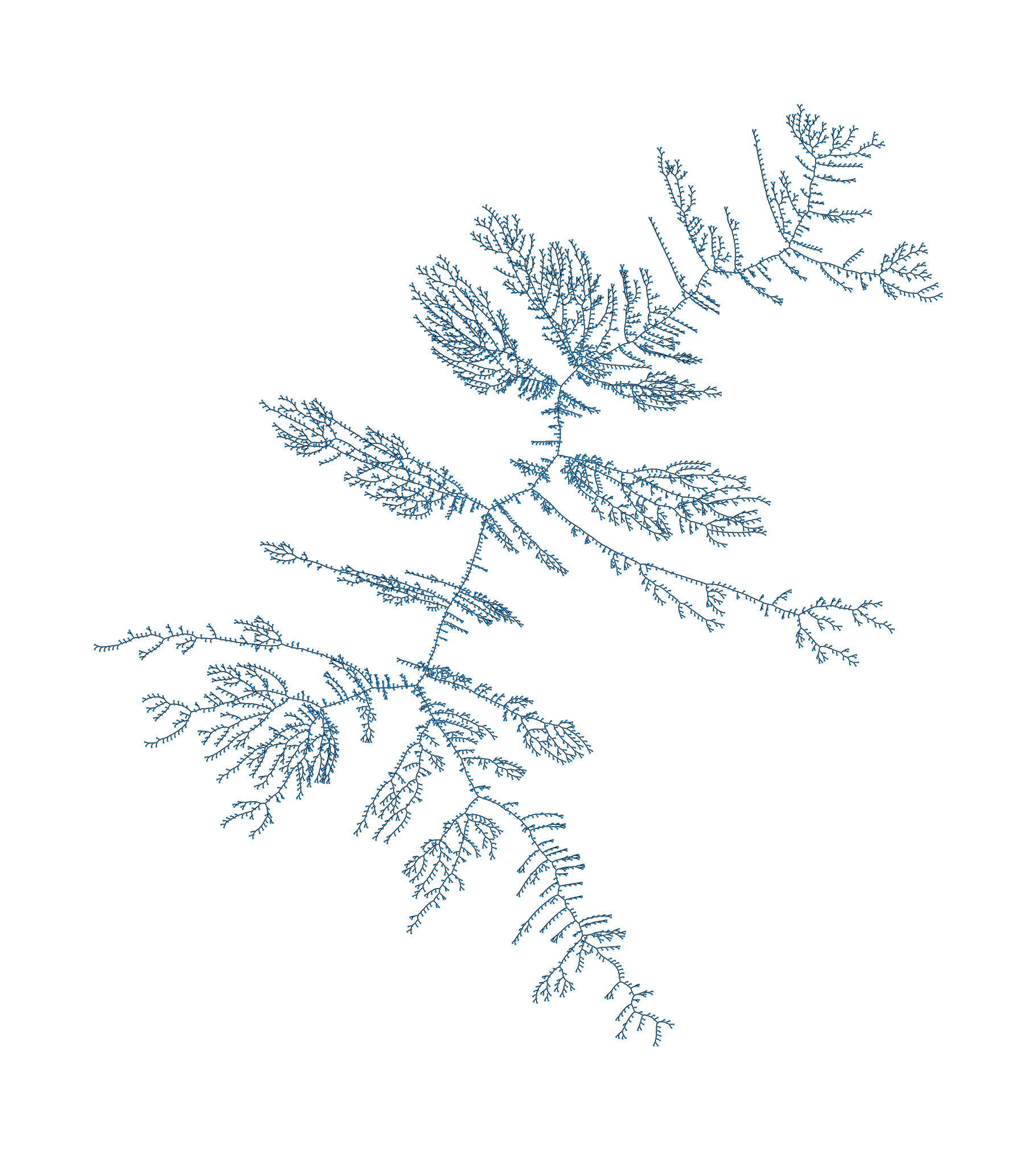}
    \caption{\centering Realisations of $\rot \Tree_{5000}$ for several values of $\alpha$: on the left $\alpha=1.01$, in the middle $\alpha=1.5$, on the right $\alpha=1.8$. }
    \label{fig:Simulation rotated stable trees}
   \end{center}
\end{figure}

Let us finally discuss some properties related to $\mathscr T_{\mathbbm x}$ \textit{seen as a limit}.  We introduced $\mathscr T_{\mathbbm x}$ as the distributional scaling limit of $\rot \Tree_n$, but it is also the scaling limit of another sequence of trees. Indeed, the rotation comes with a symmetric counterpart, the \textit{co-rotation}, denoted here by $\corot$ (see Figure~\ref{fig:DefRecCorotation} and compare it with Figure~\ref{fig:DefRecRotation}). It also maps a plane tree $T$ into a binary tree which is another spanning tree of $\Loop(T)$, and it appears that the scaling limit of $\corot \Tree_n$ is again $\mathscr T_{\mathbbm x}$ (see Corollary~\ref{cor:Comparison Rot Corot}). We have chosen to say a few words about the large-scale effects of $\corot$ on \textsc{Bienaymé} trees compared to the effects of $\rot$, because the encoding processes of $\corot \Tree_n$ surprisingly behave in a simpler way than those of $\rot \Tree_n$. This also provides interesting examples concerning the relation between the scaling limits of some trees and their \Luka walks: the \Luka walks of $\rot \Tree_n$ and $\corot \Tree_n$ have radically different scaling limits despite the fact that the trees themselves shared the same distributional scaling limit (see Theorem~\ref{thm:Detailed Result Corotation} and its comments). On the contrary, the \Luka walk of $\corot \Tree_n$ has the same distributional scaling limit as the \Luka walk of $\Tree_n$ even though these trees have different scaling limits.

\paragraph{Acknowledgement.} I wish to thank Bruno~\textsc{Schapira} and Igor~\textsc{Kortchemski} for their references, pieces of advice, and numerous rereadings that helped me throughout this work. They also provided guidance for organizing this article, and the very idea of studying the large-scale effects of the rotation was proposed by Igor. I am also indebted to Cyril~\textsc{Marzouk} for an enlightening discussion about the proof of Proposition~\ref{prop:LukaMirror}.
\bigskip

\paragraph{Outline.} First we present our notation for finite plane trees and their encoding processes in Section~\ref{sct:planeTrees}. In Section~\ref{sct:ScalingLimitTrees} we introduce all the usual notions needed to consider scaling limits of trees and measured trees, and then we briefly present the notion of parametric representations underlying \textsc{Skorokhod}'s $M_1$ topology in order to define and study measured $\R$-trees encoded by discontinuous excursions. We end this section by investigating the particular case of $\R$-trees encoded by an $\alpha$-stable excursion. The rest of the paper is dedicated to the study of the rotation correspondence: Section~\ref{sct:rotation} contains definitions of this correspondence and related objects, then in Section~\ref{sct:Encoding Processes} we give some existing results of \textsc{Duquesne} for \textsc{Bienaymé} trees before establishing the scaling limits of all encoding processes of rotated \textsc{Bienaymé} trees as well as those of co-rotated \textsc{Bienaymé} trees. This section contains both statements and proofs of these results, except four lemmas which compare some encoding processes with respect to the $M_1$ distance. We prove these lemmas in Section~\ref{sct:ProofLemmas}, which is dedicated to the use of combinatorial relations to control $M_1$ distances. Finally, Appendix~\ref{app:MirrorLuka} is devoted to the proof of a minor extension of \textsc{Duquesne}'s result needed here.



\section{Basics on finite plane trees}\label{sct:planeTrees}

\paragraph{\textsc{Ulam-Harris-Neveu} formalism.}
All trees here will be plane trees, and we will use standard notation for these trees (see e.g. \cite{Legall2005RandomTreesApplications}): plane trees will be subtrees of \textsc{Ulam}'s tree $\Ul= \bigcup_{n \in \N} (\N^*)^n$. Here, $(\N^*)^0=\{\varnothing\}$. $\varnothing$ is the root of $\Ul$, and for any $u=u_1\ldots u_n\in \Ul$ and $j \in \N^*$ we say that $uj=u_1\ldots u_nj$ is the $j$th child of $u$. We thus define parent$(uj)=u$, and we denote by $|u|=n$ the \textit{height} (or generation) of $u$ in the tree $\Ul$. The notion of parent is well-defined for every $u$ such that $|u|\geq 1$, as well as the notion of ancestors. 

\medskip

A finite plane tree $T$ is a finite subset of $\Ul$ such that:
\begin{itemize}
    \item[(a)] $\varnothing \in T$.
     \item[(b)] For $u\neq \varnothing$,  $u\in T$ implies parent$(u)\in T$.
     \item[(c)] For every $u \in T$, there is $d_u(T)\in \N^*$ such that for any $j\in \N^*$, $uj \in T$ if and only if $1 \leq j \leq d_u(T)$. 
\end{itemize}

We interpret $d_u(T)$ as the number of children of $u$ in $T$. When $d_u(T)=0$ we say that $u$ is a leaf of $T$, else it is an internal vertex of $T$. We equip every plane tree $T$ with the lexicographical order $\preccurlyeq$ (induced on it by the lexicographical order of $\Ul$), and the sequence $u_0=\varnothing,\ldots,u_{\#T-1}$ of the vertices of $T$ in lexicographical order will be called \textit{the lexicographical enumeration of $T$} (see Figure~\ref{fig:lexicoEnumeration}).

\begin{figure}[h]
    \begin{center}
\begin{tikzpicture}[scale=0.12]
\tikzstyle{every node}+=[inner sep=0pt]
\draw [black] (40.1,-46.5) circle (3);
\draw (40.1,-46.5) node {$\varnothing$};
\draw [black] (25.8,-32.4) circle (3);
\draw (25.8,-32.4) node {$1$};
\draw [black] (40.1,-32.4) circle (3);
\draw (40.1,-32.4) node {$2$};
\draw [black] (53.5,-32.4) circle (3);
\draw (53.5,-32.4) node {$3$};
\draw [black] (18.7,-16.7) circle (3);
\draw (18.7,-16.7) node {$11$};
\draw [black] (30.6,-16.7) circle (3);
\draw (30.6,-16.7) node {$12$};
\draw [black] (53.5,-16.7) circle (3);
\draw (53.5,-16.7) node {$31$};
\draw [black] (42.17,-44.33) -- (51.43,-34.57);
\fill [black] (51.43,-34.57) -- (50.52,-34.81) -- (51.24,-35.5);
\draw [black] (53.5,-29.4) -- (53.5,-19.7);
\fill [black] (53.5,-19.7) -- (53,-20.5) -- (54,-20.5);
\draw [black] (40.1,-43.5) -- (40.1,-35.4);
\fill [black] (40.1,-35.4) -- (39.6,-36.2) -- (40.6,-36.2);
\draw [black] (37.96,-44.39) -- (27.94,-34.51);
\fill [black] (27.94,-34.51) -- (28.15,-35.42) -- (28.86,-34.71);
\draw [black] (26.68,-29.53) -- (29.72,-19.57);
\fill [black] (29.72,-19.57) -- (29.01,-20.19) -- (29.97,-20.48);
\draw [black] (24.56,-29.67) -- (19.94,-19.43);
\fill [black] (19.94,-19.43) -- (19.81,-20.37) -- (20.72,-19.96);
\end{tikzpicture}
\end{center}
    
    \caption{\centering A plane tree (as a subtree of $\Ul$). Its lexicographical enumeration is $u_0=\varnothing,u_1=1,u_2=11,u_3=12, u_4=2, u_5=3,u_6=31$.}\label{fig:lexicoEnumeration}
\end{figure}

\medskip

\paragraph{Large critical \textsc{Bienaymé} trees.}
We focus on some specific probability measures on the set of finite plane trees. To define them, let $\mu$ be a probability measure on $\N$, called the offspring distribution, and assume that $\mu$ is \textit{critical}, which means it has mean $1$ but is not the \textsc{Dirac} mass $\delta_1$. Let $(k_u)_{u \in \Ul}$ be \iid~ random variables with distribution $\mu$, then almost surely there exists a unique finite plane tree $\Tree$ such that $\varnothing \in \Tree$ and $\forall u \in \Tree,\ d_u(\Tree)=k_u$. It corresponds to a family tree where each individual reproduces (on its own) independently of the rest and its offspring is distributed according to $\mu$. We denote by $\BGW$ its distribution, and any random tree with this distribution is called a \textsc{Bienaymé} tree with offspring distribution $\mu$. We will work with such trees but conditioned to have exactly $n$ vertices. Obviously, we implicitly restrict our attention to those integers $n$ such that $\BGW\evt{\text{\#vertices = $n$}}>0$. Note that there are infinitely many such $n$, hence we can choose $n$ arbitrarily large.



\medskip

\paragraph{Encoding processes.} 
A standard way to study plane trees is to use one-to-one correspondences with some integer-valued processes, called encoding processes in the following. Here we will mostly use those presented in \cite{Legall2005RandomTreesApplications}, namely the (lexicographic) height process, the contour process, and the (lexicographic) \Luka walk, and we will use them in two forms: discrete sequences and their associated time-scaled functions (see Figure~\ref{fig:encodingProcesses}). To introduce our notation, let $u_0=\varnothing,\ldots,u_{\#T-1}$ be the lexicographical enumeration of $T$.
\begin{itemize}
\item \textbf{The height process} $H_T=(H_T(k))_{0 \leq k \leq \#T}$ is defined by $H_T(k)=|u_k|$ for $k < \#T$ and $H_T(\#T)=0$ by convention.
\item \textbf{The contour process} $C_T=(C_T(k))_{0 \leq k \leq 2(\#T-1)}$ is informally constructed as follow: Imagine a particle living on the tree $T$ and initially located at the root. This particle can move in a discrete fashion, its elementary move simply is to go from its current vertex to one of its neighbours in $T$. Let $x_0=\varnothing, x_1,\ldots,x_{2(\#T-1)}=\varnothing$ be the sequence of vertices visited (in this order) by the particle when it goes straight from $u_0=\varnothing$ to $u_1$, then from $u_1$ to $u_2$,\ldots, then from $u_{\#T-1}$ to $\varnothing$ (\textit{i.e.} the particle follows the contour of $T$ from the left to the right). The contour process records the height of the particle at each step: $\forall 0\leq k\leq 2(\#T-1),\ C_T(k)=|x_k|$.
\item \textbf{The \Luka walk} $S_T=(S_T(k))_{0 \leq k \leq \#T}$ is defined by: 
\begin{equation*}
    S_T(0)=0 \text{ and } \forall k < \#T,\ S_T(k+1)-S_T(k)=d_{u_k}(T)-1.
\end{equation*}
One can see the \Luka walk as a record of the out-degrees (in lexicographical order), but there is another way to understand it. For any $u\in T$, we denote by $\Zintfo{\varnothing,u}$ the set of its ancestors (which does not contain $u$). Observe that for $k<\#T$
\begin{equation*}
\begin{split}
    S_T(k) &=\sum_{\ell < k} d_{u_{\ell}}(T) - k \\
    &= \#\{v \in T\!\setminus\!\{\varnothing\}\mid\text{parent}(v) \prec u_k\} - \#\{v \in T\!\setminus\!\{\varnothing\}\mid v \preccurlyeq u_k\} \\
    &= \#\{v \in T\!\setminus\!\{\varnothing\}\mid\text{parent}(v)\in\Zintfo{\varnothing,u_k} \text{ and } v \succ u_k\}.
\end{split}
\end{equation*}
Visually, $\Zintfo{\varnothing,u}$ forms a spine in $T$ and $\#\{v \in T\!\setminus\!\{\varnothing\}\mid\text{parent}(v)\in\Zintfo{\varnothing,u} \text{ and } v \succ u\}$ is the number of edges grafted on $\Zintfo{\varnothing,u}$ on its \textit{right} side. We call this quantity $R(u)$, which implicitly depends on $T$, and we also define its \textit{left} equivalent $L(u)=\#\{w \in T\!\setminus\!\{\varnothing\}\mid\text{parent}(w)\in\Zintfo{\varnothing,u}, w \not\in \Zintfo{\varnothing,u} \text{ and } w \prec u\}$, \ie the number of edges grafted on $\Zintfo{\varnothing,u}$ on its \textit{left} side. We thus get the useful alternative definition: 
\begin{equation*}
    \forall k < \#T\ S_T(k)=R(u_k), \text{ and } \ S_T(\#T)=-1.
\end{equation*}
\end{itemize}

\begin{figure}[h]
    \hspace{-1.5cm} \includegraphics[width=1.2\linewidth]{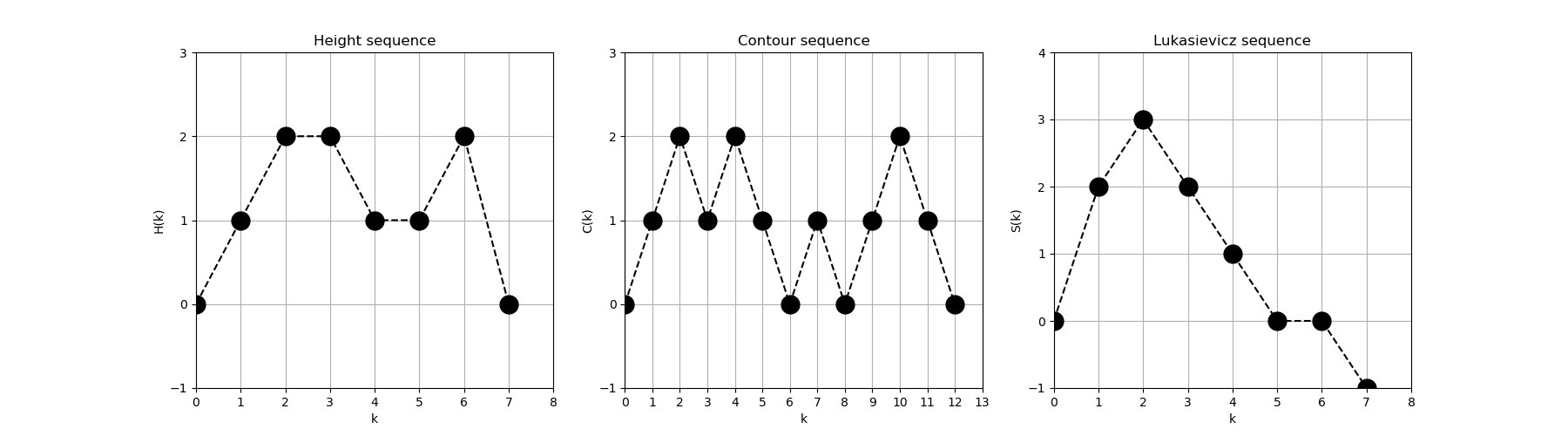}
    \caption{\centering Height, contour and \Luka processes of the tree depicted in Figure~\ref{fig:lexicoEnumeration}. Each of them fully characterizes this tree.}
    \label{fig:encodingProcesses}
\end{figure}

With any sequence $A=(A(k))_{0\leq k\leq p}$ we associate a continuous function $a\in C([0,1])$ that will be called its \textit{time-scaled function}: $a$ is the unique function affine on each segment $[k/p,(k+1)/p]$ and such that $\forall k\in \Zintff{0,p}\ a(k/p)=A(k)$. Let us stress that we only rescale \textit{time} while \textit{space} is unaffected. 

As we will make an extensive use of them, we make an explicit definition of the time-scaled height, contour, and \Luka functions of $T$ obtained with this procedure:
\begin{itemize}
\item \textbf{The time-scaled height function} is $h_T \in C([0,1])$ associated with $H_T$, hence 
\begin{equation*}
    \forall k\in \Zintff{0,\#T-1},\ h_T\!\left(\frac{k}{\#T}\right)=H_T(k)=|u_k| \text{ and } h_T(1)=H_T(\#T)=0.
\end{equation*}
\item \textbf{The time-scaled contour function} is $c_T \in C([0,1])$ associated with $C_T$, hence 
\begin{equation*}
    \forall k\in \Zintff{0,2(\#T-1)},\ c_T\!\left(\frac{k}{2(\#T-1)}\right)=C_T(k)=|x_k|.
\end{equation*}
\item \textbf{The time-scaled \Luka function} is $s_T \in C([0,1])$ associated with $S_T$, hence 
\begin{equation*}
    \forall k\in \Zintff{0,\#T-1},\ s_T\!\left(\frac{k}{\#T}\right)=S_T(k)=R(u_k) \text{ and } s_T(1)=S_T(\#T)=-1.
\end{equation*}
\end{itemize}
Note that we will also consider time-scaled functions associated with other sequences. 

\section{$\R$-trees and contour functions}\label{sct:ScalingLimitTrees}

In this section, we first explain the standard setting for studying scaling limits of random plane trees, in particular we emphasize how to understand trees, $\R$-trees and their convergence through their \textit{contour functions}. Then we introduce our extension and prove that a contour function remains a useful tool in this new setting, and finally we apply our results to study the $\R$-trees encoded by spectrally positive $\alpha$-stable excursions.

\subsection{Standard setting for scaling limits of trees}\label{ssct:CompactMetricSpace}

In the context of scaling limits, a plane tree $T$ is seen as a rooted compact metric space: we identify it with the finite set of its vertices equipped with the graph distance in $T$, and the root is a distinguished point of this space. Note that it is not a one-to-one identification as we forget the planar order of $T$. The point of this setting is that we can rescale trees, \ie we can consider $\lambda T$ obtained from $T$ by multiplying its metric by $\lambda$, in order to study the asymptotic geometry of some suitably rescaled trees by means of the \textsc{Gromov-Hausdorff} distance $\dGH$. Based on \cite{Legall2005RandomTreesApplications,Miermont2009Tesselations}, we will give a quick presentation of this distance $\dGH$, but also of the \textsc{Gromov-Hausdorff-Prokhorov} distance $\dGHP$. The interest of this distance on rooted compact metric spaces endowed with \textsc{Borel} probability measure is that it enables us to describe both the asymptotic geometry of some large finite trees and the asymptotic distribution of vertices picked uniformly at random in these trees. We will then introduce the metric spaces obtained as scaling limits of large finite trees, namely $\R$-trees, and explain the standard tools and techniques (such as \textit{contour functions}) used here to deal with both $\R$-trees and measured $\R$-trees.

\paragraph{The \textsc{Gromov-Hausdorff} and \textsc{Gromov-Hausdorff-Prokhorov} distances.}

At first, let us consider $K_1,K_2$ two compact subsets of some metric space $(E,d)$. One can compare them with the classical \textsc{Hausdorff} distance relative to  $(E,d)$: for $A \subset E$ and $r>0$, let $A^r$ be the $r$-neighbourhood of $A$, $\ie A^r=\{x \in E \st d(x,A)<r\}$. The distance between $K_1$ and $K_2$ is
\begin{equation*}
    d_{\text{haus}}(K_1,K_2)=\inf \! \left\{r >0 \st K_1 \subset K_2^r \text{ and } K_2 \subset K_1^r\right\}.
\end{equation*}
To compare two general compact metric spaces $(K_1,d_1)$ and $(K_2,d_2)$  with distinguished vertices $x_1 \in K_1$ and $x_2 \in K_2$, the idea behind the \textsc{Gromov-Hausdorff} distance simply is to embed them in a common metric space in order to make them as close as possible in the sense of the previous distance. Here we deal with rooted metric spaces, so we also take the distinguished point into account and the distance takes the following form:
\begin{equation}\label{eq:defGH}
    \dGH \bigl((K_1,d_1,x_1),(K_2,d_2,x_2)\bigr)= \inf_{\phi_1,\phi_2} \!\left\{ d_{\text{haus}}\bigl(\phi_1(K_1),\phi_2(K_2)\bigr)\vee d\bigl(\phi_1(x_1),\phi_2(x_2)\bigr)  \right\}
\end{equation}
where $(\phi_1,\phi_2)$ can be any pair of isometric embeddings of $K_1$ and $K_2$ into a common metric space and, for simplicity, $d$ always denotes the distance on this associated common space.

For practical use, we can rely on an alternative definition that does not require embedding $K_1$ and $K_2$. First, a \textit{correspondence} between $K_1$ and $K_2$ is a measurable subset $\mathcal C$ of $K_1\times K_2$ such that for every $y_1 \in K_1$ there is $y_2\in K_2$ with $(y_1,y_2)\in \mathcal C$ and conversely for every $y_2 \in K_2$ there is $y_1 \in K_1$ with $(y_1,y_2)\in \mathcal C$. The \textit{distortion} of the correspondence $\mathcal C$ is defined by
\begin{equation*}
    \dis(\mathcal C)=\sup_{(y_1,y_2),(z_1,z_2)\in \mathcal C} \abs{d_1(y_1,z_1)-d_2(y_2,z_2)}
\end{equation*}
As mentioned in \cite{Legall2005RandomTreesApplications}, the \textsc{gromov-hausdorff} distance $\dGH$ between $K_1$ and $K_2$ can be reformulated as half the infimum of distortion of correspondences that contain the pair of distinguished vertices $(x_1,x_2)$:
\begin{equation}\label{eq:GH via distortion}
    \dGH \bigl((K_1,d_1,x_1),(K_2,d_2,x_2)\bigr) = \frac{1}{2} \inf_{\mathcal C \ni (x_1,x_2)}\dis(\mathcal C).
\end{equation}

Now let us endow $K_1$ and $K_2$ with some \textsc{Borel} probability measures $\mu_1$ and $\mu_2$. The idea behind the \textsc{Gromov-Hausdorff-Prokhorov} distance $\dGHP$ is quite similar to the one that led to \eqref{eq:defGH} to define $\dGH$: one embeds $K_1$ and $K_2$ in a common metric space in order to make them as close as possible in the sense of the \textsc{Hausdorff} distance (and the distance between the distinguished vertices) but also in the sense of the \textsc{Lévy-prokhorov} distance between the \textsc{Borel} probability measures obtained by pushing forward $\mu_1$ and $\mu_2$. However, there is again an equivalent definition based on the notion of correspondence and more convenient to work with:
\begin{equation}\label{eq:GHP via distortion and coupling}
    \dGHP \bigl((K_1,d_1,x_1,\mu_1),(K_2,d_2,x_2,\mu_2)\bigr) =  \inf_{\mathcal C, \nu}\frac{1}{2}\dis(\mathcal C)\vee (1-\nu(\mathcal C))
\end{equation}
where the infimum is taken over all correspondences $\mathcal C$ between $K_1$ and $K_2$ containing the pair of distinguished vertices $(x_1,x_2)$ and all probability measures $\nu$ on $K_1\times K_2$ that form a coupling of $\mu_1$ and $\mu_2$. Note that $\dGH \bigl((K_1,d_1,x_1),(K_2,d_2,x_2)\bigr) \leq \dGHP \bigl((K_1,d_1,x_1,\mu_1),(K_2,d_2,x_2,\mu_2)\bigr)$, whatever $\mu_1$ and $\mu_2$ are.

In these settings, a map $f:K_1 \mapsto K_2$ is called an \textit{isometry} when it is one-to-one and preserves the distances and distinguished points in the case of rooted compact metric spaces, and additionally the measures when the spaces are endowed with measures. The notion of \textit{isometry} must not be confused with the notion of \textit{isometric map}, which denotes a map that preserves the distances (without any other requirement, in particular it does not have to be onto). We finally say that $K_1$ and $K_2$ are isometric when there is an isometry $f:K_1 \mapsto K_2$.

Actually, $\dGH$ and $\dGHP$ are pseudo-distances: two spaces are at distance $0$ if and only if they are isometric, and $\dGH$ (resp. $\dGHP$) truly defines a distance between \textit{isometry classes} of (resp. measured) rooted compact metric spaces. In both cases, this set of isometry classes equipped with the corresponding distance turns out to be complete and separable (see \cite{Miermont2009Tesselations,Petersen2016Geometry} for proofs).

\paragraph{$\R$-trees.}

The compact metric spaces obtained as scaling limits of large but finite plane trees always have some specific properties: they are geodesic metric spaces that are acyclic in a sense. A compact metric space with these properties is called an $\R$-tree (or a real tree). However we will not give any further details here on these metric properties, and instead of this we define $\R$-trees with their representations through contour functions. We refer to \cite{Legall2005RandomTreesApplications} for a more complete introduction.

Consider $f\in C_0([0,1],\R_+)$, \ie a positive continuous function such that $f(0)=f(1)=0$. By mimicking the way that the contour process of a plane tree can be used to compute distances in this tree, we define a pseudo-distance $d_f$ on $[0,1]$: The most recent common ancestor of two points $s,t$ would be a point $a\in {[s\wedge t, s \vee t]}$ such that $f(a)=\inf_{[s\wedge t, s \vee t]}f$ and thus we set
\begin{equation}\label{eq:pseudometric}
    d_f(s,t)=f(s)+f(t)-2\inf_{[s\wedge t, s \vee t]}f.
\end{equation}
It satisfies the triangular inequality, but $d_f(s,t)=0$ can occur even though $s\neq t$. However, once we define an equivalence relation by $s \sim_f t$ if and only if $d_f(s,t)=0$, we get that $d_f$ becomes a distance on the quotient set $[0,1]/\!\!\sim_f$. We let $\mathscr{T}_f$ denote the resulting metric space $\bigl([0,1]/\!\!\sim_f,d_f\bigr)$ and  $\pi_f$ denote the canonical projection from $[0,1]$ onto $\mathscr T_f$. This projection is continuous and onto, hence $\mathscr T_f$ is compact, and we consider this metric space as rooted at $\pi_f(0)$. 

The metric spaces called $\R$-trees are those which can be represented as $\mathscr T_g$ for some $g$:

\begin{defi}\label{def:RealTree}
A rooted compact metric space $K$ is an $\R$-tree if and only if it is isometric to $\mathscr{T}_f$ for some $f\in C_0([0,1],\R_+)$, which is then called a contour of this $\R$-tree.
\end{defi}

\begin{rmk}
    This definition is equivalent to \cite[Definition~2.1]{Legall2005RandomTreesApplications} since $\mathscr T_f$ always satisfies the requirements of \cite[Definition~2.1]{Legall2005RandomTreesApplications} and conversely all $\R$-trees (in the sense of \cite[Definition~2.1]{Legall2005RandomTreesApplications}) can be represented by a contour function (see \cite[Theorem~2.2]{Legall2005RandomTreesApplications} and \cite[Corollary~1.2]{DuquesnePreprintCoding}).
\end{rmk}

\begin{rmk}
In this definition, $f$ is far from unique. In particular we stress that any monotone and onto map $\phi:[0,1]\mapsto[0,1]$ induces an isometry $\tilde\phi: \mathscr{T}_{f\circ\phi} \mapsto \mathscr{T}_f$ such that $\tilde\phi\circ\pi_{f\circ\phi}=\pi_f\circ\phi$. We also mention that a dilation of $f$ implies a dilation of the corresponding tree, \ie $\forall \lambda>0, \mathscr{T}_{\lambda f}=\lambda\mathscr{T}_f$.
\end{rmk}

Finally, an $\R$-tree is said to be \textit{measured} when it is endowed with a \textsc{Borel} probability measure. A convenient way to define such a measure on an $\R$-tree is again to use a contour function: for $f\in C_0([0,1],\R_+)$, we let $\mu_f$ be the pushforward of $\leb$ by the projection $\pi_f:[0,1]\mapsto \mathscr T_f$, \ie
\begin{equation*}
     \mu_f \text{ is the law of } \pi_f(U) \text { where } U \text{  is uniformly distributed over }[0,1]. 
\end{equation*}

Note that for some contour functions $f,g$, we have that $\mathscr T_f$ and $\mathscr T_g$ are isometric while $(\mathscr T_f,\mu_f)$ and $(\mathscr T_g,\mu_g)$ are not.

A crucial example is given by $c_T$ the time-scaled contour function of some finite plane tree $T$. The connected metric space $\mathscr T_{c_T}$ cannot be the discrete metric space $T$ (except for $T=\{\varnothing\}$), but it is closely related to it: it is (isometric to) the metric space obtained from $T$ by considering edges as segments with unit length and vertices as the endpoints of these segments. To be precise, for each vertex, all the incident edges are unit segments that merge at their endpoint corresponding to this vertex. Moreover this space is rooted at the endpoint corresponding to $\varnothing$. The measure $\mu_{c_T}$ corresponds to the uniform measure on the union of the unit-length segments. Observe further that $T$ is isometric to the subset of endpoints of edges of $\mathscr T_{c_T}$ and that each $x \in \mathscr T_{c_T}$ belongs to an edge thus is at distance at most $1$ from its farthest away endpoint from the root.  Consequently,

\begin{equation}\label{eq:Tree approximation}
    \forall \lambda>0,\ \dGH(\lambda T, \mathscr T_{\lambda c_T}) \leq \dGHP \bigl((\lambda T,\Unif_T), (\mathscr T_{\lambda c_T},\mu_{\lambda c_T})\bigr) \leq \lambda \vee \frac{1}{\# T}.
\end{equation}
As we think of an $\R$-tree $\mathscr T$ as a tree, we say that $x\in \mathscr T$ is a vertex of $\mathscr T$ and its degree $\deg(x)$ is the number of connected components of $\mathscr T\!\setminus\!\{x\}$. A vertex $x$ is said to be a leaf when $\deg(x)=1$, and a branch-point when $\deg(x)\geq 3$.

\paragraph{Convergence through contour functions.} We have chosen to discuss $\R$-trees directly through their representation by contour functions since our goal is to extend this representation to càdlàg functions, but also because this is all we need here to study scaling limits of random trees. Indeed, convergence of contour functions directly implies convergence of trees for $\dGH$ and $\dGHP$. See \eg \cite{Legall2005RandomTreesApplications} for a proof of the fact that $f\mapsto \mathscr T_f$ is lipschitz for $\dGH$, and it is not difficult to adapt this proof to see that the same result holds for $\dGHP$: 

\begin{lem}[{\cite[Lemma~2.4]{Legall2005RandomTreesApplications}}]\label{lem:GHlipschitz}
    For $f,g\in C_0([0,1],\R_+)$, 
    \begin{equation*}
        \dGHP\bigr((\mathscr T_f,\mu_f),(\mathscr T_g,\mu_g)\bigl) \leq 2\norm{f-g}_{\infty}.
    \end{equation*}
\end{lem}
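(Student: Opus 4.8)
The plan is to establish the measured version of the Lipschitz bound by producing, for any two contour functions $f,g \in C_0([0,1],\R_+)$, a single correspondence $\mathcal C$ between $\mathscr T_f$ and $\mathscr T_g$ together with a coupling measure $\nu$ of $\mu_f$ and $\mu_g$, both built from the identity parametrization of $[0,1]$, and then to bound the two quantities appearing in \eqref{eq:GHP via distortion and coupling} simultaneously. The natural choice, exactly as in the proof of \cite[Lemma~2.4]{Legall2005RandomTreesApplications} for $\dGH$, is to declare $(a,b) \in \mathcal C$ whenever there exists $t \in [0,1]$ with $\pi_f(t)=a$ and $\pi_g(t)=b$, i.e. $\mathcal C = \{(\pi_f(t),\pi_g(t)) : t \in [0,1]\}$. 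This is a valid correspondence because $\pi_f$ and $\pi_g$ are both onto, and it contains the pair of roots $(\pi_f(0),\pi_g(0))$.

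First I would bound the distortion of $\mathcal C$. Take two pairs $(\pi_f(s),\pi_g(s))$ and $(\pi_f(t),\pi_g(t))$ in $\mathcal C$. Using the explicit formula \eqref{eq:pseudometric} for $d_f$ and $d_g$, the quantity $\abs{d_f(s,t)-d_g(s,t)}$ is controlled by $\abs{f(s)-g(s)} + \abs{f(t)-g(t)} + 2\abs{\inf_{[s\wedge t,s\vee t]} f - \inf_{[s\wedge t,s\vee t]} g}$, and since an infimum over a common interval differs by at most the sup-norm of the difference, each of these three contributions is at most $\norm{f-g}_\infty$, $\norm{f-g}_\infty$, and $2\norm{f-g}_\infty$ respectively, giving $\dis(\mathcal C) \leq 4\norm{f-g}_\infty$ and hence $\tfrac12 \dis(\mathcal C) \leq 2\norm{f-g}_\infty$.

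Next I would handle the measure term. The key observation is that pushing the Lebesgue measure $\leb$ on $[0,1]$ forward by the map $t \mapsto (\pi_f(t),\pi_g(t))$ yields a Borel probability measure $\nu$ on $\mathscr T_f \times \mathscr T_g$ whose marginals are exactly $\mu_f$ and $\mu_g$ (by definition of these measures as pushforwards of $\leb$ by $\pi_f$ and $\pi_g$), so $\nu$ is a genuine coupling. Moreover $\nu$ is entirely supported on $\mathcal C$, so $\nu(\mathcal C)=1$ and the term $1-\nu(\mathcal C)$ vanishes. Plugging both estimates into \eqref{eq:GHP via distortion and coupling} gives $\dGHP \leq \tfrac12 \dis(\mathcal C) \vee (1-\nu(\mathcal C)) \leq 2\norm{f-g}_\infty$, which is the claimed bound.

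The proof presents no serious obstacle; the only point requiring a small amount of care is the measurability of $\mathcal C$ (needed for it to be an admissible correspondence in \eqref{eq:GHP via distortion and coupling}) and the verification that the pushforward $\nu$ is well-defined and supported on $\mathcal C$. Both follow from the continuity of $\pi_f$ and $\pi_g$: the map $t \mapsto (\pi_f(t),\pi_g(t))$ is continuous, so $\mathcal C$ is the continuous image of the compact set $[0,1]$ and is therefore compact, hence closed and measurable, and $\nu$ supported on it is automatic. The entire argument is a routine upgrade of the unmeasured case, the novelty being that the single diagonal coupling simultaneously controls both the distortion and the Prokhorov-type term, which is precisely why one recovers the same constant $2$.
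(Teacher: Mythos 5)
Your proof is correct and is exactly the argument the paper intends: the paper defers to Le Gall's correspondence proof for $\dGH$ and remarks that the adaptation to $\dGHP$ is straightforward, and your diagonal correspondence $\mathcal C=\{(\pi_f(t),\pi_g(t)):t\in[0,1]\}$ with the pushforward of $\leb$ by $t\mapsto(\pi_f(t),\pi_g(t))$ as coupling (so that $1-\nu(\mathcal C)=0$ and only the distortion term $\tfrac12\dis(\mathcal C)\leq 2\norm{f-g}_{\infty}$ survives) is precisely that adaptation, the same device the paper later uses with parametric representations in its proof of Proposition~\ref{prop:GHP M1}.
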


This gives a convenient way to get scaling limits for trees by means of functional scaling limits. As an illustration, let us briefly discuss the first historical example of scaling limit result for trees, namely \textsc{Aldous}' theorem (although \textsc{Aldous} stated and proved it in a different way in his original articles \cite{Aldous1991Continuum2,Aldous1993Continuum3}). Fix a critical offspring distribution $\mu$ with finite variance $\sigma^2$ and for all adequate $n\in \N^*$ let $\Tree_n$ be a random tree with law $\BGW\evt{\,\cdot\,\vert\text{\#vertices = $n$}}$. It has been shown that $\bigl(\frac{\sigma}{2\sqrt n}c_{\Tree_n}\bigr)_n$ converges in distribution (with respect to $\norm{\cdot}_{\infty}$) to a normalized Brownian excursion $\mathbbm e$, see \eg \cite{Legall2005RandomTreesApplications}. This entails that $\bigl(\frac{\sigma}{2\sqrt n}\mathscr T_{c_{\Tree_n}}\bigr)_n$ converges in distribution (with respect to $\dGH$) towards $\mathscr T_{\mathbbm e}$, which is called a Brownian Continuum Random Tree (or Brownian CRT). Moreover \eqref{eq:Tree approximation} yields
\begin{equation*}
    \dGH\left(\frac{\sigma}{2\sqrt n}\Tree_n,\frac{\sigma}{2\sqrt n}\mathscr T_{c_{\Tree_n}}\right)\leq \frac{\sigma}{2\sqrt n}\vee \frac{1}{n} \cv{n} 0.
\end{equation*}
Hence we actually have that $\bigl(\frac{\sigma}{2\sqrt n}\Tree_n\bigr)_n$ converges in distribution towards a Brownian CRT.  Moreover this argument works fine with $\dGHP$ too, so we also have the stronger convergence of these trees endowed with their uniform probability measure toward $(\mathscr T_{\mathbbm e},\mu_{\mathbbm e})$.

This illustration is important here for two reasons: 
\begin{itemize}
    \item This range of application of this method will be broadened by our extension of the coding of $\R$-trees by contour functions, and thus we will be able to apply this method to obtain the convergence of rotated trees (Theorem~\ref{thm:MainResult}).
    \item Our main tool to study the encoding processes of rotated trees will be a generalization of \textsc{Aldous}' theorem which establishes scaling limits of all encoding functions of $\Tree_n$ under weaker assumptions (see Section~\ref{ssct:KnownScalingLimits}).
\end{itemize}  

Finally, let us stress that the encoding processes of $\Tree_n$ not only encode the metric space $\Tree_n$ endowed with a measure but also contain additional structure such as the order on this tree, hence the convergence of these processes could be used to obtain reinforcement of the convergence of trees as measured metric spaces (see \eg \cite{DuquesnePreprintCoding} where a contour function $f$ also induces a linear order on the corresponding $\R$-tree).

\subsection{Skorokhod's $M_1$ topology and discontinuous contour functions}\label{ssct:M1DiscontinuousContour}

We now turn to an extension of the standard setting exposed above. We are going to explain how to define an $\R$-tree, possibly endowed with a measure, from a càdlàg contour function $x\in D_0([0,1],\R_+)$ (\ie a positive càdlàg function vanishing at $0$ and $1$). This extension is based on the notion of parametric representations underlying \textsc{Skorokhod}'s $M_1$ topology on $D([0,1])$, hence we will begin with this notion before discussing the encoding of $\R$-trees. Then we will end this section by deriving from this construction some properties of continuity with respect to this topology. As a consequence, we will get the convergence of trees as a direct corollary of the convergence of contour functions \textit{with respect to the $M_1$ topology}. This will be particularly useful to study the rotation correspondence in the second part of this paper, because some encoding processes of rotated trees cannot be handled with the classical \textsc{Skorokhod}'s $J_1$ topology on $D([0,1])$ while the $M_1$ topology provides an adequate framework to obtain their scaling limits (see the first comment in Section~\ref{ssct:comments} for more details).

Note that the presentation of  parametric representations and \textsc{Skorokhod}'s $M_1$ topology given in this section is mostly based on \cite[section~3.3, section~11.5.2, section~12.3]{Whitt2002StochasticProcessLimits}, which we refer to for more details.

\paragraph{Parametric representations of a discontinuous function.}   For $x\in D([0,1])$, we first introduce its \textit{completed graph} $\Gamma_x$:
\begin{equation*}
    \Gamma_x = \Bigl\{ \bigl(\lambda x(t-)+(1-\lambda) x(t),t\bigr)\in \R\times[0,1], \text{ for } (t,\lambda) \in [0,1]^2 \Bigr\}.
\end{equation*}
In words, $\Gamma_x$ is the graph of $x$ completed with vertical segments to fill in the discontinuities of $x$ so that it becomes close and connected. We endow it with a linear order based on how to draw $\Gamma_x$ from left to right in a continuous way: $(z_1,t_1) \leq (z_2,t_2)$ if either $t_1 < t_2$ or $t_1=t_2$ and $\abs{z_1-x(t_1-)}\leq \abs{z_2-x(t_2-)}$.

\begin{defi}\label{def:ParametricRepresentation}
A \textit{parametric representation} of $x$ is a non-decreasing (in the sense of the order on $\Gamma_x$) continuous and onto function $(\chi,\tau):[0,1]\rightarrow \Gamma_x \subset \R\times[0,1]$. Its first component $\chi:[0,1]\rightarrow \R$ is its spatial component while $\tau:[0,1]\rightarrow[0,1]$ is its temporal component.
\end{defi}

\begin{figure}[h]
    \centering
    \includegraphics[width=0.75\linewidth]{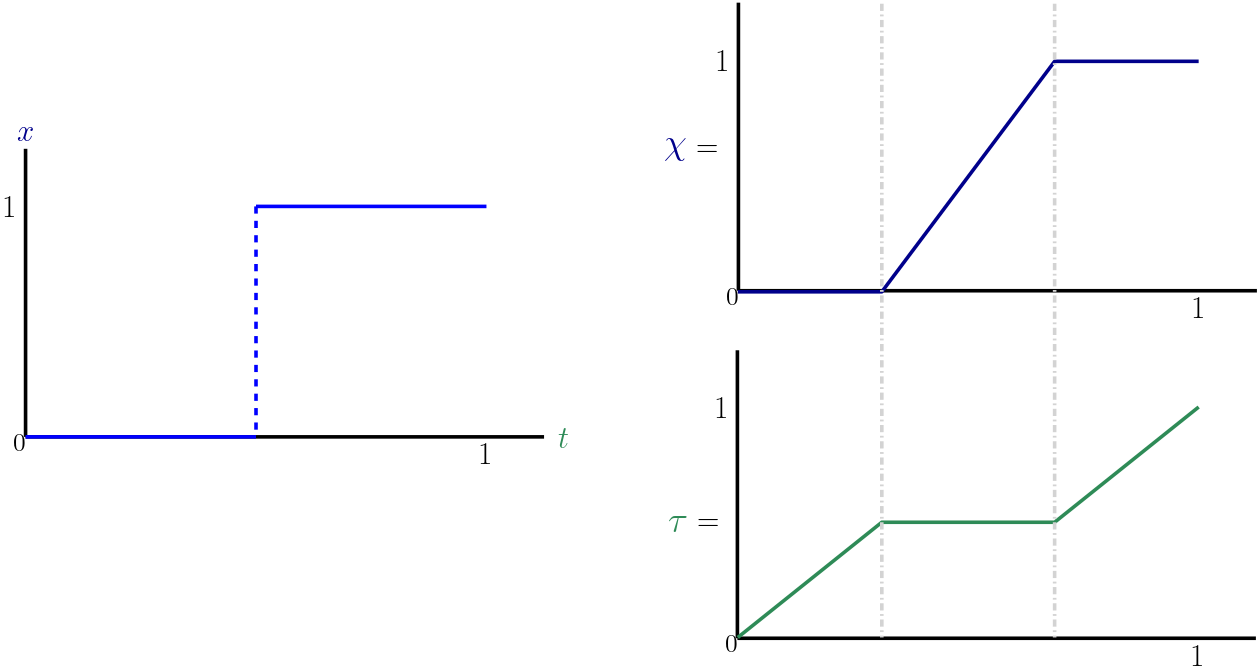}
    \caption{\centering The completed graph of $x=\1_{[1/2,1]}$ on the left, and one of its parametric representations on the right.}
    \label{fig:Parametric representation}
\end{figure}


Given a parametric representation $(\chi,\tau)$ of $x$, one can see that its temporal component $\tau$ is continuous non-decreasing (for the usual order on $[0,1]$) and  $x=\chi\circ\tau^{-1}$ with $\tau^{-1}$ the right-continuous inverse of $\tau$. The main technical difficulty with parametric representations is to prove their existence. \cite[section~12.3.3]{Whitt2002StochasticProcessLimits} indicates a way to construct a parametric representation (with the additional property of being one-to-one from $[0,1]$ to $\Gamma_x$) for any $x\in D([0,1])$, but a requirement is missing there to guarantee that this construction is well-defined. To explain this more precisely, let us assume the existence of a one-to-one parametric representation $(\chi,\tau)$ of $x$ and describe its properties: 
\begin{itemize}
    \item At extremal points we have $\bigl(\chi(0),\tau(0)\bigr)=\bigl(x(0),0\bigr)$ and $\bigl(\chi(1),\tau(1)\bigr)=\bigl(x(1),1\bigr)$.
    \item At $u\in(0,1)$, if we have $\tau(u)\not\in \disc(x)=\{t\in[0,1] \st x(t-)\neq x(t)\}$ then we must have $\chi(u)=x(\tau(u))$, hence as $(\chi,\tau)$ is one-to-one we see that $\tau$ must be strictly increasing at $u$: for all sufficiently small $\varepsilon>0,\ \tau(u-\varepsilon) < \tau(u) < \tau(u+\varepsilon)$.
    \item Conversely, for all $s\in \disc(x)$ there is a segment $[a_s,b_s] \subset [0,1]$ with $a_s < b_s$ such that $\tau(u)=s$ if and only if $u\in [a_s,b_s]$, and $\chi$ induces a continuous one-to-one function $[a_s,b_s]\rightarrow \bigl\{ \lambda x(s-)+(1-\lambda) x(s),\text{ for } \lambda \in [0,1] \bigr\}$ with $\chi(a_s)=x(s-)$ and $\chi(b_s)=x(s)$.  
\end{itemize}
  
    The construction proposed by \cite[section~12.3.3]{Whitt2002StochasticProcessLimits} is based on a collection of disjoint segments $[a_s,b_s]$ for $s \in \disc(x)$ that preserves order \ie for $s<s'\in \disc(x),\ b_s< a_{s'}$. From this, one would define a parametric representation on each of these intervals and in between so that it matches with the above description. However, one needs an additional property to ensure that this construction is well-defined and one-to-one at accumulation points of discontinuities: we must also require that for $s<s'\in \disc(x)$, there exists an open interval $I\subset(s,s')$ with $I\cap\disc(x)=\emptyset$ if and only if there exists an open interval $J\subset (b_s,a_{s'})$ with $J\cap[a_t,b_t]=\emptyset$ for all $t \in \disc(x)$. To avoid a detailed discussion about this requirement, we provide an alternative proof of existence in the next Lemma~\ref{lem:ParametricRepresentation}. We also prove that all parametric representations of $x$ are the same up to changes of time (which may not be one-to-one).

\begin{lem}\label{lem:ParametricRepresentation}
Let $\PR(x)$ be the set of all parametric representations of $x \in D([0,1])$. There exists $(\chi,\tau) \in\PR(x)$ that is one-to-one, and for all such one-to-one $(\chi,\tau) \in \PR(x)$ we have
    \begin{equation*}
        \PR(x)=\{(\chi\circ\phi,\tau\circ\phi), \text{ for } \phi: [0,1] \mapsto [0,1] \text{ non-decreasing and onto}\}.
    \end{equation*}
\end{lem}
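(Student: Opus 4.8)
The plan is to prove the two assertions in turn: first the existence of a one-to-one parametric representation, by an explicit construction that makes continuity transparent, and then the description of $\PR(x)$, by reducing an arbitrary parametric representation to a time change of a fixed one-to-one representation through the homeomorphism it induces.

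\emph{Existence.} Since $x$ is càdlàg on a compact interval it is bounded and $\disc(x)$ is at most countable; enumerate it as $\{s_i\}_{i\in I}$ and fix weights $w_i>0$ with $W:=\sum_{i\in I}w_i<+\infty$ (say $w_i=2^{-i}$). Define the strictly increasing, right-continuous $G:[0,1]\to[0,1+W]$ by $G(t)=t+\sum_{i:\,s_i\le t}w_i$, whose only jumps are $G(s_i)-G(s_i-)=w_i$. Let $\tau$ be the rescaling by $1+W$ of the left-continuous generalized inverse $v\mapsto\inf\{t:G(t)\ge v\}$; because $G$ is strictly increasing, $\tau:[0,1]\to[0,1]$ is continuous, non-decreasing and onto, it is constant equal to $s_i$ exactly on the interval $[a_i,b_i]$ that is the image of the gap $[G(s_i-),G(s_i)]$, and it is strictly increasing elsewhere. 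Finally set $\chi(u)=\frac{b_i-u}{b_i-a_i}\,x(s_i-)+\frac{u-a_i}{b_i-a_i}\,x(s_i)$ for $u\in[a_i,b_i]$, and $\chi(u)=x(\tau(u))$ otherwise.

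\emph{Verification.} One checks that $(\chi(u),\tau(u))\in\Gamma_x$ for every $u$ (on a jump interval the point lies on the vertical segment at $s_i$, elsewhere it is $(x(\tau(u)),\tau(u))$), that the map is onto, and that it is non-decreasing for the order on $\Gamma_x$: two parameters with distinct times are ordered by the time comparison, while on $[a_i,b_i]$ the value $\chi$ moves monotonically away from $x(s_i-)$, which is exactly the order along the segment at $s_i$. It is one-to-one because $\tau$ separates points with distinct times and $\chi$ is strictly monotone (with $|\Delta x(s_i)|>0$) on each jump interval, separating the two parameters sharing the time $s_i$. Continuity of $\tau$ is standard, and $\chi$ is continuous on each piece and matches at the endpoints of the jump intervals by right-continuity and left limits of $x$. \textbf{The main obstacle} is continuity (and onto-ness) of $\chi$ at the preimages of accumulation points of $\disc(x)$: this is precisely the point glossed over by the disjoint-interval construction, and it is where the càdlàg property is essential. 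Near such a point $s^*$ the one-sided limit $x(s^*-)$ exists, which forces $|\Delta x(s_i)|\to 0$ and $x(s_i\pm)\to x(s^*-)$, so the filled segments shrink to the single point $(x(s^*-),s^*)$; together with $\sum_{s_i\approx s^*}w_i\to 0$ this yields continuity of $(\chi,\tau)$ there. In the measure-type construction this holds automatically, which is why it avoids the extra requirement needed by the disjoint-interval approach.

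\emph{Description of $\PR(x)$.} Fix any one-to-one $(\chi,\tau)\in\PR(x)$. It is a continuous bijection from the compact space $[0,1]$ onto $\Gamma_x\subset\R\times[0,1]$, which is Hausdorff, hence a homeomorphism; being one-to-one and non-decreasing it is a strict order isomorphism, so its inverse $(\chi,\tau)^{-1}:\Gamma_x\to[0,1]$ is continuous and order-preserving. Given any $(\chi',\tau')\in\PR(x)$, put $\phi=(\chi,\tau)^{-1}\circ(\chi',\tau'):[0,1]\to[0,1]$. Then $\phi$ is non-decreasing as a composite of order-preserving maps, onto as a composite of onto maps, and by construction $(\chi\circ\phi,\tau\circ\phi)=(\chi,\tau)\circ(\chi,\tau)^{-1}\circ(\chi',\tau')=(\chi',\tau')$, giving the inclusion $\supseteq$... rather, that every representation has this form. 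Conversely, a non-decreasing onto $\phi:[0,1]\to[0,1]$ is automatically continuous (non-decreasing plus onto excludes jumps), and then $(\chi\circ\phi,\tau\circ\phi)$ is continuous, onto $\Gamma_x$, and non-decreasing for the order on $\Gamma_x$, hence lies in $\PR(x)$. The two inclusions give the claimed equality.
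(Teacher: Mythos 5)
Your proof is correct and follows essentially the same route as the paper: the existence construction via the strictly increasing function $G(t)=t+\sum_{s_i\le t}w_i$ and its continuous generalized inverse is, up to normalization, exactly the paper's trick of inverting the CDF of $\leb/2+\sum_{s\in\disc(x)}m_s\delta_s$, and your characterization of $\PR(x)$ via $\phi=(\chi,\tau)^{-1}\circ(\chi',\tau')$ is the paper's argument verbatim. The only difference is that you spell out the continuity of $\chi$ at accumulation points of $\disc(x)$, which the paper dispatches with a brief appeal to the regularity of $x$.
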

\begin{proof}
Let us define a one-to-one parametric representation of $x$ based on the above description. A trick is to consider a probability measure on $[0,1]$ of the form $\leb/2+\sum_{s\in \disc(x)} m_s\delta_s$ (we just have to choose a family of strictly positive reals $(m_s)_{s \in \disc(x)}$ with sum $1/2$). Its cumulative distribution function $F$ is strictly increasing, with $F(0)=0, F(1)=1$ and $\disc(F)=\disc(x)$. Now we define $\tau:[0,1]\rightarrow[0,1]$ as its right-continuous inverse, which is non-decreasing but also continuous (because $F$ is strictly increasing) hence it is onto. For all $s\in \disc(x)$ there is a segment $[a_s,b_s] \subset [0,1]$ with $a_s=F(s-) < b_s=F(s)$ such that $\tau(u)=s$ if and only if $u\in [a_s,b_s]$, while for all $t \in [0,1]\!\setminus\!\disc(x)$ we have $\tau(u)=t$ if and only if $u=F(t)$. Next, we define $\chi$ by setting $\chi(u)=\frac{b_s-u}{b_s-a_s}x(s-)+\frac{u-a_s}{b_s-a_s}x(s)$ if $u \in [a_s,b_s]$ for some $s\in \disc(x)$ and $\chi(u)=x(\tau(u))$ otherwise. By construction, $(\chi,\tau)$ is a strictly increasing map from $[0,1]$ onto $\Gamma_x$. In addition, $\chi$ is continuous on any $]a_s,b_s[$ by definition, but also at $a_s$ and $b_s$ and at any $u$ such that $\tau(u)\not\in\disc(x)$ thanks to the regularity of $x$. As a consequence, $(\chi,\tau)$ is a one-to-one parametric representation of $x$.

We now prove the second assertion. Consider $(\chi,\tau)\in \PR(x)$ which is one-to-one, then for any non-decreasing onto (hence continuous) map $\phi:[0,1]\rightarrow[0,1]$, it is clear that $(\chi\circ\phi,\tau\circ\phi)\in\PR(x)$. Conversely, for any $(\chi',\tau')\in \PR(x)$, $\phi=(\chi,\tau)^{-1}\circ(\chi',\tau'):[0,1]\rightarrow[0,1]$ is onto and non-decreasing, and $(\chi',\tau')=(\chi\circ\phi,\tau\circ\phi)$.
\end{proof}

\paragraph{Measured $\R$-tree encoded by a discontinuous contour function.} To motivate the use of parametric representations, we first discuss why the ideas that led to the definition of $\mathscr T_f$ for a continuous excursion $f$  (in Section~\ref{ssct:CompactMetricSpace}) cannot be directly extended to a  càdlàg excursion $x$. 

For a given $x\in D_0([0,1],\R_+)$, it still makes sense to define the pseudo-metric induced by $x$ on $[0,1]$ as in \eqref{eq:pseudometric} and thus we can still consider the metric space $ \bigl([0,1]/\!\!\sim_x,d_x\bigr)$ defined as in Section~\ref{ssct:CompactMetricSpace}. However, the canonical projection $\pi_x:[0,1] \mapsto [0,1]/\!\!\sim_x$ is continuous if and only if $x$ is continuous, hence there is no guarantee that $([0,1]/\!\!\sim_x,d_x)$ is connected and compact. These properties are actually satisfied for some càdlàg excursions but not for all of them, for instance with $x=\1_{[1/3,2/3)}$ we simply get a discrete space consisting of two points. This means that $([0,1]/\!\!\sim_x,d_x)$ is not necessarily an $\R$-tree, and we will call it the \textit{quotient space associated with $x$}. In some cases this quotient space is actually an $\R$-tree, but proving it requires some additional work.

Parametric representations provide a simpler and more general way to define the $\R$-tree encoded by a càdlàg excursion: for any parametric representations $(\chi_1,\tau_1)$ and $(\chi_2,\tau_2)$ of $x\in D_0([0,1],\R_+)$, according to Lemma~\ref{lem:ParametricRepresentation} and the remark after Definition~\ref{def:RealTree} we have that  $\mathscr T_{\chi_1}$ is isometric to $\mathscr T_{\chi_2}$, hence we may define $\mathscr T_x$ as this $\R$-tree (seen up to isometry) and this definition is consistent with the case of a continuous function.

\begin{defi}\label{def:Discontinuous Contour and Real Tree}
For all  $x\in D_0([0,1],\R_+)$, the $\R$-tree associated with $x$ is (the isometry class of) $\mathscr T_{\chi}$, where $\chi$ is the spatial component of any parametric representation $(\chi,\tau)$ of $x$.
\end{defi}

Before endowing this tree with a measure, we compare this construction with the quotient space defined above. It appears that \textit{the quotient space $([0,1]/\!\!\sim_x,d_x)$ can always be seen as a subspace of the $\R$-tree $\mathscr T_x$}. Indeed, consider a parametric representation $(\chi,\tau)$ of $x$, then we have 
 \begin{equation*}
    \forall s,t \in [0,1]\ d_{\chi}(\tau^{-1}(s),\tau^{-1}(t))=d_x(s,t).
 \end{equation*}
As a consequence $\tau^{-1}$ induces an isometric map $\tau^*: \bigl([0,1]/\!\!\sim_x,d_x\bigr) \mapsto \bigl(\mathscr T_{\chi},d_{\chi}\bigr)$ which may or may not be onto but satisfies $\tau^*\circ\pi_x=\pi_{\chi}\circ\tau^{-1}$. Since $\mathscr T_x$ is defined as the isometry class of $\bigl(\mathscr T_{\chi},d_{\chi}\bigr)$, up to some identifications we may write that $ \bigl([0,1]/\!\!\sim_x,d_x\bigr) \subset \mathscr T_x$, with equality if and only if $\tau^*$ is onto. Thanks to this, the canonical projection $\pi_x$ on the quotient space may always be considered as taking value in the larger space $\mathscr T_x$. There are several sufficient conditions that ensure equality, for instance we have equality when $\bigl([x(s-)\wedge x(s),x(s-)\vee x(s)]\bigr)_{s \in \disc(x)}$ are pairwise disjoint segments, but in this paper we will only make use of the following criterion:

\begin{lem}\label{lem:Criterion quotient space}
    If $x\in D_0([0,1],\R_+)$ does not have any negative jump, then $ \bigl([0,1]/\!\!\sim_x,d_x\bigr)$ is (a representative of the isometry class of) the $\R$-tree $\mathscr T_x$.
\end{lem}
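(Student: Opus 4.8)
The plan is to exploit the isometric map $\tau^*$ constructed just before the statement: since $\bigl([0,1]/\!\!\sim_x,d_x\bigr)$ embeds isometrically into $\mathscr T_x$ via $\tau^*$, with equality precisely when $\tau^*$ is onto, it suffices to prove surjectivity of $\tau^*$ for one convenient parametric representation. First I would fix the one-to-one representation $(\chi,\tau)$ built in the proof of Lemma~\ref{lem:ParametricRepresentation}, for which $\tau^{-1}=F$ (the strictly increasing cumulative distribution function used there), so that the image of $\tau^*$ is exactly $\{\pi_\chi(F(r)):r\in[0,1]\}$. Any $u$ lying in $F([0,1])$ is then trivially captured, and the only projections $\pi_\chi(u)$ still to be accounted for are those with $u$ in a half-open jump interval $[a_s,b_s)$, $s\in\disc(x)$ — recall that $a_s=F(s-)$ is, in general, not attained by $F$ because $F$ is strictly increasing.

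So I would fix such a $u$ and set $v=\chi(u)\in[x(s-),x(s))$. If $v=0$ then $d_\chi(0,u)=0$, so $\pi_\chi(u)$ is the root $\pi_\chi(F(0))$ and we are done. The heart of the argument is the case $v>0$, and this is exactly where the absence of negative jumps enters: since $x(s)>v$ while $x(1)=0<v$, the time $t=\inf\{r\geq s: x(r)\leq v\}$ lies in $(s,1)$, and because $x$ has no downward jump, at the first instant $t$ where the function reaches level $v$ from above it does so continuously, so that $x(t)=v$ and $x\geq v$ throughout $[s,t]$. Writing $w=F(t)=\tau^{-1}(t)$, I would then observe $\chi(u)=\chi(w)=v$ and argue $\inf_{[u,w]}\chi=v$, whence $d_\chi(u,w)=\chi(u)+\chi(w)-2\inf_{[u,w]}\chi=0$. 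This identifies $\pi_\chi(u)=\pi_\chi(\tau^{-1}(t))$, a point in the image of $\tau^*$, and so $\tau^*$ is onto and the quotient space is (a representative of the isometry class of) $\mathscr T_x$.

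I expect the main obstacle to be the verification that $\inf_{[u,w]}\chi=v$. On $[u,b_s]$ the spatial component $\chi$ increases monotonically from $v$ to $x(s)$, hence stays $\geq v$; the delicate portion is $[b_s,w]$, whose points are sent by $\tau$ into $[s,t]$. There one must rule out any dip of $\chi$ below $v$, both at continuity times, where $\chi=x\circ\tau\geq v$ directly, and — more subtly — along an interpolating segment $[a_{s'},b_{s'}]$ attached to an intermediate jump $s'\in(s,t)$, where $\chi$ interpolates linearly between $x(s'-)$ and $x(s')$; here both endpoints are $\geq v$ since $x\geq v$ on $[s,t]$, so the linear interpolation stays $\geq v$. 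The very same level-crossing construction, applied with $v=x(s-)$, also handles the left endpoints $u=a_s$, which form the one family of points missed by $\tau^{-1}=F$. This is the only place where the no-negative-jump hypothesis is genuinely used, and it is precisely what breaks down for the example $x=\1_{[1/3,2/3)}$, whose single downward jump prevents the intermediate levels from being re-attained and collapses the space.
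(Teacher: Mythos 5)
Your proposal is correct and takes essentially the same approach as the paper: both prove surjectivity of $\tau^*$ by a first-passage argument, taking the first time $t$ after the jump at which $x$ returns to the level $v=\chi(u)$ and using the absence of negative jumps to conclude that $x(t)=v$ with $x\geq v$ in between, whence $\inf_{[u,\tau^{-1}(t)]}\chi=v$ and $d_{\chi}\bigl(u,\tau^{-1}(t)\bigr)=0$. The only differences are cosmetic: you fix the one-to-one parametric representation with $\tau^{-1}=F$ and verify the infimum in detail, while the paper runs the same argument for an arbitrary parametric representation and instead handles separately the degenerate case where $x$ drops below $\chi(u)$ immediately (your case split $[a_s,b_s)$ versus $b_s\in F([0,1])$ makes this unnecessary).
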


\begin{proof}
    Consider any parametric representation $(\chi,\tau)$ of $x$. We prove that for all $u \in [0,1]$ there is $s\in [0,1]$ such that $\pi_{\chi}(u) = \pi_{\chi}\bigl(\tau^{-1}(s)\bigr)$. As $\pi_{\chi}$ is onto and $\tau^*\circ\pi_x=\pi_{\chi}\circ\tau^{-1}$, it implies that $\tau^*$ is onto and the desired result follows.  
    
    For all $u \in [0,1]$ either $\tau(u) \not\in \disc(x)$ in which case $\pi_{\chi}(u) = \pi_{\chi}\bigl(\tau^{-1}(\tau(u))\bigr)$, or $\tau(u)\in \disc(x)$. In this latter case consider $s=\inf\{t>\tau(u): x(t)< \chi(u)\}$, with $\inf \emptyset = 1$. If $s>\tau(u)$ then $x(s-) \geq \chi(u)$ and $x(s)\leq \chi(u)$ so by positivity of the jumps $x(s-)=x(s)=\chi(u)$ and $\pi_{\chi}(u) = \pi_{\chi}\bigl(\tau^{-1}(s)\bigr)$, else if $s=\tau(u)$ we have that $u \leq \tau^{-1}(s)$ and by positivity of the jumps $\chi$ is non-decreasing on $[u,\tau^{-1}(s)]$ but at the same time $\chi(\tau^{-1}(s))=x(s)\leq \chi(u)$ so $\pi_{\chi}(u) = \pi_{\chi}\bigl(\tau^{-1}(s)\bigr)$ again.
\end{proof}

Let us now add a measure $\mu_x$ on the $\R$-tree $\mathscr T_x$. As for $\mathscr T_x$, the idea is to use a parametric representation to define a measured $\R$-tree which does not depend on our choice of parametric representation. We fix $x\in D_0([0,1],\R_+)$ and $(\chi,\tau),(\chi',\tau') \in \PR(x)$. We do not have in general that $(\mathscr T_{\chi},\mu_{\chi})$ is isometric to $(\mathscr T_{\chi'},\mu_{\chi'})$, so instead of $\mu_{\chi}$ we consider the measure $\mu_{(\chi,\tau)}$ on $\mathscr T_{\chi}$ defined as the distribution of $\pi_{\chi}\bigl(\tau^{-1}(U)\bigr)$ where $U$ is uniform over $[0,1]$. Now we have that $(\mathscr T_{\chi},\mu_{(\chi,\tau)})$ is isometric to $(\mathscr T_{\chi'},\mu_{(\chi',\tau')})$. Indeed we may assume that $(\chi,\tau)$ is a one-to-one parametric representation of $x$, so by Lemma~\ref{lem:ParametricRepresentation} there exists a non-decreasing onto map $\phi:[0,1]\mapsto [0,1]$ such that $(\chi',\tau')=(\chi,\tau)\circ\phi$ and we claim that the induced isometry $\tilde \phi:\mathscr T_{\chi'}\mapsto \mathscr T_{\chi} $ is actually an isometry between $(\mathscr T_{\chi'},\mu_{(\chi',\tau')})$ and $(\mathscr T_{\chi},\mu_{(\chi,\tau)})$: by construction $\tilde \phi \circ \pi_{\chi'}=\pi_{\chi}\circ \phi$ and $\phi\circ(\tau')^{-1}=\tau^{-1}$, so we have $\tilde \phi \circ \pi_{\chi'}\circ(\tau')^{-1}=\pi_{\chi}\circ\tau^{-1}$ and the push-forward of $\mu_{(\chi',\tau')}$ by $\tilde \phi$ is $\mu_{(\chi,\tau)}$. This leads to the following definition:

\begin{defi}\label{def:Discontinuous Contour and Measured Real Tree}
  For all  $x\in D_0([0,1],\R_+)$, the measured $\R$-tree $\bigl(\mathscr T_x,\mu_x\bigr)$ associated with $x$ is the isometry class of $\bigl(\mathscr T_{\chi},\mu_{(\chi,\tau)}\bigr)$, where $(\chi,\tau)$ is any parametric representation of $x$ and $\mu_{(\chi,\tau)}$ is the law of $\pi_{\chi}\bigl(\tau^{-1}(U)\bigr)$ with $U$ uniform over $[0,1]$.
\end{defi}

\begin{rmk}
    As every continuous function $f$ can be represented by $(f,\id_{[0,1]})$, this definition agrees with the previous construction of  $(\mathscr T_f,\mu_f)$ where  $\mu_f$ is the law of $\pi_f(U)$  (with $U$ uniform over $[0,1]$). Moreover, for all $x\in D_0([0,1],\R_+)$ we can also use the quotient space $\bigl([0,1]/\!\!\sim_x,d_x\bigr)$ and its identification as a subspace of $\mathscr T_x$ to understand the measure $\mu_x$ as the law of $\pi_x(U)$. Indeed recall that for every parametric representation $(\chi,\tau)$ of $x$ we have that $\tau^{-1}$ induces an isometric map $\tau^*: \bigl([0,1]/\!\!\sim_x,d_x\bigr) \mapsto \bigl(\mathscr T_{\chi},d_{\chi}\bigr)$ which may or may not be onto but satisfies $\tau^*\circ\pi_x=\pi_{\chi}\circ\tau^{-1}$. Up to the identifications $\tau^*=$ inclusion and $\bigl(\mathscr T_{\chi},\mu_{(\chi,\tau)}\bigr)=\bigl(\mathscr T_x,\mu_x\bigr)$, we thus have that  $\mu_x$ is the law of $\pi_x(U)$ where $U$ is uniform over $[0,1]$. As a consequence of this and the right-continuity of $\pi_x$, we also get that
    \begin{equation}\label{eq:support}
        \supp(\mu_x) = \overline{\bigl([0,1]/\!\!\sim_x,d_x\bigr)}^{\mathscr T_x}.
    \end{equation}
\end{rmk}

\smallskip

We end this part with a simple example of a measured $\R$-tree encoded by a càdlàg excursion illustrating the previous definitions. Interestingly, the measured $\R$-tree in this example has a measure supported on a strict closed subset of the tree and thus cannot be encoded by a continuous excursion.

Consider $T$ a finite plane tree and $c'_T$ its \textit{discontinuous} time-scaled contour function, given by
\begin{equation*}
    \forall t \in [0,1],\ c'_T(t) = C_T(\lfloor 2(\#T-1)t\rfloor).
\end{equation*}
It appears that $c'_T$ and the time-scaled contour function $c_T$  have some parametric representations with the same spatial component (we detail this in Section~\ref{ssct:framework}, see Figures~\ref{fig:Continuous and discontinuous interpolations} and~\ref{fig:Parametric representation of interpolations}), hence $\mathscr T_{c'_T}=\mathscr T_{c_T}$ \ie it is the $\R$-tree obtained from $T$ by considering edges as segments with unit length and vertices as the endpoints of these segments. But the subspace $\bigl([0,1]/\!\!\sim_{c'_T},d_{c'_T}\bigr)$ corresponds to the set of these endpoints, which means that it really is isometric to $T$ with the graph distance and that $\mu_{c'_T}$ can be seen as a measure on $T$. Since for each vertex in $T$ the contour process comes back to this vertex a number of times proportional to its number of incident edges, it appears that $\mu_{c'_T}$ is not uniform over $T$ but is the degree-biased distribution over $T$ (with the full degree, not the out-degree used in the definition of plane trees).

\paragraph{Continuity with respect to \textsc{Skorokhod}'s $M_1$ topology.}  Our main interest for the construction based on parametric representations is not its generality (in particular the only discontinuous contour functions we will use in this paper are $\alpha$-stable excursions with no negative jump hence according to Lemma~\ref{lem:Criterion quotient space} the corresponding $\R$-trees may be defined as quotient spaces), it is its compatibility with \textsc{Skorokhod}'s $M_1$ topology, as it really broadens the ways to get convergence of trees through convergence of contour functions. 

To understand this compatibility, one simply has to keep in mind a simple definition of \textsc{Skorokhod}'s $M_1$ topology. This topology on $D([0,1])$ is generated by the distance $d_{M_1}$ defined as follows: for any $x_1,x_2 \in D([0,1])$,
\begin{equation*}\label{eq:distance M1}
    d_{M_1}(x_1,x_2)=\inf_{(\chi_i,\tau_i)\in\PR(x_i)} \norm{\chi_1-\chi_2}_{\infty}\vee\norm{\tau_1-\tau_2}_{\infty}.
\end{equation*}
The underlying idea is to control the convergence of \textit{completed} graphs in a \textit{functional} sense. We refer to \cite[section~3.3, section~11.5.2, section~12.3]{Whitt2002StochasticProcessLimits} for more details on this distance, here we simply stress that although $d_{M_1}$ is not complete, it defines a Polish topology (see \cite[section~12.8]{Whitt2002StochasticProcessLimits}) hence it gives a standard framework for limit theorems. 
 
From this simple definition and the properties of $\R$-trees exposed above, we immediately deduce Proposition~\ref{prop:GHlipschitzM1} which gives the continuity of $x \in D_0([0,1],\R_+) \mapsto \mathscr T_x$ for $d_{M_1}$ and $\dGH$.
\begin{proof}[Proof of Proposition~\ref{prop:GHlipschitzM1}.]
    By Definition~\ref{def:Discontinuous Contour and Real Tree} and Lemma~\ref{lem:GHlipschitz} we have that
\begin{equation*}
     \dGH(\mathscr T_x,\mathscr T_y) \leq 2\inf_{(\chi_i,\tau_i)\in\PR(x_i)} \norm{\chi_1-\chi_2}_{\infty}\leq 2d_{M_1}(x,y). \vspace{-0.5cm}
\end{equation*}
\end{proof}

In the broader setting of measured $\R$-trees, it seems that the \textsc{Lipschitz} property does not hold anymore but, as stated in Proposition~\ref{prop:GHP M1}, we still have the continuity of $x \mapsto \bigl(\mathscr T_x,\mu_x\bigr)$. Our proof relies on the definitions of $\dGHP$ (Equation~\eqref{eq:GHP via distortion and coupling}) and of $\bigl(\mathscr T_x,\mu_x\bigr)$ (Definition~\ref{def:Discontinuous Contour and Measured Real Tree}) and also requires a slightly different characterization of the convergence with respect to $d_{M_1}$: according to \cite[Theorem~12.5.1]{Whitt2002StochasticProcessLimits}, a sequence $(x_n)_n$ converges to $x$ with respect to $d_{M_1}$ if and only if for all $(\chi,\tau) \in \PR(x)$ we have $(\chi_n,\tau_n)\in \PR(x_n)$ such that $(\chi_n,\tau_n) \rightarrow (\chi,\tau)$ with respect to ${\norm{\cdot}_{\infty}}$ as $n\shortrightarrow +\infty$.

\begin{proof}[Proof of Proposition~\ref{prop:GHP M1}.]
The continuity is a consequence of the following fact: for any $x_1,x_2 \in D_0([0,1],\R_+)$ and any $(\chi_1,\tau_1)\in\PR(x_1),(\chi_2,\tau_2)\in\PR(x_2)$ we have
    \begin{equation}\label{eq:Almost lipschitz}
        \dGHP\bigr( (\mathscr T_{x_1},\mu_{x_1}), (\mathscr T_{x_2},\mu_{x_2})\bigl) \leq \inf_{\delta>0} 2\bigl(\norm{\chi_1-\chi_2}_{\infty} + \omega(\chi_1;\delta)\bigr)\vee \frac{\norm{\tau_1-\tau_2}_{\infty}}{\delta},
    \end{equation}
    where $\omega(f;\delta)=\sup_{\abs{x-y}<\delta}\abs{f(x)-f(y)}$ is the modulus of continuity of a function $f$. Indeed, consider a sequence $(x_n)_n$ converging to $x$ with respect to $d_{M_1}$. By \cite[Theorem~12.5.1]{Whitt2002StochasticProcessLimits}, we have some parametric representations $(\chi_n,\tau_n)$ of $x_n$ and $(\chi,\tau)$ of $x$ such that $(\chi_n,\tau_n) \rightarrow (\chi,\tau)$ with respect to ${\norm{\cdot}_{\infty}}$. Set $\delta_n = \sqrt{\norm{\tau-\tau_n}_{\infty}}$ and apply \eqref{eq:Almost lipschitz} to get 
\begin{equation*}
     \dGHP\bigr( (\mathscr T_{x},\mu_{x}), (\mathscr T_{x_n},\mu_{x_n})\bigl) \leq 2\bigl(\norm{\chi-\chi_n}_{\infty} + \omega(\chi;\delta_n)\bigr)\vee \sqrt{\norm{\tau-\tau_n}_{\infty}}\cv{n} 0.
\end{equation*}

    Thus it only remains to prove \eqref{eq:Almost lipschitz}. We fix $\delta>0$ and define a correspondence between $\mathscr T_{\chi_1}$ and $\mathscr T_{\chi_2}$ by setting
    \begin{equation*}
        C_{\delta}=\{(\pi_{\chi_1}(u),\pi_{\chi_2}(v)):\ u,v \in [0,1] \st \abs{u-v}\leq \delta\}.
    \end{equation*}
    For $\bigl(\pi_{\chi_1}(u),\pi_{\chi_2}(v)\bigr),\bigl(\pi_{\chi_1}(u'),\pi_{\chi_2}(v')\bigr)\in C_{\delta}$, we have $ \abs{d_{\chi_1}(v,v')-d_{\chi_2}(v,v')} \leq 4\norm{\chi_1-\chi_2}_{\infty}$, and we also have $d_{\text{haus}}([u,u'],[v,v']) \leq \delta$ which implies  $\abs{d_{\chi_1}(u,u')-d_{\chi_1}(v,v')} \leq 4\omega(\chi_1;\delta)$. By summing these two parts we get
    \begin{equation}\label{eq:distortion Cdelta}
        \dis(C_{\delta})\leq 4\bigl(\norm{\chi_1-\chi_2}_{\infty} + \omega(\chi_1;\delta)\bigr).
    \end{equation}
    
    Next, let $\nu$ be the simple coupling between $\mu_{(\chi_1,\tau_1)}$ and $\mu_{(\chi_2,\tau_2)}$ obtained as the law of the couple $(V_1,V_2)=\bigl(\pi_{\chi_1}(\tau_1^{-1}(U)),\pi_{\chi_2}(\tau_2^{-1}(U))\bigr)$ where  $U$ is uniform over $[0,1]$. By construction we have
    \begin{equation*}
        \evt{(V_1,V_2) \not\in C_{\delta}} = \evt{U \in B_{\delta}} 
    \end{equation*}
    where $B_{\delta}$ is the set of \textit{bad points} $\{ t \in [0,1] \st \abs{\tau_1^{-1}(t)-\tau_2^{-1}(t)}>\delta\}$. We now control $\leb(B_{\delta})$ with the area between $\tau_1$ and $\tau_2$. First observe that by general properties of the inverse of an increasing function, we have 
    \begin{equation*}
    \{(u,t)\st \tau_1(u) \leq t \leq \tau_2(u)\}=\{(u,t)\st \tau_2^{-1}(t-) \leq u \leq \tau_1^{-1}(t)\}.
\end{equation*}
    As the left-continuous and right-continuous inverses are equal except on a countable set, it implies that $\int_0^1 \bigl(\tau_1^{-1}(t)-\tau_2^{-1}(t))_+\d t = \int_0^1 \bigl(\tau_1(u)-\tau_2(u)\bigr)_-\d u$, and this also holds when one exchanges $\tau_1$ and $\tau_2$. We thus have two ways to compute the area between $\tau_1$ and $\tau_2$ which provide
    \begin{align*}
       1-\nu(C_{\delta}) =\leb(B_{\delta}) \leq \frac{1}{\delta}\int_0^1 \abs{\tau_1^{-1}(t)-\tau_2^{-1}(t)}\d t = \frac{1}{\delta}\int_0^1 \abs{\tau_1(u)-\tau_2(u)}\d u \leq \frac{\norm{\tau_1-\tau_2}_{\infty}}{\delta}.
    \end{align*}
Combining this with \eqref{eq:distortion Cdelta} and \eqref{eq:GHP via distortion and coupling} and the fact that $\delta$ has been chosen arbitrarily proves \eqref{eq:Almost lipschitz}.

\end{proof}

\begin{rmk}
    It will be useful later to know some relations between \textsc{Skorokhod}'s $M_1$ topology and more standard topologies:
    \begin{itemize}
    \item $x_n \xrightarrow[\tend{n}]{J_1} x$ implies  $x_n \xrightarrow[\tend{n}]{M_1} x$ ;
    \item if $x_n \xrightarrow[\tend{n}]{M_1} x$ with $x$ continuous then actually $x_n \xrightarrow[\tend{n}]{\norm{\cdot}_{\infty}} x$.
\end{itemize}
In particular, Propositions~\ref{prop:GHlipschitzM1} and~\ref{prop:GHP M1} give also the continuity with respect to the usual \textsc{Skorokhod}'s $J_1$ topology.
\end{rmk}

\subsection{Properties of $\mathscr T_{\mathbbm x}$}\label{ssct:description limit tree}

We end this section on the encoding of $\R$-trees by studying those encoded by a normalized excursion $\mathbbm x \in D_0([0,1],\R_+)$ of a spectrally positive $\alpha$-stable \textsc{Lévy} process with $\alpha \in (1,2)$. Let us first describe $\mathbbm x$. Formally, consider a \textsc{Lévy} process $X$ whose \textsc{Laplace} transform is given, for all $t,\lambda >0$, by $\E\evt{\exp\cro{-\lambda X_t}} = e^{t\lambda^{\alpha}}$, then $\mathbbm x$ is defined as an excursion away from $0$ of $X$ conditioned to have a unit length (we can make sense of this conditioning by applying excursion theory or simply by the scaling property of $X$, see \eg \cite{Chaumont1994ExcursionStable}). This definition leads to the following path-properties holding almost surely for $\mathbbm x$:
\begin{enumerate}
    \item The set of discontinuities $\disc(\mathbbm x)$ is countable and dense in $(0,1)$;
    \item The jumps of $\mathbbm x$ are positive, \ie $\forall s \in [0,1]\ \mathbbm x(s) \geq \mathbbm x(s-)$;
    \item For all $s \in\disc(\mathbbm x)$, for all $\varepsilon$ small enough we have  $\inf_{[s-\varepsilon,s]} \mathbbm x < \mathbbm x(s-)$ and $\inf_{[s,s+\varepsilon]} \mathbbm x < \mathbbm x(s)$.
    \item For all $0<s<t<1$, there is at most one $r\in (s,t)$ such that $\mathbbm x(r) =\inf_{[s,t]}\mathbbm x$;
    \item For $\leb\ae\, t\in(0,1)$, there is no one-sided local minimum for $\mathbbm x$ at $t$, \ie  for $\varepsilon>0$ small enough, $\inf_{0 \leq s \leq \varepsilon} \mathbbm x(t-s)<\mathbbm x(t)$ and $\inf_{0 \leq s \leq \varepsilon} \mathbbm x(t+s)<\mathbbm x(t)$;
    \item For all $I$ non-trivial interval of $[0,1]$, $\mathbbm x$ is not monotone on $I$.

\end{enumerate}

\begin{rmk}
    These path-properties follow from the \textsc{Markov} property, the form of the \textsc{Lévy} measures associated with spectrally positive $\alpha$-stable \textsc{Lévy} processes and the duality property of these processes (see \cite[Proposition~2.10]{Kortchemski2014StableLaminations} for some proofs).
\end{rmk}

\paragraph{First properties and interpolation.} 
We prove Proposition~\ref{prop:Continuum random tree}, which gives that $\mathscr T_{\mathbbm x}$ is a binary continuous random tree, by means of a classical argument linking local minima in a contour function and degrees of the underlying tree. In short, given a continuous contour $f$ and a vertex $v\in \mathscr T_f$, the set $\pi_f^{-1}(\{v\})$ splits $f$ into $\deg(v)-1$ consecutive excursions above height $d_f(\pi_f(0),v)$, plus a remaining part. We refer to \cite[p.30]{Legall2005RandomTreesApplications} for a detailed discussion of this link.

\begin{proof}[Proof of Proposition~\ref{prop:Continuum random tree}.]
For the first claim, observe that thanks to properties $3$ and $4$ of $\mathbbm x$ we also have that for all one-to-one parametric representation $(\chi,\tau)$ of $\mathbbm x$ and for all $0<s<t<1$, there is at most one $r\in (s,t)$ such that $\chi(r) =\inf_{[s,t]}\chi$. Based on \cite[p.30]{Legall2005RandomTreesApplications} which links degree and contour function, we deduce that the degree of any vertex $ v \in \mathscr T_{\chi}$ is less than $3$, hence $\mathscr T_{\mathbbm x}$ is binary.

For the properties of $\mu_{\mathbbm x}$ and the set of leaves $\mathcal L(\mathscr T_{\mathbbm x})$, we use the fact that for $\leb\ae\, t\in(0,1)$, $\mathbbm x(t-)=\mathbbm x(t)$ and there is no one-sided local minimum for $\mathbbm x$ at $t$. It implies that for any one-to-one parametric representation $(\chi,\tau)$ of $\mathbbm x$, for $\leb\ae\, t\in(0,1)$ there is no one-sided local minimum for $\chi$ at $\tau^{-1}(t)$, hence $\pi_{\chi}\circ \tau^{-1}(t)$ is a leaf in $\mathscr T_{\chi}$ (again, see \cite[p.30]{Legall2005RandomTreesApplications}). Since $(\mathscr T_{\mathbbm x},\mu_{\mathbbm x})$ is isometric to $(\mathscr T_{\chi},\mu_{(\chi,\tau)})$, we directly get $\mu_{\mathbbm x}\!\left(\mathcal L(\mathscr T_{\mathbbm x}) \right)=1$. Moreover, by Lemma~\ref{lem:Criterion quotient space} and \eqref{eq:support} we get $\supp(\mu_{\mathbbm x})=\mathscr T_{\mathbbm x}$ and $\mathcal L(\mathscr T_{\mathbbm x})$ must be dense. Finally, we assume that $\mu_{\mathbbm x}$ has an atom $a$ to get a contradiction. The atom $a$ must be a leaf and it means that either $\pi_{\mathbbm x}^{-1}(\{a\})={\mathbbm x}^{-1}(\{0\})$ or $\pi_{\mathbbm x}^{-1}(\{a\})=[s,t]$ with ${\mathbbm x}$ constant on this segment. But $\mathbbm x$ vanishes only at $0$ and $1$ and cannot be monotone on a non-trivial interval, hence almost surely $\pi_{\mathbbm x}^{-1}(\{a\})$ is reduced to a singleton or $\{0,1\}$, which contradicts $\leb\bigr(\pi_{\mathbbm x}^{-1}(\{a\})\bigl)=\mu_{\mathbbm x}(\{a\})>0$. 
\end{proof}

The continuity of the encoding by a contour function also enables to transfer some properties of contour functions to their encoded trees. We use this to prove Proposition~\ref{prop:interpolation}, which states that $\alpha \mapsto  \bigl(\mathscr T_{\mathbbm x^{(\alpha)}},\mu_{\mathbbm x^{(\alpha)}}\bigr)$ interpolates between the unit segment (endowed with $\leb$) and the Brownian CRT. Note that there is a similar interpolation result for looptrees which is based on the convergence of encoding functions (\cite[Theorem~1.2]{CurienKortchemski2014StableLooptrees}), but here the continuity given by Proposition~\ref{prop:GHP M1} enables a simpler proof.

\begin{proof}[Proof of Proposition~\ref{prop:interpolation}]
    Thanks to the continuity of $x \mapsto \bigl(\mathscr T_x,\mu_x)$ established in Proposition~\ref{prop:GHP M1}, this is a direct consequence of the following facts:
    \begin{itemize}
        \item $(\mathbbm x^{(\alpha)})_{1 < \alpha \leq 2}$ converges in distribution toward $\mathbbm x^{(2)}=\sqrt 2 \mathbbm e$ when $\alpha \shortrightarrow 2$ (see \cite[Proposition~3.5]{CurienKortchemski2014StableLooptrees});
        \item $(\mathbbm x^{(\alpha)})_{1 < \alpha \leq 2}$ converges in distribution toward $t \mapsto (1-t)\1_{0 < t \leq 1}$ when $\alpha \shortrightarrow 1$, or more accurately their time-reversed versions converge toward the càdlàg function $t\mapsto t\1_{0 \leq t < 1}$ (see \cite[Proposition~3.6]{CurienKortchemski2014StableLooptrees}). According to Definition~\ref{def:Discontinuous Contour and Measured Real Tree}, the corresponding $\R$-tree is $\bigl(([0,1],\abs{\cdot}),\leb\bigr)$.
    \end{itemize} 
\vspace{-0.5cm}
\end{proof}
\paragraph{Stable looptrees and \textsc{Hausdorff} dimension.}

As mentioned in the introduction, another important property of $\mathscr T_{\mathbbm x}$ is that it may be seen as a \textit{spanning $\R$-tree} of the stable looptree $\mathscr L_{\mathbbm x}$. The intuition motivating this formulation comes in part from the relation between a rotated plane tree $\rot T$ and the looptree $\Loop(T)$ corresponding to $T$, which will be explored at the end of Section~\ref{sct:rotation}. Nevertheless, we try to motivate the idea of \textit{spanning $\R$-tree} to interpret Proposition~\ref{prop:Looptree and dimension} before proving this proposition in the rest of this section. Recall that $\mathscr L_{\mathbbm x}$ is a random compact metric space introduced in \cite{CurienKortchemski2014StableLooptrees} as the distributional scaling limit of $\Loop(\Tree_n)$. This space consists of \textit{continuous loops}, \ie subspaces isometric to circles, glued together by some additional limit points, but to simplify, we first give examples of $\R$-trees that could be seen as \textit{spanning $\R$-trees} of some simpler metric spaces made of loops:
\begin{itemize}
    \item The unit segment is a spanning $\R$-tree of the unit circle;
    \item Consider $T$ a finite plane tree, and let $\mathscr T$ (resp. $\mathscr L$) be the metric space obtained from $\rot T$ (resp. $\Loop(T)$) when edges are seen as unit-length segments. Based on the forthcoming Lemma~\ref{lem:discrete spanning tree} and Figure~\ref{fig:Rot in Looptree}, we will argue in Section~\ref{sct:rotation} that $\mathscr T$ is a spanning $\R$-tree of $\mathscr L$.
\end{itemize}
In both examples, a formal statement is that we have a mapping $p$ from an $\R$-tree $\mathscr T$ to a metric space $\mathscr L$ which is onto and \textsc{Lipschitz}, and there is a subset $\mathscr T^*$ of $\mathscr T$ such that $\mathscr T\!\setminus\!\mathcal L(\mathscr T) \subset \mathscr T^*$ and $p$ restricted to $\mathscr T^*$ is one-to-one. Proposition~\ref{prop:Looptree and dimension} gives the same statement for the $\R$-tree $\mathscr T_{\mathbbm x}$ and the metric space $\mathscr L_{\mathbbm x}$, with the additional result that they have the same \textsc{Hausdorff} dimension. To prove this proposition, we use the construction of $\mathscr L_{\mathbbm x}$ as a quotient space (see \cite[Section~2]{CurienKortchemski2014StableLooptrees}) obtained by considering another pseudo-distance on $[0,1]$ denoted here by $d_{\mathbbm x}^{\text{loop}}$ (defined by \cite[Equation~2.5]{CurienKortchemski2014StableLooptrees}), and we rely on the existing proof of the fact that $\dim_H\left( \mathscr L_{\mathbbm x}\right) = \alpha$ (see \cite[Section~3.3]{CurienKortchemski2014StableLooptrees}). Consequently we will refer to \cite{CurienKortchemski2014StableLooptrees} for more details on some definitions and arguments.


\begin{proof}[Proof of Proposition~\ref{prop:Looptree and dimension}.]
    We first prove the existence and properties of the map $p$. Note that by Lemma~\ref{lem:Criterion quotient space}, $\mathscr T_{\mathbbm x} = \bigl([0,1]/\!\!\sim_{\mathbbm x},d_{\mathbbm x}\bigr)$ as isometry classes, thus we may and will work with the quotient space $\bigl([0,1]/\!\!\sim_{\mathbbm x},d_{\mathbbm x}\bigr)$ instead of the isometry class $\mathscr T_{\mathbbm x}$. 
    
    An immediate consequence of \cite[Lemma~2.1]{CurienKortchemski2014StableLooptrees} is that $d_{\mathbbm x}^{\text{loop}} \leq d_{\mathbbm x}$, thus the canonical projection $\pi^{\text{loop}}_{\mathbbm x}:[0,1]\mapsto \mathscr L_{\mathbbm x}$ (defined after \cite[Definition~2.3]{CurienKortchemski2014StableLooptrees}) is such that for $s,t \in [0,1]$
    \begin{equation*}
        d_{\mathbbm x}^{\text{loop}}\bigl(\pi^{\text{loop}}_{\mathbbm x}(s),\pi^{\text{loop}}_{\mathbbm x}(t)\bigr) \leq d_{\mathbbm x}(s,t).
    \end{equation*}
    Thanks to this, $\pi^{\text{loop}}_{\mathbbm x}$ induces a map $p$ from the quotient space $\bigl([0,1]/\!\!\sim_{\mathbbm x},d_{\mathbbm x}\bigr)$ onto $\mathscr L_{\mathbbm x}$ such that $p\circ\pi^{\text{tree}}_{\mathbbm x}=\pi^{\text{loop}}_{\mathbbm x}$, and moreover $p$ is $1$-\textsc{Lipschitz}. Now we prove that in addition, for all $u \in \{\pi^{\text{tree}}_{\mathbbm x}(s), \text{ for } s \in \disc(\mathbbm x)\}$ we have $v \not\in \{\pi^{\text{tree}}_{\mathbbm x}(s), \text{ for } s \in \disc(\mathbbm x)\}$ such that $p(u)=p(v)$. For $t \in [0,1]$, we set $e(t)=\inf\{t'\geq t : \mathbbm x(t') < \mathbbm x(t-)\}$. By the path-properties of $\mathbbm x$ we always have that $e(t)\not\in\disc(\mathbbm x)$ and $d_{\mathbbm x}^{\text{loop}}\bigl(t,e(t)\bigr)=0$. Moreover, for $t \in \disc(\mathbbm x)$ such that $u=\pi^{\text{tree}}_{\mathbbm x}(t)$ we must have that $u$ is a leaf while $e(t)>t$ hence $e(t)$ is a one-sided local minimum and $v=\pi^{\text{tree}}_{\mathbbm x}\bigl(e(t)\bigr)$ is not a leaf. This prevents $v$ from being in $\{\pi^{\text{tree}}_{\mathbbm x}(s), \text{ for } s \in \disc(\mathbbm x)\}$, and $v$ is such that $p(v)=\pi^{\text{loop}}_{\mathbbm x}\bigl(e(t)\bigr)=\pi^{\text{loop}}_{\mathbbm x}\bigl(t\bigr)=p(u)$. We finally prove that $p$ is one-to-one once it is restricted to the complementary of $\{\pi^{\text{tree}}_{\mathbbm x}(s), \text{ for } s \in \disc(\mathbbm x)\}$. We first combine the path-properties of $\mathbbm x$ and \cite[Equations~2.4,~2.5]{CurienKortchemski2014StableLooptrees} to get that for $s<t \in [0,1]$,
    \begin{equation*}
        \text{if } d_{\mathbbm x}^{\text{loop}}(s,t)=0, \text{ then } t=e(s) \text{ and } d_{\mathbbm x}(s,t)=\mathbbm x(s)-\mathbbm x(s-).
    \end{equation*}
    For $u \neq v \in [0,1]/\!\!\sim_{\mathbbm x}$, consider $s,t \in [0,1]$ such that $u=\pi^{\text{tree}}_{\mathbbm x}(s)$ and $v=\pi^{\text{tree}}_{\mathbbm x}(t)$, then $p(u)=p(v)$ means that $d_{\mathbbm x}^{\text{loop}}(s,t)=0$ and $d_{\mathbbm x}^{\text{tree}}(s,t)>0$. According to the previous display it implies that one of $s,t$ belongs to $\disc(\mathbbm x)$, thus one of $u,v$ belongs to $\{\pi^{\text{tree}}_{\mathbbm x}(s), \text{ for } s \in \disc(\mathbbm x)\}$.

    We now turn to the \textsc{Hausdorff} dimension. Since $p:\mathscr T_{\mathbbm x} \mapsto \mathscr L_{\mathbbm x}$ is $1$-\textsc{Lipschitz} and onto, we have
    \begin{equation*}
        \dim_H\left(\mathscr L_{\mathbbm x}\right) \leq \dim_H\left(\mathscr T_{\mathbbm x}\right).
    \end{equation*}
    By \cite[Theorem~1.1]{CurienKortchemski2014StableLooptrees}, $\dim_H\left(\mathscr L_{\mathbbm x}\right)=\alpha$ so it only remains to upper-bound $\dim_H\left(\mathscr T_{\mathbbm x}\right)$ by $\alpha$ to get the desired conclusion. This upper-bound can be proved with the very same argument used for the upper-bound $\dim_H\left(\mathscr L_{\mathbbm x}\right) \leq \alpha$ in \cite[Section~3.3.1]{CurienKortchemski2014StableLooptrees}: one may consider the same partition of $[0,1]$ to build a partition of $\mathscr T_{\mathbbm x}$, and we have the same upper-bound on the diameter of those parts as \cite[Equation~3.26]{CurienKortchemski2014StableLooptrees}. Consequently we get the same conclusion, namely $\dim_H\left(\mathscr T_{\mathbbm x}\right) \leq \alpha$.
\end{proof}

\section{The rotation correspondence}\label{sct:rotation}

The rest of this paper, including this section, is now devoted to the study of the rotation $\rot$, which has been defined in the introduction. In this section, we first give an alternative description of $\rot T$ for a plane tree $T$. We also define some transformations related to the rotation, the co-rotation and the looptree transformation. They are not used to prove Theorem~\ref{thm:MainResult}, but they will give further insight on the large-scale effects of the rotation studied in the following sections.

\paragraph{A geometric description.} 
In Section~\ref{sct:Intro}, we gave a recursive definition of the rotation correspondence $\rot$ illustrated by Figure~\ref{fig:DefRecRotation}.  As mentioned earlier, it is clear that it defines a bijection between plane trees with $n$ vertices and binary plane trees with $n$ leaves (hence $2n-1$ vertices), for all $n \in \N$. However, this recursive definition will only be useful when it comes to induction. An important property that can be obtained this way is that for any plane tree $T$, we always have $$S_{\rot T}=C_T\oplus(-1)\ ,$$ where $\oplus$ is concatenation, \ie the \Luka walk of $\rot T$ is the contour process of $T$ plus a final step down (\textsc{Marckert} also refers to \textsc{Flajolet, Sedgewick} (\cite[exercise~5.42~p.262]{FlajoletSedgewick1996IntroductionAnalysisAlgorithms})  about this fact). For almost everything else, we will rather work with another definition provided by \textsc{Marckert} which involves a true rotation (in the geometric sense). Consider a plane tree $T$ embedded in the upper-half plane with the edges \textit{parent -- first child} being vertical and siblings being on the same horizontal lines. To draw $\rot T$, one only needs 3 steps (illustrated by Figure~\ref{fig:DefGeoRotation}):
\begin{itemize}
\item[]\textbf{step 1:} Remove the root and all edges of $T$, except the vertical edges \textit{parent -- first child}. Then for each vertex which initially had a younger sibling, add a horizontal edge between this vertex and its first sibling on its right (\ie the oldest among its younger siblings).
\item[]\textbf{step 2:} Make a rotation of $\pi/4$, so that the first child of the initial root clearly appears as the new root and each edge, when directed to go away from the new root, goes upward. Note that the former \textit{parent -- first child} edges go upward-left (we call them \textit{left edges}) while the added edges go upward-right (we call them \textit{right edges}).
\item[]\textbf{step 3:} Add 1 or 2 leaves to each vertex in order to get a binary tree (if there is no vertex, just add one in order to get the binary tree $\{\varnothing\}$).  When we add only 1 leaf at a vertex, its position (first or second child) is determined by the following rule: each \textit{left edge} must correspond to a first (\ie left) child and each \textit{right edge} must correspond to a second (\ie right) child.

\end{itemize}
This transformation is indeed the same as the rotation described above since it satisfies the same induction. 

\begin{figure}[h]
    \hspace{-1cm}\includegraphics[scale=0.13]{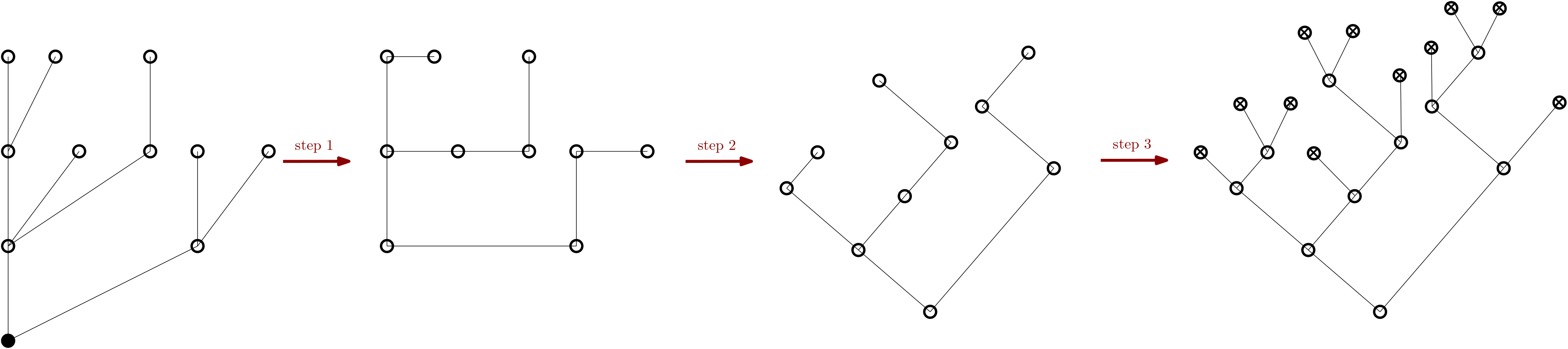}
     \caption{\centering A visual way to construct $\rot T$ from $T$. The root of $T$ is marked as the black vertex while the added leaves are crossed.}
    \label{fig:DefGeoRotation}
\end{figure}

\medskip
\paragraph{The internal subtree.}

The geometric description illustrated by Figure~\ref{fig:DefGeoRotation} enables the definition of a closely related tree that will be crucial to link $\rot T$ and $T$: Starting from $T\neq\{\varnothing\}$, apply \textbf{step 1} and \textbf{step 2} and then stop. The resulting plane tree may and will be identified to the subtree of the internal vertices of $\rot T$. We denote it by $(\rot T)^{\circ}$. Note that we will use both its plane tree structure and its subtree structure (in the graph-theoretic sense) of $\rot T$ without further explicit mention. For instance, a lexicographical enumeration of $(\rot T)^{\circ}$ will also be seen as a lexicographical enumeration of the internal vertices of $\rot T$ (observe that this enumeration is also increasing for the lexicographical order of $\rot T$, so there is no ambiguity).

The subtree $(\rot T)^{\circ}$ is useful to handle the rotation since the geometric description illustrated by Figure~\ref{fig:DefGeoRotation} enables a clear identification between vertices of $T\!\setminus\!\{\varnothing\}$ and vertices of $(\rot T)^{\circ}$  (as the former ones become the latter ones in this construction). For $u \in$  $T\!\setminus\!\{\varnothing\}$, we denote by $\widetilde u \in (\rot T)^{\circ}$  the associated internal vertex of $\rot T$ and call it the \textit{rotated version of $u$}. A simple but crucial observation about this identification is that we can express the height of a rotated vertex:
\begin{equation}\label{eq:decHauteurLargeur}
    \forall u \in T\!\setminus\!\{\varnothing\},\ |\widetilde u|=|u|-1+L(u),
\end{equation}
where $L(u)$ is the number of vertices grafted on $\Zintfo{\varnothing,u}$ on its left side, as defined in Section~\ref{sct:planeTrees}. Indeed, one can see in Figure~\ref{fig:DefGeoRotation} that the height of $\widetilde u$ is the number of edges in the path from $\varnothing$ to this vertex after \textbf{step 1}. Vertical edges amount to the height of $u$, minus $1$ as we have deleted the root, and horizontal edges amount to $L(u)$, hence we have \eqref{eq:decHauteurLargeur}.

A last important remark about the identification between vertices of $T\!\setminus\!\{\varnothing\}$ and vertices of $(\rot T)^{\circ}$ is that it preserves the lexicographical order: $u \prec u'$ in $T\!\setminus\!\{\varnothing\}$ implies $\widetilde u \prec \widetilde u'$ in $(\rot T)^{\circ}$ (see \cite[Lemma~1]{Marckert2004Rotation}). Hence this identification boils down to: 
\begin{center}
   $\widetilde u_1,\ldots,\widetilde u_{\#T-1}$ is the lexicographical enumeration of $(\rot T)^{\circ}$, \ie  for every  $i\in \Zintff{1,\#T-1}$,  $\widetilde u_i$ is the $i$th internal vertex of $\rot T$ in lexicographical order.
\end{center}

Note that extracting the internal subtree of a plane tree is not a one-to-one function. Thus, despite the previous handy identification, working with $(\rot T)^{\circ}$ only as a plane tree means losing information. One can see at \textbf{step 3} that the missing pieces of information given by the leaves are the following: a left leaf in $\rot T$ means that its parent (an internal vertex) corresponds to a leaf in $T$, while a right leaf means that its parent corresponds to a vertex in $T$ which is the last child of its own parent. 

\paragraph{A symmetric counterpart: the co-rotation.} 
We now briefly introduce another closely related transformation, but this one is not needed to understand and prove Theorem~\ref{thm:MainResult}. The reason to mention this transformation is rather to stress that the rotation is not a canonical correspondence between plane trees and binary trees since the ideas underlying the definitions of $\rot$ are the same, up to some symmetries, as the ones underlying this new transformation called here the \textit{co-rotation} and denoted by $\corot$. As this description suggests, these two transformations share many properties and thus most results about the rotation can be transferred to the co-rotation. However when it comes to the lexicographical order, using the rotation is not equivalent to using the co-rotation as the rotation is the only one to preserve this order (in the sense explained above). Because of this, the encoding processes of $\corot T$ for $T$ a large \textsc{Bienaymé} tree have an interesting property that does not hold in the case of the rotation: they jointly converge toward the same process (see Theorem~\ref{thm:Detailed Result Corotation}).

\smallskip

Let us properly define the co-rotation. As for the rotation, we have a recursive definition:
\begin{itemize}
    \item[] \textbf{Base case:} $\corot \{\varnothing\} =\{\varnothing\}$.
    \item[] \textbf{Inductive step:} Consider a plane tree $T$ with $n\geq 2$ vertices. Denote by $T_1,\ldots,T_k$ the $k\geq 1$ subtree(s) grafted on the root. Then $\corot T$ consists of a spine of $k+1$ vertices with $\corot T_k$ grafted on  the right of the $1$st one, \ldots, $\corot T_1$ grafted on  the right of the $k$th one (see Figure~\ref{fig:DefRecCorotation}).
\end{itemize}

\begin{figure}[h!]
    \centering
    \includegraphics[width=0.75\linewidth]{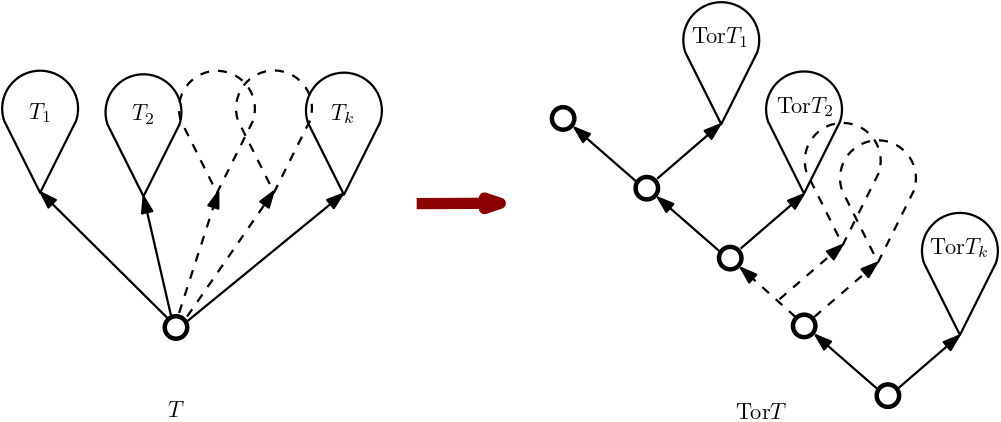}
        \caption{\centering Recursive definition of $\corot$}

    \label{fig:DefRecCorotation}
\end{figure}

There is also a geometric description similar to the one illustrated by Figure~\ref{fig:DefGeoRotation} in the case of the rotation, but one must keep the edges \textit{parent -- last child} instead of \textit{parent -- first child} at \textbf{step~1} and then apply a rotation of $-\pi/4$ instead of $\pi/4$ at \textbf{step 2}. We can thus define the internal subtree $(\corot T)^{\circ}$ in the same way we defined $(\rot T)^{\circ}$, and again there is an identification between vertices of $T\!\setminus\!\{\varnothing\}$ and vertices of $(\corot T)^{\circ}$. However \textit{this identification with $(\corot T)^{\circ}$ does not preserve the lexicographical order}.

$\corot$ lacks another property of $\rot$: there is no simple relation between $S_{\corot\!T}$ and  $C_T$. Actually, we can express $S_{\corot\!T}$ from $S_T$ without much difficulty (we will give some details in Section~\ref{ssct:Luka of Corot}) but this relation is less direct than the one between $S_{\rot\!T}$ and $C_T$.

\smallskip

We conclude this part with an explicit and simple relation between $\rot$ and $\corot$. First, for every plane tree $T$ we introduce its mirror tree $T^{\div}$. It is obtained by embedding $T$ in the upper-half plane, rooted at the origin, and by taking its image by $(x,y)\mapsto (-x,y)$ \ie the reflection symmetry with respect to the vertical line (see Figure~\ref{fig:mirror1}). Equivalently, $T^{\div}$ can be defined by considering $T$ and, for each vertex with some children, by reversing the birth order of its children. The mirror transformation $T\mapsto T^{\div}$ clearly is an involution, and for every plane tree $T$ we have\begin{equation}\label{eq:Link Rot Corot}
    \corot T = \bigl(\rot(T^{\div})\bigr)^{\div}.
\end{equation}

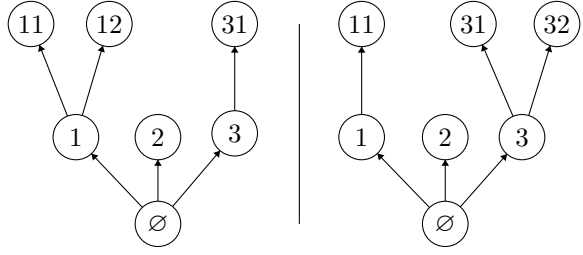
\begin{figure}[h]
    \begin{center}
    \begin{tikzpicture}[scale=0.1]
\tikzstyle{every node}+=[inner sep=0pt]
\draw [black] (24.9,-43.2) circle (3);
\draw (24.9,-43.2) node {$\varnothing$};
\draw [black] (14.1,-31.7) circle (3);
\draw (14.1,-31.7) node {$1$};
\draw [black] (24.9,-31.7) circle (3);
\draw (24.9,-31.7) node {$2$};
\draw [black] (35,-31.2) circle (3);
\draw (35,-31.2) node {$3$};
\draw [black] (8.2,-16.7) circle (3);
\draw (8.2,-16.7) node {$11$};
\draw [black] (18.5,-16.7) circle (3);
\draw (18.5,-16.7) node {$12$};
\draw [black] (35,-16.7) circle (3);
\draw (35,-16.7) node {$31$};
\draw [black] (62.7,-43.2) circle (3);

\draw (62.7,-43.2) node {$\varnothing$};
\draw [black] (62.7,-31.7) circle (3);
\draw (62.7,-31.7) node {$2$};
\draw [black] (51.7,-31.7) circle (3);
\draw (51.7,-31.7) node {$1$};
\draw [black] (72.8,-31.7) circle (3);
\draw (72.8,-31.7) node {$3$};
\draw [black] (66.4,-16.7) circle (3);
\draw (66.4,-16.7) node {$31$};
\draw [black] (77.4,-16.7) circle (3);
\draw (77.4,-16.7) node {$32$};
\draw [black] (51.7,-16.7) circle (3);
\draw (51.7,-16.7) node {$11$};
\draw [black] (26.83,-40.9) -- (33.07,-33.5);
\fill [black] (33.07,-33.5) -- (32.17,-33.79) -- (32.94,-34.43);
\draw [black] (35,-28.2) -- (35,-19.7);
\fill [black] (35,-19.7) -- (34.5,-20.5) -- (35.5,-20.5);
\draw [black] (24.9,-40.2) -- (24.9,-34.7);
\fill [black] (24.9,-34.7) -- (24.4,-35.5) -- (25.4,-35.5);
\draw [black] (22.85,-41.01) -- (16.15,-33.89);
\fill [black] (16.15,-33.89) -- (16.34,-34.81) -- (17.07,-34.13);
\draw [black] (14.94,-28.82) -- (17.66,-19.58);
\fill [black] (17.66,-19.58) -- (16.95,-20.21) -- (17.91,-20.49);
\draw [black] (13,-28.91) -- (9.3,-19.49);
\fill [black] (9.3,-19.49) -- (9.13,-20.42) -- (10.06,-20.05);
\draw [black] (62.7,-40.2) -- (62.7,-34.7);
\fill [black] (62.7,-34.7) -- (62.2,-35.5) -- (63.2,-35.5);
\draw [black] (64.68,-40.95) -- (70.82,-33.95);
\fill [black] (70.82,-33.95) -- (69.92,-34.23) -- (70.67,-34.89);
\draw [black] (60.63,-41.03) -- (53.77,-33.87);
\fill [black] (53.77,-33.87) -- (53.97,-34.79) -- (54.69,-34.1);
\draw [black] (51.7,-28.7) -- (51.7,-19.7);
\fill [black] (51.7,-19.7) -- (51.2,-20.5) -- (52.2,-20.5);
\draw [black] (71.62,-28.94) -- (67.58,-19.46);
\fill [black] (67.58,-19.46) -- (67.43,-20.39) -- (68.35,-20);
\draw [black] (73.68,-28.83) -- (76.52,-19.57);
\fill [black] (76.52,-19.57) -- (75.81,-20.19) -- (76.76,-20.48);

\draw [black] (43.5,-43.2) -- (43.5,-16.7);
\end{tikzpicture}
    \caption{\centering The tree from Figure~\ref{fig:lexicoEnumeration} and its mirror version (both embedded in $\Ul$).}
    \label{fig:mirror1}
    \end{center}
\end{figure}

\paragraph{Spanning trees of looptrees.}
The two correspondences $\rot$ and $\corot$ are also related to the looptree transformation: they produce spanning trees of looptrees. The looptree transformation has been introduced in \cite{CurienKortchemski2014StableLooptrees} in order to understand the associated limiting objects called \textit{stable looptrees}, and we have explained in Section~\ref{ssct:description limit tree} that the scaling limits of rotated \textsc{Bienaymé} trees may also be seen as spanning trees of stable looptrees. We have chosen to present clearly this relation between discrete rotated trees and discrete looptrees, but we will not use it formally, it will only serve as inspiration to understand the link between scaling limits of rotated trees and looptrees.

\smallskip

We first define the looptree $\Loop(T)$ associated to a plane tree $T$: $\Loop(T)$ is the medial graph of $T$, \ie the graph obtained by gluing cycles according to the tree structure of $T$. More precisely, for each edge in $T$ there is a vertex in $\Loop(T)$, and for each couple of edges $(e,e')$ in $T$ incident to a common vertex $v\in T$ such that $e'$ directly comes after $e$ in clockwise order around $v$, there is an edge in $\Loop(T)$ between the vertices corresponding to $e$ and $e'$ (see Figure~\ref{fig:Looptree}).  In the following, such a couple of edges $(e,e')$ is called a \textit{corner} of $T$, so we simply say that an edge in $\Loop(T)$ corresponds to a corner in $T$. Moreover, we distinguish the corner of $T$ going from the edge  \textit{root -- last child of the root} to the edge  \textit{root -- first child of the root}, we call it the root-corner of $T$ and the corresponding edge in $\Loop(T)$ is the root-edge of $\Loop(T)$. We also endow $\Loop(T)$ with the cyclic order on its edges inherited from the cyclic order on the corners of $T$. This cyclic order on corners is formally defined as follows: $(e,e')$ is the successor of $(f,f')$ if and only if $f'=e$ and $f$ and the vertex of incidence of  $(f,f')$ is different from the vertex of incidence of $(e,e')$. A more visual way to define this order is to say that the enumeration of the corners which starts at the root-corner and follows this cyclic order must correspond to the sequence of vertices defining the contour process of $T$.

\begin{figure}[h]
    \centering
\begin{tikzpicture}[scale=0.11]
\tikzstyle{every node}+=[inner sep=0pt]
\draw [black] (31.6,-48) circle (3);
\draw (31.6,-48) node {$u_1$};
\draw [black] (53.5,-47.3) circle (3);
\draw (53.5,-47.3) node {$u_8$};
\draw [black] (23.4,-38.1) circle (3);
\draw (23.4,-38.1) node {$u_2$};
\draw [black] (18,-28.7) circle (3);
\draw (18,-28.7) node {$u_3$};
\draw [black] (27.4,-28.7) circle (3);
\draw (27.4,-28.7) node {$u_4$};
\draw [black] (31.6,-38.1) circle (3);
\draw (31.6,-38.1) node {$u_5$};
\draw [black] (39.5,-38.1) circle (3);
\draw (39.5,-38.1) node {$u_6$};
\draw [black] (39.5,-28.7) circle (3);
\draw (39.5,-28.7) node {$u_7$};
\draw [black] (49.8,-38.1) circle (3);
\draw (49.8,-38.1) node {$u_9$};
\draw [black] (57.8,-38.1) circle (3);
\draw (57.8,-38.1) node {$u_{10}$};
\draw [black] (42.8,-56.9) circle (3);
\draw (42.8,-56.9) node {$u_0$};
\draw [black] (42.8,-56.9) circle (2.4);
\draw [black] (29.69,-45.69) -- (25.31,-40.41);
\draw (26.95,-44.48) node [left] {$e_2$};
\draw [black] (31.6,-45) -- (31.6,-41.1);
\draw (31.1,-43.05) node [left] {$e_5$};
\draw [black] (33.47,-45.66) -- (37.63,-40.44);
\draw (36.11,-44.47) node [right] {$e_6$};
\draw [black] (39.5,-35.1) -- (39.5,-31.7);
\draw (40,-33.4) node [right] {$e_7$};
\draw [black] (21.91,-35.5) -- (19.49,-31.3);
\draw (20.04,-34.63) node [left] {$e_3$};
\draw [black] (24.57,-35.34) -- (26.23,-31.46);
\draw (26.14,-34.35) node [right] {$e_4$};
\draw [black] (52.38,-44.52) -- (50.92,-40.88);
\draw (50.9,-43.59) node [left] {$e_9$};
\draw [black] (54.77,-44.58) -- (56.53,-40.82);
\draw (56.36,-43.74) node [right] {$e_{10}$};
\draw [black] (40.45,-55.03) -- (33.95,-49.87);
\draw (35.83,-52.94) node [below] {$e_1$};
\draw [black] (45.03,-54.9) -- (51.27,-49.3);
\draw (49.53,-52.59) node [below] {$e_8$};
\end{tikzpicture}
\hfill 
\begin{tikzpicture}[scale=0.09]
\tikzstyle{every node}+=[inner sep=0pt]
\draw [black] (40.8,-51.4) circle (3);
\draw (40.8,-51.4) node {$e_1$};
\draw [black] (55.3,-52.8) circle (3);
\draw (55.3,-52.8) node {$e_8$};
\draw [black] (21.3,-48.1) circle (3);
\draw (21.3,-48.1) node {$e_2$};
\draw [black] (12.7,-56.9) circle (3);
\draw (12.7,-56.9) node {$e_3$};
\draw [black] (9.5,-43.4) circle (3);
\draw (9.5,-43.4) node {$e_4$};
\draw [black] (26.3,-32.5) circle (3);
\draw (26.3,-32.5) node {$e_5$};
\draw [black] (40.8,-36.2) circle (3);
\draw (40.8,-36.2) node {$e_6$};
\draw [black] (50.4,-25.5) circle (3);
\draw (50.4,-25.5) node {$e_7$};
\draw [black] (64.1,-38) circle (3);
\draw (64.1,-38) node {$e_9$};
\draw [black] (72.9,-52.1) circle (3);
\draw (72.9,-52.1) node {$e_{10}$};
\draw [black] (42.329,-48.842) arc (137.66934:31.30084:7.504);
\draw [line width=0.3mm, black] (55.165,-55.777) arc (-14.01965:-177.01018:7.519);
\draw [black] (38.613,-53.441) arc (-54.52181:-144.6886:11.4);
\draw [black] (20.13,-45.348) arc (-164.84849:-230.69415:10.905);
\draw [black] (28.946,-31.111) arc (108.77801:42.59234:9.638);
\draw [black] (42.293,-38.794) arc (23.16928:-23.16928:12.723);
\draw [black] (21.24,-51.08) arc (-12.53664:-76.14627:7.55);
\draw [black] (9.746,-56.526) arc (-109.46841:-223.86137:7.01);
\draw [black] (12.296,-42.362) arc (99.58525:36.97946:7.964);
\draw [black] (39.213,-33.68) arc (-159.72456:-284.07211:7.205);
\draw [black] (54.15,-50.042) arc (-166.19468:-255.27629:9.732);
\draw [black] (67.056,-38.44) arc (72.72798:-8.79029:9.76);
\draw [black] (71.028,-54.431) arc (-46.96425:-128.48054:10.483);
\draw [black] (7.339,-41.336) arc (254.04214:-33.95786:2.25);
\draw [black] (14.023,-59.58) arc (54:-234:2.25);
\draw [black] (23.815,-30.84) arc (263.98866:-24.01134:2.25);
\draw [black] (52.339,-27.761) arc (28.69536:-112.49203:7.209);
\draw [black] (51.016,-22.576) arc (195.84277:-92.15723:2.25);
\draw [black] (62.777,-35.32) arc (234:-54:2.25);
\draw [black] (75.887,-52.184) arc (116.12449:-171.87551:2.25);
\end{tikzpicture}
\caption{\centering The tree $T$ from Figure~\ref{fig:DefGeoRotation} on the left and its associated looptree $\Loop(T)$ on the right. Labels indicate the correspondence between the vertices of $T\!\setminus\!\{\varnothing\}$, the edges of $T$ and the vertices of $\Loop(T)$. The root-edge of $\Loop(T)$ is marked in bold. The cyclic order on edges matches the clockwise order.}
    \label{fig:Looptree}
\end{figure}

Note that we can identify the vertices of $T\!\setminus\!\{\varnothing\}$ with the edges of $T$ (an edge corresponds to its farthest endpoint from the root) hence we can identify the vertices of $T\!\setminus\!\{\varnothing\}$ with those of $\Loop(T)$. We may now write the relation between $\rot T$ and $\Loop(T)$ as follows:

\begin{lem}\label{lem:discrete spanning tree}
    Consider the sequence $(c_0,\ldots ,c_{2\#T-2})$ of  edges of $\Loop(T)$ which start from the root-edge $c_0$ of $\Loop(T)$ and then follows the cyclic order until it reaches again the root-edge (so that $c_{2\#T-2}=c_0$). We define three subsets of edges with a recursive algorithm. First set $\mathcal E=\mathcal R=\mathcal L= \emptyset$, then for each $k$ between $1$ and $ 2\#T-2$, 
    \begin{itemize}
        \item[-] if $c_k$ is a self-loop then add $c_k$ to $\mathcal L$;
        \item[-] else, if $\mathcal E \cup \{c_k\}$ contains a cycle then add $c_k$ to $\mathcal R$;
        \item[-] otherwise add $c_k$ to $\mathcal E$.
    \end{itemize}

    Then the set of edges $\mathcal E$ forms a spanning tree of $\Loop(T)$. Moreover, $(\rot T)^{\circ}$ is equal to this tree once it is rooted at the vertex of incidence of $c_0$ and $c_1$ and ordered by the cyclic order of $\Loop(T)$ (with the convention that the first child of the root is the endpoint of $c_1$). Finally, if $T\neq \{\varnothing \}$ then $\mathcal L$ is in bijection with the set of left leaves of $\rot T$ and $\mathcal R$ is in bijection with the set of right leaves of $\rot T$.
\end{lem}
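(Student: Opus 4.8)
The plan is to recast the whole statement as a combinatorial fact about the corners of $T$, using the identifications recalled just before the lemma between the vertices of $T\!\setminus\!\{\varnothing\}$, the edges of $T$ and the vertices of $\Loop(T)$, and between the corners of $T$ and the edges of $\Loop(T)$. First I would describe $\Loop(T)$ explicitly: the corners around an internal vertex $w$ of $T$ (with children $w1,w2,\ldots$) form a cyclic ``loop'' passing through $w$ (when $w\neq\varnothing$) and all its children, whereas each leaf of $T$ yields a self-loop. This sorts the edges of $\Loop(T)$ into families: for each non-root internal vertex $w$, an \emph{opening corner} $\{w,w1\}$ and the \emph{sibling corners} $\{wi,w(i+1)\}$ joining consecutive children; for every internal vertex the \emph{closing corner} of its loop, i.e. the last of its corners met by the contour, which for a non-root $w$ joins its last child back to $w$ and for the root is exactly the root-corner $c_0$ joining the last child of the root to the first; and finally the self-loops.

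Second, I would match these families with the geometric description of $\rot$. By that description the left edges of $(\rot T)^{\circ}$ are the former vertical edges $\widetilde w\,\widetilde{w1}$ for internal $w\neq\varnothing$, that is, exactly the opening corners, while the right edges of $(\rot T)^{\circ}$ are the horizontal edges joining a vertex to its first younger sibling, that is, exactly the sibling corners (those among the children of the root included). Hence, seen inside $\Loop(T)$, the internal subtree $(\rot T)^{\circ}$ has edge set equal to the union of the opening and sibling corners; being a tree on all of $\Loop(T)$'s vertices it is a spanning tree, and the edges left out are exactly the closing corners and the self-loops. In the same spirit, the leaf-description recalled above identifies the self-loops (the leaves of $T$) with the left leaves of $\rot T$, and the closing corners (indexed by the last children of $T$) with the right leaves of $\rot T$.

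Third, I would analyse the algorithm, which is just the generic cycle-avoiding spanning-tree construction, with self-loops shunted into $\mathcal L$. The key is that $c_0,c_1,\ldots$ is the contour order of the corners, so inside each loop the opening corner is met first, the sibling corners next (interleaved with the explorations of the children's subtrees), and the closing corner last, after the loop has been fully traversed; in particular $c_0$, the closing corner of the root loop, is met last of all as $c_{2\#T-2}$. Therefore, when a closing corner is processed, the opening and sibling corners of its loop — which already form a path between its two endpoints — have been added to $\mathcal E$, so it closes a cycle and goes to $\mathcal R$. A short induction on the processing order then finishes the argument: since the opening and sibling corners form the acyclic set $(\rot T)^{\circ}$, each of them creates no cycle and is added to $\mathcal E$ when reached, each closing corner is sent to $\mathcal R$, and each self-loop to $\mathcal L$. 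This yields $\mathcal E=(\rot T)^{\circ}$ (so $\mathcal E$ is a spanning tree and coincides with $(\rot T)^{\circ}$), together with $\mathcal R=\{\text{closing corners}\}$ and $\mathcal L=\{\text{self-loops}\}$, whence the two bijections of the last assertion follow from the leaf-description of the previous step.

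The rooting and planar-order part I would read off directly: $c_0$ and $c_1$ share the vertex $1=\widetilde 1$, which is the root of $(\rot T)^{\circ}$, and the other endpoint of $c_1$ is precisely the first child of that root; more generally, the cyclic order of $\Loop(T)$ around each vertex induces on $\mathcal E$ the same plane-tree order as that of $(\rot T)^{\circ}$, since both descend from the planar embedding of $T$ used to build $\rot T$. I expect the main obstacle to be the second step, namely pinning down the dictionary between the left edges, right edges and leaves produced by the geometric rotation and the opening, sibling, closing and self-loop corners of $\Loop(T)$, because this is where the two independently-defined objects must be reconciled through their common planar structure; once this correspondence is secured, the spanning-tree and cycle-closing facts are just the standard behaviour of a depth-first exploration of the loops taken in contour order. (For $T=\{\varnothing\}$ the looptree is empty and all the content is vacuous, which is why the last assertion is stated only for $T\neq\{\varnothing\}$.)
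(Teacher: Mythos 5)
Your proof is correct and follows essentially the same route as the paper's own (sketched) argument: the geometric description of $\rot$ identifies the edges of $(\rot T)^{\circ}$ with the opening and sibling corners of $T$, making it a spanning tree of $\Loop(T)$, while the missing corners split into self-loops (leaves of $T$, matching left leaves of $\rot T$) and closing corners (last-child-to-parent corners, matching right leaves). The only difference is that you spell out the induction showing the greedy algorithm in contour order actually recovers this spanning tree (each closing corner is met after the rest of its loop and hence closes a cycle), a step the paper leaves implicit in the phrase ``edges ending cycle.''
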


As one may expect, $(\corot T)^{\circ}$ can be seen as another spanning tree of $\Loop(T)$, constructed with the same algorithm up to some modifications: one must reverse the enumeration of edges of $\Loop(T)$ (\ie  consider the sequence $c_0,c_{2\#T-3},\dots,c_1,c_0$), declare that the corresponding tree is rooted at the vertex of incidence of $c_0$ and $c_{2\#T-3}$ and declare that $c_{2\#T-3}$ is the \textit{last} edge out of the root. Moreover $\mathcal L$ and $\mathcal R$ exchange their role in the case of the co-rotation. See Figure~\ref{fig:Rot in Looptree} for a drawing of these two specific spanning trees of a looptree. 

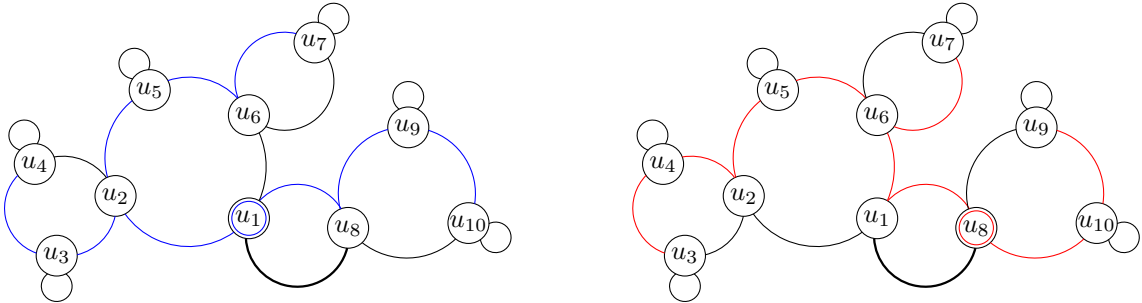
\begin{figure}[h]
\centering
\begin{tikzpicture}[scale=0.09]
\tikzstyle{every node}+=[inner sep=0pt]
\draw [black] (40.8,-51.4) circle (3);
\draw [blue] (40.8,-51.4) circle (2.5);
\draw (40.8,-51.4) node {$u_1$};
\draw [black] (55.3,-52.8) circle (3);
\draw (55.3,-52.8) node {$u_8$};
\draw [black] (21.3,-48.1) circle (3);
\draw (21.3,-48.1) node {$u_2$};
\draw [black] (12.7,-56.9) circle (3);
\draw (12.7,-56.9) node {$u_3$};
\draw [black] (9.5,-43.4) circle (3);
\draw (9.5,-43.4) node {$u_4$};
\draw [black] (26.3,-32.5) circle (3);
\draw (26.3,-32.5) node {$u_5$};
\draw [black] (40.8,-36.2) circle (3);
\draw (40.8,-36.2) node {$u_6$};
\draw [black] (50.4,-25.5) circle (3);
\draw (50.4,-25.5) node {$u_7$};
\draw [black] (64.1,-38) circle (3);
\draw (64.1,-38) node {$u_9$};
\draw [black] (72.9,-52.1) circle (3);
\draw (72.9,-52.1) node {$u_{10}$};
\draw [blue] (42.329,-48.842) arc (137.66934:31.30084:7.504);
\draw [line width = 0.3mm, black] (55.165,-55.777) arc (-14.01965:-177.01018:7.519);
\draw [blue] (38.613,-53.441) arc (-54.52181:-144.6886:11.4);
\draw [blue] (20.13,-45.348) arc (-164.84849:-230.69415:10.905);
\draw [blue] (28.946,-31.111) arc (108.77801:42.59234:9.638);
\draw [black] (42.293,-38.794) arc (23.16928:-23.16928:12.723);
\draw [blue] (21.24,-51.08) arc (-12.53664:-76.14627:7.55);
\draw [blue] (9.746,-56.526) arc (-109.46841:-223.86137:7.01);
\draw [black] (12.296,-42.362) arc (99.58525:36.97946:7.964);
\draw [blue] (39.213,-33.68) arc (-159.72456:-284.07211:7.205);
\draw [blue] (54.15,-50.042) arc (-166.19468:-255.27629:9.732);
\draw [blue] (67.056,-38.44) arc (72.72798:-8.79029:9.76);
\draw [black] (71.028,-54.431) arc (-46.96425:-128.48054:10.483);
\draw [black] (7.339,-41.336) arc (254.04214:-33.95786:2.25);
\draw [black] (14.023,-59.58) arc (54:-234:2.25);
\draw [black] (23.815,-30.84) arc (263.98866:-24.01134:2.25);
\draw [black] (52.339,-27.761) arc (28.69536:-112.49203:7.209);
\draw [black] (51.016,-22.576) arc (195.84277:-92.15723:2.25);
\draw [black] (62.777,-35.32) arc (234:-54:2.25);
\draw [black] (75.887,-52.184) arc (116.12449:-171.87551:2.25);

\end{tikzpicture}
\hfill 
\begin{tikzpicture}[scale=0.09]
\tikzstyle{every node}+=[inner sep=0pt]
\draw [black] (40.8,-51.4) circle (3);
\draw (40.8,-51.4) node {$u_1$};
\draw [black] (55.3,-52.8) circle (3);
\draw [red] (55.3,-52.8) circle (2.5);
\draw (55.3,-52.8) node {$u_8$};
\draw [black] (21.3,-48.1) circle (3);
\draw (21.3,-48.1) node {$u_2$};
\draw [black] (12.7,-56.9) circle (3);
\draw (12.7,-56.9) node {$u_3$};
\draw [black] (9.5,-43.4) circle (3);
\draw (9.5,-43.4) node {$u_4$};
\draw [black] (26.3,-32.5) circle (3);
\draw (26.3,-32.5) node {$u_5$};
\draw [black] (40.8,-36.2) circle (3);
\draw (40.8,-36.2) node {$u_6$};
\draw [black] (50.4,-25.5) circle (3);
\draw (50.4,-25.5) node {$u_7$};
\draw [black] (64.1,-38) circle (3);
\draw (64.1,-38) node {$u_9$};
\draw [black] (72.9,-52.1) circle (3);
\draw (72.9,-52.1) node {$u_{10}$};
\draw [red] (42.329,-48.842) arc (137.66934:31.30084:7.504);
\draw [line width = 0.3mm, black] (55.165,-55.777) arc (-14.01965:-177.01018:7.519);
\draw [black] (38.613,-53.441) arc (-54.52181:-144.6886:11.4);
\draw [red] (20.13,-45.348) arc (-164.84849:-230.69415:10.905);
\draw [red] (28.946,-31.111) arc (108.77801:42.59234:9.638);
\draw [red] (42.293,-38.794) arc (23.16928:-23.16928:12.723);
\draw [black] (21.24,-51.08) arc (-12.53664:-76.14627:7.55);
\draw [red] (9.746,-56.526) arc (-109.46841:-223.86137:7.01);
\draw [red] (12.296,-42.362) arc (99.58525:36.97946:7.964);
\draw [black] (39.213,-33.68) arc (-159.72456:-284.07211:7.205);
\draw [black] (54.15,-50.042) arc (-166.19468:-255.27629:9.732);
\draw [red] (67.056,-38.44) arc (72.72798:-8.79029:9.76);
\draw [red] (71.028,-54.431) arc (-46.96425:-128.48054:10.483);
\draw [black] (7.339,-41.336) arc (254.04214:-33.95786:2.25);
\draw [black] (14.023,-59.58) arc (54:-234:2.25);
\draw [black] (23.815,-30.84) arc (263.98866:-24.01134:2.25);
\draw [red] (52.339,-27.761) arc (28.69536:-112.49203:7.209);
\draw [black] (51.016,-22.576) arc (195.84277:-92.15723:2.25);
\draw [black] (62.777,-35.32) arc (234:-54:2.25);
\draw [black] (75.887,-52.184) arc (116.12449:-171.87551:2.25);
\end{tikzpicture}
    \caption{\centering $(\rot T)^{\circ}$, in blue, and $(\corot T)^{\circ}$, in red, embedded in $\Loop(T)$. Roots are marked by double circles.}
    \label{fig:Rot in Looptree}
\end{figure}
Lemma~\ref{lem:discrete spanning tree} may be proved either by induction or by means of the geometric definition of $\rot$ illustrated at Figure~\ref{fig:DefGeoRotation}. We sketch this latter version. We have seen that there is a correspondence between vertices of $(\rot T)^{\circ}$ and edges of $T$, and \textbf{step 1} in the geometric construction of the rotation reveals that the edges of $(\rot T)^{\circ}$ correspond to some corners of $T$. This makes $(\rot T)^{\circ}$ a spanning tree of $\Loop(T)$, and then it suffices to check that the missing corners of $T$ are those corners corresponding to the leaves of $T$ (which give self-loops in $\Loop(T)$ and left leaves in $\rot T$) plus those corners linking the last edge of a vertex of $T$ to its parent edge -or first edge in the case of the root- (which give edges ending cycle in $\Loop(T)$ and right leaves in $\rot T$).

\smallskip 

To conclude, Lemma~\ref{lem:discrete spanning tree} says a bit more than the fact that $(\rot T)^{\circ}$ is a spanning tree of $\Loop(T)$, as it gives that each edge of $\rot T$ corresponds to an edge of $\Loop(T)$ and vice versa. Actually, we can construct $\Loop(T)$ simply by considering $\rot T$ and by merging each leaf with an internal vertex. With this observation, we explain the simple example of a spanning $\R$-tree given in Section~\ref{ssct:description limit tree}. Consider $\mathscr T$ (resp. $\mathscr L$) the metric space obtained from $\rot T$ (resp. $\Loop(T)$) when edges are seen as unit-length segments, then we can also construct $\mathscr L$ as a quotient from $\mathscr T$ by identifying each leaf (except the root) with a branching point. In particular the canonical projection $p:\mathscr T \mapsto \mathscr L$ is onto and \textsc{Lipschitz}, with  a subset $\mathscr T^*$ of $\mathscr T$ such that $\mathscr T\!\setminus\!\mathcal L(\mathscr T) \subset \mathscr T^*$ and $p$ restricted to $\mathscr T^*$ is one-to-one.

\section{Scaling limits of encoding processes}\label{sct:Encoding Processes}

Thanks to the extension of the encoding of $\R$-trees to càdlàg contour functions, and in particular its continuity property given by Propositions~\ref{prop:GHlipschitzM1} and~\ref{prop:GHP M1}, we can obtain Theorem~\ref{thm:MainResult} by studying the joint scaling limits of some encoding process of $\Tree_n$ and $\rot \Tree_n$ in the setting of \textsc{Skorokhod}'s $M_1$ topology. This section is devoted to this study, we first detail the framework and give some known results about the encoding processes of $\Tree_n$ and then combine these results with the properties of $\rot$ given in Section~\ref{sct:rotation} to determine the desired scaling limits. We also study the co-rotation and show that its encoding processes asymptotically behave in a nicer way than those of the rotation, namely they jointly converge toward the same excursion.

\subsection{Details of the framework}\label{ssct:framework}
\paragraph{Hypothesis on \textsc{Bienaymé} trees.}
For all adequate $n\in \N^*$, $\Tree_n$ still denotes a random tree with law $\BGW\evt{\,\cdot\,\vert\text{\#vertices = $n$}}$, where $\mu$ is a fixed critical offspring distribution, and we suppose that $\mu$ is attracted to a spectrally positive $\alpha$-stable distribution, with $\alpha \in (1,2]$. This means that there exist real sequences $(a_n)_{n\geq 1},(b_n)_{n \geq 1}$ such that if $(Z_n)_{n\geq 1}$ is a sequence of \iid variables with distribution $\mu$ then we have 
\begin{equation}\label{eq:defStable}
    \frac{\sum_{i=1}^n Z_i-b_n}{a_n} \xrightarrow[\tend{n}]{\dstb} Z,
\end{equation}
where $Z$ is a real variable with finite \textsc{laplace} transform satisfying $\E\evt{\exp\cro{-\lambda Z}} = e^{\lambda^{\alpha}}$, for all $\lambda >0$. 
The variable $Z$ is a Gaussian in the case $\alpha=2$, otherwise it is a spectrally positive $\alpha$-stable variable. 
Let us also mention that $(a_n)_{n \geq 1}$ is necessarily of the form $a_n=n^{1/\alpha}\ell(n)$ with $\ell$ a slowly varying function\footnote{In this paper, one only needs to know that $\ell$ is such that for all $\varepsilon >0$, for $x$ large enough we have $x^{-\varepsilon} \leq \ell(x)\leq x^{\varepsilon}$. See \cite{BinghamGoldieTeugels1987RegularVariation} for a proper definition and a general account of slowly varying functions.}. 
We refer to \cite{JansonPreprintStableDistributions} for a review of stable distributions as well as necessary and sufficient conditions on $\mu$ that ensure that it is attracted to an $\alpha$-stable distribution. 

\paragraph{Choice of topology.}
A distribution attracted to a spectrally positive $\alpha$-stable distribution also satisfies a functional version of \eqref{eq:defStable}, but the limiting process is a Brownian motion in the case $\alpha=2$ only, otherwise it is a spectrally positive $\alpha$-stable \textsc{Lévy} process which is càdlàg but not continuous. For this reason, some encoding processes of $\Tree_n$ actually have discontinuous scaling limits and we must work in the space of càdlàg functions $D([0,1])$. In the studies on random trees, the topology used on this space is usually \textsc{Skorokhod}'s $J_1$ topology, however we will see that some encoding processes of rotated trees converge with respect to the weaker \textsc{Skorokhod}'s $M_1$ topology but not with respect to the $J_1$ topology, and in light of Proposition~\ref{prop:GHP M1} this weaker convergence is enough to get the desired result. We thus choose the following convention:
\begin{gather*}
    \textit{In this paper (except in the appendix~\ref{app:MirrorLuka}),}\\ \textit{all convergences of processes are with respect to \textsc{Skorokhod}'s $M_1$ topology.}
\end{gather*}
One only needs the definition \eqref{eq:distance M1} and the few properties of the $M_1$ topology given in section~\ref{ssct:M1DiscontinuousContour} to understand the rest of this paper (see \cite{Whitt2002StochasticProcessLimits} for a more general account). We just stress that the convergences already established with respect to $J_1$ imply the same convergences with respect to $M_1$ but they will be written in a slightly different way, because under $J_1$ it is often required to work with the \textit{discontinuous} interpolation of a sequence to establish its scaling limit while there is no need for this under $M_1$. More precisely, consider $A=(A(k))_{0\leq k\leq p}$ a real sequence. We have defined its time-scaled function $a\in C([0,1])$ by linear interpolation in order to make it continuous, but we could have chosen to embed $A$ in $D([0,1])$ by defining the piecewise constant function $a':t\mapsto A(\lfloor pt\rfloor)$ (see Figure~\ref{fig:Continuous and discontinuous interpolations}). This can make a huge difference for the $J_1$ topology as $d_{J_1}(a,a')\geq \max_{1 \leq k \leq p}\abs{A(k)-A(k-1)}/2$, but one can find parametric representations of $a$ and $a'$ with the same spatial component and close temporal components as depicted in Figure~\ref{fig:Parametric representation of interpolations}, which entails that $d_{M_1}(\lambda a,\lambda a')\leq 1/p$ for any $\lambda \in \R$. As $p$ always goes to $+\infty$ here, working with $a$ or $a'$ makes no difference for scaling limits with respect to the $M_1$ topology, thus we may and will choose to always use the linear interpolation, \ie the time-scaled function defined in Section~\ref{sct:planeTrees}.

\begin{figure}[h]
    \centering
    \includegraphics[width=0.5\linewidth]{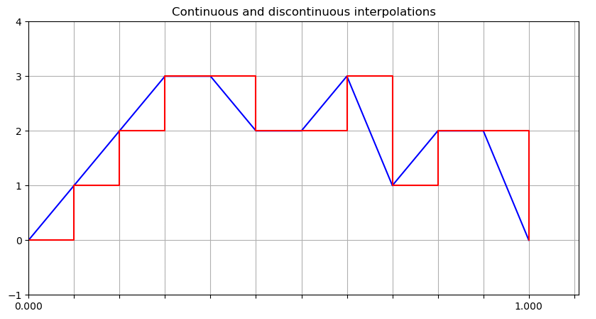}
    \caption{\centering Continuous (in blue) and discontinuous (in red) interpolations of the height process from Figure~\ref{fig:encodingProcesses}.}
    \label{fig:Continuous and discontinuous interpolations}
\end{figure}
\begin{figure}[h]
    \centering
    \includegraphics[width=\linewidth]{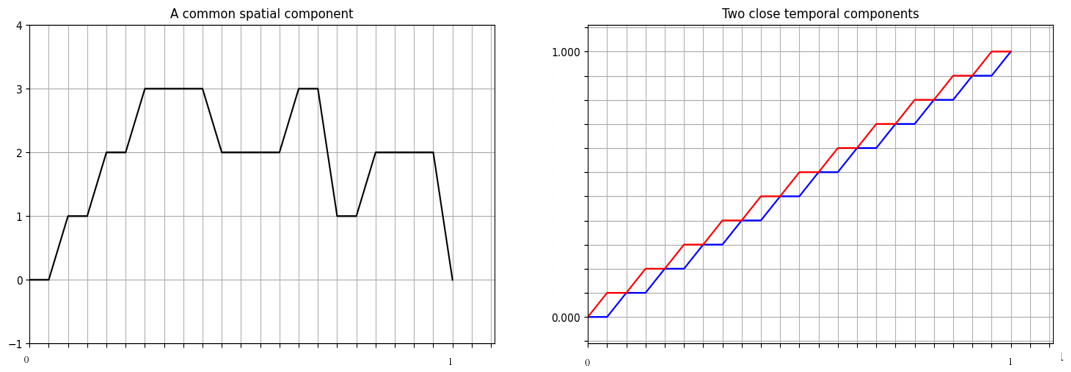}
    \caption{\centering Some parametric representations of the two functions from Figure~\ref{fig:Continuous and discontinuous interpolations}. They share the same spatial component, but have slightly different temporal components (the matching colors indicate which function is represented).}
    \label{fig:Parametric representation of interpolations}
\end{figure}


\subsection{Known scaling limits for \textsc{Bienaymé} trees}\label{ssct:KnownScalingLimits}

\paragraph{\textsc{Duquesne}'s Theorem.}

First we introduce a theorem of \textsc{Duquesne \cite{Duquesne2003Contour}} which generalizes \textsc{Aldous}' theorem by giving the joint scaling limits of all encoding functions of $\Tree_n$ in this general framework. The limiting processes can be defined from a spectrally positive $\alpha$-stable \textsc{Lévy} process in the following way: consider a \textsc{Lévy} process $X$ whose \textsc{Laplace} transform is given by $\E\evt{\exp\cro{-\lambda X_t}} = e^{t\lambda^{\alpha}}$, for all $t,\lambda >0$. The continuous-time height process $H$ associated to $X$ is informally defined from $X$ by means of the relation expressing the height process of a tree from its \Luka walk. More formally, $H$ is the continuous modification of the process $H'$ such that $H'_t$ is the local time, at instant $t$, of $X^{(t)}$ at its supremum, where $X^{(t)}$ is the dual \textsc{Lévy} process of $(X_s)_{0\leq s \leq t}$. The scaling limits obtained by \textsc{Duquesne} are a normalized excursion $\mathbbm x$ of a spectrally positive $\alpha$-stable \textsc{Lévy} process together with its associated height excursion $\mathbbm h$, which can be thought of as a positive excursion $\mathbbm x$ of $X$ away from $0$ conditioned to have a unit length together with the corresponding excursion $\mathbbm h$ of $H$. We refer to \cite[Section~3.1-2]{Duquesne2003Contour} for proper definitions and details, the only thing to keep in mind here is that $\mathbbm x$ is continuous if and only if $\alpha=2$ while $\mathbbm h$ is always continuous.

We can now give a formal statement of \textsc{Duquesne}'s theorem, where we recall that $s_T,h_T$ and $c_T$ refer to the time-scaled function of the \Luka walk, height process and contour process of a tree $T$ as defined in Section~\ref{sct:planeTrees}.

\begin{thm}[\textsc{Duquesne \cite{Duquesne2003Contour}}]\label{thm:Duquesne}
Let $\Tree_n$ be distributed as $\BGW\evt{\,\cdot\,\vert\textup{\#vertices = $n$}}$ where the offspring distribution $\mu$ is critical and has variance $\sigma^2$ possibly infinite. Assuming that $\mu$ is attracted to a stable distribution of index $\alpha \in (1,2]$, there is an increasing slowly varying function $\ell$ that depends on $\mu$ such that
\begin{equation*}
    \left(\frac{1}{\ell(n)n^{1/\alpha}}s_{\Tree_n},\frac{\ell(n)}{n^{1-1/\alpha}}h_{\Tree_n},\frac{\ell(n)}{n^{1-1/\alpha}}c_{\Tree_n}\right) \xrightarrow[\tend{n}]{\dstb} (\mathbbm x,\mathbbm h,\mathbbm h).
\end{equation*}
Furthermore when $\alpha=2$ then $\mathbbm x = \mathbbm h$ and $\ell(n)\cv{n}\frac{\sigma}{\sqrt 2}$ (if $\sigma^2=+\infty$ this means $\ell(n)\cv{n}+\infty$).
\end{thm}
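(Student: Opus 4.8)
The plan is to prove the three convergences in the order $s_{\Tree_n}\to\mathbbm x$, $h_{\Tree_n}\to\mathbbm h$, $c_{\Tree_n}\to\mathbbm h$, keeping track of the joint law throughout and reducing everything to the single process $S_{\Tree_n}$, since for every plane tree the height and contour functions are deterministic functionals of the $\Luka$ walk. For the walk itself, the increments $S_{\Tree_n}(k+1)-S_{\Tree_n}(k)=d_{u_k}(\Tree_n)-1$ are, before conditioning, i.i.d.\ of law $k\mapsto\mu(k+1)$ on $\{-1,0,1,\dots\}$ with mean $0$ by criticality; by the cycle lemma (Otter--Dwass formula), $S_{\Tree_n}$ has the law of such an unconditioned walk $W$ conditioned to stay non-negative on $\{0,\dots,n-1\}$ and to reach $-1$ at time $n$. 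Hypothesis \eqref{eq:defStable} places these increments in the domain of attraction of the spectrally positive $\alpha$-stable law, so Skorokhod's functional limit theorem gives $a_n^{-1}W(\lfloor n\,\cdot\,\rfloor)\to X$ in the $J_1$ (hence $M_1$) sense, with $a_n=n^{1/\alpha}\ell(n)$. The conditioning is then converted into the normalized excursion $\mathbbm x$ by passing to the bridge $\{W_n=-1\}$ and applying a Vervaat-type re-rooting at the running minimum; after rescaling space by $a_n^{-1}$ and time by $n$ this gives the first coordinate $\ell(n)^{-1}n^{-1/\alpha}s_{\Tree_n}\to\mathbbm x$ in distribution.

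The second and hardest step is the height process. For every plane tree one has the identity
\[ H_T(k)=\#\bigl\{\,j\in\{0,\dots,k-1\}:\ S_T(j)=\min_{j\le i\le k}S_T(i)\,\bigr\}, \]
which exhibits $H_T$ as a local-time-type functional of $S_T$ read backwards from time $k$. Since this functional is discontinuous for $J_1$, the continuous mapping theorem cannot be applied to the convergence of the walk, and one must instead invoke the Duquesne--Le Gall approximation machinery: after rescaling by $a_n/n=\ell(n)/n^{1-1/\alpha}$ the count above approximates the local time accumulated at the supremum by the time-reversed (dual) walk, matching the dual description of the continuous-time height process recalled before the statement, and this converges to the local time defining the continuous height excursion $\mathbbm h$ attached to $\mathbbm x$. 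The genuinely delicate point is the uniform-in-time control of these local-time approximations under the size-conditioning: one proves tightness of $\ell(n)n^{-(1-1/\alpha)}h_{\Tree_n}$ together with convergence of its finite-dimensional distributions, transferring the unconditioned forest statement of Duquesne--Le Gall onto the conditioned tree by absolute continuity on a fixed sub-interval and then extending to the full excursion through the regeneration structure of $S_{\Tree_n}$. This is where essentially all of the analytic work resides, and I expect it to be the main obstacle.

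Granting the height-process limit, the contour process follows by a time-change comparison. The first visit of the $k$-th lexicographic vertex occurs at contour-time $2k-H_T(k)$, and $C_T$ interpolates in height between consecutive vertices, so that $c_{\Tree_n}$ and $h_{\Tree_n}$ differ only through a time change $\phi_n$ with $\sup_t|\phi_n(t)-t|=O\bigl(\sup_k H_T(k)/n\bigr)$; since $\sup_k H_T(k)$ is of order $(n/a_n)\sup\mathbbm h$, this error is $O(a_n^{-1})\to0$, so that $\ell(n)n^{-(1-1/\alpha)}c_{\Tree_n}$ converges to the same limit $\mathbbm h$, jointly with the two other coordinates. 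All three convergences in fact hold for $J_1$; the passage to $M_1$ is harmless because replacing the piecewise-constant interpolation by the linear one changes the $M_1$ distance by at most a jump size divided by $n$, as noted in Section~\ref{ssct:framework}. Finally, in the Gaussian case $\alpha=2$ the limit variable $Z$ satisfies $\E[e^{-\lambda Z}]=e^{\lambda^2}$, i.e.\ it is centred Gaussian with variance $2$, whereas the ordinary central limit theorem gives $W(n)/\sqrt n$ converging to a centred Gaussian of variance $\sigma^2$; matching the two forces $a_n\sim\sigma\sqrt{n}/\sqrt2$, that is $\ell(n)\to\sigma/\sqrt2$. Moreover $X$ is then continuous (Brownian), the backward functional degenerates to the continuous reflection map, and one obtains $\mathbbm x=\mathbbm h$.
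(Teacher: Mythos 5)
The paper gives no proof of this theorem: it is quoted from \textsc{Duquesne} \cite{Duquesne2003Contour}, with only a remark that the reinforced joint statement follows from the same arguments there (his Remark~3.2, Proposition~4.3 and Equation~(30)). Your outline correctly reconstructs the strategy of that cited proof --- cycle-lemma/\textsc{Vervaat} treatment of the \Luka walk, the \textsc{Duquesne--Le Gall} local-time approximation for the height process (which you rightly flag as carrying all the analytic work, and which you, like the paper, ultimately defer to the literature), and the time-change $j_n(k)=2k-H(k)$ linking contour to height, with the variance matching $\ell(n)\to\sigma/\sqrt 2$ in the Gaussian case --- so it is essentially the same approach, sound at the level of detail attempted.
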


\begin{rmk}
Theorem~\ref{thm:Duquesne} is a reinforcement of \cite[Theorem~3.1]{Duquesne2003Contour} that is not explicitly written but results from the same arguments (see in particular \cite[Remark~3.2, Proposition~4.3, Equation~(30)]{Duquesne2003Contour}).
\end{rmk} 

We stress that the case $\sigma^2<+\infty$ has some specific features. In this case we must have $\alpha=2$ and all three limiting processes are equal, but we also see that $\ell$ has a finite limit hence the three encoding functions of $\Tree_n$ are rescaled by the \textit{same} factor $\sqrt n$ (up to a constant). This case has actually been proved first by \textsc{Marckert \& Mokkadem \cite{MarckertMokkadem2003SameExcursion}} under an additional assumption of finite exponential moment for $\mu$, and its features will make a real difference for the scaling limit of $\rot\Tree_n$, thus we choose to restate this result on its own. In this particular case, we use $\mathbbm e$ a normalized excursion of a \textit{standard} Brownian motion $B$, but we have to consider $X=\sqrt 2 B$ in order to have a \textsc{Lévy} process with \textsc{Laplace} transform $\E\evt{\exp\cro{-\lambda X_t}} = e^{t\lambda^2}$, so one must keep in mind that $\mathbbm e$ has the law of $\mathbbm x/\sqrt 2$ in this situation.

\begin{thm}[\textsc{Marckert \& Mokkadem \cite{MarckertMokkadem2003SameExcursion}, Duquesne \cite{Duquesne2003Contour}}]\label{thm:MMD}
Let $\Tree_n$ be distributed as $\BGW\evt{\,\cdot\,\vert\textup{\#vertices = $n$}}$ where the offspring distribution $\mu$ is critical. Assuming that $\mu$ has variance $\sigma^2 <+\infty$, we have the convergence in distribution
    \begin{equation*}
        \frac{1}{\sqrt n}\left(s_{\Tree_n},h_{\Tree_n},c_{\Tree_n}\right) \xrightarrow[\tend{n}]{\dstb} \left(\sigma\mathbbm e,\frac{2}{\sigma}\mathbbm e,\frac{2}{\sigma}\mathbbm e\right).
    \end{equation*}
\end{thm}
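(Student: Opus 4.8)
The plan is to obtain this statement as a direct specialization of Theorem~\ref{thm:Duquesne} to the finite-variance regime, so the bulk of the work is already done and what remains is to track constants and topologies. First I would observe that a critical offspring distribution $\mu$ with finite variance $\sigma^2<+\infty$ is necessarily attracted to a Gaussian law: the index of the attracting stable distribution is determined by $\mu$, and the classical central limit theorem forces the attracting law to be normal, hence $\alpha=2$. I may therefore invoke Theorem~\ref{thm:Duquesne} with $\alpha=2$, where the two exponents coincide, $n^{1/\alpha}=n^{1-1/\alpha}=\sqrt n$, the remark in that theorem gives $\ell(n)\cv{n}\sigma/\sqrt 2$, and one has the identity $\mathbbm x=\mathbbm h$. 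Since $\mu$ is critical it is not $\delta_1$, so $\sigma^2\in(0,+\infty)$; writing $\lambda=\sigma/\sqrt 2$ this means $\lambda\in(0,+\infty)$ and $\lambda^{-1}$ is well defined. Moreover, for $\alpha=2$ all three limiting processes are continuous, so by the remark following Proposition~\ref{prop:GHP M1} every $M_1$ convergence below is in fact a uniform convergence, which renders the scalar manipulations elementary.

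Writing $\lambda_n=\ell(n)$, Theorem~\ref{thm:Duquesne} reads
\begin{equation*}
\Bigl(\tfrac{1}{\lambda_n\sqrt n}\,s_{\Tree_n},\ \tfrac{\lambda_n}{\sqrt n}\,h_{\Tree_n},\ \tfrac{\lambda_n}{\sqrt n}\,c_{\Tree_n}\Bigr)\xrightarrow[\tend{n}]{\dstb}\bigl(\mathbbm x,\mathbbm h,\mathbbm h\bigr).
\end{equation*}
To recover the plain normalization by $1/\sqrt n$, I would write $\tfrac{1}{\sqrt n}s_{\Tree_n}=\lambda_n\cdot\tfrac{1}{\lambda_n\sqrt n}s_{\Tree_n}$ and similarly $\tfrac{1}{\sqrt n}h_{\Tree_n}=\lambda_n^{-1}\cdot\tfrac{\lambda_n}{\sqrt n}h_{\Tree_n}$ and $\tfrac{1}{\sqrt n}c_{\Tree_n}=\lambda_n^{-1}\cdot\tfrac{\lambda_n}{\sqrt n}c_{\Tree_n}$. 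The deterministic scalars $(\lambda_n,\lambda_n^{-1},\lambda_n^{-1})$ converge to $(\lambda,\lambda^{-1},\lambda^{-1})$ with $\lambda$ finite and positive, so by Slutsky's theorem together with the (in the uniform topology, trivial) continuity of scalar multiplication, the displayed joint convergence yields the limit $\bigl(\lambda\mathbbm x,\lambda^{-1}\mathbbm h,\lambda^{-1}\mathbbm h\bigr)$. Finally I would simplify the constants using $\mathbbm x=\mathbbm h$ and the distributional identity $\mathbbm e\stackrel{d}{=}\mathbbm x/\sqrt 2$ recalled just before the statement: $\lambda\mathbbm x=\tfrac{\sigma}{\sqrt2}\mathbbm x\stackrel{d}{=}\sigma\mathbbm e$ and $\lambda^{-1}\mathbbm h=\tfrac{\sqrt2}{\sigma}\mathbbm x\stackrel{d}{=}\tfrac{2}{\sigma}\mathbbm e$, which is exactly the announced limit $\bigl(\sigma\mathbbm e,\tfrac{2}{\sigma}\mathbbm e,\tfrac{2}{\sigma}\mathbbm e\bigr)$.

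I do not expect a genuine obstacle here, since the substantive analytic content — the joint functional scaling limit itself — is entirely contained in Theorem~\ref{thm:Duquesne}, which we take as given. The only points deserving care are the two soft reductions: identifying $\alpha=2$ from the finiteness of the variance (and the strict positivity of $\sigma$ ensuring that the constant $\lambda$ is invertible), and the passage through Slutsky's theorem in the functional setting. The latter is harmless precisely because the limits are continuous, so that the $M_1$ convergences coincide with uniform ones, on which multiplying by a convergent deterministic sequence is visibly continuous; this also shows that the conclusion holds for the uniform topology, in agreement with the original formulations of \textsc{Marckert \& Mokkadem} and \textsc{Duquesne}.
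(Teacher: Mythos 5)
Your proposal is correct and matches the paper's own treatment: the paper presents Theorem~\ref{thm:MMD} precisely as the finite-variance specialization of Theorem~\ref{thm:Duquesne}, using $\alpha=2$, $\ell(n)\cv{n}\sigma/\sqrt 2$, the identity $\mathbbm x=\mathbbm h$, and the normalization $\mathbbm e\stackrel{d}{=}\mathbbm x/\sqrt 2$, exactly as you do. Your added care about identifying $\alpha=2$ via the CLT, the strict positivity of $\sigma$, and the Slutsky step (harmless since the limits are continuous, so $M_1$ and uniform convergence coincide) fills in the routine details the paper leaves implicit.
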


\medskip

\begin{rmk}
There are actually similar scaling limit results for other types of conditioned \textsc{Bienaymé}  trees, such as those conditioned to have \textit{at least} $n$ vertices and those conditioned to have height greater than $n$. We refer to \cite[section~5]{Legall2010ItoRandomTrees} for the joint convergence of all three encoding functions rescaled by $\sqrt n$ towards the same Brownian excursion under a general \textit{regular conditioning} in the case $\sigma^2<+\infty$, and we refer to \cite[section~2.5]{DuquesneLegall2002RandomTreeLevyProcessesBranching}  for the joint convergence of all three encoding functions under some specific conditioning in the general stable case. In the following we will not be concerned with those other conditioned \textsc{Bienaymé} trees but most arguments and results here can be adapted as soon as we have a scaling limit result analogous to Theorem~\ref{thm:Duquesne}.
\end{rmk}

\paragraph{Extension based on the mirror transformation.}

We now turn to an extension of the previous result needed to study the rotation. Recall the mirror transformation $T \mapsto T^{\div}$ introduced in Section~\ref{sct:rotation}. We use the previous result of \textsc{Duquesne} to describe the joint convergence of encoding processes of both $\Tree_n$ and $\Tree_n^{\div}$.

Observe that $C_{\Tree_n^{\div}}$ simply is the reversed contour process of $\Tree_n$, obtained by following the contour of the tree from right to left instead of left to right. This means that $c_{\Tree_n^{\div}}=\widehat  c_{\Tree_n}$,  where for all $x\in D([0,1])$, $\widehat x$ is defined by $\widehat x:t \mapsto x\bigl((1-t)\text{-}\bigr)$. Since $x \mapsto \widehat x$ is continuous with respect to the $M_1$ topology, the asymptotic behaviours of $C_{\Tree_n^{\div}}$ and $C_{\Tree_n}$ are related in a simple way. However, we will rather be interested in $S_{\Tree_n^{\div}}$ and $H_{\Tree_n^{\div}}$ as it will allow us to extract some information about $\rot\Tree_n$. As height processes are asymptotically equal to contour processes for these trees, we can relate the asymptotic behaviours of $H_{\Tree_n^{\div}}$ and $H_{\Tree_n}$ without much difficulty, but the links between $S_{\Tree_n^{\div}}$ and $S_{\Tree_n}$ are more intricate. Nevertheless, one may deduce the following result.

\begin{prop}\label{prop:DuquesneMiroir}
Let $\Tree_n$ be distributed as $\BGW\evt{\,\cdot\,\vert\textup{\#vertices = $n$}}$ where the offspring distribution $\mu$ is critical. Assuming that $\mu$ is attracted to a stable distribution of index $\alpha \in (1,2]$, there is a measurable function $F^{\div}:D([0,1])\rightarrow D([0,1])$ such that 
    \begin{equation*}
        \left(\frac{1}{\ell(n)n^{1/\alpha}}s_{\Tree_n},\frac{\ell(n)}{n^{1-1/\alpha}}h_{\Tree_n},\frac{1}{\ell(n)n^{1/\alpha}} s_{\Tree_n^{\div}},\frac{\ell(n)}{n^{1-1/\alpha}} h_{\Tree_n^{\div}}\right) \xrightarrow[\tend{n}]{\dstb} \left(\mathbbm x,\mathbbm h,F^{\div}(\mathbbm x ),\widehat{\mathbbm h}\right).
    \end{equation*} 
Moreover, when $\alpha=2$ the normalized excursion $\mathbbm x$ and $F^{\div}$ are such that $F^{\div}(\mathbbm x )=\widehat {\mathbbm x}$ almost surely.
\end{prop}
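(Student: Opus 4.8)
The plan is to deduce everything from \textsc{Duquesne}'s theorem (Theorem~\ref{thm:Duquesne}) applied to $\Tree_n$, together with the single structural input that the mirror transformation preserves the law of a \textsc{Bienaymé} tree. Indeed, since the subtrees hanging from any vertex of a \textsc{Bienaymé} tree are independent and identically distributed once the out-degree is fixed, and the mirror only reverses their planar order, we have $\Tree_n^{\div}\overset{\dstb}{=}\Tree_n$; in particular every marginal scaling limit for an encoding process of $\Tree_n$ transfers to the corresponding process of $\Tree_n^{\div}$. The real work is therefore to upgrade these marginals into the asserted \emph{joint} limit and to identify the limiting processes.

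I would treat the height components first, as they form the easy part. The identity $c_{\Tree_n^{\div}}=\widehat{c_{\Tree_n}}$, together with the continuity of $x\mapsto\widehat x$ for $d_{M_1}$, turns the joint convergence of the suitably rescaled triple $(s_{\Tree_n},h_{\Tree_n},c_{\Tree_n})$ towards $(\mathbbm x,\mathbbm h,\mathbbm h)$ given by Theorem~\ref{thm:Duquesne} into the joint convergence of the rescaled $(s_{\Tree_n},h_{\Tree_n},c_{\Tree_n},c_{\Tree_n^{\div}})$ towards $(\mathbbm x,\mathbbm h,\mathbbm h,\widehat{\mathbbm h})$, by applying the continuous mapping theorem to $(s,h,c)\mapsto(s,h,c,\widehat c)$. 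Since the rescaled height and contour processes of a tree are uniformly asymptotically equal (they share the limit $\mathbbm h$ in Theorem~\ref{thm:Duquesne}), the same holds for $\Tree_n^{\div}$, so by a \textsc{Slutsky}-type argument $\frac{\ell(n)}{n^{1-1/\alpha}}h_{\Tree_n^{\div}}$ may replace $\frac{\ell(n)}{n^{1-1/\alpha}}c_{\Tree_n^{\div}}$ in the joint limit, producing the limit $\widehat{\mathbbm h}$ for the mirror height.

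The genuinely delicate point is the \Luka walk $s_{\Tree_n^{\div}}$ of the mirror tree, for which there is no analogue of $c_{\Tree_n^{\div}}=\widehat{c_{\Tree_n}}$: reversing the planar order reshuffles the lexicographic enumeration in a way that mixes the left- and right-width functions $L$ and $R$ of Section~\ref{sct:planeTrees}. The strategy is to realise $S_{\Tree_n^{\div}}$ as a deterministic functional of $S_{\Tree_n}$ --- the tree, hence its mirror and the mirror's \Luka walk, are all measurable functions of $S_{\Tree_n}$ --- and to show that, suitably rescaled, this discrete functional converges to a measurable map $F^{\div}$ of the limit $\mathbbm x$. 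Concretely I would pass through the \emph{mirrored enumeration} of $\Tree_n$, expressing $S_{\Tree_n^{\div}}$ by means of the left-widths of $\Tree_n$ read in that order, establishing tightness of $(s_{\Tree_n},s_{\Tree_n^{\div}})$ for $d_{M_1}$, and proving that in every subsequential joint limit $(\mathbbm x,Y)$ the second coordinate $Y$ is almost surely a measurable function of $\mathbbm x$. This is exactly the extension of \textsc{Duquesne}'s result carried out in Appendix~\ref{app:MirrorLuka} (Proposition~\ref{prop:LukaMirror}), which I expect to be the main obstacle of the whole statement; combined with the previous paragraph it assembles the four-coordinate joint convergence towards $(\mathbbm x,\mathbbm h,F^{\div}(\mathbbm x),\widehat{\mathbbm h})$.

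Finally, the identification $F^{\div}(\mathbbm x)=\widehat{\mathbbm x}$ when $\alpha=2$ follows without re-examining $F^{\div}$. In the Gaussian case Theorem~\ref{thm:Duquesne} gives $\mathbbm x=\mathbbm h$, so the rescaled \Luka and height processes of a single tree converge \emph{jointly} to the same continuous limit; applied to $\Tree_n$ this forces $\frac{1}{\ell(n)n^{1/\alpha}}s_{\Tree_n}-\frac{\ell(n)}{n^{1-1/\alpha}}h_{\Tree_n}\to 0$ uniformly, because $M_1$-convergence to a continuous limit is uniform convergence. By $\Tree_n^{\div}\overset{\dstb}{=}\Tree_n$ the same difference formed from the mirror tree also tends to $0$ in probability, whence in the joint limit $F^{\div}(\mathbbm x)-\widehat{\mathbbm h}=0$ almost surely; as $\widehat{\mathbbm h}=\widehat{\mathbbm x}$ for $\alpha=2$, this yields $F^{\div}(\mathbbm x)=\widehat{\mathbbm x}$ almost surely.
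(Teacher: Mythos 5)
Your proposal is correct and takes essentially the same route as the paper: the height coordinates are handled through the identity $c_{\Tree_n^{\div}}=\widehat{c_{\Tree_n}}$ together with the asymptotic equality of height and contour processes, the mirror \Luka walk is delegated to Proposition~\ref{prop:LukaMirror} exactly as the paper's proof does, and the final assembly is the paper's tightness/subsequential-limit argument, which works because every limiting coordinate is almost surely a measurable function of $\mathbbm x$ (the paper makes this explicit by writing $\mathbbm h=G(\mathbbm x)$, $x_3=F^{\div}(x_1)$ and $x_4=\widehat{x_2}$ for a subsequential limit $(x_1,x_2,x_3,x_4)$). Your $\alpha=2$ identification, obtained by showing that the rescaled difference between the \Luka walk and the height process of $\Tree_n^{\div}$ vanishes in probability, is the same argument the paper sketches with contour processes in Appendix~\ref{app:MirrorLuka}.
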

\begin{rmk}
    Assuming $\mu$ has variance $\sigma^2<+\infty$, we may reformulate Proposition~\ref{prop:DuquesneMiroir} as
    \begin{equation*}
        \frac{1}{\sqrt n}\left(s_{\Tree_n},h_{\Tree_n},s_{\Tree_n^{\div}},h_{\Tree_n^{\div}}\right) \xrightarrow[\tend{n}]{\dstb} \left(\sigma\mathbbm e,\frac{2}{\sigma}\mathbbm e,\sigma\widehat{\mathbbm e},\frac{2}{\sigma}\widehat{\mathbbm e}\right).
    \end{equation*}
\end{rmk}
We prove Proposition~\ref{prop:DuquesneMiroir} in Appendix~\ref{app:MirrorLuka}. Let us just stress here that $\Tree_n^{\div}$ is also distributed according to $\BGW\evt{\,\cdot\,\vert\text{\#vertices = $n$}}$, since the mirror transformation clearly preserves the distribution $\BGW$  as well as the set of trees with $n$ vertices. As a consequence $(s_{\Tree_n^{\div}},h_{\Tree_n^{\div}})$ is distributed as $(s_{\Tree_n},h_{\Tree_n})$, whose convergence is given by Theorem~\ref{thm:Duquesne}, and thus the proof of Proposition~\ref{prop:DuquesneMiroir} is just a question of
understanding the intricate dependence between the limiting processes.

We do not have a fully explicit expression for $F^{\div}(\mathbbm x)$ (see the last remark of Appendix~\ref{app:MirrorLuka} for details) but an important property follows from Proposition~\ref{prop:DuquesneMiroir}: since $(s_{\Tree_n^{\div}},h_{\Tree_n^{\div}})$ is distributed as $(s_{\Tree_n},h_{\Tree_n})$, their limits $\bigl(F^{\div}(\mathbbm x ),\widehat{\mathbbm h}\bigr)$ and $(\mathbbm x,\mathbbm h)$ have the same distribution. In particular, $F^{\div}(\mathbbm x )$ is an excursion of a spectrally positive $\alpha$-stable \textsc{Lévy} process, and its associated height excursion is $\widehat{\mathbbm h}$. Moreover as $T \mapsto T^{\div}$ is an involution we clearly have 
\begin{equation*}
    F^{\div}\left( F^{\div}(\mathbbm x)\right) = \mathbbm x \ \as[]
\end{equation*}
The main reason that leads us to state Proposition~\ref{prop:DuquesneMiroir} is that the connection between the scaling limits of $s_{\Tree_n}$ and $s_{\Tree_n^{\div}}$, given by $F^{\div}$, will be used to understand the connection between $s_{\Tree_n}$ and the encoding functions of $\rot \Tree_n$. It will also be useful to understand the connection between the scaling limits of $\rot\Tree_n$ and $\corot\Tree_n$.


\subsection{Encoding processes of a large \textsc{Bienaymé} tree and its rotation}\label{ssct:mainResults}

We are now all set to study the encoding processes of $\rot\Tree_n$ and compare them to those of $\Tree_n$. We provide Table~\ref{tab:knonwScalingLimits} to summarize the notation introduced in the previous section and needed in the following.

\medskip

\begin{table}[htbp]
\caption{Main notation for scaling limits of encoding processes.}
\centering
\begin{tabular}{c p{10cm}}
\toprule

 $\mu$& critical offspring distribution attracted to a spectrally positive $\alpha$-stable distribution with $\alpha \in (1,2]$;\\
 $\sigma^2$&variance of $\mu$, which may be $+\infty$;\\
 $\ell$&increasing slowly varying function related to $\mu$ as in Theorem~\ref{thm:Duquesne};\\

 \hline

 $(\Tree_n)_n$&sequences (with indices $n$ such that $\BGW\evt{\text{\#vertices = $n$}}>0$) of random trees with $\Tree_n$ distributed according to $\BGW\evt{\,\cdot\,\vert\text{\#vertices} = n}$; \\

\hline
$\Tree_n^{\div}$ & mirror tree of $\Tree_n$; \\
 \hline
 
 $\mathbbm e$&normalized excursion of a standard Brownian motion (\ie with \textsc{Laplace} exponent $\lambda \mapsto \lambda^2/2$);\\
 $\mathbbm x$&normalized excursion of a strictly $\alpha$-stable spectrally positive \textsc{Lévy} process with \textsc{Laplace} exponent $\lambda \mapsto \lambda^{\alpha}$;\\
 $\mathbbm h$&continuous-time height excursion associated with $\mathbbm x$;\\

 \hline
$x \mapsto \widehat x$ &endomorphism of $D([0,1])$  defined by  $\widehat x(t) = x\bigl((1-t)\text{-}\bigr)$;\\
$F^{\div}$ & measurable function from $D([0,1])$ to $D([0,1])$ given by Proposition~\ref{prop:DuquesneMiroir}; \\
 \bottomrule
\end{tabular}
\label{tab:knonwScalingLimits}
\end{table}

\paragraph{Statement and consequences.} Our most complete result about the rotation is the following joint convergence of all encoding functions of $\Tree_n$ and of $\rot\Tree_n$.

\begin{thm}\label{thm:DetailedResult}
Let $\Tree_n$ be distributed as $\BGW\evt{\,\cdot\,\vert\textup{\#vertices = $n$}}$ where the offspring distribution $\mu$ is critical, has variance $\sigma^2 \leq +\infty$ and is attracted to a stable distribution with index $\alpha \in (1,2]$.
\begin{itemize}
\item Assuming $\sigma^2 <+\infty$, we have
    \begin{equation*}
        \frac{1}{\sqrt n}\left(s_{\Tree_n},c_{\Tree_n},s_{\rot\!\Tree_n},c_{\rot\!\Tree_n}\right) \xrightarrow[\tend{n}]{\dstb} \left(\sigma \mathbbm e,\frac{2}{\sigma}\mathbbm e,\frac{2}{\sigma}\mathbbm e,\frac{2+\sigma^2}{\sigma}\mathbbm e\right).
    \end{equation*}

\item Assuming $\sigma^2 =+\infty$, we have
    \begin{equation*}
        \left(\frac{1}{\ell(n)n^{1/\alpha}}s_{\Tree_n},\frac{\ell(n)}{n^{1-1/\alpha}}c_{\Tree_n},\frac{\ell(n)}{n^{1-1/\alpha}} s_{\rot\!\Tree_n},\frac{1}{\ell(n)n^{1/\alpha}}c_{\rot\!\Tree_n}\right) \xrightarrow[\tend{n}]{\dstb} \left(\mathbbm x,\mathbbm h,\mathbbm h,\widehat{\mathbbm y}\right)
    \end{equation*} where $\mathbbm y=F^{\div}(\mathbbm x)$.

\end{itemize}
   
In addition, in both cases height functions are asymptotically the same as contour functions:
\begin{equation*}
    \frac{\ell(n)}{n^{1-1/\alpha}}\bigl\Vert c_{\Tree_n}- h_{\Tree_n}\bigr\Vert_{\infty}  \xrightarrow[\tend{n}]{\prob} 0 \text{ and }  d_{M_1}\left(\frac{1}{\ell(n)n^{1/\alpha}}c_{\rot\!\Tree_n},\frac{1}{\ell(n)n^{1/\alpha}}h_{\rot\!\Tree_n}\right)  \xrightarrow[\tend{n}]{\prob} 0.
\end{equation*}

\end{thm}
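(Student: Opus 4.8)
The plan is to deduce the whole statement from Proposition~\ref{prop:DuquesneMiroir}, which already supplies the joint convergence of the encoding processes of $\Tree_n$ and of its mirror $\Tree_n^{\div}$, combined with exact combinatorial identities expressing the encoding functions of $\rot\Tree_n$ through those of $\Tree_n$ and $\Tree_n^{\div}$. The Łukasiewicz component costs almost nothing: the identity $S_{\rot\Tree_n}=C_{\Tree_n}\oplus(-1)$ recalled in Section~\ref{sct:rotation} shows that, space being untouched, the time-scaled function $s_{\rot\Tree_n}$ coincides with $c_{\Tree_n}$ up to the appended final down-step and the harmless change of time-rescaling from $2n-2$ to $2n-1$ steps, both $O(1/n)$ and hence $M_1$-negligible. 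Thus $s_{\rot\Tree_n}$ has the same scaling limit as $c_{\Tree_n}$, jointly with $s_{\Tree_n}$ and $c_{\Tree_n}$, which yields the first three coordinates of each displayed convergence straight from Theorems~\ref{thm:Duquesne} and~\ref{thm:MMD}.

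The core of the argument is the analysis of $c_{\rot\Tree_n}$. First I would use the height identity~\eqref{eq:decHauteurLargeur}, $|\widetilde u|=|u|-1+L(u)$, to express the heights of the internal vertices of $\rot\Tree_n$ (listed in the order $\widetilde u_1,\dots,\widetilde u_{n-1}$, which is the lexicographic order) through $H_{\Tree_n}$ and the left-count $L$. Since $L$ is not a simple functional of the lexicographic encoding of $\Tree_n$ but equals the right-count $R$ read on the mirror tree, I would then pass to the \emph{mirrored enumeration}: reindexing the internal vertices of $\rot\Tree_n$ according to the lexicographic order of $\Tree_n^{\div}$, the intermediate process so obtained reads, at mirror position $j$, as $H_{\Tree_n^{\div}}(j)-1+S_{\Tree_n^{\div}}(j)$. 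This is exactly where Proposition~\ref{prop:DuquesneMiroir} enters, giving the joint limit of $(h_{\Tree_n^{\div}},s_{\Tree_n^{\div}})$. In the finite-variance case both terms live at the common scale $\sqrt n$ and add up to $\tfrac 2\sigma\widehat{\mathbbm e}+\sigma\widehat{\mathbbm e}=\tfrac{2+\sigma^2}{\sigma}\widehat{\mathbbm e}$, whereas in the infinite-variance case the height term $|u|-1$, rescaled by $1/(\ell(n)n^{1/\alpha})$, contributes a factor $n^{1-2/\alpha}/\ell(n)^2\to 0$ and disappears, leaving only $S_{\Tree_n^{\div}}$ whose limit is $\mathbbm y=F^{\div}(\mathbbm x)$.

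It then remains to convert this intermediate (mirror-indexed, internal-vertex) process into the genuine contour function $c_{\rot\Tree_n}$, which is the role of the $M_1$-comparison lemmas stated in this section and proved in Section~\ref{sct:ProofLemmas}: one relates the intermediate process to the contour of the internal subtree $(\rot\Tree_n)^{\circ}$, and a further one relates the contour of $(\rot\Tree_n)^{\circ}$ to the full contour $c_{\rot\Tree_n}$ after reinstating the leaves. The passage from the mirror order back to the contour order reverses time, and this is precisely what turns $\mathbbm y$ into $\widehat{\mathbbm y}$ and $\tfrac{2+\sigma^2}{\sigma}\widehat{\mathbbm e}$ into $\tfrac{2+\sigma^2}{\sigma}\mathbbm e$ (legitimate since $\widehat{\mathbbm e}\stackrel{d}{=}\mathbbm e$, tracked through $\widehat{\,\cdot\,}$). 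Since these lemmas provide convergence in probability of the relevant $d_{M_1}$-distances, a Slutsky-type argument assembles the four coordinates into the joint convergences. For the last claim, the uniform estimate $\tfrac{\ell(n)}{n^{1-1/\alpha}}\norm{c_{\Tree_n}-h_{\Tree_n}}_\infty\to 0$ is part of Duquesne's results underlying Theorem~\ref{thm:Duquesne}, while the $M_1$-closeness of $c_{\rot\Tree_n}$ and $h_{\rot\Tree_n}$ is another of these comparison lemmas.

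The main obstacle is exactly these $M_1$-comparison lemmas. The contour of $\rot\Tree_n$ carries macroscopic jumps: a vertex of $\Tree_n$ with large out-degree produces a long spine of left-edges in $\rot\Tree_n$, along which the relevant processes make a single large increment. Such jumps rule out convergence in the $J_1$ topology and force the weaker $M_1$ framework, in which the gaps are filled by segments; the lemmas must therefore be established by exhibiting parametric representations with a common spatial component and close temporal components across these jumps, while simultaneously bookkeeping the reindexing between the lexicographic and mirrored enumerations and the $n$ interspersed leaves. Once these $M_1$ estimates are in hand, the desired joint convergences follow, and Proposition~\ref{prop:GHP M1} will in turn upgrade them to the $\dGH$ (indeed $\dGHP$) statements of Theorem~\ref{thm:MainResult}.
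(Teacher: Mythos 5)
Your proposal is correct and follows essentially the same route as the paper: the identity $S_{\rot\Tree_n}=C_{\Tree_n}\oplus(-1)$ for the \Luka component, the mirrored-enumeration intermediate process $H_{\Tree_n^{\div}}+S_{\Tree_n^{\div}}-1$ fed into Proposition~\ref{prop:DuquesneMiroir} (with the height term surviving proportionally when $\sigma^2<+\infty$ and vanishing when $\sigma^2=+\infty$), and the deferred $M_1$-comparison lemmas (Lemmas~\ref{lem:HeightIsRoughlyContour},~\ref{lem:RightHeightAndRightContourM1},~\ref{lem:AddingTheLeaves}) to pass from that process, via the internal subtree and time reversal, to $c_{\rot\Tree_n}$. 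The only detail you gloss over is that addition is not $M_1$-continuous in general, which the paper handles by noting it is continuous when one summand is continuous (finite-variance case) and that the height term vanishes uniformly (infinite-variance case) — your argument implicitly covers both, so there is no gap.
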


\begin{rmk}
Observe that when $\alpha=2$, $\widehat{\mathbbm y }=\mathbbm x=\mathbbm h\ \as[]$ and all six encoding processes jointly converge towards the same excursion $\mathbbm x$, which is distributed as $\sqrt 2 \mathbbm e$. Moreover, since a Brownian excursion is continuous, the convergence in distribution can also be seen with respect to $\norm{\cdot}_{\infty}$and we actually have:
\begin{equation*}
    \frac{1}{\ell(n)\sqrt n} \bigl\Vert c_{\rot\!\Tree_n}-h_{\rot\!\Tree_n}\bigr\Vert_{\infty}  \xrightarrow[\tend{n}]{\prob} 0.
\end{equation*}
Another feature appears in the case $\sigma^2 <+\infty$ due to the specific property that all encoding processes of $\Tree_n$ have the same scale $\sqrt n$ in this setting. The functions $\tfrac{1}{\sigma\sqrt n}s_{\Tree_n}$, $\tfrac{\sigma}{2\sqrt n}c_{\Tree_n}$, $\tfrac{\sigma}{2\sqrt n}s_{\rot\!\Tree_n}$, $\tfrac{\sigma}{(2+\sigma^2)\sqrt n}c_{\rot\!\Tree_n}$ jointly converge towards the same Brownian excursion, so here again all scaling factors have order $\sqrt n$, but most importantly in this case the rescaling factor of $c_{\rot\!\Tree_n}$ is \textit{not} asymptotically equivalent to the rescaling factor of $s_{\Tree_n}$. This contrasts with the symmetry of the scaling factors in the case $\sigma^2 =+\infty$.
\end{rmk}

Thanks to Proposition~\ref{prop:GHlipschitzM1}, Theorem~\ref{thm:MainResult}  appears as a consequence of Theorem~\ref{thm:DetailedResult}. 
\begin{proof}[Proof of Theorem~\ref{thm:MainResult}.]
We first restrict our attention to the components corresponding to the contour processes to get a simpler statement.
\begin{itemize}
    \item When $\sigma^2<+\infty$
\begin{equation}\label{eq:OnlyContourFinite}
    \frac{1}{\sqrt n}\bigl(c_{\Tree_n}, c_{\rot\!\Tree_n}\bigr) \xrightarrow[\tend{n}]{\dstb} \left(\frac{2}{\sigma} \mathbbm e,\frac{2+\sigma^2}{\sigma}\mathbbm e\right).
\end{equation}
\item When $\sigma^2=+\infty$
    \begin{equation}\label{eq:OnlyContourInfinite}
    \Bigl(\frac{\ell(n)}{n^{1-1/\alpha}}c_{\Tree_n},\frac{1}{\ell(n)n^{1/\alpha}}c_{\rot\!\Tree_n}\Bigr) \xrightarrow[\tend{n}]{\dstb} \bigl(\widehat {\mathbbm h},\widehat {\mathbbm x}\bigr).
\end{equation}
\end{itemize}
Note that in the second case, we use that $\bigl(F^{\div}(\mathbbm x ),\widehat{\mathbbm h}\bigr)$ is distributed as $(\mathbbm x,\mathbbm h)$ in order to write a limit that does not involve $F^{\div}$. Then we apply the continuous mapping $(x,y)\mapsto (\mathscr T_x,\mathscr T_y)$ (Proposition~\ref{prop:GHlipschitzM1}) and we use \eqref{eq:Tree approximation} to get Theorem~\ref{thm:MainResult}. We can use $\mathbbm h$ and $\mathbbm x$ instead of $\widehat {\mathbbm h}$ and $\widehat {\mathbbm x}$ since for all $x \in D_0([0,1],\R_+)$, $\mathscr{T}_{\widehat x}$ is isometric to $\mathscr{T}_x$.
\end{proof}
\begin{rmk}
    The exact same proof with Proposition~\ref{prop:GHP M1} instead of Proposition~\ref{prop:GHlipschitzM1} gives a convergence with respect to $\dGHP$.
\end{rmk}

\paragraph{Proof strategy for Theorem~\ref{thm:DetailedResult}.} We now discuss the main steps of the proof of Theorem~\ref{thm:DetailedResult} in an informal way. As a preliminary remark, recall that up to a last step down $C_{\Tree_n}$ and $S_{\rot\!\Tree_n}$ are the same process, hence $S_{\rot\!\Tree_n}$ is already fully understood and we must focus on the study of $H_{\rot\!\Tree_n}$ and $C_{\rot\!\Tree_n}$. To do so, our main tool is the internal subtree $(\rot T)^{\circ}$ defined in Section~\ref{sct:rotation}. 

First, we express the height of vertices in $(\rot\Tree_n)^{\circ}$ with known processes, thanks to \eqref{eq:decHauteurLargeur} and the mirror transformation. However, this requires considering an enumeration $\left(\widetilde w_{k}\right)_{1 \leq k\leq n-1}$ of $(\rot\Tree_n)^{\circ}$ different from the lexicographical one. We will formally define this enumeration later in the proof, in short we will first introduce $\left( w_{k}\right)_{0 \leq k\leq n-1}$ the \textit{mirrored enumeration} of $\Tree_n$ (see Figure~\ref{fig:mirroredEnumeration} below) and then we will use the correspondence $u \in \Tree_n\!\setminus\!\{\varnothing\} \mapsto \widetilde u \in (\rot\Tree_n)^{\circ}$ defined in Section~\ref{sct:rotation} to obtain $\left(\widetilde w_{k}\right)_{1 \leq k\leq n-1}$.  This enumeration $\left(\widetilde w_{k}\right)_{1 \leq k\leq n-1}$ is such that the non-standard height process $H_n^*$ based on it, which is defined by  $H^*_n(k)=|\widetilde w_{k}|$ for $1 \leq k \leq n-1$ and $H^*_n(0)=H^*_n(n)=0$, has a scaling limit easily deduced from Theorem~\ref{thm:Duquesne}. This convergence of $H_n^*$ is actually a joint convergence involving $H_n^*$ and the encoding processes of $\Tree_n$, see \eqref{eq:FirstCvInfinite} and \eqref{eq:FirstCvFinite} below.  

Then, we relate this process $H_n^*$ to the height and contour processes of both $(\rot\Tree_n)^{\circ}$ and $\rot\Tree_n$. For this last part, we actually relate these processes through three independent lemmas. They all rely on the same method, namely establishing combinatorial links between the discrete processes and then use them to control the $M_1$ distance between their rescaled functions, and we have chosen to postpone their proofs to Section~\ref{sct:ProofLemmas} which will be dedicated to this method. 

In Section~\ref{ssct:HeightAndContour}, we adapt a classical argument to the $M_1$ setting: 
if a sequence of trees $(T_n)_n$ is such that $\#T_n\rightarrow+\infty$ and $\max H_{T_n}= o\bigl(\#T_n\bigr)$ as $n$ goes to $+\infty$, then when $n$ is large the rescaled height process of $T_n$ is roughly the same as its rescaled contour process. We formalize this in the next lemma.

\begin{lem}\label{lem:HeightIsRoughlyContour}
Let $(T_n)_n$ be a sequence of (possibly random) finite plane trees. Assume that
\begin{align*}
    \#T_n\xrightarrow[\tend{n}]{\prob}+\infty & \text{ and } 
    \frac{\max H_{T_n}}{\# T_n} \xrightarrow[\tend{n}]{\prob} 0.
\end{align*}

Then for any real sequence $(\lambda(n))_n$ such that $\lambda(n)\rightarrow 0$, we have
    \begin{equation*}
        d_{M_1}\bigl(\lambda(n) h_{T_n}, \lambda(n) c_{T_n}\bigr) \xrightarrow[\tend{n}]{\prob} 0 .
    \end{equation*}
\end{lem}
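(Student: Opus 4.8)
The plan is to establish a \emph{pathwise} inequality valid for every finite plane tree $T$ with $\#T\geq 2$ and every $\lambda\in\R$, namely
\begin{equation*}
    d_{M_1}\bigl(\lambda h_T,\lambda c_T\bigr)\leq \abs{\lambda}\vee\left(\frac{2}{\#T}+\frac{\max H_T}{2(\#T-1)}\right),
\end{equation*}
and then to substitute $T=T_n$, $\lambda=\lambda(n)$ and let $n\to+\infty$: the right-hand side tends to $0$ in probability because $\lambda(n)\to 0$, $\#T_n\xrightarrow[\tend{n}]{\prob}+\infty$ and $\max H_{T_n}/\#T_n\xrightarrow[\tend{n}]{\prob}0$. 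To obtain the inequality I would exhibit parametric representations of $\lambda h_T$ and $\lambda c_T$ whose spatial components differ by at most $\abs{\lambda}$ and whose temporal components differ by the displayed temporal error, then feed them into the definition of $d_{M_1}$.

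Write $n=\#T$ and let $u_0,\dots,u_{n-1}$ be the lexicographical enumeration. The first ingredient is the classical time-change between the two encodings: if $\beta_k$ denotes the contour time at which the exploration first reaches $u_k$, then $\beta_k=2k-H_T(k)$ and $C_T(\beta_k)=H_T(k)$, which one checks at once (either by induction on the exploration or directly). Moreover, on $[\beta_k,\beta_{k+1}]$ the contour first decreases from $H_T(k)$ down to $m_k$, the height of the most recent common ancestor of $u_k$ and $u_{k+1}$, then increases up to $H_T(k+1)$.

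The crux is the elementary observation that this intermediate dip is shallow, namely $m_k\geq\min\bigl(H_T(k),H_T(k+1)\bigr)-1$. Indeed, either $u_{k+1}$ is the first child of $u_k$, so that $m_k=H_T(k)=H_T(k+1)-1$, or the exploration backtracks to the lowest ancestor carrying an unvisited sibling and $u_{k+1}$ is a child of that ancestor, so that $H_T(k+1)=m_k+1\leq H_T(k)$. In both cases, on $[\beta_k,\beta_{k+1}]$ the contour stays within $[\min(H_T(k),H_T(k+1))-1,\ \max(H_T(k),H_T(k+1))]$ and moves monotonically in the same direction as the affine segment joining $H_T(k)$ to $H_T(k+1)$ traced by $h_T$, up to a single unit of overshoot below $\min(H_T(k),H_T(k+1))$. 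Thus, after multiplication by $\lambda$, the contour function is exactly a time-stretched copy of the height function up to a spatial error $\abs{\lambda}$.

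This is precisely what makes the two parametric representations close. For $\lambda c_T$, take the natural representation $(\lambda c_T,\id)$; for $\lambda h_T$, take $(\lambda h_T\circ\tau,\tau)$ where $\tau$ is built interval by interval so that $\lambda h_T\circ\tau$ follows the monotone portion of $\lambda c_T$ exactly and is held constant during the at-most-one-unit overshoot, which yields $\norm{\lambda h_T\circ\tau-\lambda c_T}_\infty\leq\abs{\lambda}$. Since $\tau$ sends the contour time $\beta_k/(2(n-1))$ to the height time $k/n$, the identity $\beta_k=2k-H_T(k)$ gives at these nodes
\begin{equation*}
    \left|\frac{k}{n}-\frac{\beta_k}{2(n-1)}\right|\leq\frac{1}{n}+\frac{\max H_T}{2(n-1)},
\end{equation*}
and monotonicity of both $\tau$ and $\id$ propagates this estimate to all of $[0,1]$, giving the announced temporal bound. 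The only genuinely tree-specific input is the shallow-dip inequality $m_k\geq\min(H_T(k),H_T(k+1))-1$; once it is secured, the argument is bookkeeping on two explicit reparametrizations, the main care being to check that $\tau$ can be chosen non-decreasing, continuous and onto across the match/overshoot junctions and at the final descent to the root. I expect this verification to be the only delicate point, and it is exactly where the weakness of $M_1$ is essential: the large increments of $h_T$ (which in the stable regime are genuine and not controlled here) correspond to gradual monotone descents of $c_T$, and matching a jump to a slope requires the reparametrization freedom of $M_1$, whereas the uniform topology — needing $\max_k\abs{H_T(k+1)-H_T(k)}$ to be negligible — would fail.
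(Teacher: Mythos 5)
Your proposal is correct and follows essentially the same route as the paper's proof: the same time-change $\beta_k=2k-H_T(k)$ (the paper's $j_n(k)$), the same observation that between consecutive visit times the contour is a monotone descent with at most a one-unit overshoot below $\min\bigl(H_T(k),H_T(k+1)\bigr)$, the same pair of parametric representations (natural one for $\lambda c_T$, a reparametrized copy of $\lambda h_T$ held constant during the overshoot), and the same $O\bigl(\max H_T/\#T\bigr)$ control of the temporal component. The only cosmetic difference is that you isolate a clean pathwise inequality before passing to the limit, whereas the paper works directly with the sequence.
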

Let us mention that the scaling limit result for $H^*_n$ will yield that we can apply Lemma~\ref{lem:HeightIsRoughlyContour} to $\rot\Tree_n$ and $(\rot\Tree_n)^{\circ}$.

In Section~\ref{ssct:rightmostEnumeration}, we then study the enumeration $\left(\widetilde w_{k}\right)_{1 \leq k\leq n-1}$  and we obtain the following result linking the time-scaled functions of $H_n^*$ and $C_{(\rot\!\Tree_n)^{\circ}}$.

\begin{lem}\label{lem:RightHeightAndRightContourM1}
For any real sequence $(\lambda(n))_n$, we have
     \begin{equation*}
        d_{M_1}\bigl(\lambda(n)\widehat c_{(\rot\!\Tree_n)^{\circ}}, \lambda(n)h^*_n\bigr) \xrightarrow[\tend{n}]{\prob} 0,
    \end{equation*}
where $\widehat c_{(\rot\!\Tree_n)^{\circ}}$ is the \textit{reversed} time-scaled contour function of $(\rot\Tree_n)^{\circ}$, obtained by applying $x\mapsto \widehat x$ to ${c_{(\rot\!\Tree_n)^{\circ}}}$.
\end{lem}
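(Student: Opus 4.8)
The strategy is to reduce the $M_1$ bound to a purely \emph{temporal} estimate by exhibiting an exact time-change between the two functions. Concretely, I would construct a non-decreasing continuous surjection $\phi_n\colon[0,1]\to[0,1]$ with
\[
\widehat c_{(\rot\!\Tree_n)^{\circ}} = h^*_n\circ\phi_n,
\]
and then bound $\norm{\id-\phi_n}_\infty$. This is the right reduction because, writing $f=h^*_n$, the map $u\mapsto\bigl(f(\phi_n(u)),u\bigr)$ is a parametric representation of $f\circ\phi_n=\widehat c_{(\rot\!\Tree_n)^{\circ}}$, while $u\mapsto\bigl(f(\phi_n(u)),\phi_n(u)\bigr)$ is a parametric representation of $f$ (the latter traces the graph of the continuous function $f$ monotonically, being the one-to-one representation $(f,\id)$ composed with the onto non-decreasing $\phi_n$, as in Lemma~\ref{lem:ParametricRepresentation}). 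These two representations share the \emph{same} spatial component $f\circ\phi_n$; multiplying both functions by $\lambda(n)$ keeps the spatial components equal, so from the definition of $d_{M_1}$,
\[
d_{M_1}\bigl(\lambda(n)\widehat c_{(\rot\!\Tree_n)^{\circ}},\lambda(n)h^*_n\bigr)\le\norm{\id-\phi_n}_\infty,
\]
a bound that does not involve $\lambda(n)$ at all. Thus the whole statement follows once $\phi_n$ is built and $\norm{\id-\phi_n}_\infty\xrightarrow[\tend{n}]{\prob}0$ is established.

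The combinatorial core, obtained through the study of the enumeration $(\widetilde w_k)$ carried out in Section~\ref{ssct:rightmostEnumeration}, is to locate an increasing sequence of reversed-contour times $0=t_0\le t_1\le\cdots\le t_{n}=1$ along which $\widehat c_{(\rot\!\Tree_n)^{\circ}}(t_k)=H^*_n(k)$ and such that $\widehat c_{(\rot\!\Tree_n)^{\circ}}$ is \emph{monotone} on each interval $[t_k,t_{k+1}]$. In other words, reading the reversed contour of $(\rot\Tree_n)^{\circ}$ from left to right, its successive marked visits should be exactly $\widetilde w_1,\widetilde w_2,\dots$, and consecutive marked visits should be joined by a purely ascending or descending run rather than by a genuine down--up excursion. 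Granting this, on each $[t_k,t_{k+1}]$ the time-scaled reversed contour is the affine segment from $\bigl(t_k,H^*_n(k)\bigr)$ to $\bigl(t_{k+1},H^*_n(k+1)\bigr)$, exactly as $h^*_n$ is affine from $\bigl(k/n,H^*_n(k)\bigr)$ to $\bigl((k+1)/n,H^*_n(k+1)\bigr)$; defining $\phi_n$ to map $[t_k,t_{k+1}]$ affinely onto $[k/n,(k+1)/n]$ then yields the identity $\widehat c_{(\rot\!\Tree_n)^{\circ}}=h^*_n\circ\phi_n$. Proving this monotone correspondence is the main obstacle: it requires translating the mirrored depth-first exploration of $\Tree_n$, transported through the order-preserving identification $u\mapsto\widetilde u$ of Section~\ref{sct:rotation} together with the height formula \eqref{eq:decHauteurLargeur}, into the right-to-left boundary traversal of $(\rot\Tree_n)^{\circ}$, and in particular checking that the specific visit of each vertex singled out by the enumeration never lies strictly inside a down--up excursion of the contour.

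It then remains to estimate $\norm{\id-\phi_n}_\infty$. Since $\phi_n$ interpolates the points $(t_k,k/n)$ piecewise linearly, $\norm{\id-\phi_n}_\infty\le\max_{0\le k\le n}\abs{t_k-k/n}+O(1/n)$. The un-normalized reversed-contour time of the $k$-th marked visit equals the accumulated total variation $\sum_{j\le k}\abs{H^*_n(j)-H^*_n(j-1)}$, and comparing it with $k$ via the relation between contour time and rank (the analogue of the classical identity $b_k=2k-H(k)$) gives $\abs{t_k-k/n}=O\bigl((\max H^*_n+1)/n\bigr)$, using that every internal increment satisfies $1\le\abs{H^*_n(j)-H^*_n(j-1)}\le\max H^*_n$ and that the total contour length is $2\bigl(\#(\rot\!\Tree_n)^{\circ}-1\bigr)=2(n-2)$. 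Finally, $\max H^*_n/n\xrightarrow[\tend{n}]{\prob}0$: indeed by \eqref{eq:decHauteurLargeur} the process $H^*_n$ is controlled by the encoding processes of $\Tree_n$, whose scaling limits (Theorem~\ref{thm:Duquesne}) give $\max H^*_n=o(n)$ in probability. Combining these estimates yields $\norm{\id-\phi_n}_\infty\xrightarrow[\tend{n}]{\prob}0$, which together with the reduction above proves the lemma.
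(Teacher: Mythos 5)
Your overall skeleton is the same as the paper's: both proofs take parametric representations of $\lambda(n)h^*_n$ and $\lambda(n)\widehat c_{(\rot\!\Tree_n)^{\circ}}$ with a \emph{common} spatial component, so that $d_{M_1}$ is bounded by the sup-distance between temporal components, uniformly in $\lambda(n)$; and the monotone-correspondence fact you grant (consecutive enumerated vertices joined by purely monotone runs of the reversed contour) is exactly the paper's Lemma~\ref{lem:RightHeightAndRightContour}, which it proves via the rightmost enumeration and the identification $\widetilde w_i = r_i$ (Lemma~\ref{lem:RotatedMirrorIsRighmost}). Leaving that core unproven is a gap of omission, though you flag it honestly as the main obstacle.

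The genuine flaw is in your temporal estimate. You claim $\abs{t_k - k/n} = O\bigl((\max H^*_n+1)/n\bigr)$ follows from an ``analogue of the classical identity $b_k=2k-H(k)$'' plus the facts that internal increments of $H^*_n$ lie in $[1,\max H^*_n]$ in absolute value and that the total contour length is $2(n-2)$. Those facts do not imply the estimate. Counterexample: take a non-negative path of $n$ steps from $0$ to $0$ whose first $n/2$ increments alternate $+1,-1$ and whose remaining increments alternate $+3,-3$ (adjusting one pair so the total variation is exactly $2n-4$). All increments have absolute value at least $1$, the maximum of the path is $3$, yet at $k=n/2$ the accumulated total variation equals $k$, so it deviates from $2k$ by $n/2 \gg \max+1$. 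Whether the estimate is true for the actual $H^*_n$ (it is) is beside the point: your derivation of it from the stated properties fails, and establishing it is precisely the nontrivial combinatorial content. The paper supplies it as Lemma~\ref{lem:SimplerExpressionforIndex}, the exact identity $j_n(k)=2k-1+S_{\Tree_n^{\div}}(k)-H_{\Tree_n^{\div}}(k)$, proved by counting upward edge-crossings along the rightmost enumeration (each ascending step crosses one left edge plus a number of right edges equal to the corresponding increment of the mirror \Luka walk); the conclusion then follows since $\max S_{\Tree_n^{\div}}$ and $\max H_{\Tree_n^{\div}}$ are $o(n)$ by Theorem~\ref{thm:Duquesne}. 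Note also that the naive analogue $j_n(k)=2k-H^*_n(k)$ is false: the discrepancy is $2\bigl(S_{\Tree_n^{\div}}(k)-1\bigr)$, which is only controlled by $\max H^*_n$ through this same identity. A last minor point: your $\phi_n$ is required to map $[t_k,t_{k+1}]$ \emph{onto} $[k/n,(k+1)/n]$, which is impossible when $t_k=t_{k+1}$ (this degeneracy can occur at $k=0$ or $k=n-1$); the paper sidesteps it by building the change of time in the opposite direction, from the $h^*_n$ side, where constancy on a subinterval is harmless.
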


Finally, in Section~\ref{ssct:AddingTheLeaves} we study how the leaves are distributed in $\rot\Tree_n$ in order to relate the height processes $H_{(\rot\!\Tree_n)^{\circ}}$ and $H_{\rot\!\Tree_n}$. Again, we deduce that their rescaled functions are roughly the same.

\begin{lem}\label{lem:AddingTheLeaves}
For any real sequence $(\lambda(n))_n$ such that $\lambda(n)\rightarrow 0$, we have
    \begin{equation*}
        d_{M_1}\bigl(\lambda(n) h_{(\rot\!\Tree_n)^{\circ}}, \lambda(n)h_{\rot\!\Tree_n}\bigr) \xrightarrow[\tend{n}]{\prob} 0.
    \end{equation*}
\end{lem}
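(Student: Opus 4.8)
The plan is to compare $h_{\rot\Tree_n}$ and $h_{(\rot\Tree_n)^{\circ}}$ directly, exploiting the identification of $(\rot\Tree_n)^{\circ}$ with the set of internal vertices of $\rot\Tree_n$. Write $N=\#\rot\Tree_n=2n-1$ and $M=\#(\rot\Tree_n)^{\circ}=n-1$, and let $p_0<\cdots<p_{M-1}$ be the positions of the internal vertices in the lexicographical enumeration $v_0,\ldots,v_{N-1}$ of $\rot\Tree_n$. Since the internal vertices appear in the same order as the lexicographical enumeration of $(\rot\Tree_n)^{\circ}$, and since every ancestor of an internal vertex is internal, one has $H_{\rot\Tree_n}(p_i)=H_{(\rot\Tree_n)^{\circ}}(i)$ for every $i$. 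As $\rot\Tree_n$ is a full binary tree, every $v_j$ has out-degree $0$ or $2$, so $S_{\rot\Tree_n}(k)=2\,\#\{j<k:v_j\text{ internal}\}-k$; evaluating at $k=p_i$ and using $S_{\rot\Tree_n}=C_{\Tree_n}\oplus(-1)$ yields the identity $p_i=2i-C_{\Tree_n}(p_i)$, whence $|p_i-2i|\leq \max H_{\Tree_n}$.

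The idea is then to exhibit, for each $n$, the parametric representations $(\chi_2,\tau_2)=\bigl(\lambda(n)h_{(\rot\Tree_n)^{\circ}},\id\bigr)$ of $\lambda(n)h_{(\rot\Tree_n)^{\circ}}$ and $(\chi_1,\tau_1)=\bigl(\lambda(n)h_{\rot\Tree_n}\circ\phi,\phi\bigr)$ of $\lambda(n)h_{\rot\Tree_n}$, and to bound $\norm{\chi_1-\chi_2}_{\infty}\vee\norm{\tau_1-\tau_2}_{\infty}$ through the definition \eqref{eq:distance M1}. Here $\phi$ is a non-decreasing continuous onto self-map of $[0,1]$ with $\phi(i/M)=p_i/N$. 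The temporal bound is immediate from the previous paragraph: at the breakpoints $|\phi(i/M)-i/M|\leq (\max H_{\Tree_n}+1)/N$, and since $\phi$ is monotone while each window $[i/M,(i+1)/M]$ has length $1/M\to0$, this gives $\norm{\phi-\id}_{\infty}\leq (\max H_{\Tree_n}+1)/N+1/M$, which tends to $0$ in probability because $\max H_{\Tree_n}=o(n)$ in probability (a consequence of Theorem~\ref{thm:Duquesne}).

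The delicate point is the choice of $\phi$ inside each window so that $\chi_1$ stays uniformly close to $\chi_2$, and this is where the combinatorial structure enters. Between two consecutive internal vertices $\widetilde u_i$ (of height $a$) and $\widetilde u_{i+1}$ (of height $b$) the enumeration visits only leaves, and a short analysis of the depth-first backtracking in $\rot\Tree_n$ shows that their heights form at most a two-term plateau at height $a+1$ (the left and right leaves of $\widetilde u_i$, if present) followed by a strictly decreasing sequence down to $b$; in particular $b\leq a+1$, and after the plateau $h_{\rot\Tree_n}$ is continuous and strictly decreasing from $a$ to $b$ on $[p_i/N,p_{i+1}/N]$. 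I will then define $\phi$ on $[i/M,(i+1)/M]$ to traverse the plateau on a negligible sub-interval and, on the remainder, to match values: $\phi(s)$ is the unique time at which $h_{\rot\Tree_n}$ equals the chord value $h_{(\rot\Tree_n)^{\circ}}(s)$. This forces $\chi_1=\chi_2$ exactly along the descent and $|\chi_1-\chi_2|\leq\lambda(n)$ on the plateau, so that $\norm{\chi_1-\chi_2}_{\infty}\leq\lambda(n)\to0$.

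The main obstacle is precisely this per-window matching: a naive linear reparametrization fails, because the descent of $h_{\rot\Tree_n}$ may be far from uniform — when $\widetilde u_{i+1}$ lies much below $\widetilde u_i$, $h_{\rot\Tree_n}$ drops abruptly while the chord of $h_{(\rot\Tree_n)^{\circ}}$ falls linearly — and it is the value-matching reparametrization, legitimate only once the plateau-then-strict-descent structure is established, that absorbs this mismatch. Combining the two bounds through \eqref{eq:distance M1} gives $d_{M_1}\bigl(\lambda(n)h_{(\rot\Tree_n)^{\circ}},\lambda(n)h_{\rot\Tree_n}\bigr)\xrightarrow[\tend{n}]{\prob}0$. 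The boundary windows (containing the root and the final return to $0$) and the degenerate case $b=a+1$ are handled separately but routinely.
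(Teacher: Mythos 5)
Your proposal is correct and is essentially the paper's own argument: your structural claim (internal vertices sitting at positions $2i$ up to an error of $\max H_{\Tree_n}$, and, between consecutive internal vertices, an at most two-term plateau at height $a+1$ followed by a strict descent down to $b$) is precisely the paper's Lemma~\ref{lem:internalExtraction}, and your value-matching reparametrization — spatial error at most $\lambda(n)$ coming from the plateau, temporal error $O\bigl((\max H_{\Tree_n}+1)/n\bigr)\to 0$ in probability by Theorem~\ref{thm:Duquesne} — is exactly how the paper bounds the $M_1$ distance. The only cosmetic differences are that you keep the identity time on $h_{(\rot\!\Tree_n)^{\circ}}$ and reparametrize $h_{\rot\!\Tree_n}$ whereas the paper does the reverse, and that you derive the position formula from the identity $S_{\rot\!\Tree_n}=C_{\Tree_n}\oplus(-1)$ instead of directly counting the $1+H_{\Tree_n}(k)-H_{\Tree_n}(k+1)$ leaves inserted between consecutive internal vertices.
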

Theorem~\ref{thm:DetailedResult} directly follows from these three lemmas combined with the joint convergence of $H_n^*$, the three encoding processes of $\Tree_n$, and $S_{\rot\!\Tree_n}$.

\medskip

We now make formal the above proof strategy, with the assumption that Lemmas~\ref{lem:HeightIsRoughlyContour},~\ref{lem:RightHeightAndRightContourM1} and~\ref{lem:AddingTheLeaves} hold, and we refer to Section~\ref{sct:ProofLemmas} for their proofs. We provide Tables~\ref{tab:knonwScalingLimits} and~\ref{tab:Reminder} to recall some useful notation.
\begin{table}[htbp]
\centering
\caption{Reminders on plane trees and the rotation.}
\label{tab:Reminder}
\begin{tabular}{c p{10cm}}
\toprule
 $\widetilde u$, for $u\in T\!\setminus\!\{\varnothing\}$&vertex of $(\rot T)^{\circ}$ corresponding to $u$ and called the rotated version of $u$ (defined in Section~\ref{sct:rotation});\\
 $L(u)$ (resp. $R(u)$), for $u \in T$&number of edges of $T$ grafted on $\Zintfo{\varnothing,u}$ on its \textit{left} (resp. \textit{right}) side (defined in Section~\ref{sct:planeTrees});\\
$T \mapsto T^{\div}$ &  the mirror transformation (introduced in Section~\ref{sct:rotation}); \\
 \bottomrule
\end{tabular}

\end{table}

\begin{proof}[Proof of Theorem~\ref{thm:DetailedResult}.]
Recall \eqref{eq:decHauteurLargeur} which states that the height of a rotated vertex $\widetilde u \in (\rot\Tree_n)^{\circ}$ is made of two contributions, namely $|u|$ and $L(u)$, and also recall that the \Luka walk $S_{\Tree_n}$ of $\Tree_n$ is such that $S_{\Tree_n}(k)=R(u_k)$ for all $k<n$, where $u_0,\ldots u_{n-1}$ is the lexicographical enumeration of $\Tree_n$. In order to exchange \textit{right} and \textit{left}, we consider a new enumeration of $\Tree_n$ based on its mirror version $\Tree_n^{\div}$. We identify each vertex of $\Tree_n$ with its mirror image in $\Tree_n^{\div}$ (\ie its image by the reflection symmetry, see Figure~\ref{fig:mirroredEnumeration}). Thanks to this identification, a lexicographical enumeration of $\Tree_n^{\div}$ gives a \textit{mirrored enumeration} of $\Tree_n$, which will be systematically denoted by $w_0=\varnothing, w_1, \ldots, w_{n-1}$ to avoid confusion with the lexicographic enumeration (again, see Figure~\ref{fig:mirroredEnumeration}). As the right of the mirror image of some vertex $u$ corresponds to the left of $u$ in the initial tree, while the height is preserved, we get that 
\begin{equation}\label{eq:MirrorLukaIsLeft}
    \text{for } 0\leq k \leq n-1,\ L(w_k)=S_{\Tree_n^{\div}}(k) \text{ and } |w_k|=H_{\Tree_n^{\div}}(k).
\end{equation}
\begin{figure}[h]
    \begin{center}
    \begin{tikzpicture}[scale=0.1]
\tikzstyle{every node}+=[inner sep=0pt]
\draw [black] (24.9,-43.2) circle (3);
\draw (24.9,-43.2) node {$w_0$};
\draw [black] (14.1,-31.7) circle (3);
\draw (14.1,-31.7) node {$w_4$};
\draw [black] (24.9,-31.7) circle (3);
\draw (24.9,-31.7) node {$w_3$};
\draw [black] (35,-31.2) circle (3);
\draw (35,-31.2) node {$w_1$};
\draw [black] (8.2,-16.7) circle (3);
\draw (8.2,-16.7) node {$w_6$};
\draw [black] (18.5,-16.7) circle (3);
\draw (18.5,-16.7) node {$w_5$};
\draw [black] (18.5,-16.7) circle (2.4);
\draw [black] (35,-16.7) circle (3);
\draw (35,-16.7) node {$w_2$};
\draw [black] (62.7,-43.2) circle (3);
\draw (62.7,-43.2) node {$u_0^{\div}$};
\draw [black] (62.7,-31.7) circle (3);
\draw (62.7,-31.7) node {$u_3^{\div}$};
\draw [black] (51.7,-31.7) circle (3);
\draw (51.7,-31.7) node {$u_1^{\div}$};
\draw [black] (72.8,-31.7) circle (3);
\draw (72.8,-31.7) node {$u_4^{\div}$};
\draw [black] (66.4,-16.7) circle (3);
\draw (66.4,-16.7) node {$u_5^{\div}$};
\draw [black] (66.4,-16.7) circle (2.4);
\draw [black] (77.4,-16.7) circle (3);
\draw (77.4,-16.7) node {$u_6^{\div}$};
\draw [black] (51.7,-16.7) circle (3);
\draw (51.7,-16.7) node {$u_2^{\div}$};
\draw [black] (26.83,-40.9) -- (33.07,-33.5);
\fill [black] (33.07,-33.5) -- (32.17,-33.79) -- (32.94,-34.43);
\draw [black] (35,-28.2) -- (35,-19.7);
\fill [black] (35,-19.7) -- (34.5,-20.5) -- (35.5,-20.5);
\draw [black] (24.9,-40.2) -- (24.9,-34.7);
\fill [black] (24.9,-34.7) -- (24.4,-35.5) -- (25.4,-35.5);
\draw [black] (22.85,-41.01) -- (16.15,-33.89);
\fill [black] (16.15,-33.89) -- (16.34,-34.81) -- (17.07,-34.13);
\draw [black] (14.94,-28.82) -- (17.66,-19.58);
\fill [black] (17.66,-19.58) -- (16.95,-20.21) -- (17.91,-20.49);
\draw [black] (13,-28.91) -- (9.3,-19.49);
\fill [black] (9.3,-19.49) -- (9.13,-20.42) -- (10.06,-20.05);
\draw [black] (62.7,-40.2) -- (62.7,-34.7);
\fill [black] (62.7,-34.7) -- (62.2,-35.5) -- (63.2,-35.5);
\draw [black] (64.68,-40.95) -- (70.82,-33.95);
\fill [black] (70.82,-33.95) -- (69.92,-34.23) -- (70.67,-34.89);
\draw [black] (60.63,-41.03) -- (53.77,-33.87);
\fill [black] (53.77,-33.87) -- (53.97,-34.79) -- (54.69,-34.1);
\draw [black] (51.7,-28.7) -- (51.7,-19.7);
\fill [black] (51.7,-19.7) -- (51.2,-20.5) -- (52.2,-20.5);
\draw [black] (71.62,-28.94) -- (67.58,-19.46);
\fill [black] (67.58,-19.46) -- (67.43,-20.39) -- (68.35,-20);
\draw [black] (73.68,-28.83) -- (76.52,-19.57);
\fill [black] (76.52,-19.57) -- (75.81,-20.19) -- (76.76,-20.48);
\draw [black] (43.5,-43.2) -- (43.5,-16.7);
\end{tikzpicture}
    \caption{\centering A tree (left) and its mirror tree (right). The lexicographical enumeration of the mirror tree is denoted by $u_0^{\div}, \ldots, u_6^{\div}$. The double circles indicate a marked vertex and its mirror image. The mirrored enumeration of the initial tree is denoted by $w_0, \ldots, w_6$.}
    \label{fig:mirroredEnumeration}
    \end{center}
\end{figure}
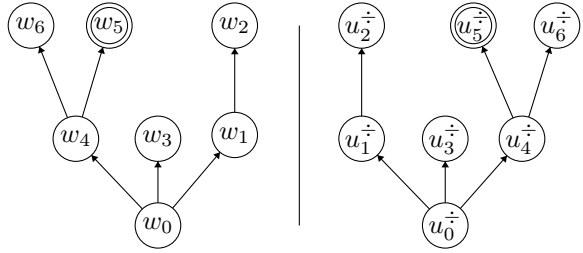

    Let us now consider the enumeration of  $(\rot\Tree_n)^{\circ}$ obtained by applying the correspondence $u \mapsto \widetilde u$ to $\left( w_{k}\right)_{1 \leq k\leq n-1}$, and let $H^*_n$ be such that $H^*_n(k)=|\widetilde w_{k}|$ for $1 \leq k \leq n-1$ and $H^*_n(0)=H^*_n(n)=0$. As usual $h^*_n$ denotes the associated time-scaled function. By  \eqref{eq:decHauteurLargeur} and \eqref{eq:MirrorLukaIsLeft}, we have that for all $1 \leq k \leq n-1$
    \begin{equation}\label{eq:ExactHeightPlusWidth}
        H^*_n(k)=H_{\Tree_n^{\div}}(k) + S_{\Tree_n^{\div}}(k)-1.
    \end{equation}
As a consequence, $\Vert h^*_n-h_{\Tree_n^{\div}} - s_{\Tree_n^{\div}}\Vert_{\infty}\leq 2$ and we deduce a scaling limit for $h^*_n$ (jointly with the encoding processes of $\Tree_n$) thanks to Proposition~\ref{prop:DuquesneMiroir}:

\begin{itemize}
    
\item When $\sigma^2=+\infty$, $h_{\Tree_n^{\div}}$ becomes negligible in front of $s_{\Tree_n^{\div}}$ since $n^{1-1/\alpha}/\ell(n)=o\bigl(n^{1/\alpha}\ell(n)\bigr)$ as $n\rightarrow+\infty$, hence we have
    \begin{equation}\label{eq:FirstCvInfinite}
    \Bigl(\frac{1}{\ell(n)n^{1/\alpha}} s_{\Tree_n}, \frac{\ell(n)}{n^{1-1/\alpha}}c_{\Tree_n},\frac{1}{\ell(n)n^{1/\alpha}} h^*_n \Bigr) \xrightarrow[\tend{n}]{\dstb} (\mathbbm x, \mathbbm h,\mathbbm y),
\end{equation}
where $\mathbbm y=F^{\div}(\mathbbm x)$.
\item When $\sigma^2<+\infty$, $h_{\Tree_n^{\div}}$ becomes proportional to $s_{\Tree_n^{\div}}$, hence we have
\begin{equation}\label{eq:FirstCvFinite}
    \frac{1}{\sqrt n}\Bigl(s_{\Tree_n},c_{\Tree_n},h^*_n\Bigr) \xrightarrow[\tend{n}]{\dstb} (\sigma \mathbbm e, \frac{2}{\sigma} \mathbbm e,\frac{2+\sigma^2}{\sigma}\widehat {\mathbbm e}).
\end{equation}
\end{itemize}
Note that we use the following facts: $\ell(n)/n^{1-1/\alpha}\times\Vert c_{\Tree_n}- h_{\Tree_n}\Vert_{\infty}  \rightarrow 0$ in probability (Theorem~\ref{thm:Duquesne}), and the addition of functions is not continuous in general for $d_{M_1}$ but is continuous at any point $(x,y)$ such that $x$ or $y$ is in $C([0,1])$.

We now relate $h_n^*$ to the contour function of $\rot\Tree_n$. First, Lemma~\ref{lem:RightHeightAndRightContourM1} gives that once rescaled $\widehat {h^*_n}$ is roughly the same as $c_{(\rot\!\Tree_n)^{\circ}}$. But according to Lemma~\ref{lem:HeightIsRoughlyContour}, since $\max H_{(\rot\!\Tree_n)^{\circ}}=\max H_n^*$ is negligible in front of $n$, we have that $c_{(\rot\!\Tree_n)^{\circ}}$ is almost the same as $h_{(\rot\!\Tree_n)^{\circ}}$. By Lemma~\ref{lem:AddingTheLeaves} this last process is also roughly the same as $h_{\rot\!\Tree_n}$, so it is roughly the same as $c_{\rot\!\Tree_n}$ by  Lemma~\ref{lem:HeightIsRoughlyContour} again. We finally get
\begin{equation*}
    d_{M_1}\left(\frac{1}{\ell(n)n^{1/\alpha}} c_{\rot\!\Tree_n}, \frac{1}{\ell(n)n^{1/\alpha}} \widehat {h^*_n}\right) \xrightarrow[\tend{n}]{\prob} 0.
\end{equation*}

Thus \eqref{eq:FirstCvInfinite} and \eqref{eq:FirstCvFinite} still hold if we replace $h^*_n$ with $c_{\rot\!\Tree_n}$, as long as we apply $x\mapsto \widehat x$ to the corresponding limit. Finally, we also know that, up to a last step down, $C_{\Tree_n}$ and $S_{\rot\!\Tree_n}$ coincide, so by convergence of $c_{\Tree_n}$ we get
\begin{equation*}
    \frac{\ell(n)}{n^{1-1/\alpha}}\bigl\Vert c_{\Tree_n}-s_{\rot\!\Tree_n}\bigr\Vert_{\infty} \xrightarrow[\tend{n}]{\prob} 0.
\end{equation*}
As a consequence we get from \eqref{eq:FirstCvInfinite} and \eqref{eq:FirstCvFinite} the joint convergence of $s_{\Tree_n},c_{\Tree_n},s_{\rot\!\Tree_n}, c_{\rot\!\Tree_n}$ (rescaled as above). Lemma~\ref{lem:HeightIsRoughlyContour} already gives that their height and contour processes are roughly the same, so Theorem~\ref{thm:DetailedResult} is proved.
\end{proof}

\paragraph{Comments on the proof.}
\begin{itemize}
    \item If one is not interested in the dependence between the encoding processes of $\rot\Tree_n$ and $s_{\Tree_n}$ (for instance, if one simply wants to get \eqref{eq:OnlyContourFinite} and \eqref{eq:OnlyContourInfinite} to establish Theorem~\ref{thm:MainResult}), then there is no need to use $F^{\div}$ as \eqref{eq:FirstCvInfinite} and \eqref{eq:FirstCvFinite} without $s_{\Tree_n}$ may be deduced from Theorem~\ref{thm:Duquesne} applied to $\bigl(\Tree_n^{\div}\bigr)_n$ instead of its extension Proposition~\ref{prop:DuquesneMiroir}.
    
    \item  In the special case $\mu=\Geom(1/2)$, a form of Theorem~\ref{thm:DetailedResult} has already been proved by \textsc{Marckert} in \cite{Marckert2004Rotation}. Let us stress the similarities and differences between his proof and ours. \textsc{Marckert} presents in his paper the geometric construction of $\rot$ which gives the handy identification between $T\!\setminus\!\{\varnothing\}$ and the internal subtree $(\rot T)^{\circ}$, and he also introduces the key idea of using $h_{(\rot\! \Tree_n)^{\circ}}$ as an intermediate process to link $h_{\Tree_n}$ and $h_{\rot\! \Tree_n}$. Our strategy for a general proof of Theorem~\ref{thm:DetailedResult} is based on this idea, with some adaptations such as replacing $h_{(\rot\! \Tree_n)^{\circ}}$ by $h_n^*$ as we actually have to link $h_{\rot\! \Tree_n}$ with $s_{\Tree_n^{\div}}$ as soon as $\sigma^2=+\infty$. However, we use quite different methods to prove that the several processes are asymptotically close to each other. Indeed, in this special case $\rot\Tree_n$ is uniform over binary trees with $n$ leaves and thus is distributed as a conditioned \textsc{Bienaymé} tree with offspring distribution $(\delta_0+\delta_2)/2$. This enables \textsc{Marckert} to use some previous results from \cite{MarckertMokkadem2003SameExcursion,MarckertMokkadem2003InternalStructureBGW} which describe in details the internal structure of conditioned critical \textsc{Bienaymé} trees (with some exponential moment assumption), and it immediately gives that the internal height process of $\rot\Tree_n$, namely $h_{(\rot\! \Tree_n)^{\circ}}$, is close to the global height process $h_{\rot\! \Tree_n}$, as well as the fact that the "right part" of $h_{(\rot\! \Tree_n)^{\circ}}$ is close to its "left part" which is $h_{\Tree_n}$. Unfortunately, $\mu=\Geom(1/2)$ is the only case where $\rot\Tree_n$ is distributed as a conditioned \textsc{Bienaymé} tree. We thus use another approach, as explained in Section~\ref{sct:Intro}. Its main specific features are that we express all quantities of interest with encoding processes of $\Tree_n$ (since the behaviour of this tree only is well known) and then we look for combinatorial relations between processes that hold for every plane tree in order to link the asymptotic behaviours, with respect to $d_{M_1}$, of those processes.
\end{itemize}

\subsection{Comparison with the co-rotation}\label{ssct:Comparison co-rotation}

Recall the companion correspondence of the rotation, $\corot$, introduced in Section~\ref{sct:rotation}. We explore briefly the similarities and mostly the differences between $\rot$ and $\corot$ applied on large \textsc{Bienaymé} trees in this section, and in particular we prove that the encoding processes of $\corot\Tree_n$ asymptotically behave in a nicer way. 

By \eqref{eq:Link Rot Corot}, which gives that $\corot$ is linked to $\rot$ through the mirror transformation, we see that $\corot \Tree_n$ is distributed as $(\rot \Tree_n)^{\div}$, hence it has the same distributional scaling limit. Thanks to Proposition~\ref{prop:DuquesneMiroir}, we can even use \eqref{eq:Link Rot Corot} to get a joint convergence as a corollary of Theorem~\ref{thm:DetailedResult}. We state this result as convergence of contour functions but it directly translates into convergence of trees. Recall the notation of Table~\ref{tab:knonwScalingLimits}.

\begin{cor}\label{cor:Comparison Rot Corot}
Let $\Tree_n$ be distributed as $\BGW\evt{\,\cdot\,\vert\textup{\#vertices = $n$}}$ where the offspring distribution $\mu$ is critical and attracted to a stable distribution of index $\alpha\in(1,2]$. 

\begin{itemize}
    \item When $\alpha = 2$,
 \begin{equation*}
\frac{1}{\ell(n)\sqrt{n}}\bigl\Vert c_{\rot\! \Tree_n}- c_{\corot\! \Tree_n}\bigr\Vert_{\infty}  \xrightarrow[\tend{n}]{\prob} 0.
    \end{equation*}

\item When $\alpha < 2$, we have the convergence in distribution
\begin{equation*}
    \left(\frac{\ell(n)}{n^{1-1/\alpha}}c_{\Tree_n},\frac{1}{\ell(n)n^{1/\alpha}}c_{\rot\! \Tree_n}, \frac{1}{\ell(n)n^{1/\alpha}}c_{\corot \! \Tree_n}\right) \xrightarrow[\tend{n}]{\dstb} \bigl(\mathbbm h,\widehat{\mathbbm y},\mathbbm x\bigr)
\end{equation*}
with $\mathbbm y = F^{\div}(\mathbbm x)$.
\end{itemize}
\end{cor}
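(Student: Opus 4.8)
The plan is to deduce this corollary from Theorem~\ref{thm:DetailedResult} and Proposition~\ref{prop:DuquesneMiroir} by exploiting the identity \eqref{eq:Link Rot Corot}. The key observation is that $\corot \Tree_n = (\rot(\Tree_n^{\div}))^{\div}$, combined with the fact that the contour of a mirror tree is the time-reversed contour (so $c_{S^{\div}}=\widehat{c_S}$ for every plane tree $S$), gives the exact identity $c_{\corot \Tree_n} = \widehat{c_{\rot \Tree_n^{\div}}}$. Hence I only need to control $c_{\rot \Tree_n^{\div}}$ jointly with the encoding processes of $\Tree_n$, which I would achieve by replaying the proof of Theorem~\ref{thm:DetailedResult} with $\Tree_n^{\div}$ in the role of $\Tree_n$.

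First I would recall that this proof shows $\frac{1}{\ell(n)n^{1/\alpha}} c_{\rot \Tree_n}$ to be $M_1$-close, in probability, to $\frac{1}{\ell(n)n^{1/\alpha}} \widehat{h_n^*}$, where $H_n^*(k)=H_{\Tree_n^{\div}}(k)+S_{\Tree_n^{\div}}(k)-1$. Running the identical argument on $\Tree_n^{\div}$ (legitimate since $(\Tree_n^{\div})^{\div}=\Tree_n$) shows that $\frac{1}{\ell(n)n^{1/\alpha}} c_{\rot \Tree_n^{\div}}$ is $M_1$-close to $\frac{1}{\ell(n)n^{1/\alpha}} \widehat{(h_n^*)^{\div}}$, with $(H_n^*)^{\div}(k)=H_{\Tree_n}(k)+S_{\Tree_n}(k)-1$. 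Since $x\mapsto\widehat x$ is an $M_1$-isometry and $(h_n^*)^{\div}$ is continuous (so that the double reversal $\widehat{\widehat{(h_n^*)^{\div}}}$ equals $(h_n^*)^{\div}$), applying $\widehat{\cdot}$ turns the last display into the statement that $\frac{1}{\ell(n)n^{1/\alpha}} c_{\corot \Tree_n}$ is $M_1$-close to $\frac{1}{\ell(n)n^{1/\alpha}} (h_n^*)^{\div}$.

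Next I would feed in Proposition~\ref{prop:DuquesneMiroir}, which provides the joint convergence of $\frac{1}{\ell(n)n^{1/\alpha}} s_{\Tree_n}$, $\frac{\ell(n)}{n^{1-1/\alpha}} h_{\Tree_n}$ and $\frac{1}{\ell(n)n^{1/\alpha}} s_{\Tree_n^{\div}}$ towards $\mathbbm x$, $\mathbbm h$ and $\mathbbm y=F^{\div}(\mathbbm x)$ respectively, with the correct joint law. When $\sigma^2=+\infty$ one has $\frac{1}{\ell(n)n^{1/\alpha}} h_{\Tree_n}\to0$, because its order is $n^{1-2/\alpha}/\ell(n)^2$ and this tends to $0$ for every $\alpha\in(1,2]$ once $\ell(n)\to+\infty$; hence $\frac{1}{\ell(n)n^{1/\alpha}}(h_n^*)^{\div}$ is asymptotically equal to $\frac{1}{\ell(n)n^{1/\alpha}} s_{\Tree_n}\to\mathbbm x$, while $\widehat{h_n^*}$ reduces to $\widehat{s_{\Tree_n^{\div}}}\to\widehat{\mathbbm y}$. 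Combining these with $c_{\Tree_n}\approx h_{\Tree_n}\to\mathbbm h$ and applying the continuous-mapping theorem to the joint convergence of Proposition~\ref{prop:DuquesneMiroir} yields the announced joint limit $(\mathbbm h,\widehat{\mathbbm y},\mathbbm x)$ in the case $\alpha\in(1,2)$.

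Finally, the case $\alpha=2$ is the diagonal specialization of this joint statement: there $\mathbbm h=\mathbbm x$ and $F^{\div}(\mathbbm x)=\widehat{\mathbbm x}$, so $\widehat{\mathbbm y}=\widehat{\widehat{\mathbbm x}}=\mathbbm x$ and the joint limit collapses to $(\mathbbm x,\mathbbm x,\mathbbm x)$ for a single common $\mathbbm x$. In particular $(c_{\rot \Tree_n},c_{\corot \Tree_n})$ converges jointly to $(\mathbbm x,\mathbbm x)$; as $\mathbbm x$ is then continuous (distributed as $\sqrt2\,\mathbbm e$), this $M_1$ convergence upgrades to uniform convergence, and the continuity of subtraction for $\norm{\cdot}_\infty$ gives $\frac{1}{\ell(n)\sqrt n}(c_{\rot \Tree_n}-c_{\corot \Tree_n})\to0$ in distribution, hence in probability since the limit is the constant $0$. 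The main obstacle is purely the bookkeeping of the reversal and mirror operations—in particular checking that the two applications of $\widehat{\cdot}$ compose to the identity on the continuous intermediate process, and that the dependence between the limits of $s_{\Tree_n}$ and $s_{\Tree_n^{\div}}$ is faithfully captured by $F^{\div}$—which is precisely the information that Proposition~\ref{prop:DuquesneMiroir} supplies.
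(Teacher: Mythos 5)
Your derivation of the $\alpha<2$ bullet is correct and is essentially the paper's own proof: the identity $c_{\corot\!\Tree_n}=\widehat{c}_{\rot(\Tree_n^{\div})}$ coming from \eqref{eq:Link Rot Corot}, the $M_1$-comparisons to $h_n^*$ extracted from the proof of Theorem~\ref{thm:DetailedResult} (your comparison for $\rot\!\Tree_n$ and its replayed version on $\Tree_n^{\div}$ are precisely the paper's \eqref{eq:Contour Rot et Luka Miroir} and \eqref{eq:Contour corot et Luka}), and Proposition~\ref{prop:DuquesneMiroir} to identify the joint limit $(\mathbbm h,\widehat{\mathbbm y},\mathbbm x)$. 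Your treatment of $\alpha=2$ with $\sigma^2=+\infty$ is also fine.

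The genuine gap is the sub-case $\alpha=2$, $\sigma^2<+\infty$, which the first bullet of the corollary also covers (it is the generic Gaussian case, e.g.\ geometric offspring). Your joint-limit statement rests on the negligibility $\frac{1}{\ell(n)n^{1/\alpha}}\norm{h_{\Tree_n}}_{\infty}\to 0$, which you justify by $\ell(n)\to+\infty$; but when $\sigma^2<+\infty$ one has $\ell(n)\to\sigma/\sqrt 2$, and by Theorem~\ref{thm:MMD} the process $\frac{1}{\ell(n)\sqrt n}h_{\Tree_n}$ converges to the non-zero limit $\frac{2\sqrt 2}{\sigma^2}\mathbbm e$, of the same order as $\frac{1}{\ell(n)\sqrt n}s_{\Tree_n}\to\sqrt 2\,\mathbbm e$. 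Hence $(h_n^*)^{\div}=h_{\Tree_n}+s_{\Tree_n}-1$ is not asymptotically equal to $s_{\Tree_n}$ but to $(1+\tfrac{2}{\sigma^2})s_{\Tree_n}$, and the ``diagonal specialization'' you invoke specializes a joint statement that was never established for this sub-case. The repair stays inside your scheme and is exactly the paper's unified treatment: replace negligibility by proportionality. Theorem~\ref{thm:MMD} applied to $\Tree_n$ and to $\Tree_n^{\div}$ gives $h_{\Tree_n}\approx\frac{2}{\sigma^2}s_{\Tree_n}$ and $h_{\Tree_n^{\div}}\approx\frac{2}{\sigma^2}s_{\Tree_n^{\div}}$ uniformly at scale $\sqrt n$, so your two comparisons become $c_{\rot\!\Tree_n}\approx(1+\tfrac{2}{\sigma^2})\widehat{s}_{\Tree_n^{\div}}$ and $c_{\corot\!\Tree_n}\approx(1+\tfrac{2}{\sigma^2})s_{\Tree_n}$; Proposition~\ref{prop:DuquesneMiroir} with $F^{\div}(\mathbbm x)=\widehat{\mathbbm x}$ then yields $\frac{1}{\ell(n)\sqrt n}\norm{\widehat{s}_{\Tree_n^{\div}}-s_{\Tree_n}}_{\infty}\to 0$ in probability, and the two chains combine (with the convention $2/(+\infty)=0$) to prove the first bullet in both Gaussian sub-cases at once.
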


\begin{proof}
    First, we assume that $\alpha=2$. We first get from Theorem~\ref{thm:DetailedResult} that
    \begin{equation*}
        \frac{1}{\ell(n)\sqrt{n}}\bigl\Vert c_{\rot\! \Tree_n}-(1+\frac{2}{\sigma^2})s_{\Tree_n}\bigr\Vert_{\infty} \xrightarrow[\tend{n}]{\prob} 0,
    \end{equation*}
    with $2/(+\infty)=0$ by convention. The same holds when one replaces $\Tree_n$ by $\Tree_n^{\div}$, and by \eqref{eq:Link Rot Corot} we have $c_{\rot( \Tree_n^{\div})} = \widehat c_{\corot\! \Tree_n}$. The result thus follows from Proposition~\ref{prop:DuquesneMiroir} as it gives
    \begin{equation*}
\frac{1}{\ell(n)\sqrt{n}}\bigl\Vert \widehat s_{\Tree_n^{\div}}-s_{\Tree_n}\bigr\Vert_{\infty} \xrightarrow[\tend{n}]{\prob} 0.
    \end{equation*}

\smallskip

    We now deal with the case $\alpha<2$. The argument is rather similar: it is sufficient to prove that 
    \begin{equation}\label{eq:Contour Rot et Luka Miroir}
        d_{M_1}\left(\frac{1}{\ell(n)n^{1/\alpha}}\widehat c_{\rot\! \Tree_n},\frac{1}{\ell(n)n^{1/\alpha}}s_{\Tree_n^{\div}} \right) \xrightarrow[\tend{n}]{\prob} 0.
    \end{equation}
    Indeed, \eqref{eq:Contour Rot et Luka Miroir}  applied to $\Tree_n^{\div}$ instead of $\Tree_n$ gives
    \begin{equation}\label{eq:Contour corot et Luka}
        d_{M_1}\left(\frac{1}{\ell(n)n^{1/\alpha}}c_{\corot\! \Tree_n},\frac{1}{\ell(n)n^{1/\alpha}}s_{\Tree_n} \right) \xrightarrow[\tend{n}]{\prob} 0.
    \end{equation}
     By combining Proposition~\ref{prop:DuquesneMiroir} with \eqref{eq:Contour Rot et Luka Miroir}, \eqref{eq:Contour corot et Luka} and the fact that $\ell(n)/n^{1-1/\alpha}\times \Vert c_{\Tree_n}- h_{\Tree_n}\Vert_{\infty}  \xrightarrow{\prob} 0$, we get the desired result.

     Notice that \eqref{eq:Contour Rot et Luka Miroir} has already been implicitly proved during the proof of Theorem~\ref{thm:DetailedResult} (both terms in \eqref{eq:Contour Rot et Luka Miroir} have been compared to $h_n^*$). We may also deduce it directly from Theorem~\ref{thm:DetailedResult} and Proposition~\ref{prop:DuquesneMiroir}: According  to these results, the sequence $$\left(\frac{\ell(n)}{n^{1-1/\alpha}}c_{\Tree_n},\frac{1}{\ell(n)n^{1/\alpha}}\widehat c_{\rot\! \Tree_n}, \frac{1}{\ell(n)n^{1/\alpha}}s_{\Tree_n^{\div}}\right)_n $$ is tight and any subsequential distributional limit must be distributed as $\bigl(\mathbbm x,F^{\div}(\mathbbm x),F^{\div}(\mathbbm x)\bigr)$. This concludes the proof.
    \end{proof}

\begin{rmk}
    Simulations seem to indicate that the corresponding joint scaling limits for rotated and co-rotated trees, namely $\bigl(\mathscr T_{F^{\div}(\mathbbm x)},\mathscr T_{\mathbbm x}\bigr)$, is such that almost surely $\mathscr T_{F^{\div}(\mathbbm x)} \neq \mathscr T_{\mathbbm x}$ when $\alpha<2$, but we do not have a proof for this claim. In any case, these two trees are still closely related, as $\mathscr L_{F^{\div}(\mathbbm x)}= \mathscr L_{\mathbbm x}$ because of \cite[Theorem~4.1]{CurienKortchemski2014StableLooptrees} and the fact that $\Loop(T)=\Loop(T^{\div})$ once we see them as metric spaces (that is, once we forget about the ordered structure of these graphs). According to the discussion in Section~\ref{ssct:description limit tree}, $\mathscr T_{\mathbbm x}$ and $\mathscr T_{F^{\div}(\mathbbm x)}$ are two spanning $\R$-trees of $\mathscr L_{\mathbbm x}$. We may think of this as a continuous analogue of Figure~\ref{fig:Rot in Looptree}. 
\end{rmk}

So far, $\rot \Tree_n$ and $\corot \Tree_n$ have a symmetric behaviour at large scale, as their contour processes satisfy a kind of duality relation at the limit. But this symmetry breaks down once we take into account their \Luka walks. Indeed, there is a combinatorial relation between $S_{\corot \Tree_n}$ and $S_{\Tree_n}$ that will be studied and used in Section~\ref{ssct:Luka of Corot}, with the same method previously used to compare some encoding processes, and it appears that these processes are asymptotically the same for $d_{M_1}$.

\begin{lem}\label{lem:ComparisonLukaCorot}
For any real sequence $(\lambda(n))_n$ such that $\lambda(n)\rightarrow 0$, we have
    \begin{equation*}
        d_{M_1}\bigl(\lambda(n) s_{\corot\!\Tree_n}, \lambda(n)s_{\Tree_n}\bigr) \xrightarrow[\tend{n}]{\prob} 0.
    \end{equation*}
\end{lem}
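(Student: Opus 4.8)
The plan is to follow the general strategy used for Lemmas~\ref{lem:HeightIsRoughlyContour}, \ref{lem:RightHeightAndRightContourM1} and~\ref{lem:AddingTheLeaves}: first extract a deterministic combinatorial relation between the integer walks $S_{\corot T}$ and $S_T$, valid for every finite plane tree $T$, and then turn it into an estimate of $d_{M_1}$ through a careful choice of parametric representations. The relation rests on the recursive description of $\corot$ (and the spine--graft picture of Figure~\ref{fig:DefGeoRotation}): when the root of $T$ has out-degree $d$, the walk $S_T$ makes a single up-jump of height $d-1$, whereas $S_{\corot T}$ first climbs the spine $\sigma_1,\dots,\sigma_d$ by $d$ consecutive unit up-steps, reaches height $d$, and then steps down by one unit at the spine-end leaf $\sigma_{d+1}$, so that it is back at height $d-1$; afterwards the co-rotated subtrees $\corot T_1,\dots,\corot T_d$ are visited as contiguous lexicographic blocks, in the \emph{same} order $T_1,\dots,T_d$ as in $T$ (the deepest spine graft $\corot T_1$ being explored first). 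Since $\corot$ maps a subtree of size $s$ to a binary tree of size $2s-1$, each such block occupies a time-fraction $\tfrac{2s-1}{2\#T-1}\approx\tfrac{s}{\#T}$ in $s_{\corot T}$, matching the fraction $\tfrac{s}{\#T}$ occupied by the corresponding subtree in $s_T$. Iterating this description yields a recursive correspondence in which every up-jump of $S_T$ is replaced by a monotone unit climb followed by one unit step down, and all subtrees are kept in register.

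Next I would convert this into an $M_1$ bound. Writing $d_{M_1}$ through parametric representations as in \eqref{eq:distance M1}, and recalling that the completed graph of $s_T$ fills each jump with a vertical segment, I would build representations $(\chi_1,\tau_1)$ of $s_{\corot T}$ and $(\chi_2,\tau_2)$ of $s_T$ that trace the two completed graphs in parallel, matching each monotone climb of $s_{\corot T}$ against the vertical segment filling the corresponding jump of $s_T$. The jump of height $d-1$ and the climb of height $d$ sweep the same vertical extent up to a single extra unit at the top (the over-shoot that the spine-end leaf immediately corrects), and this over-shoot resets at every subtree boundary, so it does not accumulate across the recursion. Hence one can arrange $\norm{\chi_1-\chi_2}_\infty\le C$ for a universal constant $C$, giving $d_{M_1}\bigl(\lambda(n)s_{\corot\Tree_n},\lambda(n)s_{\Tree_n}\bigr)\le\bigl(\lambda(n)\,C\bigr)\vee\norm{\tau_1-\tau_2}_\infty$. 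Because $\lambda(n)\to 0$, the spatial term is negligible for every such sequence, and the problem reduces to controlling the temporal discrepancy $\norm{\tau_1-\tau_2}_\infty$.

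The temporal discrepancy measures the mismatch between the clock of $s_{\corot T}$ (of length $2\#T-1$) and that of $s_T$ (of length $\#T$). Since the subtree blocks are kept in register and occupy matching time-fractions, this discrepancy is created only by the monotone climbs replacing the jumps, and its supremum is bounded by the normalized amount of \emph{pending} spine structure, which I expect to be controlled by $\tfrac{1}{\#T}\max_k S_T(k)$ (a quantity dominating the normalized maximal out-degree). This is the exact analogue of the condition $\max H_{T_n}/\#T_n\to 0$ driving Lemma~\ref{lem:HeightIsRoughlyContour}. The randomness enters only here: by Theorem~\ref{thm:Duquesne}, $\tfrac{1}{\ell(n)n^{1/\alpha}}s_{\Tree_n}\xrightarrow[\tend{n}]{\dstb}\mathbbm x$, so $\max_k S_{\Tree_n}(k)=O_{\mathbb P}\!\bigl(\ell(n)n^{1/\alpha}\bigr)=o(n)$, whence $\norm{\tau_1-\tau_2}_\infty\xrightarrow[\tend{n}]{\prob}0$ and the lemma follows.

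The delicate point — and the step I expect to be the main obstacle — is precisely this temporal estimate: proving that the induced time-change is asymptotically the identity, with deviation bounded by $\tfrac{1}{\#T}\max_k S_T(k)$ and no global accumulation along deep branches. It is here that one genuinely needs $\Tree_n$ to be a size-conditioned \textsc{Bienaymé} tree: the statement is false for atypical trees where a single macroscopic piece of structure appears (for a star, or for a complete binary tree, the relevant functional is of order $n$ and $d_{M_1}(\lambda s_{\corot T},\lambda s_T)$ does not tend to $0$ when, e.g., $\lambda=n^{-1/2}$). One must therefore show that for these conditioned trees all the spine climbs, and the cumulative over-shoot they produce, involve only a vanishing fraction of the total time, which is exactly what the convergence of the rescaled \Luka{} walk provides. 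The remaining verifications — that the over-shoot stays uniformly bounded and that the subtree blocks truly match time-fractions — are combinatorial and follow by induction on the recursive construction of $\corot$.
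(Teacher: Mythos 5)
Your proposal is correct and follows essentially the same route as the paper: the combinatorial relation you describe is exactly the paper's Lemma~\ref{lem:Link Luka Corot} (proved by induction on the recursive definition of $\corot$, with each jump of $S_{\Tree_n}$ replaced by a unit climb overshooting by one, and the subtree blocks kept in register through the time change $j_n(k)=2k+S_{\Tree_n}(k)$), after which the paper builds parametric representations whose spatial components differ by at most $1$ and whose temporal discrepancy is bounded by $(\max S_{\Tree_n}+O(1))/n$, which vanishes in probability by Theorem~\ref{thm:Duquesne}. One correction to a side remark: for a complete binary tree one has $\max_k S_T(k)=O(\log \#T)$ (a binary ancestral spine carries at most one right-hanging edge per level), so the lemma does hold for it; the star is the right kind of counterexample, and consistently with this the paper notes that the lemma holds for any sequence of trees with $\max S_{T_n}/\#T_n\to 0$ in probability, the \textsc{Bienaymé} hypothesis serving only to guarantee this condition.
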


As a consequence, we are able to give an analogue of Theorem~\ref{thm:DetailedResult} \ie a joint convergence result of all encoding processes of $\Tree_n$ and $\corot\Tree_n$, but in this situation \textit{all encoding processes of  $\corot\Tree_n$ have the same order and converge towards the same limit, namely the limit of the \Luka walk of $\Tree_n$}.

\begin{thm}\label{thm:Detailed Result Corotation}
Let $\Tree_n$ be distributed as $\BGW\evt{\,\cdot\,\vert\textup{\#vertices = $n$}}$ where the offspring distribution $\mu$ is critical, has variance $\sigma^2 \leq +\infty$ and is attracted to a stable distribution with index $\alpha \in (1,2]$.
\begin{itemize}
\item Assuming $\sigma^2 <+\infty$, we have
    \begin{equation*}
        \frac{1}{\sqrt n}\left(s_{\Tree_n},c_{\Tree_n},s_{\corot\!\Tree_n},c_{\corot\! \Tree_n}\right) \xrightarrow[\tend{n}]{\dstb} \left(\sigma \mathbbm e,\frac{2}{\sigma}\mathbbm e, \sigma \mathbbm e,\frac{2+\sigma^2}{\sigma}\mathbbm e\right).
    \end{equation*}

\item Assuming $\sigma^2 =+\infty$, we have
    \begin{equation*}
        \left(\frac{1}{\ell(n)n^{1/\alpha}}s_{\Tree_n},\frac{\ell(n)}{n^{1-1/\alpha}}c_{\Tree_n}, \frac{1}{\ell(n)n^{1/\alpha}} s_{\corot\!\Tree_n},\frac{1}{\ell(n)n^{1/\alpha}}c_{\corot\! \Tree_n}\right) \xrightarrow[\tend{n}]{\dstb} \left(\mathbbm x,\mathbbm h,\mathbbm x,\mathbbm x\right).
    \end{equation*}
\end{itemize}
In addition, in both cases height functions are asymptotically the same as contour functions.
\end{thm}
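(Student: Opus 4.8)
The plan is to deduce the statement by attaching the co-rotation processes, one at a time, to the already-convergent tuple of encoding processes of $\Tree_n$, using the two comparison results special to $\corot$: the contour comparison contained in Corollary~\ref{cor:Comparison Rot Corot} and the \Luka comparison of Lemma~\ref{lem:ComparisonLukaCorot}. The mechanism throughout is the elementary convergence-together principle for the product $M_1$ topology: if $(A_n,B_n)\xrightarrow[\tend{n}]{\dstb}(A,B)$ and $d_{M_1}(C_n,A_n)\xrightarrow[\tend{n}]{\prob}0$, then $(A_n,B_n,C_n)\xrightarrow[\tend{n}]{\dstb}(A,B,A)$, with the obvious extension when several coordinates are $M_1$-asymptotic to a single already-convergent one.

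I would first handle the \Luka walk. Applying Lemma~\ref{lem:ComparisonLukaCorot} with $\lambda(n)=1/(\ell(n)n^{1/\alpha})$ when $\sigma^2=+\infty$, and with $\lambda(n)=1/\sqrt n$ when $\sigma^2<+\infty$, shows that the rescaled $s_{\corot\!\Tree_n}$ is $M_1$-asymptotically equal to the correspondingly rescaled $s_{\Tree_n}$. Since the latter converges, by Theorems~\ref{thm:Duquesne} and~\ref{thm:MMD}, to $\mathbbm x$ (resp. $\sigma\mathbbm e$), so does $s_{\corot\!\Tree_n}$, jointly with $(s_{\Tree_n},c_{\Tree_n})$.

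The contour process is where the two regimes differ, and where the distinctive feature of the statement appears. When $\sigma^2=+\infty$, the key point is that $c_{\corot\!\Tree_n}$ is itself $M_1$-asymptotic to $s_{\Tree_n}$: this is \eqref{eq:Contour corot et Luka} from the proof of Corollary~\ref{cor:Comparison Rot Corot}, which extends to the whole regime $\sigma^2=+\infty$ because $h_{\Tree_n^{\div}}$ is then negligible against $s_{\Tree_n^{\div}}$ after rescaling (exactly as in the proof of Theorem~\ref{thm:DetailedResult}). Consequently $s_{\Tree_n}$, $s_{\corot\!\Tree_n}$ and $c_{\corot\!\Tree_n}$ all collapse onto the single limit $\mathbbm x$, and the convergence-together principle yields the joint limit $(\mathbbm x,\mathbbm h,\mathbbm x,\mathbbm x)$. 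When $\sigma^2<+\infty$ this coincidence fails, and I would instead import the contour limit from the rotation: Corollary~\ref{cor:Comparison Rot Corot} gives $\tfrac{1}{\ell(n)\sqrt n}\norm{c_{\rot\!\Tree_n}-c_{\corot\!\Tree_n}}_{\infty}\xrightarrow[\tend{n}]{\prob}0$ (recall $\ell(n)\to\sigma/\sqrt2$), while Theorem~\ref{thm:DetailedResult} gives $\tfrac{1}{\sqrt n}c_{\rot\!\Tree_n}\xrightarrow[\tend{n}]{\dstb}\tfrac{2+\sigma^2}{\sigma}\mathbbm e$; transporting this limit to $c_{\corot\!\Tree_n}$ and combining with the \Luka part produces the joint limit $(\sigma\mathbbm e,\tfrac{2}{\sigma}\mathbbm e,\sigma\mathbbm e,\tfrac{2+\sigma^2}{\sigma}\mathbbm e)$. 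In this last case every limit is continuous, so all the $M_1$-convergences may be read as uniform convergences.

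Finally, the asserted asymptotic equality of height and contour functions follows from Lemma~\ref{lem:HeightIsRoughlyContour} applied to $\corot\Tree_n$: this tree has $2n-1$ vertices, and the convergence of its rescaled contour forces $\max H_{\corot\!\Tree_n}$ to be of order $\ell(n)n^{1/\alpha}$ (resp. $\sqrt n$), which is $o(\#\corot\Tree_n)$ since $\alpha>1$, so the hypotheses of the lemma are met. The only genuinely new analytic ingredient in the whole argument is Lemma~\ref{lem:ComparisonLukaCorot}, whose proof I would defer to Section~\ref{ssct:Luka of Corot}; granting it, the remaining difficulty is organizational, namely keeping track of \emph{joint} convergence in the product $M_1$ topology and carefully separating the two variance regimes, since the collapse $c_{\corot\!\Tree_n}\approx s_{\Tree_n}$ that forces all co-rotated processes to the same limit $\mathbbm x$ is a phenomenon special to the infinite-variance case.
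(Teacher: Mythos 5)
Your proof is correct and takes essentially the same approach as the paper: the paper's proof likewise combines Lemma~\ref{lem:ComparisonLukaCorot} for the \Luka walks, \eqref{eq:Contour corot et Luka} (observing it remains valid whenever $\sigma^2=+\infty$) for the contour in the infinite-variance regime, Corollary~\ref{cor:Comparison Rot Corot} together with Theorem~\ref{thm:DetailedResult} for the contour when $\sigma^2<+\infty$, and Lemma~\ref{lem:HeightIsRoughlyContour} for the height/contour comparison. The only difference is cosmetic: where you transport the limit of $c_{\rot\!\Tree_n}$ to $c_{\corot\!\Tree_n}$ via the uniform comparison of Corollary~\ref{cor:Comparison Rot Corot}, the paper packages the same chain as $\frac{1}{\sqrt n}\bigl\Vert c_{\corot\!\Tree_n}-(1+\frac{2}{\sigma^2})s_{\Tree_n}\bigr\Vert_{\infty}\xrightarrow[\tend{n}]{\prob}0$.
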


\begin{proof}
    In the case $\sigma^2=+\infty$, the result follows from Theorem~\ref{thm:Duquesne} together with Lemma~\ref{lem:ComparisonLukaCorot}  and \eqref{eq:Contour corot et Luka} (which is valid as soon as $\sigma^2=+\infty$).

    In the case $\sigma^2<+\infty$, the argument is the same but \eqref{eq:Contour corot et Luka} does not hold anymore. We use instead Corollary~\ref{cor:Comparison Rot Corot} and Theorem~\ref{thm:DetailedResult} to get
   \begin{equation*}
        \frac{1}{\sqrt{n}}\bigl\Vert c_{\corot\! \Tree_n}-(1+\frac{2}{\sigma^2})s_{\Tree_n}\bigr\Vert_{\infty} \xrightarrow[\tend{n}]{\prob} 0.
    \end{equation*}
    Finally, height and contour processes are asymptotically close according to Lemma~\ref{lem:HeightIsRoughlyContour}.
\end{proof}

\begin{rmk}
    Theorem~\ref{thm:Detailed Result Corotation} gives an example where two families of trees, $(\Tree_n)_n$ and $(\corot\Tree_n)_n$, have asymptotically the same \Luka walk (for $d_{M_1}$) but significantly different scaling limits. On the other hand, Theorems~\ref{thm:DetailedResult} and~\ref{thm:Detailed Result Corotation} give a family of trees, $(\rot \Tree_n)_n$, whose lexicographic \Luka walk and mirrored \Luka walk have  dramatically different behaviours: $S_{\rot\! \Tree_n}$ is of order $n^{1-1/\alpha}/\ell(n)$ and converges in distribution towards the continuous function $\mathbbm h$ while $S_{(\rot\! \Tree_n)^{\div}}=S_{\corot (\Tree_n^{\div})}$ is of order $\ell(n)n^{1/\alpha}$ and converges towards the discontinuous function $F^{\div}(\mathbbm x)$. This comes from the presence of spines with a specific orientation (with respect to the planar order) in $\rot \Tree_n$.
\end{rmk}

\subsection{Comments on the method used in this paper}\label{ssct:comments}

\paragraph{On the use of \textsc{Skorokhod}'s $M_1$ topology.} As soon as we consider the case $\alpha < 2$, the use of \textsc{Skorokhod}'s $M_1$ topology is crucial for several steps. In particular, in Lemmas~\ref{lem:RightHeightAndRightContourM1} and~\ref{lem:ComparisonLukaCorot}, we compare the processes $H_n^*$ and $S_{\Tree_n}$ to the processes $C_{(\rot\!\Tree_n)^{\circ}}$ and $S_{\corot\!\Tree_n}$, and this shows that they have the same limits with respect to the $M_1$ topology. However the former processes have macroscopic jumps (\ie jumps comparable to the scale of the whole process) while the latter only have increments $\pm1$, thus we cannot match their jumps and those processes cannot converge toward the same limits with respect to the $J_1$ topology. In short, the $M_1$ topology gives sense to convergences with unmatched jumps, such as in Theorem~\ref{thm:DetailedResult} where the convergence of $c_{\rot\!\Tree_n}$ (whose increments are $\pm1$) toward a discontinuous function comes from series of microscopic jumps that merge together to form macroscopic jumps at the limit.

\smallskip

On the contrary, when $\alpha=2$ we can stick with the uniform convergence in $C([0,1])$, and in the case $\sigma^2<+\infty$ it is possible to simplify further the proof of Theorem~\ref{thm:DetailedResult}. The key observation is that in this setting $L(u)$ (as well as $R(u)$) is asymptotically proportional to $\abs{u}$, uniformly in $u \in \Tree_n$, hence the height of a rotated vertex is also proportional to the height of the corresponding initial vertex.
\begin{prop}\label{prop:HeightAfterRotationIsUniformlyProportionalToHeightBefore}
Suppose $\sigma^2<+\infty$, then
\begin{equation*}\label{eq:leftProportionalToHeight}
        \frac{1}{\sqrt n}\max_{u \in \Tree_n} \abs{L(u)-\frac{\sigma^2}{2}|u|}  \xrightarrow[\tend{n}]{\prob} 0.
    \end{equation*}
\end{prop}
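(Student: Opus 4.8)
The plan is to reduce the uniform estimate to a single sup-norm comparison between two encoding processes of the mirror tree $\Tree_n^{\div}$, and then to read it off from the known joint scaling limits in the finite-variance regime. The starting point is identity~\eqref{eq:MirrorLukaIsLeft}, which expresses both $L(u)$ and $|u|$ through the mirror tree. Recall the mirrored enumeration $w_0=\varnothing,w_1,\ldots,w_{n-1}$ of $\Tree_n$: every $u\in\Tree_n$ is uniquely $u=w_k$ with $0\leq k\leq n-1$, and~\eqref{eq:MirrorLukaIsLeft} gives $L(w_k)=S_{\Tree_n^{\div}}(k)$ and $|w_k|=H_{\Tree_n^{\div}}(k)$. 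Hence
\[
\max_{u\in\Tree_n}\abs{L(u)-\frac{\sigma^2}{2}\abs{u}}=\max_{0\leq k\leq n-1}\abs{S_{\Tree_n^{\div}}(k)-\frac{\sigma^2}{2}H_{\Tree_n^{\div}}(k)}.
\]
Since, by definition of the time-scaled functions, $S_{\Tree_n^{\div}}(k)=s_{\Tree_n^{\div}}(k/n)$ and $H_{\Tree_n^{\div}}(k)=h_{\Tree_n^{\div}}(k/n)$, each term on the right equals $(s_{\Tree_n^{\div}}-\frac{\sigma^2}{2}h_{\Tree_n^{\div}})(k/n)$ and is therefore bounded by $\norm{s_{\Tree_n^{\div}}-\frac{\sigma^2}{2}h_{\Tree_n^{\div}}}_{\infty}$. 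So it suffices to prove
\[
\frac{1}{\sqrt n}\norm{s_{\Tree_n^{\div}}-\frac{\sigma^2}{2}h_{\Tree_n^{\div}}}_{\infty}\xrightarrow[\tend{n}]{\prob}0 .
\]

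The second step obtains this from joint convergence. I would note that $\Tree_n^{\div}$ is again distributed as $\BGW\evt{\,\cdot\,\vert\textup{\#vertices}=n}$, so Theorem~\ref{thm:MMD} applies to it and yields the \emph{joint} convergence $\frac{1}{\sqrt n}(s_{\Tree_n^{\div}},h_{\Tree_n^{\div}})\xrightarrow{\dstb}(\sigma E,\frac{2}{\sigma}E)$ in $C([0,1])$ for the uniform topology, where $E$ is a single normalized Brownian excursion (the limits being continuous, the $M_1$ convention reduces to uniform convergence; equivalently one may cite the finite-variance form of Proposition~\ref{prop:DuquesneMiroir}, which gives $E=\widehat{\mathbbm e}$). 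The decisive feature is that both components converge to the \emph{same} excursion, up to the constants $\sigma$ and $2/\sigma$ whose ratio is exactly $\sigma^2/2$. Applying the continuous functional $(f,g)\mapsto\norm{f-\frac{\sigma^2}{2}g}_{\infty}$ gives convergence in distribution of the rescaled difference to $\norm{\sigma E-\frac{\sigma^2}{2}\cdot\frac{2}{\sigma}E}_{\infty}=0$, and convergence in distribution to a constant is convergence in probability. This establishes the displayed bound and hence the proposition.

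The only genuinely substantive point is that the whole argument rests on joint convergence of $s_{\Tree_n^{\div}}$ and $h_{\Tree_n^{\div}}$ to \emph{proportional} limits of one and the same random excursion, rather than merely on their marginal laws. This is precisely the content of the finite-variance regime in Theorem~\ref{thm:MMD}: there all encoding functions are rescaled by the common factor $\sqrt n$ and degenerate onto a single Brownian excursion, so their difference vanishes at scale $\sqrt n$. Everything else is bookkeeping: the passage from a maximum over vertices to a sup norm via~\eqref{eq:MirrorLukaIsLeft}, and the elementary observation that the grid values of the time-scaled functions are dominated by their sup norms, so no separate control of the piecewise-linear interpolation between grid points is required. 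I expect no real obstacle beyond correctly invoking the joint (not marginal) convergence.
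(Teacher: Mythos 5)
Your proposal is correct and follows essentially the same route as the paper: the paper likewise passes to the mirror tree $\Tree_n^{\div}$ (exchanging left and right, which is exactly the content of~\eqref{eq:MirrorLukaIsLeft}), bounds the maximum by $\bigl\Vert s_{\Tree_n^{\div}}-\frac{\sigma^2}{2}h_{\Tree_n^{\div}}\bigr\Vert_{\infty}$, and concludes by applying Theorem~\ref{thm:MMD} to $\bigl(\Tree_n^{\div}\bigr)_n$, using precisely the joint convergence to proportional multiples of the same Brownian excursion that you identify as the decisive point.
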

\begin{proof}
Recall that we can exchange right and left by considering the mirror tree. It gives that
\begin{equation*}
    \max_{u \in \Tree_n} \abs{L(u)-\frac{\sigma^2}{2}|u|} = \max_{u' \in \Tree_n^{\div}}\abs{ R(u')-\frac{\sigma^2}{2} |u'|} \leq \big\Vert s_{\Tree_n^{\div}}-\frac{\sigma^2}{2} h_{\Tree_n^{\div}}\big\Vert_{\infty}.
\end{equation*}
Then we apply Theorem~\ref{thm:MMD} to $\bigl(\Tree_n^{\div}\bigr)_n$ to get the desired result.
\end{proof}
As a consequence of Proposition~\ref{prop:HeightAfterRotationIsUniformlyProportionalToHeightBefore}, \eqref{eq:decHauteurLargeur} and the fact that $\rot$ preserves the lexicographical order, we get
\begin{equation}\label{eq:Alternative proof finite variance}
        \frac{1}{\sqrt n}\bigl\Vert h_{(\rot\!\Tree_n)^{\circ}}-(1+\frac{\sigma^2}{2})h_{\Tree_n}\bigr\Vert_{\infty} \xrightarrow[\tend{n}]{\prob} 0.
    \end{equation}
Let us stress that one has to use the convergence in distribution of $(h_{\Tree_n})_n$  towards a \textit{continuous} function to get the above display with respect to $\norm{\cdot}_{\infty}$, and a proof that does not involve $d_{M_1}$ also requires the use of the modulus of continuity of this limit to control some perturbations induced by a change of time. Comparing processes with respect to $d_{M_1}$ enables one to get rid of those considerations, but requires parametric representations.

Theorem~\ref{thm:MMD} combined with \eqref{eq:Alternative proof finite variance} and Lemmas~\ref{lem:HeightIsRoughlyContour} and~\ref{lem:AddingTheLeaves} (which can also be proved without $d_{M_1}$ in this particular setting, with the technicalities mentioned above) lead to the joint convergence of $S_{\Tree_n}, H_{\Tree_n}, C_{\Tree_n}, H_{\rot\!\Tree_n}, C_{\rot\!\Tree_n}$. Since $S_{\rot\!\Tree_n}$ is (almost) $C_{\Tree_n}$ we get Theorem~\ref{thm:DetailedResult} in the case $\sigma^2<+\infty$.

One can also prove Lemma~\ref{lem:ComparisonLukaCorot} without $d_{M_1}$ in this particular case and thus get a proof of Theorem~\ref{thm:Detailed Result Corotation}, case $\sigma^2<+\infty$, that does not use $d_{M_1}$.

\paragraph{On possible generalizations.} The approach used here is not restricted to \textsc{Bienaymé} trees. We essentially need a family of random trees $(T_n)_n$ such that we have a joint scaling limit for $\bigl(s_{T_n},h_{T_n}\bigr)_n$ and in addition \begin{equation*}
     \frac{\max H_{T_n}+\max S_{T_n}+1}{\# T_n} \xrightarrow[\tend{n}]{\prob} 0.
\end{equation*}

To be precise, these assumptions are sufficient to study the joint behaviour of $T_n$ and $\corot T_n$ and get an analogue of Theorem~\ref{thm:Detailed Result Corotation}. In the case of the rotation, we rather need that $(T_n^{\div})_n$ satisfies these assumptions to study $(\rot T_n)_n$ and we must additionally understand the joint behaviour of the \Luka walks of  $T_n$ and $T_n^{\div}$ to get an analogue of Theorem~\ref{thm:DetailedResult}. In this paper, we relied on $T_n^{\div}$ for both the rotation and the co-rotation, hence we briefly sketch here how to study the joint behaviour of $T_n$ and $\corot T_n$ without any assumption on $T_n^{\div}$.

First, $(\corot T)^{\circ}$ and $T\!\setminus\!\{\varnothing\}$ are linked by an analogue of \eqref{eq:decHauteurLargeur} involving $R(u)$ instead of $L(u)$, thus in the very same way used to prove Theorem~\ref{thm:DetailedResult} we may define a non-standard height sequence $H_n^{**}$ for $(\corot T_n)^{\circ}$, based on the order corresponding to the lexicographical one of $T_n\!\setminus\!\{\varnothing\}$, such that $H_n^{**}=S_{T_n}+H_{T_n}$. It implies a joint scaling limit for $h_n^{**}$ and all encoding processes of $T_n$, without any consideration about $T_n^{\div}$. Moreover this order on $(\corot T_n)^{\circ}$ is the \textit{left} equivalent of the rightmost order defined for $(\rot T_n)^{\circ}$, thus Lemma~\ref{lem:RightHeightAndRightContourM1} can be directly adapted to get that $h_n^{**}$ and $c_{(\corot \!T_n)^{\circ}}$ are roughly the same. Lemmas~\ref{lem:HeightIsRoughlyContour} and~\ref{lem:ComparisonLukaCorot}  do not require any assumption about $T_n^{\div}$, the only part that must be adapted is the link between $(\corot T_n)^{\circ}$ and $\corot T_n$. Indeed, Lemma~\ref{lem:AddingTheLeaves} relies on the fact that $\rot$ preserves the lexicographical order, but $\corot$ only preserves the mirrored lexicographical order. However, since $\max H_{T_n^{\div}}=\max H_{T_n}$, we may apply Lemma~\ref{lem:AddingTheLeaves} (without modification) to $T_n^{\div}$, and thanks to \eqref{eq:Link Rot Corot} and Lemma~\ref{lem:HeightIsRoughlyContour} we get that $c_{(\corot \!T_n)^{\circ}}$ and $c_{\corot \!T_n}$ are roughly the same.

We may deduce, for instance, that the rotation acts as a dilation again on rotated \textsc{Bienaymé} trees: when $\sigma^2=+\infty$ we get 
\begin{equation*}
    \dGH\left(\frac{1}{\ell(n)n^{1/\alpha}} \rot^{\circ k} \Tree_n,\frac{k}{\ell(n)n^{1/\alpha}} \rot \Tree_n \right) \xrightarrow[\tend{n}]{\prob} 0.
\end{equation*}

\section{From combinatorics to comparisons under $M_1$}\label{sct:ProofLemmas}

In this section, we prove the several lemmas stating that some encoding processes, once rescaled, are close according to $d_{M_1}$. They all rely on the same method. We will first establish some combinatorial relations between those processes and then use this to build some convenient parametric representations that enable us to control the $d_{M_1}$ distance between them. Sections~\ref{ssct:HeightAndContour} and~\ref{ssct:Luka of Corot} are the simplest illustrations of this method, while Sections~\ref{ssct:rightmostEnumeration} and~\ref{ssct:AddingTheLeaves} require some additional work.

As the combinatorial relations will require manipulating different trees at the same time, we fix some notation: $u_0=\varnothing, u_1, \ldots, u_{n-1}$ will always denote the lexicographical enumeration of $\Tree_n$ while we will use $v_0=\varnothing, v_1, \ldots, v_{2n-2}$ for the lexicographical enumeration of $\rot\Tree_n$. Thus $\widetilde u_1,\ldots,\widetilde u_{n-1}$ is the lexicographical enumeration of $\bigl(\rot\Tree_n\bigr)^{\circ}$. 

We provide Table~\ref{tab:ProofPart1} to complete Table~\ref{tab:knonwScalingLimits} with a recap of the notation used throughout Section~\ref{sct:ProofLemmas}.

\begin{table}[htbp]
\caption{Table of important notation in Section~\ref{sct:ProofLemmas}.}
\centering
\begin{tabular}{c p{10cm}}
\toprule
 $\widetilde u$, for $u\in T\!\setminus\!\{\varnothing\}$&vertex of $(\rot T)^{\circ}$ corresponding to $u$ and called the rotated version of $u$ (defined in Section~\ref{sct:rotation})\\
 $L(u)$ (resp. $R(u)$), for $u \in T$&number of edges of $T$ grafted on $\Zintfo{\varnothing,u}$ on its \textit{left} (resp. \textit{right}) side (defined in Section~\ref{sct:planeTrees})\\

 \hline
 
 $u_0=\varnothing, u_1, \ldots, u_{n-1}$&lexicographical enumeration of $\Tree_n$\\
 $v_0=\varnothing, v_1, \ldots, v_{2n-2}$&lexicographical enumeration of $\rot\Tree_n$ \\
 $w_0=\varnothing, w_1, \ldots, w_{n-1}$&mirrored enumeration of $\Tree_n$ (defined in Section~\ref{ssct:mainResults})\\
 $r_1,r_2,\ldots,r_{n-1}$&rightmost enumeration of $\bigl(\rot\Tree_n\bigr)^{\circ}$ (defined in Section~\ref{ssct:rightmostEnumeration})\\

\hline
 $H^*_n$ and $h^*_n$&non-standard height process of $\bigl(\rot\Tree_n\bigr)^{\circ}$ and its time-scaled function (defined in Section~\ref{ssct:mainResults})\\
 \bottomrule
\end{tabular}
\label{tab:ProofPart1}
\end{table}



\subsection{Lemma~\ref{lem:HeightIsRoughlyContour}: height and contour processes are roughly the same}\label{ssct:HeightAndContour}

We prove here Lemma~\ref{lem:HeightIsRoughlyContour}, which will illustrate our method to control the $M_1$ distance between several encoding functions. 

Recall that Lemma~\ref{lem:HeightIsRoughlyContour} translates the following general argument into the framework of \textsc{Skorokhod}'s $M_1$ topology: for any sequence of trees $(T_n)_n$ such that $\#T_n\rightarrow+\infty$ and $\max H_{T_n}$ is negligible in front of $\#T_n$ as $n$ goes to $+\infty$, and for any renormalization $(\lambda(n))_n$ such that $\lambda(n)\rightarrow 0$, the rescaled height process $\lambda(n) h_{T_n}$ is roughly the same as the rescaled contour process $\lambda(n) c_{T_n}$ when $n$ is large.

\begin{proof}[Proof of Lemma~\ref{lem:HeightIsRoughlyContour}.]
We fix $(\lambda(n))_n$ such that $\lambda(n)\rightarrow 0$. First, we state a simple combinatorial relation between these processes. For $0 \leq k < \#T_n$, let $j_n(k)$ be the time needed to reach $u_k$ for the first time while following the contour of $T_n$ at unit speed. By counting edges used twice or once we see that 
\begin{equation}\label{eq:ExtractionHeightContour}
    \text{for } 0 \leq k < \#T_n,\ j_n(k)=2k-H_{T_n}(k).
\end{equation} 
We also set $j_n(\#T_n)=2\#T_n-2$ the total time needed to perform the contour (note that $j_n$ remains strictly increasing). By construction we have $C_{T_n}\bigl(j_n(k)\bigr)=H_{T_n}(k)$ for all $0 \leq k \leq \#T_n$, and the contour process $C_{T_n}$ is easily described in between those times as depicted by Figure~\ref{fig:LinkHeightContour}.  More precisely, for $0 \leq k \leq \#T_n-2$  either $H_{T_n}(k+1)=H_{T_n}(k)+1$ or $H_{T_n}(k+1) \leq  H_{T_n}(k)$ and we have:
\begin{itemize}
    \item In the first case, $j_n(k+1) = j_n(k)+1$ hence $C_{T_n}$ is already fully described on $\Zintff{j_n(k),j_n(k+1)}$.
    \item In the second case, $j_n(k+1) > j_n(k)+1$ and $C_{T_n}$ is affine on $\Zintff{j_n(k),j_n(k+1)-1}$ with $C_{T_n}\bigl(j_n(k)\bigr)=H_{T_n}(k)$ and slope $-1$. This entails that $C_{T_n}\bigl(j_n(k+1)-1\bigr)=H_{T_n}(k+1)-1$, so after this $C_{T_n}$ makes one step $+1$ to satisfy $C_{T_n}\bigl(j_n(k+1)\bigr)=H_{T_n}(k+1)$.
    \item Finally, $C_{T_n}$ is affine on $\Zintff{j_n(\#T_n-1),j_n(\#T_n)}$ with slope $-1$.
\end{itemize}  

\begin{figure}[h]\label{LinkHeightContour}
    \centering
    \includegraphics[width=\linewidth]{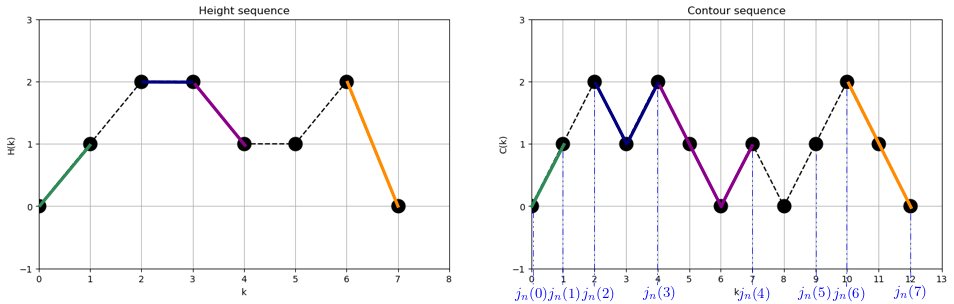}
    \caption{\centering Description of $C_{T_n}$ (on the right) based on $H_{T_n}$ (on the left) and the times $(j_n(k), 0\leq k\leq \#T_n)$, with $T_n$ being the tree from Figure~\ref{fig:lexicoEnumeration}. Several parts of the two processes have been identified by matching colours to illustrate the case $H_{T_n}(k+1)=H_{T_n}(k)+1$ (green part), the case $H_{T_n}(k+1) \leq  H_{T_n}(k)$ (blue and magenta parts) and the case of the last step of $H_{T_n}$ (orange part).}
    \label{fig:LinkHeightContour}
\end{figure}

Based on this, we now introduce two parametric representations of $h_{T_n}$ and $c_{T_n}$ that can be compared easily. For $c_{T_n}$ we simply take $(c_{T_n}, \id_{[0,1]})$. For $h_{T_n}$, we introduce two auxiliary sequences $\bigl(A_n(m)\bigr)_{0 \leq m\leq 2\#T_n-2}$ and $\bigl(B_n(m)\bigr)_{0 \leq m\leq 2\#T_n-2}$ by defining them on each interval $\Zintff{j_n(k),j_n(k+1)}$:

\begin{figure}[h]
    \centering
    \includegraphics[width=\linewidth]{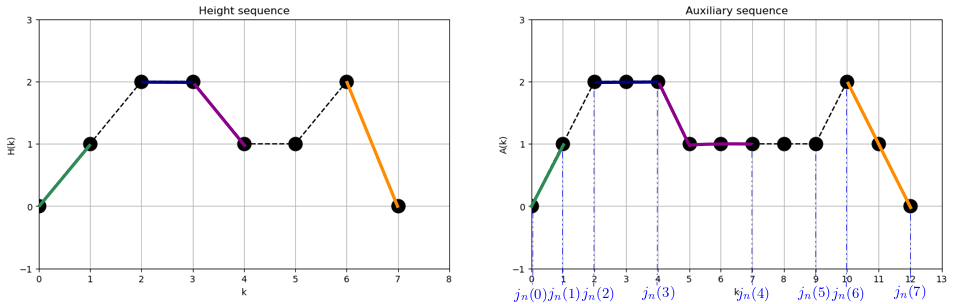}
    \caption{\centering Description of $A_n$ (on the right) based on $H_{T_n}$ (on the left) and the times $(j_n(k), 0\leq k\leq \#T_n)$, with $T_n$ being the tree from Figure~\ref{fig:lexicoEnumeration}. Several parts of the two processes have been identified by matching colours to illustrate the case $H_{T_n}(k+1)=H_{T_n}(k)+1$ (green part), the case $H_{T_n}(k+1) =  H_{T_n}(k)$ (blue part), the case $H_{T_n}(k+1) < H_{T_n}(k)$ (magenta part) and the case of the last step of $H_{T_n}$ (orange part).}
    \label{fig:AuxiliaryProcess}
\end{figure}
\begin{itemize}
    \item For $0 \leq k \leq \#T_n-2 \st H_{T_n}(k+1)\geq H_{T_n}(k)$, $A_n$ and $B_n$ are affine on $\Zintff{j_n(k),j_n(k+1)}$ with: 
    \begin{align*}
        & A_n\bigl(j_n(k)\bigr)=H_{T_n}(k);&\ & B_n\bigl(j_n(k)\bigr)= \frac{k}{\#T_n}; \\  
        & A_n\bigl(j_n(k+1)\bigr)=H_{T_n}(k+1);&\ & B_n\bigl(j_n(k+1)\bigr)= \frac{k+1}{\#T_n}.
    \end{align*}
    \item For $0 \leq k \leq \#T_n-2 \st H_{T_n}(k+1) <  H_{T_n}(k)$, $A_n$ and $B_n$ are affine on $\Zintff{j_n(k),j_n(k+1)-2}$ with:
    \begin{align*}
       & A_n\bigl(j_n(k)\bigr)=H_{T_n}(k);&\ & B_n\bigl(j_n(k)\bigr)= \frac{k}{\#T_n}; \\  
        & A_n\bigl(j_n(k+1)-2\bigr)=H_{T_n}(k+1);&\ & B_n\bigl(j_n(k+1)-2\bigr)= \frac{k+1}{\#T_n};
    \end{align*}
    and $A_n$ and $B_n$ are constant on $\Zintff{j_n(k+1)-2,j_n(k+1)}$.
    \item On $\Zintff{j_n(\#T_n-1),j_n(\#T_n)}$, $A_n$ and $B_n$ are affine with:
    \begin{align*}
        & A_n\bigl(j_n(\#T_n-1)\bigr)=H_{T_n}(\#T_n-1);&\ & B_n\bigl(j_n(\#T_n-1)\bigr)= \frac{\#T_n-1}{\#T_n}; \\  
        & A_n\bigl(j_n(\#T_n)\bigr)=H_{T_n}(\#T_n)=0;&\ & B_n\bigl(j_n(\#T_n)\bigr)= 1.
        \end{align*}
\end{itemize}
By construction of $A_n$ and $B_n$, their time-scaled functions $a_n,b_n$ are such that $(a_n,b_n)$ is a parametric representation of $h_{T_n}$ (Figure~\ref{fig:AuxiliaryProcess} illustrates the fact that $a_n$ is a valid spatial component). Moreover, $A_n(m) \neq C_{T_n}(m)$ if and only if $m=j_n(k+1)-1$ for some  $k \leq \#T_n-2$ such that $j_n(k+1) > j_n(k)+1$, and in this situation $A_n(m)= C_{T_n}(m)+1$ (compare Figures~\ref{fig:LinkHeightContour} and~\ref{fig:AuxiliaryProcess} to see this). Thus $\norm{a_n- c_{\Tree_n}}_{\infty}\leq1$ and we get \begin{equation*}
    d_{M_1}\bigl(\lambda(n) h_{T_n}, \lambda(n) c_{T_n}\bigr) \leq \lambda(n) \vee \norm{\id_{[0,1]}-b_n}_{\infty}.
\end{equation*}
Now we control the temporal components thanks to the construction of $B_n$ and \eqref{eq:ExtractionHeightContour}:
\begin{equation*}
    \norm{\id_{[0,1]}-b_n}_{\infty} \leq \max_{1 \leq k\leq \#T_n-1}\abs{\frac{k}{\#T_n}-\frac{j_n(k)}{2\#T_n-2}}\vee\abs{\frac{k}{\#T_n}-\frac{j_n(k)-2}{2\#T_n-2}}  
    \leq \frac{\max H_{T_n} + O(1)}{\#T_n}.
\end{equation*}
By assumption, this last term goes to $0$ with high probability as $n$ goes to $+\infty$, so the result holds.
\end{proof}

\subsection{Lemma~\ref{lem:RightHeightAndRightContourM1} and the rightmost enumeration}\label{ssct:rightmostEnumeration}

In this section, we turn to the study of the order on $(\rot\Tree_n)^{\circ}$ given by $\widetilde w_1, \ldots, \widetilde w_{n-1}$ that arises in the proof of Theorem~\ref{thm:DetailedResult}. Our main goal is to establish Lemma~\ref{lem:RightHeightAndRightContourM1} in order to extract information from the scaling limit of  $h_n^*$. 

As in the previous section, we start with some combinatorial relations between $H_n^*$ and some encoding function of $(\rot\Tree_n)^{\circ}$. Unfortunately, the rotation does not preserve the mirrored order and we do not have any direct description of $H_{(\rot\!\Tree_n)^{\circ}}$ based on $H_n^*$. But we can still describe this new order given by $\widetilde w_1, \ldots, \widetilde w_{n-1}$ and use it to link $H^*_n$ with the contour process of $(\rot\Tree_n)^{\circ}$. More precisely, let $\widehat C_{(\rot\!\Tree_n)^{\circ}}$ be the \textit{reversed contour process} of $(\rot\Tree_n)^{\circ}$, obtained by following its contour from right to left instead of left to right (\ie $k \mapsto \widehat C_{(\rot\!\Tree_n)^{\circ}}(2n-4-k)$ is its classical contour process). Lemma~\ref{lem:RightHeightAndRightContour} gives a full description of $\widehat C_{(\rot\!\Tree_n)^{\circ}}$ based on $H_n^*$, illustrated by Figure~\ref{fig:SpecialHeightAndReversedContour}.
\begin{lem}\label{lem:RightHeightAndRightContour}
For all $0\leq k\leq n$, set 
    \begin{equation*}
        j_n(k)=\sum_{i=1}^k \abs{H^*_n(i)- H^*_n(i-1)}.
    \end{equation*}
First, $j_n$ is increasing from $j_n(0)=0$ to $j_n(n)=2n-4$. Moreover, for all $(k,m)$ such that $0\leq k\leq n-1$ and $j_n(k)\leq m\leq j_n(k+1)$, we have
\begin{equation*}
    \widehat C_{(\rot\!\Tree_n)^{\circ}}(m)=H^*_n(k)+\bigl(m-j_n(k)\bigr)\sgn\bigl(H^*_n(k+1)-H^*_n(k)\bigr).
\end{equation*}
In particular $H^*_n(k)=\widehat C_{(\rot\!\Tree_n)^{\circ}}(j_n(k))$ for all $0\leq k\leq n$.
\end{lem}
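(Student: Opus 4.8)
The plan is to establish Lemma~\ref{lem:RightHeightAndRightContour} by carefully understanding the combinatorial structure of the enumeration $\widetilde w_1, \ldots, \widetilde w_{n-1}$ of $(\rot\Tree_n)^{\circ}$ and comparing it to the usual contour exploration, read from right to left. The key conceptual point is that the sequence $\widetilde w_1, \ldots, \widetilde w_{n-1}$, obtained by transporting the mirrored enumeration $w_1, \ldots, w_{n-1}$ of $\Tree_n$ through the correspondence $u \mapsto \widetilde u$, should coincide (up to the reversal) with the order in which the vertices of $(\rot\Tree_n)^{\circ}$ are first visited by a contour exploration. Thus my first step is to argue that for each $k$, the vertex $\widetilde w_k$ is \emph{first reached} by the reversed contour exploration precisely at time $j_n(k)$, where $j_n(k)$ counts the total vertical distance travelled, i.e. $\sum_{i=1}^k |H^*_n(i)-H^*_n(i-1)|$.

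First I would set up the description of a contour exploration of any plane tree in terms of a sequence of first-visit times, analogous to the relation \eqref{eq:ExtractionHeightContour} used in the proof of Lemma~\ref{lem:HeightIsRoughlyContour}. The crucial difference here is that the enumeration underlying $H^*_n$ is \emph{not} the lexicographic one, so the simple formula $j_n(k)=2k-H(k)$ no longer applies; instead, the increment of $j_n$ between consecutive indices records the actual number of edges traversed to go from $\widetilde w_{k-1}$ to $\widetilde w_k$ along the reversed contour, which equals $|H^*_n(k)-H^*_n(k-1)|$ whenever consecutive vertices in this order are related in the "straight-line" way (one is an ancestor of the other, or they share the relevant branch structure). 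The heart of the argument is therefore to prove that consecutive vertices $\widetilde w_{k-1}, \widetilde w_k$ in this enumeration are always connected in $(\rot\Tree_n)^{\circ}$ by a monotone path whose length is exactly $|H^*_n(k)-H^*_n(k-1)|$; this reduces to a statement purely about the geometry of $\rot$ and the mirrored order, which I would verify using the geometric description of $\rot$ (step~1 and step~2 of Figure~\ref{fig:DefGeoRotation}) together with the order-preservation property $u\prec u' \Rightarrow \widetilde u \prec \widetilde u'$.

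Once the first-visit times are identified with $j_n(k)$, the formula for $\widehat C_{(\rot\!\Tree_n)^{\circ}}(m)$ in between consecutive times follows exactly as in Lemma~\ref{lem:HeightIsRoughlyContour}: on each interval $\Zintff{j_n(k), j_n(k+1)}$ the exploration moves monotonically (either strictly up or strictly down) at unit speed from height $H^*_n(k)$ to height $H^*_n(k+1)$, which gives the affine interpolation $\widehat C_{(\rot\!\Tree_n)^{\circ}}(m)=H^*_n(k)+(m-j_n(k))\sgn(H^*_n(k+1)-H^*_n(k))$. The monotonicity on each such interval is precisely what the straight-path claim of the previous step guarantees, so no extra up-down corrections (unlike the "$+1$ step" appearing in the lexicographic case of Lemma~\ref{lem:HeightIsRoughlyContour}) are needed here. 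Summing the increments and using that the total number of edges traversed in a full contour of a tree with $n-1$ vertices is $2(n-2)=2n-4$ yields $j_n(n)=2n-4$, and strict monotonicity of $j_n$ follows from the positivity of each increment once one checks that no two consecutive $\widetilde w$'s coincide.

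The main obstacle I anticipate is the straight-path claim, namely proving that consecutive vertices $\widetilde w_{k-1}, \widetilde w_k$ in the enumeration are joined in $(\rot\Tree_n)^{\circ}$ by a geodesic of length exactly $|H^*_n(k)-H^*_n(k-1)|$, with no "backtracking" along the reversed contour. This is where the interplay between the mirrored order on $\Tree_n$ and the planar structure of $\rot\Tree_n$ is most delicate, since the rotation does \emph{not} preserve the mirrored order (as emphasized in Section~\ref{ssct:mainResults}), so the enumeration $\widetilde w_1,\ldots,\widetilde w_{n-1}$ is genuinely different from the lexicographic enumeration of $(\rot\Tree_n)^{\circ}$. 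I would handle this by analyzing, for each $k$, whether $w_k$ is a descendant or a "mirror-successor" of $w_{k-1}$ in $\Tree_n$, and tracking how this relationship is transformed under $u\mapsto\widetilde u$ into an ancestor/descendant relationship along a single branch of $(\rot\Tree_n)^{\circ}$; the geometric picture (Figures~\ref{fig:DefGeoRotation} and~\ref{fig:Rot in Looptree}) strongly suggests this always produces a monotone segment of the reversed contour, but making this rigorous for all cases is the technical crux.
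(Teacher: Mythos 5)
Your central structural claim --- that consecutive vertices $\widetilde w_k,\widetilde w_{k+1}$ of the enumeration are in ancestor--descendant relation and that the reversed contour travels the monotone path between them during the interval $\Zintff{j_n(k),j_n(k+1)}$ --- is indeed the crux, and it is essentially what the paper proves (by defining the \emph{rightmost enumeration} $r_1,\ldots,r_{n-1}$ via a particle that follows the reversed contour in monotone steps, and showing $\widetilde w_i=r_i$ by induction in Lemma~\ref{lem:RotatedMirrorIsRighmost}). However, the mechanism you propose for identifying the times rests on a false premise: $j_n(k)$ is \emph{not} the first-visit time of $\widetilde w_k$, and the enumeration $\widetilde w_1,\ldots,\widetilde w_{n-1}$ is \emph{not} (even up to reversal) a first-visit order of any contour exploration. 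It is the \emph{symmetric-order} (in-order) traversal of $(\rot\!\Tree_n)^{\circ}$: right subtree, then vertex, then left subtree. Whenever $\widetilde w_{k+1}$ is an ancestor of $\widetilde w_k$ (the ``going up'' steps), the reversed contour first visited $\widetilde w_{k+1}$ on its way down, \emph{before} it ever reached $\widetilde w_k$; the time $j_n(k+1)$ is a \emph{return} visit, namely the one separating the exploration of the right subtree of $\widetilde w_{k+1}$ from that of its left subtree. A minimal counterexample: take $\Tree_n$ to be the root with two children ($n=3$), so that $(\rot\!\Tree_n)^{\circ}$ is a root with a single right child; then $\widetilde w_1$ is the child, $\widetilde w_2$ is the root, and $j_n(2)=2$, whereas the root is first visited at time $0$. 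In the same example the first-visit times of $\widetilde w_1,\widetilde w_2$ are $1,0$: they are not monotone in $k$ and their increments are not $\abs{H^*_n(k)-H^*_n(k-1)}$, so your proposed first step (``a description of the contour in terms of first-visit times, analogous to \eqref{eq:ExtractionHeightContour}'') would fail as stated.

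The repair is exactly the in-order characterization: translate, through $u\mapsto\widetilde u$, the two elementary moves of the mirrored order on $\Tree_n$ (``go to the last child'' / ``go to the closest elder sibling, else the parent'') into moves in $(\rot\!\Tree_n)^{\circ}$ (``cross the left edge, then every right edge encountered'' / ``go to the parent''), and check by induction that the particle performing these moves follows the reversed contour while moving monotonically in height at each step. Then $j_n(k)$ is, by construction, the number of directed edges crossed after $k$ steps, the particle sits at $\widetilde w_k$ at that time, and the affine interpolation formula and $j_n(n)=2n-4$ follow just as you describe. One last small correction: $j_n$ is only weakly increasing --- $j_n(1)=j_n(0)$ occurs when $\widetilde w_1$ is the root of $(\rot\!\Tree_n)^{\circ}$, and similarly the last increment can vanish --- so ``strict monotonicity'' should not be claimed; the distinct consecutive vertices argument only gives positive increments for $1\leq k\leq n-2$, since $H^*_n(0)$ and $H^*_n(n)$ are conventions rather than heights of vertices.
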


\begin{figure}[h]
    \centering
    \includegraphics[width=\linewidth]{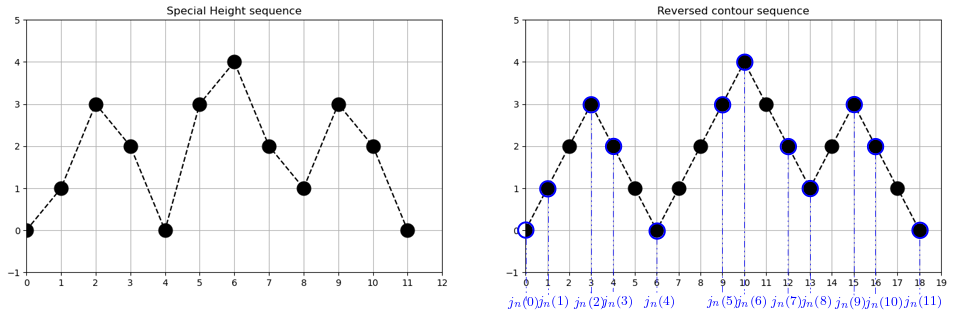}
    \caption{\centering The non-standard height process $H_n^*$ (on the left) and the reversed contour process $\widehat C_{(\rot\!\Tree_n)^{\circ}}$ (on the right), in the particular case of the internal subtree from Figure~\ref{fig:DefGeoRotation}. The blue extraction (on the right) shows the relation between these processes.}
    \label{fig:SpecialHeightAndReversedContour}
\end{figure}
To prove Lemma~\ref{lem:RightHeightAndRightContourM1}, we still have to control the change of time $j_n$ introduced in Lemma~\ref{lem:RightHeightAndRightContour} to control the $M_1$ distance between $h_n^*$ and $\widehat c_{(\rot\!\Tree_n)^{\circ}}$. Lemma~\ref{lem:SimplerExpressionforIndex} provides a more amenable expression for this change of time, based on further combinatorial considerations.

\begin{lem}\label{lem:SimplerExpressionforIndex}
For all $1\leq k\leq n-1$, we have
\begin{equation*}
        \sum_{i=1}^k \abs{H^*_n(i)- H^*_n(i-1)}=2k-1+S_{\Tree_n^{\div}}(k)-H_{\Tree_n^{\div}}(k).
    \end{equation*}
\end{lem}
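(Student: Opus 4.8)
The plan is to reduce the claimed identity to a statement about the \emph{signs} of the increments of $H^*_n$, and then to read off these signs from the out-degree of the corresponding vertex of $\Tree_n^{\div}$. Throughout, write $D_i = H^*_n(i) - H^*_n(i-1)$ and $(x)_- = \max(-x,0)$, so that $|x| = x + 2(x)_-$. Summing the telescoping series and using $H^*_n(0) = 0$ together with \eqref{eq:ExactHeightPlusWidth},
\begin{equation*}
    \sum_{i=1}^k |D_i| = \sum_{i=1}^k D_i + 2\sum_{i=1}^k (D_i)_- = H^*_n(k) + 2\sum_{i=1}^k (D_i)_- = H_{\Tree_n^{\div}}(k) + S_{\Tree_n^{\div}}(k) - 1 + 2\sum_{i=1}^k (D_i)_-.
\end{equation*}
Comparing with the target right-hand side, the lemma becomes equivalent to the single identity $\sum_{i=1}^k (D_i)_- = k - H_{\Tree_n^{\div}}(k)$.

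Next I would compute the sign of each increment. Fix $1 \le i \le k$ and let $z$ be the $(i-1)$-th vertex of $\Tree_n^{\div}$ in lexicographic order. By \eqref{eq:ExactHeightPlusWidth}, $D_i = \bigl(H_{\Tree_n^{\div}}(i) - H_{\Tree_n^{\div}}(i-1)\bigr) + \bigl(S_{\Tree_n^{\div}}(i) - S_{\Tree_n^{\div}}(i-1)\bigr)$, and the \Luka increment equals $d_z - 1$, where $d_z$ is the out-degree of $z$ in $\Tree_n^{\div}$. If $z$ is internal ($d_z \ge 1$), then its lexicographic successor is its first child, so the height increment is exactly $+1$ and $D_i = d_z \ge 1$; hence $(D_i)_- = 0$. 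If $z$ is a leaf ($d_z = 0$), then the successor of $z$ is reached by backtracking up along ancestors, so the height increment is at most $0$, whence $D_i = \bigl(H_{\Tree_n^{\div}}(i) - H_{\Tree_n^{\div}}(i-1)\bigr) - 1 \le -1$ and $(D_i)_- = 1 + H_{\Tree_n^{\div}}(i-1) - H_{\Tree_n^{\div}}(i)$.

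It then remains to sum these contributions. Let $\mathcal A_k$ (resp. $\mathcal L_k$) be the set of indices $1 \le i \le k$ for which the vertex $z$ above is internal (resp. a leaf), so $|\mathcal A_k| + |\mathcal L_k| = k$. From the previous step,
\begin{equation*}
    \sum_{i=1}^k (D_i)_- = \sum_{i \in \mathcal L_k}\bigl(1 + H_{\Tree_n^{\div}}(i-1) - H_{\Tree_n^{\div}}(i)\bigr) = |\mathcal L_k| + \sum_{i \in \mathcal L_k}\bigl(H_{\Tree_n^{\div}}(i-1) - H_{\Tree_n^{\div}}(i)\bigr).
\end{equation*}
The full telescoping sum of the height differences over $1 \le i \le k$ equals $H_{\Tree_n^{\div}}(0) - H_{\Tree_n^{\div}}(k) = -H_{\Tree_n^{\div}}(k)$, and each $i \in \mathcal A_k$ contributes exactly $-1$ (the height increment being $+1$); hence the leaf indices contribute $\sum_{i \in \mathcal L_k}\bigl(H_{\Tree_n^{\div}}(i-1) - H_{\Tree_n^{\div}}(i)\bigr) = |\mathcal A_k| - H_{\Tree_n^{\div}}(k)$. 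Therefore $\sum_{i=1}^k (D_i)_- = |\mathcal L_k| + |\mathcal A_k| - H_{\Tree_n^{\div}}(k) = k - H_{\Tree_n^{\div}}(k)$, which is exactly what was needed.

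The only delicate point I anticipate is the leaf case: there the height can drop by more than one, so $D_i$ is not determined by $d_z$ alone. What makes the argument go through is that for the sign split one only needs the height increment to be \emph{nonpositive} at a leaf (which is immediate from the definition of the lexicographic order), while its precise value is used solely inside a telescoping sum that collapses; thus no fine control of the backtracking depth is required. A minor check is the boundary index $i=1$, where $z = \varnothing$ is internal (as $n \ge 2$), so it falls into the internal case with $(D_1)_- = 0$, consistently with the formula.
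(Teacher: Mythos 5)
Your proof is correct, but it takes a genuinely different route from the paper's. The paper works with the dual decomposition $|x| = 2(x)_+ - x$ and evaluates $\sum_{i=1}^k \bigl(H^*_n(i)-H^*_n(i-1)\bigr)_+$ geometrically: by Lemma~\ref{lem:RotatedMirrorIsRighmost}, this sum is the number of edges crossed in the upward direction by the particle performing the rightmost enumeration of $(\rot\Tree_n)^{\circ}$, and the rotation structure (one left edge plus $S_{\Tree_n^{\div}}(i)-S_{\Tree_n^{\div}}(i-1)$ right edges at each upward step) gives $\sum_{i=1}^k (D_i)_+ = k + S_{\Tree_n^{\div}}(k) - 1$. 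You instead use $|x| = x + 2(x)_-$, reduce the lemma to $\sum_{i=1}^k (D_i)_- = k - H_{\Tree_n^{\div}}(k)$, and prove that identity entirely inside the mirror tree $\Tree_n^{\div}$: \eqref{eq:ExactHeightPlusWidth} turns each increment into $\Delta H_{\Tree_n^{\div}} + \Delta S_{\Tree_n^{\div}}$, and a leaf/internal dichotomy plus telescoping finishes. Your argument is more elementary and self-contained --- it never references the rotated tree, the left/right edge structure, or the rightmost-enumeration particle --- whereas the paper's argument reuses the traversal picture it has already built (and needs anyway for Lemma~\ref{lem:RightHeightAndRightContour}), keeping the interpretation of $j_n(k)$ as a count of directed edge crossings front and center. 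The two computations are consistent duals: the paper's positive-part formula and your negative-part formula sum to the claimed value of $j_n(k)$ and subtract to the telescoped value $H^*_n(k)$.

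One imprecision to fix at the boundary: the identity $D_i = \Delta H_{\Tree_n^{\div}}(i) + \Delta S_{\Tree_n^{\div}}(i)$ requires \eqref{eq:ExactHeightPlusWidth} at both indices $i$ and $i-1$, and it fails at $i=1$ because of the convention $H^*_n(0)=0$ (the right-hand side of \eqref{eq:ExactHeightPlusWidth} would give $-1$ at index $0$). Hence $i=1$ does not literally "fall into the internal case": one has $D_1 = H^*_n(1) = d_{\varnothing}(\Tree_n^{\div}) - 1$, not $d_{\varnothing}(\Tree_n^{\div})$, and this can vanish. This does not damage your proof, since the only facts you use about $i=1$ are $(D_1)_- = 0$ (true, as $D_1 \geq 0$ because the root has at least one child) and $H_{\Tree_n^{\div}}(1) - H_{\Tree_n^{\div}}(0) = 1$ (also true, so $i=1$ contributes $-1$ to the telescoping sum as claimed); just verify these two facts directly at $i=1$ instead of invoking the internal-case formula there.
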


With Lemmas~\ref{lem:RightHeightAndRightContour} and~\ref{lem:SimplerExpressionforIndex}, we are ready to prove the main result of this section, Lemma~\ref{lem:RightHeightAndRightContourM1}.
\begin{proof}[Proof of Lemma~\ref{lem:RightHeightAndRightContourM1}.]
We use the simple parametric representation $(h^*_n,\id_{[0,1]})$ for $h^*_n$. Recall the sequence $\bigl(j_n(k)\bigr)_{0 \leq k \leq n}$ from Lemma~\ref{lem:RightHeightAndRightContour} and let $\phi_n\in C([0,1])$ be such that 
\begin{equation*}
    \phi_n\!\left(\frac{k}{n}\right)=\frac{j_n(k)}{2n-4} \text{ for } 0 \leq k\leq n \text{ and } \phi_n \text{ affine on each segment } \left[\frac{k}{n},\frac{k+1}{n}\right].
\end{equation*}
According to Lemma~\ref{lem:RightHeightAndRightContour}, $\phi_n$ is non-decreasing and onto, and $\widehat c_{(\rot\!\Tree_n)^{\circ}}$ is affine on each segment $[j_n(k)/(2n-4),j_n(k+1)/(2n-4)]$ with $\widehat c_{(\rot\!\Tree_n)^{\circ}}\bigl(j_n(k)/(2n-4)\bigr)=h^*_n(k/n)$ for $0\leq k \leq n$, hence the (completed) graph of $\widehat c_{(\rot\!\Tree_n)^{\circ}}$ is the union of the segments in $\R^2$ linking $\bigl(h^*_n(k/n),j_n(k)/(2n-4)\bigr)$ to $\bigl(h^*_n((k+1)/n),j_n(k+1)/(2n-4)\bigr)$. Even though some of these segments may be reduced to a single point, it entails that $(h^*_n,\phi_n)$ is a parametric representation of $\widehat c_{(\rot\!\Tree_n)^{\circ}}$. For any real sequence $(\lambda(n))_n$, we can thus control $d_{M_1}\bigl(\lambda(n)\widehat c_{(\rot\!\Tree_n)^{\circ}}, \lambda(n)h^*_n\bigr)$ by controlling the temporal component $\phi_n$, as we have equal spatial components. This boils down to control $j_n$ and in light of Lemma~\ref{lem:SimplerExpressionforIndex} we get
\begin{align*}
    d_{M_1}\bigl(\lambda(n)\widehat c_{(\rot\!\Tree_n)^{\circ}}, \lambda(n)h^*_n\bigr) &\leq \norm{\id_{[0,1]}-\phi_n}_{\infty} \leq \max_{1 \leq k\leq n-1}\abs{\frac{k}{n}-\frac{j_n(k)}{2n-4}} \\
    &\leq \frac{\max S_{\Tree_n^{\div}} + \max H_{\Tree_n^{\div}} + O(1)}{n}.
\end{align*}

Since $n^{1-1/\alpha}/\ell(n)$ and $ \ell(n)n^{1/\alpha}$ are negligible in front of $n$,  Theorem~\ref{thm:Duquesne} applied to $\bigl(\Tree_n^{\div}\bigr)_n$ gives that the last term goes to $0$ in probability as $n$ goes to $+\infty$, so the result holds.
\end{proof}

We now establish the combinatorial relations given by Lemmas~\ref{lem:RightHeightAndRightContour} and~\ref{lem:SimplerExpressionforIndex}. Before giving any proof, let us introduce the adequate order on $(\rot\Tree_n)^{\circ}$. First, we equip the unary-binary tree $(\rot\Tree_n)^{\circ}$ with an additional structure inherited from its definition as a subtree of $\rot \Tree_n$: each edge will be either a \textit{left edge} or a \textit{right edge}. As seen in the geometric definition of  $\rot \Tree_n$ (see Figure~\ref{fig:DefGeoRotation}),  $(\rot\Tree_n)^{\circ}$ is naturally equipped with such a structure. More explicitly, identify $(\rot\Tree_n)^{\circ}$ with the internal vertices of the (full) binary tree $\rot \Tree_n$ and say that an edge is a \textit{left edge} if it points towards a first child in $\rot \Tree_n$ and a \textit{right edge} if it points towards a second child.

We now define the \textit{rightmost enumeration} $r_1,\ldots,r_{n-1}$ of $(\rot\Tree_n)^{\circ}$ in a recursive way, with a particle following the reversed contour of $(\rot\Tree_n)^{\circ}$ (see Figure~\ref{fig:rightmostenumeration}): 
\begin{itemize}
    \item \textit{Step $1$}: The particle starts at the root and travels every right edge it encounters (but no left edge). The last vertex reached is $r_1$.
    \item \textit{Step $k+1$} (where $k \leq n-2$): Suppose that we have defined $r_i$ for all $i \leq k$ using the particle which now sits at $r_k$.  If $r_k$ has a left child, then the particle goes through this left edge then travels every right edge it encounters (but no more left edge), and the last vertex reached is $r_{k+1}$. Otherwise, the particle goes back up the ancestral line of $r_{k}$ until it finds a vertex that does not belong to $\{r_1,\ldots,r_k\}$, and this vertex becomes $r_{k+1}$.
\end{itemize}

\begin{figure}[h]
    \centering
    \includegraphics[width=0.5\linewidth]{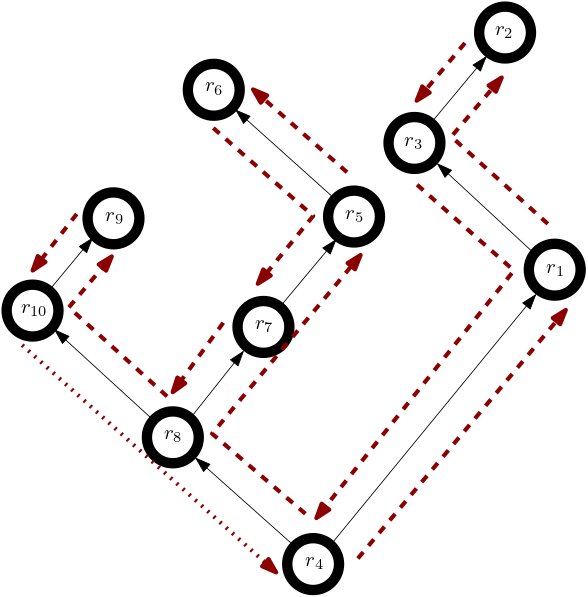}
    \caption{\centering The rightmost enumeration of the internal subtree from Figure~\ref{fig:DefGeoRotation}. The dashed arrows indicate the several steps for the particle used in the definition, and the dotted one is the additional step needed to complete the reversed contour.}
    \label{fig:rightmostenumeration}
\end{figure}
One can prove, by induction on the height of the tree, that the particle indeed follows the reversed contour of $(\rot\Tree_n)^{\circ}$ when those steps are realized (with an additional step to come back to the root) as depicted in Figure~\ref{fig:rightmostenumeration}. The same induction could prove that $r_1,\ldots,r_{n-1}$ truly is an enumeration of $(\rot\Tree_n)^{\circ}$, but this fact also follows from our main observation:
\begin{lem}\label{lem:RotatedMirrorIsRighmost}
    For all $1\leq i\leq n-1,\ \widetilde w_i = r_i$.
\end{lem}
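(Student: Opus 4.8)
The plan is to argue by strong induction on $n=\#\Tree_n$, exploiting the recursive definition of $\rot$ together with a recursive description of both enumerations. Write $c_1,\ldots,c_k$ for the children of the root of $\Tree_n$ in birth order, and let $T_1,\ldots,T_k$ be the subtrees they generate (so $T_j$ is rooted at $c_j$). The base cases $n\in\{1,2\}$ are immediate. For the inductive step I would first record the recursive shape of $(\rot\Tree_n)^{\circ}$ read off from the recursive definition of $\rot$ and the identification $u\mapsto\widetilde u$: the vertices $\widetilde{c_1},\ldots,\widetilde{c_k}$ form a spine joined by right edges (the spine vertices of $\rot\Tree_n$), and the left subtree hanging from $\widetilde{c_j}$ is exactly a copy of $(\rot T_j)^{\circ}$. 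Crucially, the identification is local: for a proper descendant $u$ of $c_j$, the vertex $\widetilde u\in(\rot\Tree_n)^{\circ}$ coincides with the rotated version of $u$ computed inside $T_j$, once $(\rot T_j)^{\circ}$ is identified with the left subtree of $\widetilde{c_j}$. This follows because \textbf{step 1} and \textbf{step 2} of the geometric construction act on the subtree rooted at $c_j$ independently of the rest of the tree; it is also consistent with the height formula \eqref{eq:decHauteurLargeur}, which gives $|\widetilde{c_j}|=j-1$.

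Next I would unfold the two enumerations along this decomposition. On the tree side, the mirrored enumeration of $\Tree_n$ is the lexicographic enumeration of $\Tree_n^{\div}$ pulled back, and since the root of $\Tree_n^{\div}$ carries the subtrees $T_k^{\div},\ldots,T_1^{\div}$ in that order, locality of the lexicographic order gives that $w_0,w_1,\ldots$ reads as $\varnothing$ followed by the mirrored enumeration of $T_k$, then of $T_{k-1}$, \ldots, then of $T_1$, each block beginning with the corresponding child $c_j$. On the rotated side, I would show that the rightmost enumeration of $(\rot\Tree_n)^{\circ}$ is precisely the reversed in-order traversal (first the right subtree, then the current vertex, then the left subtree). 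Combined with the spine description above, this yields the block decomposition $\widetilde{c_k}$, then the rightmost enumeration of $(\rot T_k)^{\circ}$, then $\widetilde{c_{k-1}}$, \ldots, then $\widetilde{c_1}$, then the rightmost enumeration of $(\rot T_1)^{\circ}$.

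Matching the two decompositions block by block then finishes the proof: the $j$-th block of the mirrored enumeration starts with $c_j$, whose rotated version is the spine vertex $\widetilde{c_j}$ opening the $j$-th block on the rotated side, and the remaining entries of that block are the pulled-back mirrored enumeration of $T_j$ beyond its root, whose rotated versions coincide by the induction hypothesis applied to $T_j$ and the locality of the identification with the rightmost enumeration of $(\rot T_j)^{\circ}$. Since the blocks appear in the same reversed order $k,k-1,\ldots,1$ on both sides, we conclude $\widetilde w_i=r_i$ for all $1\leq i\leq n-1$.

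I expect the main obstacle to be the second step, namely deriving rigorously from the particle definition that the rightmost enumeration is the reversed in-order traversal, or equivalently that its restriction to the left subtree of a spine vertex $\widetilde{c_j}$ agrees with the standalone rightmost enumeration of $(\rot T_j)^{\circ}$. The delicate point is that the spine vertices are passed over but not recorded during the initial all-right move, and only recorded later upon backtracking; one must check that the rule ``go up to the first unrecorded ancestor'' records each spine vertex exactly when the traversal returns from its right subtree, that is, after the block of its right neighbour has been exhausted and before entering its own left subtree. This is naturally handled within the same induction, by verifying that the three visits a binary internal vertex receives under the reversed contour correspond respectively to entering its right subtree, recording the vertex, and entering its left subtree.
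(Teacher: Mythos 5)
Your proof is correct, but it takes a genuinely different route from the paper's. The paper argues by induction on the index $i$, translating the successor operation of the mirrored enumeration move by move under $u\mapsto\widetilde u$: ``go to the last child of $w_i$'' becomes ``cross the left edge at $\widetilde w_i$, then every right edge encountered'', while each step of the upward walk ``go to the closest elder sibling if it exists, else to the parent'' becomes ``go to the parent of the current vertex'' in $(\rot\Tree_n)^{\circ}$, the stopping rule ``first vertex having an elder sibling'' matching ``first unrecorded vertex''. You instead run a strong induction on the size of the tree, matching block decompositions of the two enumerations along the root's children $c_k,\ldots,c_1$, via the spine structure of $(\rot\Tree_n)^{\circ}$, the locality of $u\mapsto\widetilde u$, and the identification of the rightmost enumeration with the reversed in-order traversal. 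Both arguments ultimately rest on the same two structural facts (the first child of $u$ corresponds to the left child of $\widetilde u$, and the closest younger sibling of $u$ to the right child of $\widetilde u$), but they distribute the work differently. The paper's version is shorter and needs no embedding lemmas, at the price of leaving informal the bookkeeping of which vertices are already recorded when an upward walk occurs. Your version requires the extra claims you flag (block structure of both enumerations, the reversed in-order identification), but it confines the delicate ``first unrecorded ancestor'' rule to the spine: by the induction hypothesis the standalone enumeration of each $(\rot T_j)^{\circ}$ is a \emph{complete} enumeration (its terms are the rotated versions of the non-root vertices of $T_j$, and $u\mapsto\widetilde u$ is a bijection onto $(\rot T_j)^{\circ}$), so an upward walk leaves the left subtree of $\widetilde{c_j}$ exactly when that block is exhausted, and it then necessarily stops at $\widetilde{c_{j-1}}$, the first unrecorded spine vertex above. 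One small suggestion: to avoid any hidden circularity, state your induction hypothesis so that it explicitly includes well-definedness, namely that the particle process on a smaller tree never gets stuck before having recorded every vertex; this is precisely what lets you assert that the embedded block agrees with the standalone enumeration for its full length.
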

\begin{proof}
      Lemma~\ref{lem:RotatedMirrorIsRighmost} can be proved by induction on $i$. Indeed, when one applies $u\mapsto \tilde u$, "go from $w_i$ to its last child (if there is any)" translates into "starting from $\widetilde w_i$, go through its left edge (if there is any) then cross every right edge encountered (but no more left edge)" in the rotated tree while "go from $w_i$ to its closest elder sibling if it exists, else go to its parent" translates into "go from $\widetilde w_i$ to its parent". Formal details are left to the reader.
\end{proof}

We can now use this link between the rightmost enumeration of $(\rot\Tree_n)^{\circ}$ and its reversed contour process $\widehat C_{(\rot\!\Tree_n)^{\circ}}$ to prove Lemma~\ref{lem:RightHeightAndRightContour}.

\begin{proof}[Proof of Lemma~\ref{lem:RightHeightAndRightContour}.] 
Let $x_0=\varnothing,x_1,\ldots,x_{2n-4}=\varnothing$ be the sequence of adjacent vertices obtained by following the reversed contour. In the definition of the rightmost enumeration, for all $1\leq k \leq n-1$ the particle  either goes straight up or straight down at step $k$, hence it crosses $\abs{|r_k|-|r_{k-1}|}$ edges during this step (with $r_0=\varnothing$ to include the case $k=1$). Consider that each edge is split into two directed edges (one upward and one downward). By the previous Lemma~\ref{lem:RotatedMirrorIsRighmost}, $j_n(k)$ simply is the number of directed edges crossed by the particle after performing \textit{Steps $1$ to $k$}. Since the particle follows the reversed contour, we get that  $r_k=x_{j_n(k)}$ hence $H^*_n(k)=\widehat C_{(\rot\!\Tree_n)^{\circ}}(j_n(k))$ for all $1\leq k \leq n-1$. This also holds for $k=0$ as $j_n(0)=0$. By considering the additional step that goes from $r_{n-1}$ to $\varnothing$, the particle crosses $|r_{n-1}|=|H^*_n(n)- H^*_n(n-1)|$ edges to complete the contour thus $j_n(n)$ is indeed $2n-4$ \ie the total number of edges crossed by the particle and $H^*_n(n)=0=\widehat C_{(\rot\!\Tree_n)^{\circ}}(j_n(n))$. The proposed expression for $\widehat C_{(\rot\!\Tree_n)^{\circ}}(m)$ follows from the observation that $\widehat C_{(\rot\!\Tree_n)^{\circ}}$ is monotone on each interval $\Zintff{j_n(k),j_n(k+1)}$ (because it corresponds to \textit{Step $k+1$}), and on this interval it goes from $H^*_n(k)$ to $H^*_n(k+1)$ with increments of constant absolute size $1$. Note that this holds even if $\Zintff{j_n(k),j_n(k+1)}$ is degenerate \ie $j_n(k)=j_n(k+1)$, since this happens if and only if $ H^*_n(k+1)= H^*_n(k)$ (and it may only happen for $k=0$ or $k=n-1$). 
\end{proof}

A refinement of the same argument, based on counting the number of edges crossed in the upward direction by two different means, also yields Lemma~\ref{lem:SimplerExpressionforIndex}:

\begin{proof}[Proof of Lemma~\ref{lem:SimplerExpressionforIndex}.]
As explained above, $j_n(k)=\sum_{i=1}^k \bigl|H^*_n(i)- H^*_n(i-1)\bigr|$ is the number of directed edges crossed by the particle after \textit{Steps $1$ to $k$} of the construction of the rightmost enumeration of $(\rot\Tree_n)^{\circ}$. Let us focus on the edges crossed in the upward direction (\ie in the direction going away from the root). Since the particle either goes straight up or straight down at each step, the number of edges crossed in the upward direction after \textit{step $k$} is $\sum_{i=1}^k \bigl(H^*_n(i)- H^*_n(i-1)\bigr)_+$. However we also notice that at step $k+1$ with $k\geq 1$, the particle goes up if and only if its initial vertex $r_k=\widetilde w_k$ has a left child, which is equivalent to $d_{w_k}(\Tree_n)=S_{\Tree_n^{\div}}(k+1)-S_{\Tree_n^{\div}}(k)+1 \geq 1$, and then it crosses  $1$ left edge and $S_{\Tree_n^{\div}}(k+1)-S_{\Tree_n^{\div}}(k)$ right edges. At the first step, the particle just crosses $S_{\Tree_n^{\div}}(1)$ right edges. When the particle goes straight down, it still crosses $0=S_{\Tree_n^{\div}}(k+1)-S_{\Tree_n^{\div}}(k)+1$ edges in the upward direction, hence we get the following for $k\geq 1$
\begin{equation*}
    \sum_{i=1}^k \bigl(H^*_n(i)- H^*_n(i-1)\bigr)_+=k+ S_{\Tree_n^{\div}}(k)-1.
\end{equation*}
We can now derive our simpler expression for $j_n(k)$ for $1\leq k\leq n-1$ thanks to \eqref{eq:ExactHeightPlusWidth}:
\begin{equation*}
\begin{split}
    \sum_{i=1}^k \abs{H^*_n(i)- H^*_n(i-1)}
    &=2\sum_{i=1}^k \bigl(H^*_n(i)- H^*_n(i-1)\bigr)_+ -  H^*_n(k) \\
    &=2k-1+S_{\Tree_n^{\div}}(k)-H_{\Tree_n^{\div}}(k).
\end{split}
\end{equation*}

\end{proof}

\subsection{Lemma~\ref{lem:AddingTheLeaves}: adding the leaves}\label{ssct:AddingTheLeaves}
The purpose of this section is to prove Lemma~\ref{lem:AddingTheLeaves}, which states that the height processes of $\rot\Tree_n$ and $(\rot\Tree_n)^{\circ}$, once rescaled, are also roughly the same. As before, we first need a combinatorial lemma to relate these processes, illustrated by Figure~\ref{fig:HeightWithWithoutLeaves}.

\begin{lem}\label{lem:internalExtraction}
For all $0\leq k \leq n-1$, set $j_n(k)=2k-H_{\Tree_n}(k+1)+1$. It is strictly increasing from $j_n(0)=0$ to $j_n(n-1)=2n-1$, and for all $0\leq k \leq n-2$, $H_{\rot\!\Tree_n}$ satisfies on $\Zintff{j_n(k),j_n(k+1)}$:
\begin{align*}
    \text{\textbullet When } j_n(k+1) = j_n(k)+1: & \\
    H_{\rot\!\Tree_n}\bigl(j_n(k)\bigr)=H_{(\rot\!\Tree_n)^{\circ}}(k) &\text{ and } H_{\rot\!\Tree_n}\bigl(j_n(k)+1\bigr)=H_{(\rot\!\Tree_n)^{\circ}}(k+1); \\
    \text{\textbullet When } j_n(k+1) = j_n(k)+2: & \\
    H_{\rot\!\Tree_n}\bigl(j_n(k)\bigr)=H_{(\rot\!\Tree_n)^{\circ}}(k) &\text{ and } H_{\rot\!\Tree_n}\bigl(j_n(k)+1\bigr)=H_{\rot\!\Tree_n}\bigl(j_n(k)+2\bigr)=H_{(\rot\!\Tree_n)^{\circ}}(k+1); \\
    \text{\textbullet When } j_n(k+1) > j_n(k)+2: & \\
    H_{\rot\!\Tree_n}\bigl(j_n(k)\bigr)=H_{(\rot\!\Tree_n)^{\circ}}(k) &,\  H_{\rot\!\Tree_n}\bigl(j_n(k)+1\bigr)=H_{\rot\!\Tree_n}\bigl(j_n(k)+2\bigr)=H_{(\rot\!\Tree_n)^{\circ}}(k)+1, \\
    \text{then } H_{\rot\!\Tree_n} \text{ strictly decreases} &\text{ on } \Zintff{j_n(k)+2,j_n(k+1)} \text{ with } H_{\rot\!\Tree_n}\bigl(j_n(k+1)\bigr)=H_{(\rot\!\Tree_n)^{\circ}}(k+1).
\end{align*}
\end{lem}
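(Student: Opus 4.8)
The plan is to read off the description of $H_{\rot\!\Tree_n}$ directly from the depth-first (lexicographic) traversal of $\rot\Tree_n$, matching it step by step against the traversal of $\Tree_n$. First I would record the precise binary structure of $\rot\Tree_n$ furnished by the geometric construction (step~3 of Figure~\ref{fig:DefGeoRotation}): for $u \in \Tree_n\!\setminus\!\{\varnothing\}$, the vertex $\widetilde u$ has as left child the rotated first child of $u$ if $u$ is internal in $\Tree_n$, and a left leaf otherwise; and as right child the rotated first younger sibling of $u$ if $u$ is not the last child of its parent, and a right leaf otherwise. Combined with the order-preservation of $u\mapsto\widetilde u$ recalled in Section~\ref{sct:rotation}, this shows that the internal vertices of $\rot\Tree_n$ are visited, in lexicographic order, exactly as $\widetilde u_1,\ldots,\widetilde u_{n-1}$, and that $\widetilde u_{k+1}$ sits at some position $m_k$ in the enumeration $v_0,\ldots,v_{2n-2}$, with $v_0=\widetilde u_1$ so that $m_0=0$. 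Everything else reduces to counting the leaves visited between $\widetilde u_{k+1}$ and $\widetilde u_{k+2}$ and computing their heights via \eqref{eq:decHauteurLargeur}.

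Next I would split according to the lexicographic transition $u_{k+1}\to u_{k+2}$ in $\Tree_n$, governed by the increment $H_{\Tree_n}(k+2)-H_{\Tree_n}(k+1)$. If $u_{k+1}$ is internal (increment $+1$), then $u_{k+2}$ is its first child, so $\widetilde u_{k+2}$ is the left child of $\widetilde u_{k+1}$ and is visited immediately, giving no leaf in between and $m_{k+1}=m_k+1$. If $u_{k+1}$ is a leaf with $u_{k+2}$ its next sibling (increment $0$), then $\widetilde u_{k+1}$ carries a left leaf and has $\widetilde u_{k+2}$ as right child, so exactly one leaf is visited and $m_{k+1}=m_k+2$, both new vertices sitting at height $|\widetilde u_{k+1}|+1$ (here one checks $|\widetilde u_{k+2}|=|\widetilde u_{k+1}|+1$ from \eqref{eq:decHauteurLargeur} and $L(u_{k+2})=L(u_{k+1})+1$). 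In every case one obtains $m_{k+1}-m_k=2-\bigl(H_{\Tree_n}(k+2)-H_{\Tree_n}(k+1)\bigr)$, which integrates to $m_k=2k-H_{\Tree_n}(k+1)+1=j_n(k)$ and yields $j_n(0)=0$, $j_n(n-1)=2n-1$ and strict monotonicity.

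The main obstacle is the third case: $u_{k+1}$ a leaf that is the last child of its parent (increment $<0$), where the traversal must backtrack. Writing $a_h=u_{k+1}$, $a_{h-1}=\mathrm{parent}(u_{k+1}),\ldots$ for the chain of last-child ancestors of $u_{k+1}$ up to the first ancestor $a_{h'}$ possessing a younger sibling (this sibling being $u_{k+2}$), I would show that the backtrack visits precisely the left leaf of $\widetilde u_{k+1}$ followed by the right leaves of $\widetilde{a_h},\widetilde{a_{h-1}},\ldots,\widetilde{a_{h'+1}}$ before reaching $\widetilde u_{k+2}$. The heights then read $|\widetilde{a_i}|+1$, and the strict decrease claimed in the statement follows from the identity $|\widetilde{a_i}|-|\widetilde{a_{i-1}}|=d_{a_{i-1}}(\Tree_n)\geq 1$, itself a consequence of \eqref{eq:decHauteurLargeur} together with the recursion $L(a_i)=L(a_{i-1})+\bigl(d_{a_{i-1}}(\Tree_n)-1\bigr)$ valid when $a_i$ is the last child of $a_{i-1}$; the final drop to $\widetilde u_{k+2}$ is strict because $|\widetilde{a_{h'+1}}|-|\widetilde u_{k+2}|=d_{a_{h'}}(\Tree_n)-1\geq 0$. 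This simultaneously produces the count $m_{k+1}-m_k=2+\bigl(|u_{k+1}|-|u_{k+2}|\bigr)$ and the full height profile. The delicate point is to justify carefully that the backtrack meets exactly these right leaves, and no left leaf of a strict ancestor, for which the left/right child dictionary of the first paragraph is precisely what is needed.
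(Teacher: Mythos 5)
Your proposal is correct and takes essentially the same route as the paper: both rest on the order-preservation of $u\mapsto\widetilde u$, the left/right-leaf dictionary (left leaf below $\widetilde u$ iff $u$ is a leaf of $\Tree_n$, right leaf iff $u$ is a last child), and a case split on the increment $H_{\Tree_n}(k+2)-H_{\Tree_n}(k+1)$ to count and locate the leaves interleaved between consecutive internal vertices, your explicit last-child ancestor chain with the recursion $L(a_i)=L(a_{i-1})+\bigl(d_{a_{i-1}}(\Tree_n)-1\bigr)$ being just the computational form of the paper's argument that the intermediate right leaves sit at distinct heights visited in decreasing order. The one point the paper treats that you pass over is the boundary case $k=n-2$, where $u_{k+2}$ and the ancestor $a_{h'}$ do not exist: there the chain runs all the way down to the last child of the root and the endpoint identity $H_{\rot\!\Tree_n}\bigl(j_n(n-1)\bigr)=0=H_{(\rot\!\Tree_n)^{\circ}}(n-1)$ holds by the conventions $H_{\Tree_n}(n)=0$ and $H_{(\rot\!\Tree_n)^{\circ}}(n-1)=0$, so your argument adapts immediately, but as written it does not cover this case.
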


\begin{figure}[h]
    \centering
    \includegraphics[width=\linewidth]{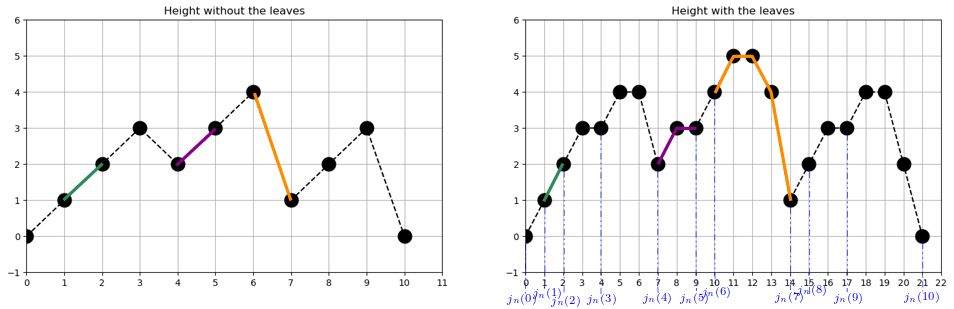}
    \caption{\centering Description of $H_{\rot\!\Tree_n}$ (on the right) based on $H_{(\rot\!\Tree_n)^{\circ}}$ (on the left) and the times $(j_n(k), 0\leq k\leq n-1)$, in the particular case of the tree from Figure~\ref{fig:DefGeoRotation}. Several parts of the two processes have been identified by matching colours to illustrate the case $j_n(k+1) = j_n(k)+1$ (green part), the case $j_n(k+1) = j_n(k)+2$ (magenta part) and finally the case $j_n(k+1) > j_n(k)+2$ (orange part).}
\label{fig:HeightWithWithoutLeaves}
\end{figure}

\begin{proof}
Recall from Section~\ref{sct:rotation} that $u\mapsto \widetilde u$ preserves the lexicographical order and that we have the following property for the leaves in $\rot\Tree_n$: a vertex is a left leaf in $\rot\Tree_n$ if and only if its parent is $\widetilde u$ for $u$ a leaf in $\Tree_n$, while it is a right leaf if and only if its parent is $\widetilde u$ for $u$ in $\Tree_n$ which is the last child of its own parent. 
We now prove that 
\begin{equation}\label{eq:ExtractionInternalNodes}
    \text{For } 0\leq k \leq n-2, \widetilde u_{1+k}=v_{j_n(k)},
\end{equation} 
by analysing the number of leaves in $\rot\Tree_n$ between two consecutive internal vertices, say $\widetilde u_{m}$ and $\widetilde u_{m+1}$: 
\begin{itemize}
    \item When $H_{\Tree_n}(m+1)=H_{\Tree_n}(m)+1$,  $u_{m+1}$ is the first child of $u_{m}$ in $\Tree_n$, so $\widetilde u_{m+1}$ is also the first child of $\widetilde u_{m}$ in $\rot\Tree_n$ and there is no leaf in between.
    \item When $H_{\Tree_n}(m+1)\leq H_{\Tree_n}(m)$, $u_{m}$ is a leaf and there are $H_{\Tree_n}(m)-H_{\Tree_n}(m+1)$ last children among $u_{m}$ and its ancestors that are not ancestors of $u_{m+1}$, so there is one left leaf and then $H_{\Tree_n}(m)-H_{\Tree_n}(m+1)$ right leaves between $\widetilde u_{m}$ and $\widetilde u_{m+1}$.
\end{itemize}

As a consequence, there are always $1+H_{\Tree_n}(m)-H_{\Tree_n}(m+1)\geq 0$ leaves between $\widetilde u_{m}$ and $\widetilde u_{m+1}$. Since $\widetilde u_1=v_0=v_{j_n(0)}$ and for all $0\leq k \leq n-2$, $j_n(k+1)-j_n(k)=2+H_{\Tree_n}(k+1)-H_{\Tree_n}(k+2)$, $j_n$ is strictly increasing and \eqref{eq:ExtractionInternalNodes} holds. This entails that $H_{\rot\!\Tree_n}\bigl(j_n(k)\bigr)=H_{(\rot\!\Tree_n)^{\circ}}(k)$ for those $k$, and it also holds for $k=n-1$. 

Moreover, $j_n(k+1)=j_n(k)+2$ if and only if $H_{\Tree_n}(k+1)=H_{\Tree_n}(k+2)$, which means that in this situation the left child of $\widetilde u_{1+k}$ is a leaf while its right child is $\widetilde u_{2+k}$. In particular we have $H_{\rot\!\Tree_n}\bigl(j_n(k)+1\bigr)=H_{\rot\!\Tree_n}\bigl(j_n(k)+2\bigr)=H_{(\rot\!\Tree_n)^{\circ}}(k+1)$.

Finally, $j_n(k+1)>j_n(k)+2$ if and only if $H_{\Tree_n}(k+1) > H_{\Tree_n}(k+2)$. In this case, both children of $\widetilde u_{1+k}$ are leaves, hence $H_{\rot\!\Tree_n}\bigl(j_n(k)+1\bigr)=H_{\rot\!\Tree_n}\bigl(j_n(k)+2\bigr)=H_{(\rot\!\Tree_n)^{\circ}}(k)+1$, and $\widetilde u_{2+k}$ must be the right child of an ancestor of $\widetilde u_{1+k}$ (except in the case $k=n-2$ where $\widetilde u_{1+k}$ is the last internal vertex). The $H_{\Tree_n}(k+1) - H_{\Tree_n}(k+2)-1$ remaining right leaves must be grafted on  vertices among  $\{$ancestors~of~$\widetilde u_{1+k}\}\!\setminus\!\{$ancestors~of~$\widetilde u_{2+k}\}$ (or simply among $\{$ancestors~of~$\widetilde u_{1+k}\}$ in the special case $k=n-2$), so they have distinct height and since they are consecutive leaves  the lexicographical order sort them in decreasing order for their height. In addition, in the case $k<n-2$, $\widetilde u_{2+k}$ is the right child of a vertex $v\in\{$ancestors~of~$\widetilde u_{1+k}\}$ while those leaves have the left child of $v$ as an ancestor, hence their height must be larger than $|v|+1=H_{(\rot\!\Tree_n)^{\circ}}(k+1)$. This is also satisfied for $k=n-2$ as $H_{(\rot\!\Tree_n)^{\circ}}(k+1)=0$ in this situation.
\end{proof}

We now prove Lemma~\ref{lem:AddingTheLeaves} with  the usual method: we build closely related parametric representations based on the combinatorial description given by Lemma~\ref{lem:internalExtraction}.

\begin{proof}[Proof of Lemma~\ref{lem:AddingTheLeaves}.]
Fix a sequence $(\lambda(n))_n$ such that $\lambda(n)\rightarrow 0$. First, recall the sequence $j_n$ of Lemma~\ref{lem:internalExtraction} and for simplicity let $g_n(k)=j_n(k)/(2n-1)$. We work with the  parametric representation $(h_{\rot\!\Tree_n},\id_{[0,1]})$ for $h_{\rot\!\Tree_n}$, and we build a piecewise affine parametric representation $(\chi_n,\tau_n)$ for $h_{(\rot\!\Tree_n)^{\circ}}$. More precisely, on each segment $\bigl[g_n(k),g_n(k+1)\bigr]:$ 
    \begin{itemize}
    \item For $ k \st j_n(k+1)=j_n(k)+1$, $(\chi_n,\tau_n)$ is affine on $\bigl[g_n(k),g_n(k+1)\bigr]$ with 
    \begin{align*}
        & \chi_n\bigl(g_n(k)\bigr)=H_{(\rot\!\Tree_n)^{\circ}}(k);&\ & \tau_n\bigl(g_n(k)\bigr)= \frac{k}{n-1}; \\  
        & \chi_n\bigl(g_n(k+1)\bigr)=H_{(\rot\!\Tree_n)^{\circ}}(k+1);&\ & \tau_n\bigl(g_n(k+1)\bigr)= \frac{k+1}{n-1}.
    \end{align*}
    \item For $ k \st j_n(k+1)=j_n(k)+2$, $(\chi_n,\tau_n)$ is affine on $\bigl[g_n(k),(j_n(k)+1)/(2n-1)\bigr]$ with
    \begin{align*}
        & \chi_n\bigl(g_n(k)\bigr)=H_{(\rot\!\Tree_n)^{\circ}}(k);&\ & \tau_n\bigl(g_n(k)\bigr)= \frac{k}{n-1}; \\  
        & \chi_n\left(\frac{j_n(k)+1}{2n-1}\right)=H_{(\rot\!\Tree_n)^{\circ}}(k+1);&\ & \tau_n\left(\frac{j_n(k)+1}{2n-1}\right)= \frac{k+1}{n-1},
    \end{align*}
and $(\chi_n,\tau_n)$ is constant on $\bigl[(j_n(k)+1)/(2n-1),g_n(k+1)\bigr]$.

    \item For $ k \st j_n(k+1)>j_n(k)+2$, let $t^*\in\bigl(g_n(k),g_n(k+1)\bigr)$ be the unique solution of $h_{\rot\!\Tree_n}(t^*)=H_{(\rot\!\Tree_n)^{\circ}}(k)$, $(\chi_n,\tau_n)$ is constant on $\bigl[g_n(k),t^*\bigr]$ with  \begin{align*}
        & \chi_n\bigl(g_n(k)\bigr)=H_{(\rot\!\Tree_n)^{\circ}}(k);&\ & \tau_n\bigl(g_n(k)\bigr)= \frac{k}{n-1}. 
    \end{align*}
Now observe that $h_{\rot\!\Tree_n}$ induces a decreasing one-to-one map from $\bigl[t^*,g_n(k+1)\bigr]$ to $\bigl[H_{(\rot\!\Tree_n)^{\circ}}(k+1),H_{(\rot\!\Tree_n)^{\circ}}(k)\bigr]$. On $\bigl[t^*,g_n(k+1)\bigr]$, $(\chi_n,\tau_n)$ is given by:
\begin{align*}
    \chi_n(t)=h_{\rot\!\Tree_n}(t); &\ &\tau_n(t)=\frac{k}{n-1}+\frac{1}{n-1}\frac{H_{(\rot\!\Tree_n)^{\circ}}(k)-h_{\rot\!\Tree_n}(t)}{H_{(\rot\!\Tree_n)^{\circ}}(k)-H_{(\rot\!\Tree_n)^{\circ}}(k+1)}.
\end{align*}

\end{itemize}
Clearly, $\tau_n$ is continuous non-decreasing and one can check that $\chi_n=h_{(\rot\!\Tree_n)^{\circ}}\circ\tau_n$ on each segment $\bigl[g_n(k),g_n(k+1)\bigr]$, hence $(\chi_n,\tau_n)$ is a parametric representation of $h_{(\rot\!\Tree_n)^{\circ}}$. Moreover, in light of Lemma~\ref{lem:internalExtraction} we see that $\chi_n$ and $h_{\rot\!\Tree_n}$ may only differ on intervals of the form $\bigl(g_n(k),t^*\bigr)$. Consequently $\Vert\chi_n-h_{\rot\!\Tree_n}\Vert_{\infty} \leq 1$ and we get 
\begin{equation*}
        d_{M_1}\bigl(\lambda _n h_{(\rot\!\Tree_n)^{\circ}}, \lambda(n) h_{\rot\!\Tree_n}\bigr) \leq \lambda(n) \vee \norm{\tau_n-\id_{[0,1]}}_{\infty}.
    \end{equation*}
    But on each segment $\bigl[g_n(k),g_n(k+1)\bigr]$ we have
    \begin{equation*}
        |\tau_n(t)-t|\leq \frac{1}{n-1} + |\frac{j_n(k)}{2n-1}-\frac{k}{n-1}|\vee |\frac{j_n(k+1)}{2n-1}-\frac{k}{n-1}|. 
    \end{equation*}
    We can thus control the temporal component thanks to the expression of $j_n$ given in Lemma~\ref{lem:internalExtraction}.
 \begin{equation*}
    \norm{\id_{[0,1]}-\tau_n}_{\infty}  \leq \max_{1 \leq k\leq n-2}\abs{\frac{k}{n-1}-\frac{j_n(k)}{2n-1}}+O\left(\frac{1}{n}\right)
     \leq \frac{\max H_{\Tree_n} + O(1)}{n}.
\end{equation*}
By Theorem~\ref{thm:Duquesne}, this last term goes to $0$ as $n$ goes to $+\infty$, so the result is proved.
\end{proof}

\subsection{Lemma~\ref{lem:ComparisonLukaCorot}: the \Luka path of $\corot\Tree_n$}\label{ssct:Luka of Corot}
We deal in this section with the last combinatorial lemma, Lemma~\ref{lem:ComparisonLukaCorot}, which compares $S_{\Tree_n}$ and $S_{\corot\!\Tree_n}$. The same method applies without difficulty, we first give a combinatorial relation between these processes (illustrated by Figure~\ref{fig:ComparisonLukaCorot}) and then translate it into the desired control for the $M_1$ distance between them.

\begin{lem}\label{lem:Link Luka Corot}
    For all $0\leq k \leq n$, set $j_n(k)=2k+S_{\Tree_n}(k)$. Then $j_n$ is strictly increasing from $j_n(0)=0$ to $j_n(n)=2n-1$, and for all $0\leq k \leq n-1$, $S_{\corot\!\Tree_n}$ satisfies on $\Zintff{j_n(k),j_n(k+1)}$:
    \begin{itemize}
        \item if $j_n(k+1)=j_n(k)+1$,
        \begin{equation*}
            S_{\corot\!\Tree_n}\bigl(j_n(k)\bigr) = S_{\Tree_n}(k)\ \text{ and }\ S_{\corot\!\Tree_n}\bigl(j_n(k+1)\bigr) = S_{\Tree_n}(k+1);
        \end{equation*}
        \item else, $S_{\corot\!\Tree_n}$ is affine on $\Zintff{j_n(k),j_n(k+1)-1}$ with
        \begin{equation*}
            S_{\corot\!\Tree_n}\bigl(j_n(k)\bigr) = S_{\Tree_n}(k)\ \text{ and }\ S_{\corot\!\Tree_n}\bigl(j_n(k+1)-1\bigr) = S_{\Tree_n}(k+1)+1,
        \end{equation*}
        and then $S_{\corot\!\Tree_n}\bigl(j_n(k+1)\bigr) = S_{\Tree_n}(k+1)$.
    \end{itemize}
\end{lem}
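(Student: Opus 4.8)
The plan is to settle the monotonicity of $j_n$ at once and then to obtain the shape of $S_{\corot\!\Tree_n}$ by describing precisely how the lexicographical enumeration of $\corot\!\Tree_n$ decomposes in terms of that of $\Tree_n$. For the first point, the definition of the \Luka walk gives $S_{\Tree_n}(k+1)-S_{\Tree_n}(k)=d_{u_k}(\Tree_n)-1$, so that $j_n(k+1)-j_n(k)=d_{u_k}(\Tree_n)+1\geq 1$, which makes $j_n$ strictly increasing; the endpoint values $j_n(0)=0$ and $j_n(n)=2n-1$ follow from $S_{\Tree_n}(0)=0$ and $S_{\Tree_n}(n)=-1$. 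The equality $j_n(k+1)=j_n(k)+1$ holds exactly when $d_{u_k}(\Tree_n)=0$, i.e. when $u_k$ is a leaf of $\Tree_n$, which is the dichotomy of the statement.

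The heart of the argument is the following \emph{block structure}: writing $v_0=\varnothing,v_1,\ldots,v_{2n-2}$ for the lexicographical enumeration of $\corot\!\Tree_n$, the index set splits into consecutive blocks $\Zintff{j_n(k),j_n(k+1)-1}$, $0\leq k\leq n-1$, and block $k$ consists of exactly $d_{u_k}(\Tree_n)$ internal vertices of $\corot\!\Tree_n$ followed by exactly one leaf. I would prove this by induction on $\#\Tree_n$ via the recursive definition of $\corot$. If the root of $\Tree_n$ has $k$ children carrying subtrees $T_1,\ldots,T_k$, then $\corot\!\Tree_n$ is a spine $s_1,\ldots,s_{k+1}$ (the left-child path) with $\corot T_{k+1-i}$ hanging as the right child of $s_i$; inspecting the depth-first traversal shows that the lexicographical enumeration reads $s_1,\ldots,s_{k+1},[\corot T_1],\ldots,[\corot T_k]$, the subtrees appearing in their original order. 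The spine is block $0$: the vertices $s_1,\ldots,s_k$ are internal and $s_{k+1}$ is a leaf, i.e. $d_\varnothing(\Tree_n)$ internal vertices then one leaf, and the remaining blocks come from applying the induction hypothesis to $T_1,\ldots,T_k$.

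The one delicate point, and the main obstacle, is to check that each co-rotated subtree $\corot T_i$ occupies precisely the global positions predicted by $j_n$. This rests on the additivity of the \Luka walk along subtrees: if $u_m$ is the root of $T_i$, with $T_i$ enumerated as $u_m,\ldots,u_{m+\#T_i-1}$, then the out-degrees inside $T_i$ are unchanged, so $S_{\Tree_n}(m+j)=S_{\Tree_n}(m)+S_{T_i}(j)$ for $0\leq j\leq \#T_i$, whence $j_n(m+j)=j_n(m)+\bigl(2j+S_{T_i}(j)\bigr)$. Thus the blocks issued from $T_i$ are exactly those of $\corot T_i$ translated by $j_n(m)$, and it only remains to verify that $[\corot T_i]$ does start at global position $j_n(m)$ — a short computation from the traversal above, using that the spine fills positions $0,\ldots,k$ and each $[\corot T_i]$ has length $2\#T_i-1$. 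Making these offsets match across the recursion is the bookkeeping one has to carry out carefully.

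Once the block structure is in hand, the conclusion is immediate: since $\corot\!\Tree_n$ is binary, $S_{\corot\!\Tree_n}$ rises by $1$ at each internal vertex and drops by $1$ at each leaf, so within block $k$ it performs $d_{u_k}(\Tree_n)$ consecutive $+1$ steps followed by a single $-1$ step, which is precisely the affine-then-down description of the statement. Finally, summing the net contribution $d_{u_\ell}(\Tree_n)-1$ of each earlier block gives $S_{\corot\!\Tree_n}\bigl(j_n(k)\bigr)=\sum_{\ell<k}\bigl(d_{u_\ell}(\Tree_n)-1\bigr)=S_{\Tree_n}(k)$, which pins down the endpoint values at $j_n(k)$ and $j_n(k+1)$ and closes the proof.
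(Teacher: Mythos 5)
Your proof is correct and takes essentially the same route as the paper's: the paper also argues by induction through the recursive definition of $\corot$, encoding your spine-then-subtrees block structure as concatenation identities $S_{\corot T}=(0,1,\ldots,p-1,p,p-1)\boxplus(p-1+S_{\corot (T_1)})\boxplus\cdots\boxplus S_{\corot(T_p)}$ together with the matching decompositions of $S_T$ and $j_T$, which is exactly the offset bookkeeping you describe. The one step you leave as a ``short computation'' does check out: $j_n(m_i)=p+2-i+2\sum_{i'<i}\#T_{i'}$, which coincides with the position at which $[\corot T_i]$ starts, namely $(p+1)+\sum_{i'<i}\bigl(2\#T_{i'}-1\bigr)$.
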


\begin{figure}[h]
    \centering
    \includegraphics[width=\linewidth]{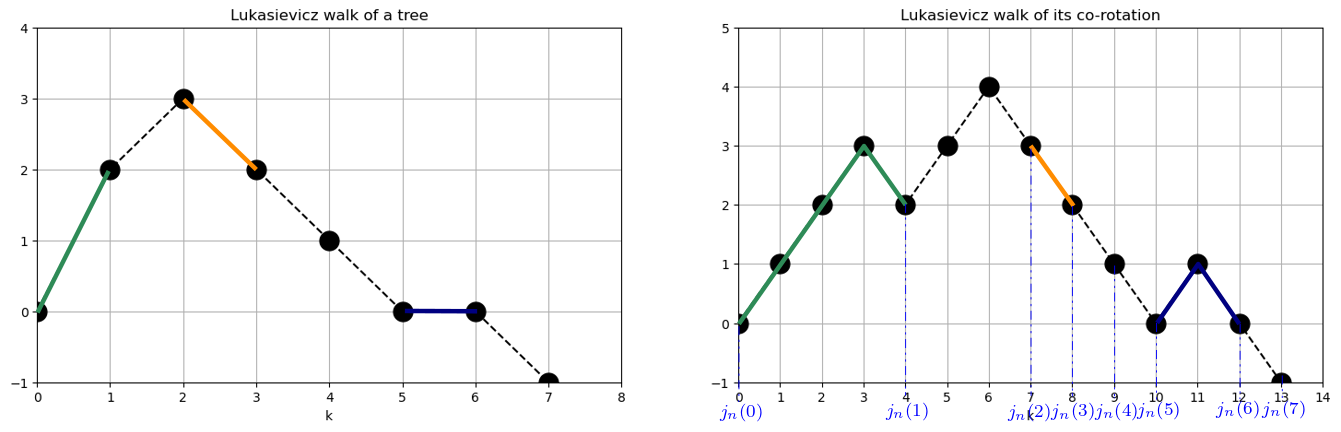}
    \caption{\centering Description of $S_{\corot\!\Tree_n}$ (on the right) based on $S_{\Tree_n}$ (on the left) and the times $(j_n(k), 0\leq k\leq n)$, where $\Tree_n$ is the tree from Figure~\ref{fig:lexicoEnumeration}. Several parts of the two processes have been identified by matching colours to illustrate the case $j_n(k+1) = j_n(k)+1$ (yellow part) and the case $j_n(k+1) \geq j_n(k)+2$ (green and blue parts)}
    \label{fig:ComparisonLukaCorot}
\end{figure}
\begin{proof}
    We prove this lemma for every finite plane tree $T$, instead of $\Tree_n$. We thus consider $j_T(k)=2k+S_T(k)$ for $0 \leq k \leq \#T$.

    We argue by induction on the height of $T$. First, for $T=\{\varnothing\}$, both processes are just one step down and the result is valid. Next, consider $T$ a non-trivial tree and let $T_1,\ldots,T_p$ denote the $p\geq 1$ subtrees grafted on the root of $T$. Notice that by construction of the $\Luka$ walk,
    \begin{equation*}
        S_T = (0,p-1) \boxplus (p-1 + S_{T_1}) \boxplus (p-2 + S_{T_2}) \boxplus \ldots \boxplus S_{T_p},
    \end{equation*}
    where $\boxplus$ means \textit{concatenation with fusion} \ie $(x_1,\dots x_m,a)\boxplus (a,y_1,\ldots y_q) = (x_1,\ldots,x_m,a,y_1,\dots y_q) $. Since $j_T(k)=\sum_{i=1}^k 2+S_T(i)-S_T(i-1)$, it implies that 
    \begin{equation*}
        j_T=(0,p+1)\boxplus (p+1 +j_{T_1}) \boxplus  (p+2\#T_1 +j_{T_2})\boxplus \ldots \boxplus (2+2\#T_1+\ldots+2\#T_{p-1} +j_{T_p}).
    \end{equation*}
    
    Finally, by mean of the recursive definition of $\corot T$ (Figure~\ref{fig:DefRecCorotation}) we also have 
    \begin{equation*}
        S_{\corot T} = (0,1,2,\ldots,p-2,p-1,p,p-1) \boxplus (p-1 + S_{\corot (T_1)}) \boxplus (p-2 + S_{\corot (T_2)}) \boxplus \ldots \boxplus S_{\corot(T_p)}.
    \end{equation*}
    As a consequence, $S_{\corot T}$ satisfies the lemma on $\Zintff{j_T(0),j_T(1)}=\Zintff{0,p+1}$, and it also satisfies the lemma on the next segments since by the induction hypothesis $S_{\corot T_1},\ldots,  S_{\corot T_p}$ already fully satisfy the lemma. 
\end{proof}

Thanks to this relation, we can easily mimic Section~\ref{ssct:HeightAndContour} and introduce some convenient parametric representations of $s_{\Tree_n}$ and $s_{\corot \Tree_n}$  to prove Lemma~\ref{lem:ComparisonLukaCorot}.

\begin{proof}[Proof of Lemma~\ref{lem:ComparisonLukaCorot}.]
We fix $(\lambda(n))_n$ such that $\lambda(n)\rightarrow 0$. 

    We simply take $(s_{\corot \Tree_n},\id_{[0,1]})$ as a parametric representation of $s_{\corot \Tree_n}$, and we introduce two auxiliary sequences $\bigl(A_n(m)\bigr)_{0 \leq m\leq 2n-1}$ (depicted in Figure~\ref{fig:ComparisonLukaAuxiliary}) and $\bigl(B_n(m)\bigr)_{0 \leq m\leq 2n-1}$ to define a parametric representation of $s_{\Tree_n}$. 

    \begin{figure}[h]
    \centering
    \includegraphics[width=\linewidth]{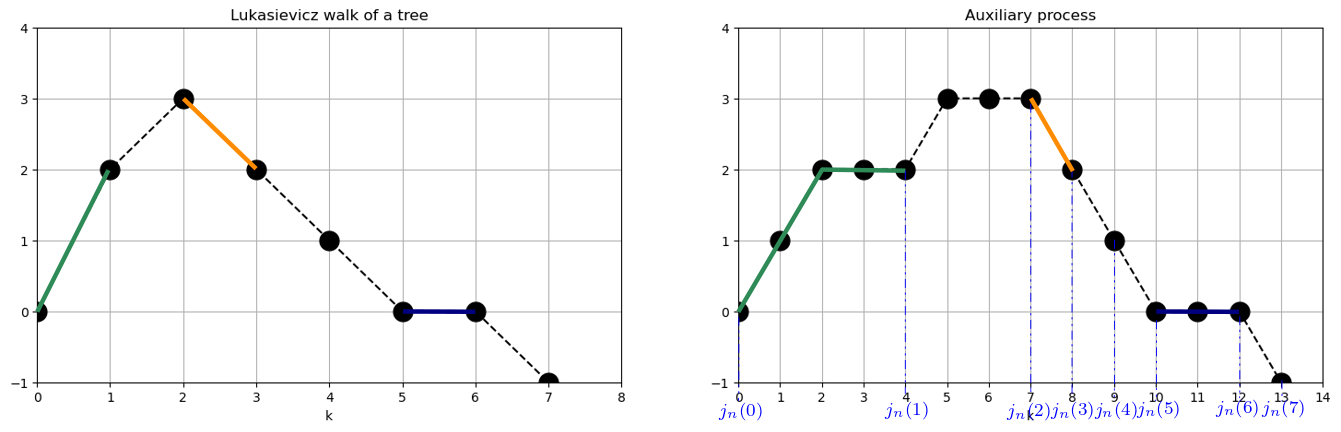}
    \caption{\centering An auxiliary process $A_n$ that approximates $S_{\corot\!\Tree_n}$ (on the right) based on $S_{\Tree_n}$ (on the left) and the times $(j_n(k), 0\leq k\leq n)$, where $\Tree_n$ is the tree from Figure~\ref{fig:lexicoEnumeration}. Several parts of the two processes have been identified by matching colours to illustrate the case $j_n(k+1) = j_n(k)+1$ (yellow part), the case $j_n(k+1) = j_n(k)+2$ (blue part) and the case $j_n(k+1)>j_n(k)+2$ (green part).}
    \label{fig:ComparisonLukaAuxiliary}
\end{figure}
Recall the change of time $j_n$ introduced in Lemma~\ref{lem:Link Luka Corot}.  On each interval $\Zintff{j_n(k),j_n(k+1)}$,
\begin{itemize}
    \item When $j_n(k+1)\leq j_n(k)+2$, $A_n$ and $B_n$ are affine on $\Zintff{j_n(k),j_n(k+1)}$ with: 
    \begin{align*}
        & A_n\bigl(j_n(k)\bigr)=S_{T_n}(k);&\ & B_n\bigl(j_n(k)\bigr)= \frac{k}{n}; \\  
        & A_n\bigl(j_n(k+1)\bigr)=S_{T_n}(k+1);&\ & B_n\bigl(j_n(k+1)\bigr)= \frac{k+1}{n}.
    \end{align*}
    \item When $j_n(k+1)>j_n(k)+2$, $A_n$ and $B_n$ are affine on $\Zintff{j_n(k),j_n(k+1)-2}$ with:
    \begin{align*}
       & A_n\bigl(j_n(k)\bigr)=S_{T_n}(k);&\ & B_n\bigl(j_n(k)\bigr)= \frac{k}{n}; \\  
        & A_n\bigl(j_n(k+1)-2\bigr)=S_{T_n}(k+1);&\ & B_n\bigl(j_n(k+1)-2\bigr)= \frac{k+1}{n};
    \end{align*}
    and then $A_n$ and $B_n$ are constant on $\Zintff{j_n(k+1)-2,j_n(k+1)}$.
   
\end{itemize}

By construction, their time-scaled functions $(a_n,b_n)$ form a parametric representation of $s_{\Tree_n}$. Moreover, it is clear from Lemma~\ref{lem:Link Luka Corot} that $\norm{a_n- s_{\corot \Tree_n}}_{\infty}\leq1$ (compare Figures~\ref{fig:ComparisonLukaCorot} and~\ref{fig:ComparisonLukaAuxiliary} to see this). Thus we get \begin{equation*}
    d_{M_1}\bigl(\lambda(n) s_{\corot \Tree_n}, \lambda(n) s_{\Tree_n}\bigr) \leq \lambda(n) \vee \norm{\id_{[0,1]}-b_n}_{\infty}.
\end{equation*}
Now we control the temporal components thanks to the construction of $B_n$ and Lemma~\ref{lem:Link Luka Corot}:
\begin{equation*}
    \norm{\id_{[0,1]}-b_n}_{\infty} \leq \max_{1 \leq k\leq n-1}\abs{\frac{k}{n}-\frac{j_n(k)}{2n-1}}\vee\abs{\frac{k}{n}-\frac{j_n(k)-2}{2n-1}}  
    \leq \frac{\max S_{\Tree_n} + O(1)}{n}.
\end{equation*}
By Theorem~\ref{thm:Duquesne}, this last term goes to $0$ with high probability as $n$ goes to $+\infty$, so the result holds.
\end{proof}
\begin{rmk}
    We have chosen to state Lemma~\ref{lem:ComparisonLukaCorot} with $\Tree_n$ but it actually holds for every sequence of random trees $(T_n)_n$ such that 
    \begin{align*}
    \#T_n\xrightarrow[\tend{n}]{\prob}+\infty & \text{ and } 
    \frac{\max S_{T_n}}{\# T_n} \xrightarrow[\tend{n}]{\prob} 0.
\end{align*}
\end{rmk}

\newpage

\appendix

\section{The \Luka walk and the mirror transformation}\label{app:MirrorLuka}

In this appendix, we give the full details of the joint convergence stated in Proposition~\ref{prop:DuquesneMiroir}.  It is a simple consequence of Theorem~\ref{thm:Duquesne} in the special case $\alpha=2$, but additional arguments are needed when $\alpha < 2$. Indeed, in the case $\alpha=2$ we have that all encoding processes of $\Tree_n$ converge towards the same Brownian excursion, and the same is true for $\Tree_n^{\div}$. Since the contour process behaves nicely when we apply the mirror transformation, we easily get a joint convergence result for $\bigl(\ell(n)/\sqrt n\times(c_{\Tree_n},c_{\Tree_n^{\div}})\bigr)_n$ and then deduce the desired result. When $\alpha < 2$, the \Luka walks and contour processes no longer converge towards the same excursion and we must study the \Luka walks on their own. The comparison of these two \Luka walks will require keeping track of their jumps in a precise way, which can be done thanks to some specific properties of the \textsc{Skorokhod}'s $J_1$ topology, hence we state the next result with this topology.
 
\begin{prop}\label{prop:LukaMirror}
 Let $\Tree_n$ be distributed as $\BGW\evt{\,\cdot\,\vert\textup{\#vertices = $n$}}$ where the offspring distribution $\mu$ is critical. Assuming that $\mu$ is attracted to a stable distribution of index $\alpha \in (1,2]$, there is a measurable function $F^{\div}:D([0,1])\rightarrow D([0,1])$ such that
    \begin{equation*}
        \frac{1}{\ell(n)n^{1/\alpha}}\bigl(S_{\Tree_n}(\lfloor nt\rfloor),S_{\Tree_n^{\div}}(\lfloor nt\rfloor)\bigr)_{t \in [0,1]} \xrightarrow[\tend{n}]{\dstb} \bigl(\mathbbm x, F^{\div}(\mathbbm x )\bigr)
    \end{equation*}
with respect to \textsc{Skorokhod}'s $J_1$ topology.
\end{prop}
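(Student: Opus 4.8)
The plan is to treat the two regimes separately, as the statement itself suggests. When $\alpha = 2$, all three encoding processes of $\Tree_n$ (and likewise of $\Tree_n^{\div}$) converge in distribution to the same Brownian excursion, so using $c_{\Tree_n^{\div}} = \widehat c_{\Tree_n}$ together with the asymptotic equality of height, contour and \Luka{} functions one obtains the joint convergence immediately and may simply set $F^{\div}(\mathbbm x) = \widehat{\mathbbm x}$. All the work is therefore in the case $\alpha \in (1,2)$, where $\mathbbm x$ is genuinely discontinuous, and I would concentrate on it. The overall scheme is standard: first establish tightness of the rescaled pair $\bigl(S_{\Tree_n},S_{\Tree_n^{\div}}\bigr)$ in $D([0,1])^2$, which is automatic since each marginal converges by Theorem~\ref{thm:Duquesne} (recall $\Tree_n^{\div} \overset{d}{=} \Tree_n$); then identify every subsequential limit $(\mathbbm x, Y)$ by showing that $Y$ is one and the same measurable functional of $\mathbbm x$.

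The heart of the matter is a deterministic combinatorial identity relating the positions of a vertex in the lexicographic and in the mirrored enumerations. For $v \in \Tree_n$, write $\Lambda(v)$ (resp. $P(v)$) for the total number of vertices lying in the subtrees grafted strictly to the left (resp. right) of the ancestral line $\Zintfo{\varnothing, v}$, and let $\Theta(v)$ be the number of vertices in the subtree rooted at $v$. Counting the vertices visited before $v$ in each traversal gives that the lexicographic index $k$ and the mirrored index $k'$ of $v$ satisfy $k = |v| + \Lambda(v)$ and $k' = |v| + P(v)$, while $n = |v| + \Lambda(v) + P(v) + \Theta(v)$; eliminating $P(v)$ yields
\begin{equation*}
    k' = n - k + |v| - \Theta(v).
\end{equation*}
Because the mirror transformation preserves out-degrees, the increment of $S_{\Tree_n^{\div}}$ at its index $k'$ equals that of $S_{\Tree_n}$ at $k$; thus passing from $\Tree_n$ to $\Tree_n^{\div}$ merely rearranges the increments of the \Luka{} walk according to $k \mapsto k'$.

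I would then read this identity off the limit along the macroscopic jumps, which is exactly where \textsc{Skorokhod}'s $J_1$ topology is needed, since it matches jumps with jumps. A jump of the rescaled $S_{\Tree_n}$ at a time $t \in \disc(\mathbbm x)$ corresponds to a vertex $v = u_k$ of out-degree of order $\ell(n)n^{1/\alpha}$, for which $|v| = H_{\Tree_n}(k) = o(n)$ by Theorem~\ref{thm:Duquesne}, and whose subtree size satisfies $\Theta(v)/n \to \rho(t) - t$, where $\rho(t) = \inf\{s > t : \mathbbm x(s) < \mathbbm x(t-)\}$ is the right end of the excursion of $\mathbbm x$ above the level $\mathbbm x(t-)$ started at $t$. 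Dividing the identity above by $n$ then gives $k'/n \to 1 - \rho(t)$. Hence in any subsequential limit $(\mathbbm x, Y)$ each jump of $\mathbbm x$, occurring at $t$ with size $\Delta$, is matched by a jump of $Y$ of the same size $\Delta$ occurring at $1 - \rho(t)$, so that the jump point process of $Y$ is the explicit measurable image of that of $\mathbbm x$ under $(t, \Delta) \mapsto (1 - \rho(t), \Delta)$.

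To conclude I would use that $Y \overset{d}{=} \mathbbm x$ is the excursion of a spectrally positive $\alpha$-stable \textsc{Lévy} process, hence almost surely an infinite-variation pure-jump path recovered from its jumps through the a.s. convergent compensated series; this reconstruction is a fixed measurable map $\Psi$ from jump configurations to $D([0,1])$. Writing $G$ for the measurable rearrangement of jumps described above, one has $Y = \Psi(G(\mathbbm x))$ and $\mathbbm x = \Psi(\text{jumps of }\mathbbm x)$, so that $F^{\div} := \Psi \circ G$ is measurable and independent of the subsequence; as every subsequential limit equals $\bigl(\mathbbm x, F^{\div}(\mathbbm x)\bigr)$, the full sequence converges. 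I expect the main obstacle to lie precisely in making the jump correspondence rigorous: one must prove the joint convergence $\Theta(v)/n \to \rho(t) - t$ for all macroscopic jumps simultaneously — which amounts to the almost sure continuity of the functional $\rho$ at $\mathbbm x$ and relies on the path properties of $\mathbbm x$ (in particular that a stable excursion immediately crosses any level strictly below a local value) — and to control the small-jump part well enough that no jump of $Y$ is created or lost under the rearrangement, so that $Y$ really is $\Psi$ applied to the rearranged jump set and not merely a process sharing its large jumps with it.
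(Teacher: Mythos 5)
Your combinatorial core is sound and is in fact identical to the paper's: your identity $k' = n-k+|v|-\Theta(v)$ is the paper's relation $u_k = w_{n-E_n(k)+H_{\Tree_n}(k)}$ (with $E_n(k)-k=\Theta(u_k)$ the subtree size), the exchange of increments under this reindexing is the paper's jump identity, and your relocation rule $t\mapsto 1-\rho(t)$ with preserved jump sizes is exactly the second assertion of the paper's lemma describing subsequential limits (your $\rho$ is the paper's $e$). Tightness and the case $\alpha=2$ are handled as in the paper. The divergence --- and the genuine gap --- lies in how you pin down $Y$ once the jump correspondence is known. You assert that $Y$, being distributed as a stable excursion, is a.s.\ recovered from its jump configuration by a fixed measurable map $\Psi$ (compensated partial sums). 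For the unconditioned L\'evy process this is L\'evy--It\^o; but $\mathbbm x$ is the \emph{normalized excursion}, whose law is singular with respect to the L\'evy process law on every interval $[0,u]$, $u>0$ (the excursion stays positive, while a spectrally positive $\alpha$-stable process with $\alpha\in(1,2)$ immediately visits $(-\infty,0)$), so the pathwise identity does not transfer by absolute continuity. It can be proved --- e.g.\ the stable bridge is locally absolutely continuous with respect to the process, hence is a.s.\ the compensated sum of its jumps, and \textsc{Chaumont}'s Vervaat-type transform (a cyclic shift, which preserves the compensated-sum structure since the compensator is linear in time) carries this to the normalized excursion --- but this is a genuine missing step, not a remark: without it, knowing the jumps of $Y$ does not determine $Y$ at all. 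The second gap, which you do flag, is the simultaneous two-way matching of jumps; be aware that this is the bulk of the work (in the paper it takes roughly a page: \textsc{Skorokhod} representation, matching of the sets $\disc_{\varepsilon}$ in both directions for every $\varepsilon$, and path properties of $\mathbbm x$ to get $E_n(k_n(t))/n \rightarrow e(t)$).

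For comparison, the paper avoids reconstruction from jumps altogether. It proves that every subsequential limit satisfies, almost surely, $Y_{1-t}=W_t$ for all rational $t$, where $W_t=\sum_{0\leq s<t:\, X_{s-}<I_{s,t}}(X_s-I_{s,t})$ with $I_{s,t}=\inf_{[s,t]}X$ is an explicit functional of $X$: this is the limit of the discrete identity expressing $S_{\Tree_n^{\div}}(k')=L(u_k)$ as a sum of left-spine contributions, and the identification of the $\varepsilon$-truncated limits with $Y_{1-e(t)}$ uses an identity of \textsc{Curien--Kortchemski} for stable excursions. Since a c\`adl\`ag function is determined by its values on a dense set of times, the \textsc{Doob--Dynkin} lemma then produces the measurable $F^{\div}$ directly, and uniqueness of subsequential limits follows. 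So either you supply the excursion-level L\'evy--It\^o reconstruction to complete your route, or you replace your last step by this dense-time identification, which brings your argument in line with the paper's and removes the need for that fact.
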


Note that it implies a similar result for the weaker $M_1$ topology and the corresponding continuous time-scaled function $s_{\Tree_n}$ and $s_{\Tree_n^{\div}}$. Moreover, as we still have that the height process is asymptotically the same as the contour process and this last process behaves nicely when we apply the mirror transformation, we eventually get Proposition~\ref{prop:DuquesneMiroir} as a consequence of Proposition~\ref{prop:LukaMirror}.

\begin{proof}[Proof of Proposition~\ref{prop:DuquesneMiroir}.] We apply Theorem~\ref{thm:Duquesne} to $\bigl(\Tree_n\bigr)_n$ and $\bigl(\Tree_n^{\div}\bigr)_n$ to immediately get tightness of the sequence 
\begin{equation*}
    \left(\frac{1}{\ell(n)n^{1/\alpha}}s_{\Tree_n},\frac{\ell(n)}{n^{1-1/\alpha}}h_{\Tree_n},\frac{1}{\ell(n)n^{1/\alpha}} s_{\Tree_n^{\div}},\frac{\ell(n)}{n^{1-1/\alpha}} h_{\Tree_n^{\div}}\right)_n.
\end{equation*}
Now, let $(x_1,x_2,x_3,x_4)$ denote a subsequential distributional limit. By Proposition~\ref{prop:LukaMirror}, $(x_1,x_3)$ has the law of $\bigl(\mathbbm x,F^{\div}(\mathbbm x )\bigr)$ hence  $x_3=F^{\div}(x_1)$ almost surely. In the same way, Theorem~\ref{thm:Duquesne}  gives that $(x_1,x_2,x_4)$ has the law of $\bigl(\mathbbm x,\mathbbm h,\widehat{\mathbbm h}\bigr)$. As there is a measurable function $G:D([0,1])\rightarrow D([0,1])$ such that $\mathbbm h = G(\mathbbm x) \as[ ]$  we must have $x_2=G(x_1)$ and $x_4=\widehat x_2$ almost surely. Since $x_1$ is distributed as $\mathbbm x$, we can conclude that $(x_1,x_2,x_3,x_4)$ is distributed as $\bigl(\mathbbm x,\mathbbm h, F^{\div}(\mathbbm x ),\widehat{\mathbbm h}\bigr)$ and by uniqueness of the limiting distribution we get the desired convergence.
\end{proof}
Let us now discuss the proof of Proposition~\ref{prop:LukaMirror} in the case $\alpha < 2$. By Theorem~\ref{thm:Duquesne} (actually we use its version with respect to the $J_1$ topology, see \eg \cite[Equation~1]{Marzouk2020ScalingLimitStableSnake} for a precise statement), the sequence at stake is tight and it is enough to study the uniqueness in distribution of its subsequential limits. Based on the relation of the two \Luka walks in the discrete setting and properties of the $J_1$ topology, a subsequential limit is a pair of discontinuous functions with some constraints on their discontinuities. To make this formal, for $x\in D([0,1])$ we set $\Delta x(t)=x(t)-x(t-), \disc(x)=\{t\in[0,1] \st x(t-)\neq x(t)\}$ and $\disc_{\varepsilon}(x)=\{t\in[0,1] \st \abs{\Delta x(t)}>\varepsilon\}$. We have the following lemma.

\begin{lem}\label{lem:Description link Luka mirror}
    Let $(X,Y)$ be a subsequential limit in distribution (with respect to $J_1$) of 
    \begin{equation*}
        \left(\frac{1}{\ell(n)n^{1/\alpha}}\bigl(S_{\Tree_n}(\lfloor nt\rfloor),S_{\Tree_n^{\div}}(\lfloor nt\rfloor)\bigr)_{t \in [0,1]} \right)_n .
    \end{equation*} For all $s\leq t \in [0,1]$, let $e(t)=\inf\{ t'\geq t: X_{t'} < X_{t-}\}$ (with $\inf\emptyset =1$) and $I_{s,t}=\inf_{[s,t]} X$. Finally, introduce $(W_t)_{t \in [0,1]}$ such that 
    \begin{equation*}
        W_t=\sum_{0 \leq s < t: X_{s-} < I_{s,t}} X_s-I_{s,t},
    \end{equation*} where the sum is countable since $(X_s-I_{s,t})\1_{X_{s-} \leq I_{s,t}}=0$ for $s\not\in\disc(X)$. Note that $W$, simply seen as a collection of positive variables, is measurable with respect to $X$.
    
    Then almost surely:
    \begin{itemize}
        \item $\disc(X)$ and $\disc(Y)$ are dense in $[0,1]$;
        \item $\disc(Y)=\{1-e(t), \forall t \in \disc(X)\}$ and  $\forall t \in \disc(X), \Delta Y_{1-e(t)}=\Delta X_t$;
        \item $\forall t\in [0,1]\cap\Q,\ Y_{1-t} = W_t$.

    \end{itemize}
\end{lem}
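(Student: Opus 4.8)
The plan is to establish Lemma~\ref{lem:Description link Luka mirror} by transferring the discrete combinatorial relation between $S_{\Tree_n}$ and $S_{\Tree_n^{\div}}$ through the $J_1$ convergence, then using the path-properties of the limiting $\alpha$-stable excursion $\mathbbm x$. The starting point must be a precise discrete identity: recall that $S_{\Tree_n}(k)=R(u_k)$ counts edges grafted on the right of the spine $\Zintfo{\varnothing,u_k}$, while $S_{\Tree_n^{\div}}(k)=L(w_k)$ counts edges on the left side in the mirrored enumeration (this is \eqref{eq:MirrorLukaIsLeft}). I would first write down, for each pair of indices, how a vertex $u$ of $\Tree_n$ and its mirror image relate; the key is that the edges grafted to the \emph{right} of the spine of $u_k$ in $\Tree_n$ become, after the mirror, edges to the left of the corresponding spine, but enumerated in the \emph{reversed} order of $\Tree_n^{\div}$. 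This should yield a discrete analogue of each of the three bullet points: a bijection between down-jumps of $S_{\Tree_n}$ of a given size and down-jumps of $S_{\Tree_n^{\div}}$ (at the reversed/excursion-completed time), and a discrete version of the summation formula $W_t=\sum_{X_{s-}<I_{s,t}}(X_s-I_{s,t})$.

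First I would translate the jump structure. In the discrete \Luka walk, a jump of $S_{\Tree_n}$ at time $k$ corresponds to the out-degree $d_{u_k}(\Tree_n)-1$, and a \emph{down}-step of size one happens each time the walk passes a vertex that closes a subtree. The crucial combinatorial observation is that the number of right-children subtrees pending from the spine at $u_k$ above the running infimum encodes exactly the future contribution to $S_{\Tree_n^{\div}}$ at the time where the corresponding left-spine is traversed in $\Tree_n^{\div}$. This is where the map $t\mapsto 1-e(t)$ enters: $e(t)$ is the first time after $t$ that $X$ drops below $X_{t-}$, i.e.\ the end of the excursion of $X$ straddling $t$ at that level, and the reflection $1-e(t)$ is the time in $\Tree_n^{\div}$ at which this subtree is enumerated. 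I would make this rigorous at the discrete level and then pass to the limit: since we work in the $J_1$ topology, the jumps of $(S_{\Tree_n})$ and $(S_{\Tree_n^{\div}})$ converge to the jumps of $(X,Y)$ with matched locations and sizes, which immediately gives the density of $\disc(X),\disc(Y)$ (from path-property~1 of $\mathbbm x$) and the jump-matching identity $\Delta Y_{1-e(t)}=\Delta X_t$.

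The summation identity $Y_{1-t}=W_t$ for rational $t$ is the most delicate part, and I expect it to be the main obstacle. The quantity $W_t=\sum_{0\le s<t:\,X_{s-}<I_{s,t}}(X_s-I_{s,t})$ collects, over all jump times $s<t$ whose pre-jump value sits strictly below the running infimum $I_{s,t}=\inf_{[s,t]}X$, the overshoot $X_s-I_{s,t}$. Combinatorially this counts exactly the left-hanging edges accumulated along the spine up to the mirror-time $1-t$, and I would derive the corresponding finite-$n$ equality $S_{\Tree_n^{\div}}(\lfloor n(1-t)\rfloor)=\sum_{\text{ancestral jumps}}(\text{overshoot})$ by a direct bookkeeping of which subtrees of $\Tree_n$ contribute to the left side of a given mirrored spine. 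The subtlety is that passing to the limit in an \emph{infinite} sum of overshoots is not automatic from $J_1$ convergence, since small jumps accumulate; I would control this using path-property~5 (absence of one-sided local minima for $\leb$-a.e.\ $t$) so that for $t\notin\disc(X)$ the infimum $I_{s,t}$ is attained in the interior and the relevant sum is well-approximated by its finitely-many large-jump terms $\disc_{\varepsilon}(X)$, uniformly as $\varepsilon\to0$. Restricting to rational $t$ sidesteps the measure-zero exceptional set, and the measurability of $W$ with respect to $X$ then follows because $W$ is built from the countable jump data of $X$ and the infimum functionals, both measurable. Finally I would invoke path-property~3 (each discontinuity is a genuine two-sided strict local minimum from below) to ensure $e(t)>t$ is well-defined and $1-e(t)\notin\disc$ issues do not corrupt the jump-matching, closing the argument.
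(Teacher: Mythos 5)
Your treatment of the first two bullet points follows essentially the same route as the paper: density of $\disc(X)$ and $\disc(Y)$ comes from both marginals being stable excursions, and the jump-matching comes from a discrete index identity plus $J_1$ convergence. Be aware, though, that the step you describe as ``immediate'' is not: one must first prove the exact discrete identity $u_k = w_{n-E_n(k)+H_{\Tree_n}(k)}$, where $E_n(k)=\min\{m\geq k: S_{\Tree_n}(m)<S_{\Tree_n}(k)\}$, which gives $\Delta S_{\Tree_n^{\div}}\bigl(n-E_n(k)+H_{\Tree_n}(k)\bigr)=\Delta S_{\Tree_n}(k)$, and then show that this index map, once rescaled, converges to $t\mapsto 1-e(t)$. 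The latter requires $E_n(k_n(t))/n \to e(t)$, which rests on the first-passage stability of the stable excursion (for small $\delta$, $\inf_{[t,e(t)-\delta]}X > X_{t-}$ while $\inf_{[t,e(t)+\delta]}X < X_{t-}$, cf.\ \cite[Proposition~2.10]{Kortchemski2014StableLaminations}), not on your path-property~3. This part of your plan is vague but repairable along the paper's lines (Skorokhod representation, time-changes $\phi_n,\psi_n$, finiteness of $\disc_{\varepsilon}$).

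The genuine gap is in the third bullet, and you have in fact put your finger on it yourself: the discrete bookkeeping expresses $S_{\Tree_n^{\div}}$ at the mirrored time as a sum of overshoots whose number of terms blows up, and $J_1$ convergence only controls finitely many large jumps, so one must interchange $n\to\infty$ with the truncation $\varepsilon\to 0$. Your proposed fix --- invoking path-property~5 to get approximation by the large-jump terms ``uniformly as $\varepsilon\to 0$'' --- is not a proof mechanism: property~5 is a statement about the limit path $X$ alone, and it cannot bound, uniformly in $n$, the aggregate contribution of the microscopic jumps along the discrete spine; that uniform bound is precisely a tightness estimate you would have to prove separately, and nothing in your sketch produces it. The paper avoids the issue entirely by a duality trick: the same ancestral sum $L(u_{k_n(t)})=S_{\Tree_n^{\div}}(m_n(t))$ admits two discrete expressions, one in terms of $S_{\Tree_n}$ and one in terms of $S_{\Tree_n^{\div}}$, with terms in bijection, so the truncated sum $r_n(\varepsilon)$ can be written in both trees. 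Passing to the limit in each expression identifies $r_{\infty}(\varepsilon)=\lim_n r_n(\varepsilon)$ both as the $\varepsilon$-truncation of $W_t$ and as the $\varepsilon$-truncation of the overshoot sum of $Y$ up to time $1-e(t)$; the known identity for stable excursions \cite[Corollary~3.4]{CurienKortchemski2014StableLooptrees} states that the \emph{full} overshoot sum of $Y$ equals $Y_{1-e(t)}$, so letting $\varepsilon\to 0$ in both expressions yields $W_t = Y_{1-e(t)} = Y_{1-t}$ with no uniform-in-$n$ estimate ever needed. Without this second expression and the Curien--Kortchemski identity (or a genuine uniform small-jump estimate as a substitute), your argument does not close.
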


\begin{proof}
    First we know by Theorem~\ref{thm:Duquesne} that both $X$ and $Y$ are distributed as an excursion of an $\alpha$-stable \textsc{Lévy} process hence we have that $\disc(X)$ and $\disc(Y)$ are dense in $[0,1]$ almost surely (see \cite[Proposition~2.10]{Kortchemski2014StableLaminations}). The other assertions are inherited from relations between the discrete processes $S_{\Tree_n}$ and $S_{\Tree_n^{\div}}$ that we describe now.

    Consider $u_0,\ldots,u_{n-1}$ the lexicographical enumeration of $\Tree_n$ as well as $w_0,\ldots,w_{n-1}$ its mirrored enumeration. For $0\leq k < n$, set $E_n(k)=\min\{m\geq k: S_{\Tree_n}(m) < S_{\Tree_n}(k)\}$, so that $E_n(k)-k$ is the size of the subtree of descendants of $u_k$ in $\Tree_n$. The set of vertices that are greater than or equal to $u_k$ for the mirrored lexicographical order contains those descendants as well as the $k-H_{\Tree_n}(k)$ vertices with an ancestor which is a younger sibling of an ancestor of $u_k$, hence we have $u_k = w_{n-E_n(k)+H_{\Tree_n}(k)}$. As a consequence, $k \mapsto n-E_n(k)+H_{\Tree_n}(k)$  is a permutation of $\Zintff{0,n-1}$ that exchanges the jumps $\Delta S_{\Tree_n^{\div}}$ and $\Delta S_{\Tree_n}$ (where $\Delta A(k)=A(k+1)-A(k)$):
    \begin{equation}\label{eq:same jumps}
        \Delta S_{\Tree_n^{\div}}\bigl(n-E_n(k)+H_{\Tree_n}(k)\bigr) = \Delta S_{\Tree_n}(k).
    \end{equation}
Moreover, for a fixed $k$ and $k'= n-E_n(k)+H_{\Tree_n}(k)$, we get that $S_{\Tree_n^{\div}}(k') =L(u_k)$ (recall that $L,R$ are defined in Section~\ref{sct:planeTrees}), and this may be expressed with $S_{\Tree_n}$ as follows: 
\begin{equation}\label{eq:Left from Standard Luka}
    S_{\Tree_n^{\div}}(k')=\sum_{j:u_j \in \Zintof{\varnothing,u_k}} L(u_j)-L(\text{parent}(u_j))=\sum_{j<k:S_{\Tree_n}(j)=\min_{\Zintff{j,k}}  S_{\Tree_n}} S_{\Tree_n}(j+1)-\min_{\Zintff{j+1,k}}S_{\Tree_n} .
\end{equation}
Notice that we could argue directly in $\Tree_n^{\div}$, since the mirror transformation preserves the ancestral line and simply exchanges $L$ and $R$, to get another expression of the same sum:
\begin{equation}\label{eq:Right from Luka}
    S_{\Tree_n^{\div}}(k')=\sum_{j<k':S_{\Tree_n^{\div}}(j)=\min_{\Zintff{j,k'}}S_{\Tree_n^{\div}}} \left(\min_{\Zintff{j+1,k'}}S_{\Tree_n^{\div}}-S_{\Tree_n^{\div}}(j)\right)
\end{equation}
where additionally, each term in the right sum of \eqref{eq:Left from Standard Luka} corresponds to and is equal to a term in the right sum of \eqref{eq:Right from Luka}.

Sending $n$ to $+\infty$ will yield the desired assertions. To see this, first notice that the lemma only concerns the law of $(X,Y)$ (indeed all three assertions correspond to a measurable events), thus by \textsc{Skorokhod}'s representation theorem we may further assume that we have an almost sure convergence towards $(X,Y)$ with respect to \textsc{Skorokhod} $J_1$ topology. It means that for all $n$ we have some processes $X^n,Y^n$ and some increasing bijections  $\phi_n,\psi_n:[0,1]\mapsto[0,1]$ such that 
\begin{equation*}
    \forall u\in [0,1],\ X^n\circ\phi_n(u)=\frac{1}{\ell(n)n^{1/\alpha}}S_{\Tree_n}(\lfloor nu\rfloor),\  Y^n\circ\psi_n(u)=\frac{1}{\ell(n)n^{1/\alpha}}S_{\Tree^{\div}_n}(\lfloor nu\rfloor),
\end{equation*}
and \begin{equation*}
    (X^n,Y^n) \xrightarrow[\tend{n}]{\norm{\cdot}_{\infty}} (X,Y);\  (\phi_n,\psi_n) \xrightarrow[\tend{n}]{\norm{\cdot}_{\infty}} (\id,\id).
\end{equation*}
Since we have $\{t\in[0,1] \st \Delta X(t)>\varepsilon\}=\{t\in[0,1] \st \Delta X(t) \geq \varepsilon\}$ for all $\varepsilon>0$ almost surely, this convergence implies that for any given $\varepsilon>0$, when $n$ is large enough the change of time $\phi_n$ maps the jumps of size $> \varepsilon$ of $X$ onto those of $S_{\Tree_n}/\ell(n)n^{1/\alpha}$:
\begin{equation*}
    \disc_{\varepsilon}(X)=\left\{\phi_n\left(\frac{k+1}{n}\right), \text{for } k<n \st \Delta S_{\Tree_n}(k) > \varepsilon \ell(n)n^{1/\alpha}\right\} \text{for $n$ large enough}.
\end{equation*}
An analogue statement holds for $Y$ and $S_{\Tree_n^{\div}}$,  which may be combined to \eqref{eq:same jumps} to give
\begin{equation*}
    \disc_{\varepsilon}(Y)=\left\{\psi_n\left(\frac{n-E_n(k)+H_{\Tree_n}(k)+1}{n}\right), \text{for } k<n \st \Delta S_{\Tree_n}(k) > \varepsilon \ell(n)n^{1/\alpha}\right\}.
\end{equation*}
Let us study the asymptotic of the right-hand side terms to obtain the desired link between $\disc(X)$  and $\disc(Y)$. For all $t \in (0,1)$ and for all $n$, let $k_n(t)$ be such that $t \in \bigl(\phi_n\bigl(k_n(t)/n\bigr),\phi_n\bigl((k_n(t)+1) / n\bigr)\bigr]$ and set $m_n(t)= n-E_n(k_n(t))+H_{\Tree_n}(k_n(t))$. Note that if $t\in \disc(X)$ then we must have $t=\phi_n\bigl((k_n(t)+1) / n\bigr)$ for $n$ large. We first prove that almost surely, for all $t\in [0,1], \psi_n \bigl( (m_n(t)+1)/n\bigr)\rightarrow 1-e(t)$ as $n$ goes to $+\infty$. It is clear that $k_n(t) \sim tn$ and  $H_{\Tree_n}(k_n(t))=o(n)$, so it is sufficient to prove that $ E_n(k_n(t))\sim e(t)n$. To see this, observe that by construction we have 
\begin{equation*}
    \phi_n\left( \frac{E_n(k_n(t))}{n}\right)=\inf\{t' \geq t \st X^n_{t'} < X^n_{t-}\}.
\end{equation*} 
For all $t$ such that $e(t)=t$, this quantity clearly converges towards $t$. Moreover, properties of $\alpha$-stable excursions implies that, almost surely, for all $t$ such that $e(t)>t$ and for all $\delta >0$ small enough we have
\begin{equation*}
    \inf_{[t,e(t)-\delta]} X > X_{t-} \text{ and } \inf_{[t,e(t)+\delta]} X < X_{t-}.
\end{equation*}
(see \cite[Proposition~2.10]{Kortchemski2014StableLaminations} again). Thus we always have $\phi_n \bigl( E_n(k_n(t))/n\bigr)\rightarrow e(t)$ and it gives that $\forall t\  E_n(k_n(t))\sim e(t)n$.  

As an almost sure consequence, for all $\varepsilon > 0$, we see that $\forall t \in \disc_{\varepsilon}(X),\  \psi_n \bigl( (m_n(t)+1)/n\bigr)\rightarrow 1-e(t)$ and eventually every point of this sequence is in the finite set $\disc_{\varepsilon}(Y)$, hence $1-e(t)\in \disc_{\varepsilon}(Y)$. Conversely, for $s \in \disc_{\varepsilon}(Y)$, for $n$ large enough we have $k_n$ such that $s=\psi_n\bigl((n-E_n(k_n)+H_{\Tree_n}(k_n)+1)/n\bigr)$, and since $\disc_{\varepsilon}(X)$ is finite we may assume (by extraction) that there is $t\in \disc_{\varepsilon}(X)$ such that $k_n=k_n(t)$ for large $n$. The previous argument directly gives that $s=1-e(t)$ (and $1-e(t)=\psi_n \bigl( (m_n(t)+1)/n\bigr)$ for $n$ large) thus we have
\begin{equation*}
    \disc_{\varepsilon}(Y)=\{1-e(t), \forall t \in \disc_{\varepsilon}(X)\}.
\end{equation*}
As this holds for all $\varepsilon > 0$, we get that $\disc(Y)=\{1-e(t), \forall t \in \disc(X)\}$ and that  $\forall t \in \disc(X)$, $ \Delta Y_{1-e(t)}=\Delta X_t$, almost surely.

Finally, we prove that almost surely $\forall t\in [0,1]\cap\Q,\ Y_{1-t} = W_t$. First, we fix $t\in (0,1)$. By properties of $\alpha$-stable excursions, almost surely $1-t \not\in \disc(Y)$, $t \not \in \disc(X)$ and the equation $e(y)=t$ has only one solution, namely $y=t$, because for all $\varepsilon >0$ small enough we have $\inf_{[t,t\pm \varepsilon]} X < X_t$. Consequently $Y_{1-t}=Y_{1-e(t)}$ almost surely, and by the previous discussion this term can be approximated by $S_{\Tree_n^{\div}}\bigl(m_n(t)\bigr)/\ell(n)n^{1/\alpha}$. We will introduce closely related quantities and prove that they converge both toward $Y_{1-e(t)}$ and $W_t$ to conclude. 

We start by rewriting \eqref{eq:Left from Standard Luka} as 
\begin{equation*}
    S_{\Tree_n^{\div}}\bigl(m_n(t)\bigr)= \sum_{j<k_n(t):S_{\Tree_n}(j)=\min_{\Zintff{j,k_n(t)}}S_{\Tree_n}}\!\!\!\!\!\!\!\!\!\!\!\!\!\!\!\!\!\!\!\!\!\!\!\!\!\!\! \left(S_{\Tree_n}(j+1)-\min_{\Zintff{j+1,k_n(t)}}S_{\Tree_n}\right)=\ell(n)n^{1/\alpha} \!\!\!\!\!\!\!\!\!\sum_{0 \leq s < t: X^n_{s-} \leq I^n_{s,t}} \!\!\!\!\!\!\!(X^n_s-I^n_{s,t}),
\end{equation*}
where $I^n_{u,v}=\inf_{[u,v]}X^n$. We also introduce $\delta_{s,t}(x)= \bigl(x(s)-\inf_{[s,t]}x\bigr)\wedge \bigl(\inf_{[s,t]}x-x(s-)\bigr)$ for $x\in D([0,1])$ and $s<t$, and for all $\varepsilon >0$, we define $$r_n(\varepsilon)=\sum_{0 \leq s < t: X^n_{s-} \leq I^n_{s,t}} (X^n_s-I^n_{s,t})\1_{\delta_{s,t}(X^n) > \varepsilon}\ .$$ Since we only take into account a finite number of jumps, we can deduce that
\begin{equation*}
   r_n(\varepsilon) \cv{n} r_{\infty}(\varepsilon):=\sum_{0 \leq s < t:X_{s-} \leq I_{s,t}} (X_s-I_{s,t})\1_{\delta_{s,t}(X) > \varepsilon}\ .
\end{equation*}
Moreover $r_{\infty}(\varepsilon)$ converges towards $\sum_{0 \leq s < t: X_{s-} < I_{s,t}} X_s-I_{s,t}=W_t$ as $\varepsilon$ goes to $0$. But we can also express $r_n(\varepsilon)$ thanks to \eqref{eq:Right from Luka} and its link with \eqref{eq:Left from Standard Luka} as it enables us to write
\begin{equation*}
    \ell(n)n^{1/\alpha} r_n(\varepsilon)= \sum_{j<m_n(t):  S_{\Tree_n^{\div}}(j)=\min_{\Zintff{j,m_n(t)}}S_{\Tree_n^{\div}}}\left(\min_{\Zintff{j+1,m_n(t)}}S_{\Tree_n^{\div}}-S_{\Tree_n^{\div}}(j)\right)\1_{\Delta_{j,m_n(t)} > \varepsilon \ell(n)n^{1/\alpha}}\ ,
\end{equation*}
where $\Delta_{j,m}=\bigl(\min_{\Zintff{j+1,m}}S_{\Tree_n^{\div}}-S_{\Tree_n^{\div}}(j)\bigr) \wedge \bigl(S_{\Tree_n^{\div}}(j+1)- \min_{\Zintff{j+1,m}}S_{\Tree_n^{\div}}\bigr)$. By expressing the right-hand side term with $Y^n$, which converges toward $Y$ with respect to $J_1$, we deduce another expression of the limit $r_{\infty}(\varepsilon)$. 
\begin{equation*}
r_{\infty}(\varepsilon) = \sum_{0 \leq s < 1-e(t): Y_{s-} \leq \inf_{[s,1-e(t)]}Y} \left(\inf_{[s,1-e(t)]}Y - Y_{s-} \right)\1_{\delta_{s,t}(Y) > \varepsilon}.
\end{equation*}
We conclude thanks to the following property of the $\alpha$-stable excursion $Y$, established in \cite[Corollary~3.4]{CurienKortchemski2014StableLooptrees}:
\begin{equation*}
    Y_{1-e(t)}=\sum_{0 \leq s \leq 1-e(t): Y_{s-} \leq \inf_{[s,1-e(t)]}Y} \left(\inf_{[s,1-e(t)]}Y - Y_{s-} \right).
\end{equation*}
Since $1-t=1-e(t) \not\in \disc(Y)$, we deduce from this that $r_{\infty}(\varepsilon)$ converges toward $Y_{1-t}$ as $\varepsilon$ goes to $0$, hence $Y_{1-t}=W_t$.
\end{proof}

We now end this section with a proof Proposition~\ref{prop:LukaMirror} based on the explicit description of a subsequential limit given by Lemma~\ref{lem:Description link Luka mirror}.

\begin{proof}
    Let $(X,Y)$ be a subsequential limit in distribution (with respect to $J_1$) of 
    \begin{equation*}
        \left(\frac{1}{\ell(n)n^{1/\alpha}}\bigl(S_{\Tree_n}(\lfloor nt\rfloor),S_{\Tree_n^{\div}}(\lfloor nt\rfloor)\bigr)_{t \in [0,1]} \right)_n.
    \end{equation*}
    
    Recall from Lemma~\ref{lem:Description link Luka mirror} that almost surely $\forall t \in [0,1]\cap\Q, Y_{1-t} = W_t$. This shows that $\sigma(Y) = \sigma(Y_s, s\in[0,1]\cap\Q)  \subset \sigma(W)\vee\sigma(\mathcal N) \subset \sigma(X)\vee\sigma(\mathcal N)$ where $\mathcal N$ is the set of $\P$-negligible events. Since $D([0,1])$ is a polish space, \textsc{Doob-Dynkin} lemma guarantees the existence of a measurable function $F^{\div}$ such that $Y=F^{\div}(X)$ almost surely. In particular, almost surely $F^{\div}(X)$ is the unique element  in $D([0,1])$ such that $\forall t \in [0,1]\cap\Q, F^{\div}(X)_{1-t}=W_t$.
    Let us now consider another subsequential limit in distribution $(X',Y')$. Then $X$ and $X'$ both are $\alpha$-stable excursions, hence almost surely $F^{\div}(X')$ is the unique element  in $D([0,1])$ such that $\forall t \in [0,1]\cap\Q, F^{\div}(X')_{1-t}=W'_t$. Moreover, the consequence of Lemma~\ref{lem:Description link Luka mirror} also apply to $(X',Y')$, hence we have $Y'=F^{\div}(X')$ almost surely. This shows that $(X',Y')$ is always distributed as $(X,F^{\div}(X))$, and the desired convergence follows.
\end{proof}
\begin{rmk}
    There may be a more explicit expression for $F^{\div}$: Define $(\widetilde W_t)_{t \in [0,1]}$ by 
    \begin{equation*}
        \widetilde W_t=\sum_{0 \leq s < t: X_{s-} \leq I_{s,t}} X_s-I_{s,t}.
    \end{equation*}
The only difference with $W_t$ is that we also consider the jumps such that $X_{s-} = I_{s,t}$. For a given $t\in[0,1]$, almost surely $\widetilde W_t =W_t$. However, it is conjectured that $t \mapsto \widetilde W_t$ is left-continuous. If this is true then almost surely $F^{\div}(X) = t \mapsto \widetilde W_{1-t}$, but we have not been able to prove this claim.
\end{rmk}

\bibliographystyle{alpha}
\bibliography{mabiblio}

\begin{thebibliography}{MM03b}

\bibitem[Ald91a]{Aldous1991Continuum1}
David Aldous.
\newblock The continuum random tree. {I}.
\newblock {\em Ann. Probab.}, 19(1):1--28, 1991.

\bibitem[Ald91b]{Aldous1991Continuum2}
David Aldous.
\newblock The continuum random tree. {II}. {A}n overview.
\newblock In {\em Stochastic analysis ({D}urham, 1990)}, volume 167 of {\em London Math. Soc. Lecture Note Ser.}, pages 23--70. Cambridge Univ. Press, Cambridge, 1991.

\bibitem[Ald93]{Aldous1993Continuum3}
David Aldous.
\newblock The continuum random tree. {III}.
\newblock {\em Ann. Probab.}, 21(1):248--289, 1993.

\bibitem[BGT87]{BinghamGoldieTeugels1987RegularVariation}
N.~H. Bingham, C.~M. Goldie, and J.~L. Teugels.
\newblock {\em Regular variation}, volume~27 of {\em Encyclopedia of Mathematics and its Applications}.
\newblock Cambridge University Press, Cambridge, 1987.

\bibitem[Cha94]{Chaumont1994ExcursionStable}
Lo\"ic Chaumont.
\newblock Excursion normalis\'ee, pont et m\'eandre pour les processus stables.
\newblock {\em C. R. Acad. Sci. Paris S\'er. I Math.}, 318(6):563--566, 1994.

\bibitem[CHK15]{CurienHaasKortchemski2015CRTDissections}
Nicolas Curien, B{\'e}n{\'e}dicte Haas, and Igor Kortchemski.
\newblock The {CRT} is the scaling limit of random dissections.
\newblock {\em Random Struct. Algorithms}, 47(2):304--327, 2015.

\bibitem[CK14]{CurienKortchemski2014StableLooptrees}
Nicolas Curien and Igor Kortchemski.
\newblock Random stable looptrees.
\newblock {\em Electron. J. Probab.}, 19:35, 2014.
\newblock Id/No 108.

\bibitem[DLG02]{DuquesneLegall2002RandomTreeLevyProcessesBranching}
Thomas Duquesne and Jean-Fran{\c{c}}ois Le~Gall.
\newblock {\em Random trees, {L{\'e}vy} processes and spatial branching processes}, volume 281 of {\em Ast{\'e}risque}.
\newblock Paris: Soci{\'e}t{\'e} Math{\'e}matique de France, 2002.

\bibitem[DLG05]{DuquesneLegall2005LevyTrees}
Thomas Duquesne and Jean-Fran{\c{c}}ois Le~Gall.
\newblock Probabilistic and fractal aspects of {L{\'e}vy} trees.
\newblock {\em Probab. Theory Relat. Fields}, 131(4):553--603, 2005.

\bibitem[Duq03]{Duquesne2003Contour}
Thomas Duquesne.
\newblock A limit theorem for the contour process of conditioned {Galton}-{Watson} trees.
\newblock {\em Ann. Probab.}, 31(2):996--1027, 2003.

\bibitem[Duq06]{DuquesnePreprintCoding}
Thomas Duquesne.
\newblock The coding of compact real trees by real valued functions.
\newblock Preprint, {arXiv}:math/0604106 [math.{PR}] (2006), 2006.

\bibitem[FS96]{FlajoletSedgewick1996IntroductionAnalysisAlgorithms}
Philippe Flajolet and Robert Sedgewick.
\newblock {\em An introduction to the analysis of algorithms}.
\newblock Addison-Wesley Longman Publishing Co., Inc., 1996.

\bibitem[HM04]{HaasMiermont2004FragmentationTrees}
B{\'e}n{\'e}dicte Haas and Gr{\'e}gory Miermont.
\newblock The genealogy of self-similar fragmentations with negative index as a continuum random tree.
\newblock {\em Electron. J. Probab.}, 9:57--97, 2004.
\newblock Id/No 4.

\bibitem[Jan11]{JansonPreprintStableDistributions}
Svante Janson.
\newblock Stable distributions.
\newblock Preprint, {arXiv}:1112.0220 [math.{PR}] (2011), 2011.

\bibitem[Kha22]{KhanfirPreprintConvergenceLooptrees}
Robin Khanfir.
\newblock Convergences of looptrees coded by excursions.
\newblock Preprint, {arXiv}:2208.11528 [math.{PR}] (2022), 2022.

\bibitem[KM24]{KortchemskiMarzouk2024LevyLooptreesMaps}
Igor Kortchemski and Cyril Marzouk.
\newblock Random {L{\'e}vy} {Looptrees} and {L{\'e}vy} {Maps}.
\newblock Preprint, {arXiv}:2402.04098 [math.{PR}] (2024), 2024.

\bibitem[Kor14]{Kortchemski2014StableLaminations}
Igor Kortchemski.
\newblock Random stable laminations of the disk.
\newblock {\em Ann. Probab.}, 42(2):725--759, 2014.

\bibitem[KR20]{KortchemskiRichier2020BoundaryMapsLooptrees}
Igor Kortchemski and Lo{\"{\i}}c Richier.
\newblock The boundary of random planar maps via looptrees.
\newblock {\em Ann. Fac. Sci. Toulouse, Math. (6)}, 29(2):391--430, 2020.

\bibitem[LG05]{Legall2005RandomTreesApplications}
Jean-Fran{\c{c}}ois Le~Gall.
\newblock Random trees and applications.
\newblock {\em Probab. Surv.}, 2:245--311, 2005.

\bibitem[LG10]{Legall2010ItoRandomTrees}
Jean-Fran{\c{c}}ois Le~Gall.
\newblock It{\^o}'s excursion theory and random trees.
\newblock {\em Stochastic Processes Appl.}, 120(5):721--749, 2010.

\bibitem[Mar04]{Marckert2004Rotation}
Jean-Fran{\c{c}}ois Marckert.
\newblock The rotation correspondence is asymptotically a dilatation.
\newblock {\em Random Struct. Algorithms}, 24(2):118--132, 2004.

\bibitem[Mar20]{Marzouk2020ScalingLimitStableSnake}
Cyril Marzouk.
\newblock Scaling limits of discrete snakes with stable branching.
\newblock {\em Ann. Inst. Henri Poincar{\'e}, Probab. Stat.}, 56(1):502--523, 2020.

\bibitem[Mie09]{Miermont2009Tesselations}
Gr{\'e}gory Miermont.
\newblock Tessellations of random maps of arbitrary genus.
\newblock {\em Ann. Sci. {\'E}c. Norm. Sup{\'e}r. (4)}, 42(5):725--781, 2009.

\bibitem[MM03a]{MarckertMokkadem2003SameExcursion}
Jean-Fran{\c{c}}ois Marckert and Abdelkader Mokkadem.
\newblock The depth first processes of {Galton}-{Watson} trees converge to the same {Brownian} excursion.
\newblock {\em Ann. Probab.}, 31(3):1655--1678, 2003.

\bibitem[MM03b]{MarckertMokkadem2003InternalStructureBGW}
Jean-Fran{\c{c}}ois Marckert and Abdelkader Mokkadem.
\newblock Ladder variables, internal structure of {Galton}-{Watson} trees and finite branching random walks.
\newblock {\em J. Appl. Probab.}, 40(3):671--689, 2003.

\bibitem[Pet16]{Petersen2016Geometry}
Peter Petersen.
\newblock {\em Riemannian geometry}, volume 171 of {\em Grad. Texts Math.}
\newblock Cham: Springer, 3rd edition edition, 2016.

\bibitem[Whi02]{Whitt2002StochasticProcessLimits}
Ward Whitt.
\newblock {\em Stochastic-process limits. {An} introduction to stochastic-process limits and their application to queues}.
\newblock Springer Ser. Oper. Res. New York, NY: Springer, 2002.

\end{thebibliography}
\end{document}